\newcommand*{\arxiv}[1]{\href{http://www.arxiv.org/abs/#1}{arXiv: #1}}
\renewcommand{\PrintDOI}[1]{\href{http://dx.doi.org/\detokenize{#1}}{doi: \detokenize{#1}}}
\setlist[enumerate,1]{label=\textup{(\arabic*)}}
\tikzset{node distance=2cm, auto}
\tikzset{cd/.style=matrix of math nodes,row sep=2em,column sep=2em, text height=1.5ex, text depth=0.5ex}
\tikzset{cdar/.style=->,auto}
\tikzset{mid/.style={anchor=mid}} 
\tikzset{narrowfill/.style={inner sep=1pt, fill=white}}
\tikzset{rndblock/.style={rounded corners,rectangle,draw,outer sep=0pt}}
\theoremstyle{plain}
\newtheorem{theorem}{Theorem}
\newtheorem{lemma}[theorem]{Lemma}
\newtheorem{proposition}[theorem]{Proposition}
\newtheorem{corollary}[theorem]{Corollary}
\theoremstyle{definition}
\newtheorem{definition}[theorem]{Definition}
\theoremstyle{remark}
\newtheorem{remark}[theorem]{Remark}
\newtheorem{example}[theorem]{Example}
\numberwithin{equation}{section}
\DeclareMathOperator{\Aut}{Aut}
\DeclareMathOperator{\Prim}{Prim}
\DeclareMathOperator{\Bis}{Bis}
\DeclareMathOperator{\Fix}{Fix}
\DeclareMathOperator{\Int}{Int}
\newcommand{\twolinesubscript}[2]{\genfrac{}{}{0pt}{}{#1}{#2}}
\newcommand{\pHom}{\Pi}
\newcommand{\D}{X}
\newcommand{\haction}{h}
\newcommand*{\nb}{\nobreakdash}
\newcommand*{\Star}{\(^*\)\nobreakdash-}
\newcommand*{\C}{\mathbb C}
\newcommand*{\Z}{\mathbb Z}
\newcommand*{\Q}{\mathbb Q}
\newcommand*{\R}{\mathbb R}
\newcommand*{\N}{\mathbb N}
\newcommand*{\T}{\mathbb T}
\newcommand*{\Ideals}{\mathbb I}
\newcommand*{\Null}{\mathcal N}
\newcommand*{\lNull}{\mathcal{LN}}
\newcommand*{\rNull}{\mathcal{RN}}
\newcommand*{\Bound}{\mathbb B}
\newcommand*{\Comp}{\mathbb K}
\newcommand*{\Mat}{\mathbb M}
\newcommand{\Her}{\mathbb H}
\newcommand*{\red}{\mathrm r}
\newcommand*{\ess}{\mathrm{ess}}
\newcommand*{\sing}{\mathrm{sing}}
\newcommand*{\alg}{\mathrm{alg}} 
\newcommand*{\Cst}{\textup C^*}
\newcommand*{\Mult}{\mathcal M}
\newcommand*{\Locmult}{\mathcal{M}_\mathrm{loc}}
\newcommand*{\Cont}{\textup C}
\newcommand*{\Contb}{\textup C_\textup b}
\newcommand*{\Contc}{\Cont_\textup c} 
\newcommand{\idealin}{\mathrel{\triangleleft}} 
\newcommand*{\Slice}{\mathcal S}
\newcommand*{\Id}{\textup{Id}}
\newcommand*{\Hils}{\mathcal H}
\newcommand*{\Hilm}[1][E]{\mathcal #1}
\newcommand*{\B}{\mathcal B}
\newcommand*{\A}{\mathcal A}
\renewcommand*{\L}{\mathcal L}
\newcommand*{\defeq}{\mathrel{\vcentcolon=}}
\newcommand*{\eqdef}{\mathrel{=\vcentcolon}}
\newcommand*{\congto}{\xrightarrow\sim}
\DeclarePairedDelimiter{\abs}{\lvert}{\rvert}
\DeclarePairedDelimiter{\norm}{\lVert}{\rVert}
\DeclarePairedDelimiterX{\braket}[2]{\langle}{\rangle}{#1\,\delimsize\vert\,\mathopen{}#2}
\DeclarePairedDelimiterX{\BRAKET}[2]{\langle}{\rangle}{\!\delimsize\langle#1\,\delimsize\vert\,\mathopen{}#2\delimsize\rangle\!}
\DeclarePairedDelimiterX{\setgiven}[2]{\{}{\}}{#1\,{:}\,\mathopen{}#2}
\newcommand*{\dual}[1]{\widehat{#1}}
\newcommand*{\s}{s}
\newcommand*{\rg}{r}
\DeclareMathOperator*{\stlim}{s-lim}
\newcommand*{\into}{\rightarrowtail}
\newcommand*{\onto}{\twoheadrightarrow}
\begin{document}
\title[Essential crossed products for inverse semigroup actions]{Essential
  crossed products for inverse semigroup actions: simplicity and pure
  infiniteness}

\author{Bartosz Kosma Kwa\'sniewski}
\email{bartoszk@math.uwb.edu.pl}
 \address{Faculty of Mathematics\\
   University  of Bia\l ystok\\
   ul.\@ K.~Cio\l kowskiego 1M\\
   15-245 Bia\l ystok\\
   Poland}

\author{Ralf Meyer}
\email{rmeyer2@uni-goettingen.de}
\address{Mathematisches Institut\\
 Universit\"at G\"ottingen\\
 Bunsenstra\ss e 3--5\\
 37073 G\"ottingen\\
 Germany}

\begin{abstract}
  We study simplicity and pure infiniteness criteria for
  \(\Cst\)-algebras associated to inverse semigroup actions by
  Hilbert bimodules and to Fell bundles over \'etale not necessarily
  Hausdorff groupoids.  Inspired by recent work of Exel and Pitts,
  we introduce essential crossed
  products
  for which there are such criteria.  In our approach the major
  role is played by a generalised expectation with values in the
  local multiplier algebra.  We give a long list of equivalent
  conditions characterising when the essential and reduced
  \(\Cst\)-algebras coincide.  Our most general simplicity and pure
  infiniteness criteria apply to aperiodic \(\Cst\)-inclusions
  equipped with supportive generalised expectations.  We thoroughly
  discuss the relationship between aperiodicity, detection of
  ideals, purely outer inverse semigroup actions, and non-triviality
  conditions for dual groupoids.
\end{abstract}
\thanks{BKK was partially supported by the National Science Center
  (NCN), Poland, grant no.~2019/35/B/ST1/02684}
\subjclass[2010]{46L55, 46L05, 20M18, 22A22}
\maketitle
\setcounter{tocdepth}{1}
\tableofcontents

\section{Introduction}
\label{sec:introduction}

Much is known about the ideal structure of reduced crossed products
for group actions and of reduced groupoid \(\Cst\)\nb-algebras of étale,
Hausdorff, locally compact groupoids.  More precisely, for an
action~\(\alpha\) of a discrete group~\(G\) on a separable
\(\Cst\)\nb-algebra~\(A\), there is a  long list of equivalent
conditions due to Kishimoto and Olesen--Pedersen, which imply that the
coefficient algebra~\(A\) \emph{detects ideals} in the reduced crossed
product \(A\rtimes_{\alpha,\red} G\) in the sense that \(J\cap A=0\) for an
ideal~\(J\) in \(A\rtimes_{\alpha,\red} G\) implies \(J=0\).
This theory has recently been generalised
in~\cite{Kwasniewski-Meyer:Aperiodicity} to actions by Hilbert
bimodules or, equivalently, Fell bundles over groups.  The goal of
this article is to generalise the results
in~\cite{Kwasniewski-Meyer:Aperiodicity} to Fell bundles over
groupoids and inverse semigroups.  It turns out that many of these
results extend, without major problems, to actions of inverse
semigroups by Hilbert bimodules provided that the crossed product
carries a conditional expectation.  Such crossed products model, as
a special case, section \(\Cst\)-algebras associated to Fell bundles
over Hausdorff, étale, locally compact groupoids.  The existence of
a canonical conditional expectation is closely related to the
Hausdorffness of the underlying groupoid.  This is already quite
satisfactory, as it unifies the existing results for crossed
products for group actions and for (twisted) groupoid
\(\Cst\)\nb-algebras of Hausdorff étale groupoids and improves the
criteria for detection of ideals for groupoid crossed products by
Renault~\cite{Renault:Ideal_structure}.  Another intriguing source
of potential applications come from regular inclusions and the
noncommutative Cartan inclusions introduced by
Exel~\cite{Exel:noncomm.cartan}.  Namely,
Exel~\cite{Exel:noncomm.cartan} has found sufficient conditions for
a regular inclusion with a conditional expectation to be of the form
\(A\subseteq A\rtimes_\red S\) for some inverse semigroup action by
Hilbert bimodules.  However, a real challenge and an important
problem that has been open for decades now, is how to deal with
non-Hausdorff groupoids.  Eventually, the main point of this article
has become how to attack this problem and get results about
detection of ideals without a (genuine) conditional expectation.

Difficulties for non-Hausdorff
groupoids were already noticed in~\cite{Renault:Ideal_structure},
which includes an unexpected example by Skandalis of a minimal
foliation with a non-simple \(\Cst\)\nb-algebra.  Reduced crossed
products for non-Hausdorff groupoids were studied further in
\cites{Khoshkam-Skandalis:Regular,
  Khoshkam-Skandalis:Crossed_inverse_semigroup}, but without
progress on their ideal structure.  Until recently, all general
results about the ideal structure of groupoid \(\Cst\)\nb-algebras
were limited to the Hausdorff case.  There are several constructions
of étale groupoids from other data for which Hausdorffness is
unclear and not a natural assumption to make.  Important classes of
such groupoids are foliation \(\Cst\)\nb-algebras -- restricted to
complete transversals to make them étale -- and the
\(\Cst\)\nb-algebras of self-similar graphs defined by Exel and
Pardo~\cite{Exel-Pardo:Self-similar}.  Very recently, there has been
progress in the non-Hausdorff case in two directions.  First, the
simplicity of~\(\Cst_\red(H)\) for minimal, topologically principal,
second countable groupoids~\(H\) has been studied
in~\cite{Clark-Exel-Pardo-Sims-Starling:Simplicity_non-Hausdorff}.
 Secondly, Exel and
Pitts~\cite{Exel-Pitts:Weak_Cartan} have defined ``essential''
(twisted) groupoid \(\Cst\)\nb-algebras for topologically principal
groupoids in which \(\Cont_0(X)\) detects ideals.  The construction
in~\cite{Exel-Pitts:Weak_Cartan} appears to
be rather \emph{ad hoc}, however.  It only works well for
topologically principal groupoids.

Here we take the idea of Exel and Pitts much further.  We define
essential crossed products for all inverse semigroup actions on
\(\Cst\)\nb-algebras by Hilbert bimodules.  Our definition is
conceptual and analogous to the definition of reduced crossed
products in~\cite{Buss-Exel-Meyer:Reduced}.  Secondly, we derive a
very powerful abstract criterion for a \(\Cst\)\nb-subalgebra~\(A\)
in a \(\Cst\)\nb-algebra~\(B\) to detect ideals.  Even more, our
theory may show that~\(A\) supports~\(B\) in the sense that any
non-zero positive element in~\(B\) dominates some non-zero positive
element in~\(A\) with respect to the Cuntz preorder.  This allows to
prove that~\(B\) is purely infinite and simple under suitable
assumptions.  Previous criteria for pure infiniteness of crossed
products were restricted to groupoid \(\Cst\)\nb-algebras of
Hausdorff groupoids (see also
\cites{Jolissaint-Robertson:Simple_purely_infinite,
  Rordam-Sierakowski:Purely_infinite,
  Pasnicu-Phillips:Spectrally_free,
  Kirchberg-Sierakowski:Strong_pure,
  Giordano-Sierakowski:Purely_infinite,
  Kwasniewski-Meyer:Aperiodicity,
	Kwasniewski:Crossed_products,
  Kwasniewski-Szymanski:Pure_infinite} for more work on pure
infiniteness of crossed products).

The reduced crossed product \(A\rtimes_\red S\) for an inverse semigroup
action on~\(A\) is defined in~\cite{Buss-Exel-Meyer:Reduced} using a
weak conditional expectation \(E\colon A\rtimes S \to A''\),
where~\(A''\) is the bidual of~\(A\).  We call the action
\emph{closed} if this expectation takes values in
\(A\subseteq A''\).  This happens for inverse semigroup actions that
are derived from actions of Hausdorff étale groupoids.  In general,
however, we must enlarge~\(A\) to accommodate a conditional
expectation.  Let \(\Locmult(A)\) be the local multiplier algebra
of~\(A\), that is, the inductive limit of the multiplier algebras of
the essential ideals in~\(A\)
(see~\cite{Ara-Mathieu:Local_multipliers}).  If
\(A=\Cont_0(X)\), then \(\Locmult(A)\) is the inductive limit of
\(\Contb(U)\) for dense open subsets \(U\subseteq X\).  So it is spanned
by functions that are ``densely defined'' on~\(X\).
Our main idea is to  replace \(E\colon A\rtimes S \to A''\) by a
generalised conditional expectation
\(EL\colon A\rtimes S \to \Locmult(A)\), which we briefly call an
\emph{\(\Locmult\)-expectation}.
Then we define the essential crossed
product as the quotient of \(A\rtimes S\) by the largest
two-sided ideal on which~\(EL\) vanishes.
The main troublemakers in~\(\Cst_\red(H)\) for a non-Hausdorff
groupoid~\(H\) are elements \(x\in\Cst_\red(H)\) for which~\(E(x)\)
is supported on a nowhere dense subset of~\(X\).  The
expectation~\(EL\) kills such elements.  Hence they get killed in
the essential crossed product.

The main achievement in this article is a conceptual understanding of
the techniques used to prove that crossed products of various kinds are
simple or purely infinite.  We take this occasion to honour Emmy
Noether, who pioneered the conceptual approach to mathematics despite
strong resistance, and whose Habilitation in Göttingen was finally
granted only in 1919.  The basic concepts that make this paper work
are aperiodicity and \(\Locmult\)-expectations.

The concept of aperiodicity goes back to Kishimoto's proof that
reduced crossed products for outer group actions on simple
\(\Cst\)\nb-algebras are again simple
(see~\cite{Kishimoto:Outer_crossed}).  The crucial condition in
Kishimoto's proof was rewritten
in~\cite{Kwasniewski-Meyer:Aperiodicity}
in terms of normed \(A\)\nb-bimodules in order
to treat actions by Hilbert bimodules instead of by automorphisms.
The right generality to study aperiodicity is a
\(\Cst\)\nb-inclusion \(A\subseteq B\).  We call the inclusion
\emph{aperiodic} if~\(B/A\) equipped with the quotient norm and the
induced \(A\)\nb-bimodule structure satisfies Kishimoto's condition.
Given an aperiodic inclusion, we prove that there is a maximal
ideal~\(\Null\) in~\(B\) which is aperiodic as an \(A\)\nb-bimodule
and that~\(B/\Null\) is the unique quotient of~\(B\)
for which the map \(A\to B/\Null\) is injective and detects ideals.

For inverse semigroup actions by Hilbert bimodules, the aperiodicity
of~\((A\rtimes S)/A\) is equivalent to the aperiodicity of certain
Hilbert bimodules.  Using the results
of~\cite{Kwasniewski-Meyer:Aperiodicity} we show  the following.
Let~\(A\) be separable or of Type~I.  The inclusion
\(A\subseteq A\rtimes S\) is aperiodic if and only if the dual
groupoid \(\dual{A}\rtimes S\) is topologically free (see Theorem \ref{the:aperiodic_top_non-trivial}).
Here~\(\dual{A}\) is the space of isomorphism classes of irreducible
representations of~\(A\), and the action of~\(S\) on~\(A\) by
Hilbert bimodules induces an action on~\(\dual{A}\) by the Rieffel
correspondence.  We carefully discuss the concept of a
``topologically free'' étale groupoid in Section~\ref{sec:etale}
because several slightly different definitions are used in the
literature, and the dual groupoid \(\dual{A}\rtimes S\) is rather
badly non-Hausdorff.

Aperiodicity becomes powerful when combined with a conditional
expectation.  The following theorem is a special case of
Theorem~\ref{the:aperiodic_consequences}.

\begin{theorem}
  \label{the:aperiodic_intro}
  Let \(A\subseteq B\) be an aperiodic \(\Cst\)\nb-inclusion with a
  conditional expectation \(E\colon B\to A\).  Let~\(\Null_E\) be the
  largest two-sided ideal contained in \(\ker E\).  Let
  \(A^+\defeq \setgiven{a\in A}{a\ge0}\) and let~\(\precsim\) denote
  the Cuntz preorder on~\(B^+\).  Then
  \begin{enumerate}
  \item \label{enu:aperiodic_intro1}%
    for every \(b\in B^+\) with \(b\notin\Null_E\), there is
    \(a\in A^+\setminus\{0\}\) with \(a \precsim b\);
  \item \label{enu:aperiodic_intro2}%
    \(A\) supports~\(B/\Null_E\);
  \item \label{enu:aperiodic_intro4}%
    \(A\) detects ideals in~\(B/\Null_E\), and~\(B/\Null_E\) is the
    only quotient of~\(B\) with this property;
  \item \label{enu:aperiodic_intro5}%
    \(B\) is simple if and only if \(\Null_E=0\) and
    \(\overline{BIB}=B\) for all \(0\neq I\in \Ideals(A)\);
  \item \label{enu:aperiodic_intro6}%
    if~\(B\) is simple, then~\(B\) is purely infinite if and only if
    every element in \(A^+\setminus\{0\}\) is infinite in~\(B\).
  \end{enumerate}
\end{theorem}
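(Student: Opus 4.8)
The plan is to extract from aperiodicity a single averaging lemma, use it to prove~(1), and then deduce (2)--(5) by formal arguments together with standard facts about simple \(\Cst\)\nb-algebras. The averaging lemma I would record is: for every \(b\in\ker E\), every \(a\in A^+\) and every \(\varepsilon>0\) there is a positive contraction \(d\in A\) with \(\norm{dad}>\norm a-\varepsilon\) and \(\norm{dbd}<\varepsilon\). Indeed, Kishimoto's condition for the normed \(A\)\nb-bimodule \(B/A\), applied to the class of~\(b\) and to~\(a\), yields such a~\(d\) with \(\operatorname{dist}(dbd,A)<\varepsilon/2\); choosing \(a'\in A\) with \(\norm{dbd-a'}<\varepsilon/2\) and applying the contractive map~\(E\), using \(E(dbd)=dE(b)d=0\), gives \(\norm{a'}<\varepsilon/2\), hence \(\norm{dbd}<\varepsilon\). (Iterating handles finitely many elements of \(\ker E\) at once; a single one suffices below.)

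For~(1), let \(b\in B^+\setminus\Null_E\). The ideal generated by~\(b\) is not contained in \(\ker E\); since \(E\) vanishes on all \(c^*bd\) once it vanishes on all \(c^*bc\) (polarisation, using \(b\ge0\)), there is \(c\in B\) with \(E(c^*bc)\neq0\). As \(c^*bc\precsim b\), we may replace~\(b\) by \(c^*bc\) and assume \(E(b)\neq0\). Write \(b=E(b)+(b-E(b))\) with \(b-E(b)\in\ker E\) and apply the averaging lemma with \(a=E(b)\) and a fixed \(\varepsilon<\norm{E(b)}/2\): we get a positive contraction \(d\in A\) with \(\norm{dE(b)d}>\norm{E(b)}-\varepsilon\) and \(\norm{dbd-dE(b)d}<\varepsilon\). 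By R\o rdam's lemma \((dE(b)d-\varepsilon)_+\precsim dbd\), and \(dbd\precsim b\) as \(d\) is a contraction; moreover \((dE(b)d-\varepsilon)_+\) is a nonzero element of \(A^+\) by the choice of~\(\varepsilon\). This proves~(1).

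Now \(A\cap\Null_E\subseteq A\cap\ker E=0\), so \(A\) embeds in \(B/\Null_E\). Any nonzero \(c\in(B/\Null_E)^+\) lifts to some \(b\in B^+\) with \(b\notin\Null_E\); by~(1) there is \(a\in A^+\setminus\{0\}\) with \(a\precsim b\), and \(a\notin\Null_E\) since \(E(a)=a\neq0\), so the image of~\(a\) is Cuntz-below~\(c\); this is~(2). For~(3): a nonzero ideal of \(B/\Null_E\) contains a nonzero positive element, hence by~(2) a nonzero element of~\(A\), which is detection of ideals; and since \(A\to B/\Null_E\) is injective, the uniqueness clause of the general aperiodicity theorem (the maximal aperiodic ideal~\(\Null\), with \(B/\Null\) the unique quotient in which~\(A\) embeds and detects ideals) forces \(\Null_E=\Null\), so \(B/\Null_E\) is the only such quotient. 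For~(4): if \(B\) is simple then \(\Null_E\neq B\) because \(E|_A=\Id_A\), hence \(\Null_E=0\), and for \(0\neq I\in\Ideals(A)\) the ideal \(\overline{BIB}\) contains~\(I\) by an approximate-unit argument, so \(\overline{BIB}=B\); conversely, if \(\Null_E=0\) then \(A\) detects ideals in~\(B\) by~(1), so a nonzero ideal \(J\idealin B\) meets~\(A\) in a nonzero ideal~\(I\), whence \(B=\overline{BIB}\subseteq J\).

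For~(5) the forward direction is immediate. Conversely assume \(B\) simple with every element of \(A^+\setminus\{0\}\) infinite in~\(B\), and fix nonzero positive \(b,b'\in B\). By~(1) pick \(a\in A^+\setminus\{0\}\) with \(a\precsim b'\); then~\(a\) is infinite. In a simple \(\Cst\)\nb-algebra a nonzero infinite positive element is properly infinite: from \(a\oplus a''\precsim a\) with \(a''\neq0\) one gets \(a\sim a\oplus(a'')^{\oplus n}\) for all~\(n\), and since \(a''\) is full, \(a\precsim(a'')^{\oplus n}\) for some~\(n\), so \(a\oplus a\precsim a\); hence \(a^{\oplus n}\precsim a\) for all~\(n\), and fullness of~\(a\) gives \(b\precsim a^{\oplus n}\precsim a\precsim b'\). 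As \(B\neq\C\), the Kirchberg--R\o rdam characterisation of simple purely infinite \(\Cst\)\nb-algebras shows~\(B\) is purely infinite. The main obstacle is the averaging lemma, i.e.\ converting the purely quotient-norm information in the aperiodicity hypothesis into genuine norm estimates in~\(B\) through~\(E\); everything afterwards is bookkeeping, except that~(5) relies on the external structure theory of simple purely infinite \(\Cst\)\nb-algebras, where the step from ``infinite'' to ``properly infinite'' needs the simplicity hypothesis and some care.
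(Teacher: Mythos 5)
Your proof is correct and follows essentially the same route as the paper: Theorem~\ref{the:aperiodic_intro} is proved there as the special case of Theorem~\ref{the:aperiodic_consequences} for supportive expectations, whose key step is exactly your averaging argument --- pass to $E(b)\neq 0$ via Proposition~\ref{pro:GNS_kernels}, pick a hereditary subalgebra adapted to $E(b)$ so that $\norm{hE(b)h}$ stays bounded below, use Kishimoto's condition for $B/A$ to make $hbh$ close to $A$, and invoke the Kirchberg--R{\o}rdam lemma to extract a non-zero element of $A^+$ Cuntz-below $b$; parts (2)--(5) then follow formally just as you describe. The only real difference is that, since $E$ is genuinely $A$-valued here, you can take $a=(dE(b)d-\varepsilon)_+$ directly, whereas the paper isolates the ``supportive'' property and extracts $a=(c_+-\varepsilon)_+$ from an approximant $c\in A$ of $hbh$, so that the identical argument also covers generalised expectations with values in $\Locmult(A)$.
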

\numberwithin{theorem}{section}

The first statement~\ref{enu:aperiodic_intro1} is the key step here.
It easily implies all the others.

In order to treat general inverse semigroup crossed products or
\(\Cst\)\nb-algebras of non-Hausdorff étale groupoids, it is
necessary to replace the conditional expectation in
Theorem~\ref{the:aperiodic_intro} by something weaker.  At first, we
tried a weak conditional expectation \(E\colon B\to A''\) as in the
definition of the reduced crossed product for inverse semigroup
actions in~\cite{Buss-Exel-Meyer:Reduced}.  An inspection of the
proof of Theorem~\ref{the:aperiodic_intro} led to the concept of a
supportive weak conditional expectation, which suffices to make the
proof of the theorem work.  Besides looking at weak conditional
expectations, we also looked at pseudo-expectations, which take
values in the injective hull of~\(A\).  Here we were motivated by
the theorem of Zarikian that a crossed product for a group action
has a unique pseudo-expectation if and only if the action is
aperiodic (see \cite{Zarikian:Unique_expectations}*{Theorem~3.5}).
The injective hull of a commutative \(\Cst\)\nb-algebra is equal to
its local multiplier algebra
(see~\cite{Frank:Injective_local_multiplier}).  And it turns out
that a generalised
conditional expectation with values in \(\Locmult(A)\) is always
supportive.  Thus Theorem~\ref{the:aperiodic_intro} still holds with
a generalised conditional expectation~\(E\) that takes values in
\(\Locmult(A)\).  And this then suggests our definition of the
essential crossed product.

As another test of our definition of the essential crossed product,
we carry over the main results of
Archbold--Spielberg~\cite{Archbold-Spielberg:Topologically_free} and
Kawamura--Tomiyama~\cite{Kawamura-Tomiyama:Properties_dynamical}.
Namely, if~\(S\) is an inverse semigroup acting on a
\(\Cst\)\nb-algebra~\(A\) by Hilbert bimodules and
\(\dual{A}\rtimes S\) is topologically free, then \(A\) detects
ideals in~\(A\rtimes_\ess S\)
(Theorem~\ref{the:top_free_uniqueness}).  And for an étale locally
compact groupoid~\(H\), \(\Cont_0(X)\) detects ideals
in~\(\Cst_\ess(H)\) if \emph{and only if}~\(H\) is topologically
free (Theorem~\ref{thm:detection_in_etale_groupoids}).  Another very
promising observation is that all derivations on~\(A\) become inner
in~\(\Locmult(A)\) (see~\cite{Ara-Mathieu:Local_multipliers}).  The
corresponding result for~\(A''\) instead of~\(\Locmult(A)\) plays a
key role in the work of Olesen--Pedersen.  The local multiplier
algebra always embeds into Hamana's injective hull
(see~\cite{Frank:Injective_local_multiplier}).  Hence every
\(\Locmult\)-expectation is a pseudo-expectation as well.  These
have been studied, for instance, in \cites{Pitts:Regular_I,
  Pitts-Zarikian:Unique_pseudoexpectation,
  Zarikian:Unique_expectations}.

It is, however, often necessary to work with the reduced crossed
product.  One reason is that the essential
crossed product is not functorial.  Thus we want to know when the
essential and the reduced crossed products are equal, meaning that
they are both quotients of~\(A\rtimes S\) by the same ideal.
We use that~\(E(A\rtimes S) \) embeds into the product
\(\prod_{\pi\in\dual{A}} \Bound(\Hils_\pi)\), consisting of
uniformly bounded families of operators on the Hilbert spaces on
which the irreducible representations of~\(A\) act.
The local multiplier algebra~\(\Locmult(A)\) embeds
into the quotient of \(\prod_{\pi\in\dual{A}} \Bound(\Hils_\pi)\)
that is defined by the \emph{essential} supremum of the pointwise
norms -- we disregard subsets that are meagre.
As a result, the following are equivalent (see
Corollary~\ref{cor:ess_vs_reduced}):
\begin{itemize}
\item \(A\rtimes_\red S = A\rtimes_\ess S\);
\item if \(x\in (A\rtimes S)^+\) and \(E(x)\neq0\), then the set of
  \(\pi\in\dual{A}\) with \(\norm{\pi''(E(x))}\neq0\) is not meagre;
\item if \(\varepsilon >0\) and \(x\in (A\rtimes S)^+\) satisfies
  \(E(x)\neq0\), then the set of \(\pi\in\dual{A}\) with
  \(\norm{\pi''(E(x))}> \varepsilon \) has non-empty interior.
\end{itemize}

We carry the theory above over to section \(\Cst\)\nb-algebras
\(\Cst(H,\A)\) for Fell bundles~\(\A\) over a locally compact, étale
groupoid~\(H\) with Hausdorff unit space~\(X\).  This includes
twisted groupoid \(\Cst\)\nb-algebras as a special case.  We define
an essential section \(\Cst\)\nb-algebra \(\Cst_\ess(H,\A)\) for all
Fell bundles~\(\A\).  For a Fell line bundle~\(\A\),  it coincides
with the essental twisted groupoid \(\Cst\)\nb-algebra defined by
Pitts and Exel~\cite{Exel-Pitts:Weak_Cartan} if and only if~\(H\) is
topologically principal.  We give many equivalent characterisations
for \(\Cst_\red(H,\A) = \Cst_\ess(H,\A)\).  One of them is related
to singular elements as defined
in~\cite{Clark-Exel-Pardo-Sims-Starling:Simplicity_non-Hausdorff}
for groupoid \(\Cst\)\nb-algebras.  We also discuss simplicity and
pure infiniteness criteria for \(\Cst_\ess(H,\A)\) that generalise
and improve various results of this sort.

Our more recent
article~\cite{Kwasniewski-Meyer:Aperiodicity_pseudo_expectations}
contains two important advances.  First, all topologically free
inverse semigroup actions are aperiodic.  Secondly,
pseudo-expectations are always supportive.  This generalises
Theorem~\ref{the:aperiodic_intro} to all aperiodic inclusions
\(A\subseteq B\).

Steinberg and Szak\'acs in~\cite{Steinberg-Szakacs:Simplicity} prove
a criterion when the Steinberg algebra of an \'etale groupoid with
totally disconnected object space is simple.  Our results imply an
analogous criterion for reduced groupoid \(\Cst\)\nb-algebras.

The paper is organised as follows.  Section~\ref{sec:etale}
introduces inverse semigroup actions on topological spaces and
\(\Cst\)\nb-algebras and the dual groupoid for an action on a
\(\Cst\)\nb-algebra, and it compares several concepts of topological
freeness for such actions and for non-Hausdorff étale groupoids.
Section~\ref{sec:prelim} discusses basic notation about generalised
conditional expectations and full and reduced crossed products for
inverse semigroup actions.  We show that the canonical weak
conditional expectation on the reduced crossed product is faithful.
Section~\ref{sec:essential_crossed} introduces the essential crossed
product and the \(\Locmult\)-expectation that
defines it.  We prove that this generalised expectation is faithful,
and we characterise when the reduced and essential crossed products
coincide.

Section~\ref{sec:hidden_ideal} contains our general results on
aperiodic \(\Cst\)\nb-inclusions \(A\subseteq B\).
We show that for any such inclusion there is a unique
quotient of~\(B\) in which~\(A\) detects ideals.  Then we say that
\(A\subseteq B\) has the generalised intersection property and call
the unique ideal~\(\Null\) such that~\(A\) detects ideals
in~\(B/\Null\) the hidden ideal.  We define supportive generalised
expectations and prove the generalisations of
Theorem~\ref{the:aperiodic_intro} discussed above.
In Section~\ref{sec:aperiodic_isg}, we specialise aperiodicity to
inverse semigroup actions.  We show that the inclusion
\(A\subseteq B\) into an exotic crossed product is aperiodic if and
only if the underlying action is aperiodic in a suitable sense, and
we reformulate aperiodicity in several equivalent ways.  We use
these equivalent characterisations of aperiodicity to rewrite our
main results with different assumptions.
In Section~\ref{sec:groupoids}, we describe section algebras of Fell
bundles over étale, locally compact groupoids through crossed
products for inverse semigroup actions.  Using  this we define
essential section algebras for Fell bundles over étale groupoids.
We
prove results about their ideal structure and when they are simple
and purely infinite.  And we compare the essential and reduced
section algebras.

\numberwithin{equation}{subsection}

\section{Preliminaries on inverse semigroup actions and étale groupoids}
\label{sec:etale}

First we define inverse semigroup actions on topological spaces and
compare them to étale groupoids.  We allow arbitrary topological
spaces, requiring neither Hausdorffness nor local compactness.  We
define actions of inverse semigroups on \(\Cst\)\nb-algebras by
Hilbert bimodules and relate them to regular inclusions and gradings
by inverse semigroups.  We define the dual groupoid
\(\dual{A}\rtimes S\) of such an action.  Then we discuss several
variants of the concept of topological freeness.

\subsection{Inverse semigroup actions on spaces and étale groupoids}
\label{sec:isg-action_vs_groupoid}

An \emph{inverse semigroup} is a semigroup~\(S\) with the property
that for each \(t\in S\) there is a unique element \(t^* \in S\)
such that \(t t^* t = t\) and \(t^* t t^* = t^*\).  Let
\[
  E(S)\defeq \setgiven{e\in S}{e^2=e}.
\]
If \(e,f\in E(S)\), then \(e = e^*\) and \(e f = f e\).  If
\(t\in S\), then \(t^* t, t t^* \in E(S)\).  We call \(E(S)\) the
\emph{idempotent semilattice} of~\(S\).  A partial order on~\(S\) is
defined by \(t \le u\) for \(t,u \in S\) if and only if
\(t = u t^* t\), if and only if there is \(e\in E(S)\) with
\(t = u e\).  By definition, \(E(S) = \setgiven{e\in S}{e \le 1}\).

Let~\(X\) be an arbitrary topological space.  A partial
homeomorphism of~\(X\) is a homeomorphism between two open subsets
of~\(X\).  Partial homeomorphisms with the composition of partial
maps form a unital inverse semigroup, which we denote by
\(\pHom(X)\).  Let~\(S\) be a unital inverse semigroup.  An
\emph{action of~\(S\) on~\(X\) by partial homeomorphisms} is a
unital semigroup homomorphism \(\haction\colon S\to\pHom(X)\).
So it consists of open subsets~\(\D_t\) of~\(X\) and homeomorphisms
\(\haction_t\colon \D_t\to \D_{t^*}\) for all \(t\in S\), such that
\(\haction_t \circ \haction_u = \haction_{t u}\) for all
\(t,u\in S\) and \(\haction_1= \Id_X\); then
\(\haction_{t^*} = \haction_t^* = \haction_t^{-1}\) for all
\(t\in S\). 
We denote  the domain of \(h_t\) by \(X_t\), rather than by \(X_{t^*}\) (which is a convention adopted in most of sources).
We do this to lighten the notation, as we will talk mostly about domains of \(h_t\)'s.

An inverse semigroup action~\(\haction\) on~\(X\) as above yields an
étale topological groupoid with object space~\(X\), namely, the
\emph{transformation groupoid} \(X\rtimes S\) (see \cite{Paterson:Groupoids}*{p. 140}
or 
\cite{Exel:Inverse_combinatorial}*{Section~4}). We avoid the name
``groupoid of germs'' used by Exel because some authors use that name
for another groupoid with a different germ relation.  The arrows of
\(X\rtimes S\) are equivalence classes of pairs~\((t,x)\) for
\(x\in \D_t\subseteq X\); two pairs \((t,x)\) and~\((t',x')\) are
equivalent if \(x=x'\) and there is \(v\in S\) with \(v\le t, t'\) and
\(x\in \D_v\).  The range and source maps
\(\rg,\s\colon X\rtimes S \rightrightarrows X\) and the multiplication
are defined by \(\rg([t,x])\defeq \haction_t(x)\),
\(\s([t,x])\defeq x\), and
\([t,\haction_u(x)] \cdot [u,x] = [t\cdot u,x]\).  We
give~\(X\rtimes S\) the unique topology for which \([t,x]\mapsto x\)
is a homeomorphism from an open subset of~\(X\rtimes S\) onto~\(\D_t\)
for each \(t\in S\).  Then the range and source maps are local
homeomorphisms \(X\rtimes S \rightrightarrows X\).  The multiplication
is continuous.  So~\(X\rtimes S\) is an étale topological groupoid.
The subsets \(U_t \defeq \setgiven{[t,x]}{x \in \D_t}\) are
bisections of \(X\rtimes S\), and they cover~\(X\rtimes S\).

Now let~\(H\) be an étale groupoid with object space~\(X\).  We are
going to write~\(H\) as a transformation groupoid.  A
\emph{bisection} of~\(H\) is an open subset \(U\subseteq H\) such
that \(\rg|_U\) and~\(\s|_U\) are injective.  If \(U,V\subseteq H\)
are bisections, then so are
\[
  U^{-1}\defeq \setgiven{\gamma^{-1}}{\gamma\in U},\qquad
  U\cdot V\defeq \setgiven{\gamma\cdot\eta}{\gamma\in U,\ \eta\in V}.
\]
The bisections of~\(H\) with these operations form a unital inverse
semigroup, which we denote by~\(\Bis(H)\).  The unit bisection is
the subset of all identity arrows in~\(H\).  The inverse
semigroup~\(\Bis(H)\) acts canonically on~\(X\) by
\(\haction_U\defeq \rg\circ (\s|_U)^{-1}\) for \(U\in \Bis(H)\).

\begin{definition}
  \label{def:wide_isg}
  An inverse subsemigroup \(S\subseteq \Bis(H)\) is called \emph{wide}
  if \(\bigcup S = H\) and \(U\cap V\) is a union of bisections
  in~\(S\) for all \(U,V\in S\).
\end{definition}

\begin{proposition}
  \label{prop:wide_semigroup_isomorphism}
  For any unital inverse subsemigroup \(S\subseteq \Bis(H)\), the map
  \[
    \Phi\colon X\rtimes S \to H,\qquad
    [U,x] \mapsto (\s|_U)^{-1}(x),
  \]
  is a well defined, continuous, open groupoid homomorphism.  It is an
  isomorphism if and only if \(S\subseteq \Bis(H)\) is wide.
\end{proposition}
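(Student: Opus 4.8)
The plan is to verify the three claimed properties of $\Phi$ (well-definedness/continuity/openness, the homomorphism property, and the characterisation of when it is an isomorphism) in that order, reducing each to the defining features of the transformation groupoid $X \rtimes S$ and of bisections.

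First I would check that $\Phi$ is well defined. Given $[U,x] = [V,x']$ in $X\rtimes S$, by definition $x = x'$ and there is $W \in S$ with $W \le U$, $W\le V$, and $x \in \D_W$. Here $W \le U$ in $\Bis(H)$ means $W = U W^*W$, so as subsets of $H$ one has $W \subseteq U$, and likewise $W\subseteq V$; since $\s|_U$, $\s|_V$, $\s|_W$ are injective and agree on the relevant fibres, $(\s|_U)^{-1}(x) = (\s|_W)^{-1}(x) = (\s|_V)^{-1}(x')$. So $\Phi$ is well defined. Continuity and openness are then immediate from the construction of the topology on $X\rtimes S$: for each $U \in S$ the map $[U,x]\mapsto x$ is a homeomorphism of an open subset of $X\rtimes S$ onto $\D_U$, and $\Phi$ restricted to that open set is $(\s|_U)^{-1}$, which is a homeomorphism of $\D_U \subseteq X$ onto the open subset $U \subseteq H$ (because $\s|_U$ is an injective local homeomorphism, as $H$ is étale and $U$ is a bisection). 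Since these open sets cover $X\rtimes S$ and their images cover $\bigcup S \subseteq H$, the gluing of homeomorphisms gives a continuous open map; it lands in $H$, so $\Phi$ is continuous and open onto its image $\bigcup S$.

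Next, the homomorphism property. On objects, $\Phi$ sends the unit bisection's germs to identity arrows, so it respects units; and $\rg(\Phi([U,x])) = \rg((\s|_U)^{-1}(x)) = \haction_U(x) = \rg_{X\rtimes S}([U,x])$, with the analogous identity for source. For multiplication, take composable germs $[U, \haction_V(x)]$ and $[V,x]$ in $X \rtimes S$; their product is $[UV, x]$, and I must show $(\s|_U)^{-1}(\haction_V(x)) \cdot (\s|_V)^{-1}(x) = (\s|_{UV})^{-1}(x)$ in $H$. Writing $\eta = (\s|_V)^{-1}(x) \in V$ and $\gamma = (\s|_U)^{-1}(\haction_V(x)) \in U$, one has $\s(\gamma) = \haction_V(x) = \rg(\eta)$, so $\gamma\eta$ is defined and lies in $U\cdot V = UV$, with $\s(\gamma\eta) = \s(\eta) = x$; since $\s|_{UV}$ is injective this forces $\gamma\eta = (\s|_{UV})^{-1}(x)$, as needed. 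Compatibility with inversion follows formally, or from the two previous points.

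Finally, the isomorphism characterisation. If $S$ is wide, then $\bigcup S = H$ shows $\Phi$ is surjective (every $\gamma \in H$ lies in some $U \in S$, hence equals $\Phi([U,\s(\gamma)])$), and since $\Phi$ is a continuous open bijection it is a homeomorphism, hence a groupoid isomorphism once injectivity is established. For injectivity, suppose $\Phi([U,x]) = \Phi([V,x'])$; then $x = \s((\s|_U)^{-1}(x)) = \s((\s|_V)^{-1}(x')) = x'$ and the common arrow $\gamma \defeq (\s|_U)^{-1}(x)$ lies in $U \cap V$. By the wideness hypothesis $U\cap V$ is a union of bisections in $S$, so there is $W \in S$ with $\gamma \in W \subseteq U\cap V$; as subsets of $H$ this gives $W \subseteq U$ and $W\subseteq V$, which translates back into $W \le U$ and $W \le V$ in $\Bis(H)$, and $x = \s(\gamma) \in \s(W) = \D_W$. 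Hence $[U,x] = [W,x] = [V,x']$ in $X\rtimes S$, so $\Phi$ is injective. Conversely, if $\Phi$ is an isomorphism then it is in particular surjective, so $\bigcup S = \Phi(X\rtimes S) = H$; and injectivity of $\Phi$ forces, for any $U,V \in S$ and $\gamma \in U \cap V$, that $[U,\s(\gamma)] = [V,\s(\gamma)]$, which by the germ relation yields $W \in S$ with $W \le U, V$ and $\s(\gamma) \in \D_W$, i.e.\ $\gamma \in W \subseteq U\cap V$ with $W\in S$; thus $U\cap V$ is covered by bisections in $S$, so $S$ is wide.

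The step I expect to be the main obstacle is the careful bookkeeping in the injectivity argument for the isomorphism criterion: one must translate fluently back and forth between the order relation $W \le U$ in the inverse semigroup $\Bis(H)$ and literal set inclusion $W \subseteq U$ of bisections in $H$, and between the germ equivalence in $X \rtimes S$ and the existence of a common lower bound in $S$ containing the relevant arrow. None of this is deep, but it is where a sloppy argument could silently use more than the wideness hypothesis provides (for instance, tacitly assuming $H$ is Hausdorff, which we are precisely avoiding).
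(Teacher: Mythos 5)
Your proof is correct and follows essentially the same route as the paper: well-definedness and the homomorphism property by direct computation with the germ relation (using that $W\le U$ in $\Bis(H)$ is literal inclusion $W\subseteq U$), continuity and openness by restricting to the open bisections $\setgiven{[U,x]}{x\in\D_U}$, and the observation that surjectivity is exactly $\bigcup S=H$ while injectivity is exactly the second half of wideness. The paper merely compresses these verifications by citing Exel's Propositions 5.3 and 5.4, so no further comparison is needed.
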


\begin{proof}
  This is mostly proven in
  \cite{Exel:Inverse_combinatorial}*{Propositions 5.3 and~5.4}.
  Direct computations show that~\(\Phi\) is a well defined groupoid
  homomorphism.  If \(U\in S\), then
  \(U'\defeq \setgiven{[U,x]}{x \in \s(U)}\) is a bisection of  \(X\rtimes S\) with
  \(\Phi(U')= U\).  Since both \(H\) and \(X\rtimes S\) are \'etale
  and~\(\Phi\) is the identity map on objects, it follows
  that~\(\Phi\) restricts to a homeomorphism from~\(U'\) onto~\(U\).
  The bisections~\(U'\) for \(U\in S\) cover~\(X\rtimes S\).
  Therefore, the map~\(\Phi\) is both continuous and open.

  Clearly, the map~\(\Phi\) is surjective if and only if
  \(\bigcup S = H\).  And~\(\Phi\) is not injective if and only if
  there are \(U,V\in S\) and \(\gamma \in U\cap V\) with
  \([U,\s(\gamma)]\neq [V,\s(\gamma)]\) in \(X\rtimes S\).  By the
  definition of \(X\rtimes S\), the latter holds if and only if
  \(\gamma \notin W\) for all \(W\in S\) with
  \(W\subseteq U\cap V\).  Hence there is such a
  \(\gamma \in U\cap V\) if and only if
  \(\bigcup_{W\in S, W\subseteq U\cap V} W \neq U\cap V\).
  So~\(\Phi\) is bijective if and only if~\(S\) is wide.
\end{proof}

Proposition~\ref{prop:wide_semigroup_isomorphism} allows to
translate properties of groupoids into the language of inverse
semigroup actions, and vice versa.

\begin{definition}
  \label{def:invariant_subset_A}
  A subset \(Y\subseteq X\) is \emph{\(\haction\)\nb-invariant} for
  an action \(\haction\colon S\to\pHom(X)\) if
  \(\haction_t(Y\cap \D_t)\subseteq Y\) for all \(t\in S\).
  If~\(Y\) is invariant, then there is a restricted action
  \(\haction|_Y\colon S\to\pHom(Y)\), which is defined by
  \((\haction|_Y)_t \defeq \haction_t|_Y\colon Y\cap \D_t\to Y\cap
  \D_{t^*}\) for all \(t\in S\).  A subset \(Y\subseteq X\) is
  \(H\)\nb-invariant for a groupoid~\(H\) with object space \(X\) if and only if
  \(\s^{-1}(Y) = \rg^{-1}(Y)\) as subsets of~\(H\).
\end{definition}

\begin{remark}
  \label{rem:invariance_transformation_groupoid}
  A subset \(Y\subseteq X\) is \(X\rtimes_\haction S\)\nb-invariant if
  and only if it is \(\haction\)\nb-invariant (see
  \cite{Exel-Pardo:Tight_groupoid}*{Proposition~5.4}).
\end{remark}

\begin{definition}
  \label{def:closed_action_A}
  Let~\(X\) be a topological space and
  \(\haction\colon S\to\pHom(X)\) an inverse semigroup action.  For
  \(t\in S\), define
  \[
    \D_{1,t}\defeq\bigcup_{e\le t, e\in E(S)} \D_e.
  \]
  The action is called \emph{closed} if~\(\D_{1,t}\) is relatively
  closed in~\(\D_t\) for all \(t\in S\).
\end{definition}

\begin{lemma}
  \label{lem:closed_groupoid_vs_action}
  The action~\(\haction\) is closed if and only if the space of
  units~\(X\) is closed in~\(X\rtimes S\).
\end{lemma}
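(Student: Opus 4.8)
The plan is to turn the global condition ``\(X\) is closed in \(X\rtimes S\)'' into a family of local conditions, one for each bisection \(U_t = \setgiven{[t,x]}{x\in\D_t}\), and then to recognise each local condition as exactly the requirement that \(\D_{1,t}\) be relatively closed in \(\D_t\).

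The first step is purely point-set: since the sets \(U_t\), \(t\in S\), form an open cover of \(X\rtimes S\), a subset \(C\) of \(X\rtimes S\) is closed if and only if \(C\cap U_t\) is relatively closed in \(U_t\) for every \(t\in S\); the nontrivial implication comes from writing the complement of~\(C\) as the union over \(t\) of the sets \(U_t\setminus(C\cap U_t)\), each of which is open in \(U_t\) and hence open in \(X\rtimes S\). Applying this with \(C=X\), it remains to analyse \(X\cap U_t\) for a fixed \(t\in S\). Using the homeomorphism \(U_t\congto\D_t\), \([t,x]\mapsto x\), from the construction of \(X\rtimes S\) recalled above, I would show that an arrow \([t,x]\) with \(x\in\D_t\) lies in~\(X\), i.e.\ is a unit, if and only if \(x\in\D_{1,t}\). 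Indeed, \([t,x]\) is a unit iff \([t,x]=[1,x]\), which by the definition of the equivalence relation on pairs means that there is \(v\in S\) with \(v\le t\), \(v\le 1\) and \(x\in\D_v\); since \(v\le 1\) is the same as \(v\in E(S)\), this says precisely that \(x\in\D_e\) for some idempotent \(e\le t\), that is, \(x\in\D_{1,t}\). Hence the homeomorphism \(U_t\congto\D_t\) carries \(X\cap U_t\) onto \(\D_{1,t}\subseteq\D_t\), so \(X\cap U_t\) is relatively closed in \(U_t\) exactly when \(\D_{1,t}\) is relatively closed in \(\D_t\).

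Putting the two steps together, \(X\) is closed in \(X\rtimes S\) if and only if \(\D_{1,t}\) is relatively closed in \(\D_t\) for every \(t\in S\), which is the definition of \(\haction\) being closed. I do not expect a genuine obstacle here: the only points that need care are the identification of \(X\cap U_t\) with \(\D_{1,t}\) via the definition of the germ equivalence relation, together with the (automatic) fact that a homeomorphism preserves relative closedness; everything else is bookkeeping.
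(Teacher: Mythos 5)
Your proof is correct and follows essentially the same route as the paper, whose one-line argument is precisely your key identification: \(\D_{1,t}\) equals the intersection of \(\D_t\) with the unit bisection under the homeomorphism \(U_t\congto\D_t\). You have merely spelled out the (standard) local-to-global step over the open cover by the bisections \(U_t\), which the paper leaves implicit.
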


\begin{proof}
  This follows because the subset~\(\D_{1,t}\) is equal to the
  intersection of~\(\D_t\) with the unit bisection in
  \(X\rtimes S\) and \((\D_t)_{t\in S}\) is an open cover of~\(X\).
\end{proof}

\begin{remark}
  \label{rem:Hausdorff_vs_closed}
  Closed inverse semigroup actions are important because a
  topological groupoid is Hausdorff if and only if the object space
  is Hausdorff and the units form a closed subset of the arrows (see
  \cite{Buss-Exel-Meyer:Reduced}*{Lemma~5.2}).
\end{remark}

\subsection{Inverse semigroup actions on
  \texorpdfstring{$\Cst$}{C*}-algebras and regular inclusions}
\label{sec:isg_crossed}

\begin{definition}[\cite{Buss-Meyer:Actions_groupoids}]
  \label{def:S_action_Cstar}
  An \emph{action} of a unital inverse semigroup~\(S\) on a
  \(\Cst\)\nb-algebra~\(A\) (by Hilbert bimodules) consists of
  Hilbert \(A\)\nb-bimodules~\(\Hilm_t\) for \(t\in S\) and Hilbert
  bimodule isomorphisms
  \(\mu_{t,u}\colon \Hilm_t\otimes_A \Hilm_u\congto \Hilm_{tu}\) for
  \(t,u\in S\), such that
  \begin{enumerate}[label=\textup{(A\arabic*)}]
  \item \label{enum:AHB3}%
    for all \(t,u,v\in S\), the following diagram commutes
    (associativity):
    \[
    \begin{tikzpicture}[baseline=(current bounding box.west)]
      \node (1) at (0,1) {\((\Hilm_t\otimes_A \Hilm_u) \otimes_A \Hilm_v\)};
      \node (1a) at (0,0) {\(\Hilm_t\otimes_A (\Hilm_u \otimes_A \Hilm_v)\)};
      \node (2) at (5,1) {\(\Hilm_{tu} \otimes_A \Hilm_v\)};
      \node (3) at (5,0) {\(\Hilm_t\otimes_A \Hilm_{uv}\)};
      \node (4) at (7,.5) {\(\Hilm_{tuv}\)};
      \draw[<->] (1) -- node[swap] {ass}    (1a);
      \draw[cdar] (1) -- node {\(\mu_{t,u}\otimes_A \Id_{\Hilm_v}\)} (2);
      \draw[cdar] (1a) -- node[swap] {\(\Id_{\Hilm_t}\otimes_A\mu_{u,v}\)} (3);
      \draw[cdar] (3.east) -- node[swap] {\(\mu_{t,uv}\)} (4);
      \draw[cdar] (2.east) -- node {\(\mu_{tu,v}\)} (4);
    \end{tikzpicture}
    \]
  \item \label{enum:AHB1}%
    \(\Hilm_1\) is the identity Hilbert \(A,A\)-bimodule~\(A\);
  \item \label{enum:AHB2}%
    \(\mu_{t,1}\colon \Hilm_t\otimes_A A\congto \Hilm_t\) and
    \(\mu_{1,t}\colon A\otimes_A \Hilm_t\congto \Hilm_t\) for
    \(t\in S\) are the maps defined by
    \(\mu_{1,t}(a\otimes\xi)=a\cdot\xi\) and
    \(\mu_{t,1}(\xi\otimes a) = \xi\cdot a\) for \(a\in A\),
    \(\xi\in\Hilm_t\).
  \end{enumerate}
\end{definition}

We shall not use actions by partial automorphisms in this article.
\emph{So all actions of inverse semigroups on \(\Cst\)\nb-algebras
  are understood to be by Hilbert bimodules.}  Any \(S\)\nb-action
by Hilbert bimodules comes with canonical involutions
\(\Hilm_t^*\to \Hilm_{t^*}\), \(x\mapsto x^*\), and inclusion maps
\(j_{u,t}\colon \Hilm_t\to\Hilm_u\) for \(t \le u\) that satisfy the
conditions required for a saturated Fell bundle
in~\cite{Exel:noncomm.cartan} (see
\cite{Buss-Meyer:Actions_groupoids}*{Theorem~4.8}).  Thus
\(S\)\nb-actions by Hilbert bimodules are equivalent to saturated
Fell bundles over~\(S\).

\begin{definition}[\cite{Kwasniewski-Meyer:Stone_duality}*{Definition~6.15}]
  \label{def:isg_grading}
  An \emph{\(S\)\nb-graded \(\Cst\)\nb-algebra} is a
  \(\Cst\)\nb-algebra~\(B\) with closed subspaces \(B_t\subseteq B\)
  for \(t\in S\) such that \(\sum_{t\in S} B_t\) is dense in~\(B\),
  \(B_t B_u \subseteq B_{t u}\) and \(B_t^* = B_{t^*}\) for all
  \(t,u\in S\), and \(B_t \subseteq B_u\) if \(t\le u\) in~\(S\).
  The grading is \emph{saturated} if \(B_t\cdot B_u = B_{t u}\) for
  all \(t,u\in S\).  We call \(A\defeq B_1\subseteq B\) the
  \emph{unit fibre} of the grading.
\end{definition}

\begin{remark} 
  For group gradings, it is customary to require the fibres to be
  linearly independent.  We have no use for such a condition.  We
  will, however, restrict to ``topological gradings'' for several
  important results.  In the group case, our notion of a topological
  grading specialises to the usual one, and that implies immediately
  that the fibres are linearly independent.
\end{remark}

A (saturated) \(S\)\nb-grading~\((B_t)_{t\in S}\) on~\(B\) defines a
(saturated) Fell bundle over~\(S\) using the operations in~\(B\).
Conversely, the crossed product construction allows to realise any
(saturated) Fell bundle through a grading on a suitable
\(\Cst\)\nb-algebra.

One important source of inverse semigroup actions are Fell bundles
over étale groupoids (see Section~\ref{sec:groupoid_Fell}).  Another
important source are regular inclusions:

\begin{definition}
  \label{def:normaliser}
  Let \(A\subseteq B\) be a \(\Cst\)\nb-subalgebra.  We call the
  elements of
  \[
  N(A,B)\defeq \setgiven{b\in B}{b A b^*\subseteq A,\ b^* A b\subseteq A}
  \]
  \emph{normalisers} of~\(A\) in~\(B\)
  (see~\cite{Kumjian:Diagonals}).  We call the inclusion
  \(A\subseteq B\) \emph{regular} if it is non-degenerate
  and~\(N(A,B)\) generates~\(B\) as a \(\Cst\)\nb-algebra
  (see~\cite{Renault:Cartan.Subalgebras}).
\end{definition}

\begin{proposition}
  \label{prop:regular_vs_inverse_semigroups}
  The following are equivalent for a \(\Cst\)\nb-inclusion
  \(A\subseteq B\):
  \begin{enumerate}
  \item \label{enu:non_commutative_cartans1}%
    \(A\) is a regular subalgebra of~\(B\);
  \item \label{enu:non_commutative_cartans2}%
    \(A\) is the unit fibre for some inverse semigroup grading
    on~\(B\);
  \item \label{enu:non_commutative_cartans3}%
    \(A\) is the unit fibre for some saturated inverse semigroup
    grading on~\(B\).
  \end{enumerate}
  If these equivalent conditions hold, then
  \[
  \Slice(A,B)\defeq \setgiven{M \subseteq N(A,B)}
  {M \text{ is a closed linear subspace, } AM\subseteq M,\ MA\subseteq M}
  \]
  with the operations
  \(M\cdot N\defeq \operatorname{\overline{span}} {}\setgiven{m n}{m
    \in M,\ n\in N}\) and \(M^*\defeq \setgiven{m^*}{m \in M}\) is an
  inverse semigroup.  And the subspaces \(M\in \Slice(A,B)\) form a
  saturated \(\Slice(A,B)\)-grading on~\(B\).
\end{proposition}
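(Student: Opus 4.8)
The plan is to prove the equivalences and the final structural claims in turn, exploiting the correspondence between normalisers and the combinatorial data of a grading. First I would show \ref{enu:non_commutative_cartans3}\(\Rightarrow\)\ref{enu:non_commutative_cartans2}, which is trivial, and \ref{enu:non_commutative_cartans2}\(\Rightarrow\)\ref{enu:non_commutative_cartans1}: if \((B_t)_{t\in S}\) is an \(S\)\nb-grading with \(B_1=A\), then for each \(t\in S\) one checks directly from \(B_t B_{t^*}\subseteq B_{t t^*}\subseteq B_1=A\), \(B_{t^*}B_t\subseteq A\), and \(B_t^*=B_{t^*}\) that every element of \(B_t\) lies in \(N(A,B)\); since \(\sum_t B_t\) is dense in~\(B\), the normalisers generate~\(B\). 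Non-degeneracy must be addressed — here I would either build it into the definition of grading (a grading with \(B_1\cdot B_t\) dense in~\(B_t\)) or argue that a grading automatically yields a non-degenerate inclusion because \(A=B_1\) acts non-degenerately on each slice. The real content is \ref{enu:non_commutative_cartans1}\(\Rightarrow\)\ref{enu:non_commutative_cartans3}, together with the assertion that \(\Slice(A,B)\) is an inverse semigroup grading.

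For that direction I would take \(\Slice(A,B)\) as defined and verify, step by step, the inverse-semigroup axioms. The key observations are: (i) for \(M,N\in\Slice(A,B)\), the product \(M\cdot N=\overline{\operatorname{span}}\{mn\}\) again lies in \(N(A,B)\) because \(mn\,A\,(mn)^*=m(n A n^*)m^*\subseteq mAm^*\subseteq A\), and is a closed \(A\)\nb-bimodule, so \(M\cdot N\in\Slice(A,B)\); associativity of this product follows from associativity in~\(B\) once one checks the closed spans agree. (ii) \(M^*\in\Slice(A,B)\) is clear from the symmetric definition of \(N(A,B)\). (iii) The crucial lemma is that for any single normaliser \(n\), the closed \(A\)\nb-bimodule \(M_n\defeq\overline{AnA}\) satisfies \(M_n M_n^* M_n=M_n\); this is essentially the statement that \(n^*n\in\overline{M_n^*M_n}\) acts as a kind of ``support projection'' in the multiplier algebra, and it is where one uses an approximate unit together with the \(\Cst\)\nb-identity on the hereditary subalgebra \(\overline{n^*An}\subseteq A\). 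From this one deduces \(M M^* M=M\) for every \(M\in\Slice(A,B)\) by approximating elements of~\(M\) by finite sums of elements \(a n b\), and uniqueness of the generalised inverse follows from the standard argument that in any \(*\)-semigroup an element satisfying both pentagon-type identities is unique.

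Next I would identify the idempotents: \(E(\Slice(A,B))\) consists exactly of the closed ideals of~\(A\) that lie in the ``diagonal'' — concretely, \(M\in\Slice(A,B)\) is idempotent iff \(M=M^*\) and \(M=\overline{M^2}\), which forces \(M\idealin A\); this makes \(E(\Slice(A,B))\) the semilattice of closed ideals contained in~\(A\), with meet given by intersection (= product, since products of ideals in a \(\Cst\)\nb-algebra give the intersection). Then saturation of the grading, \(M\cdot N=M N\) for the product slice, holds by definition of the product in \(\Slice(A,B)\); that the slices span a dense subspace of~\(B\) is exactly regularity, since \(N(A,B)\subseteq\bigcup_{M\in\Slice(A,B)}M\) (each \(n\in N(A,B)\) lies in \(\overline{AnA}\) by non-degeneracy) and \(N(A,B)\) generates~\(B\); the compatibility with the order (\(M\subseteq N\) when \(M\le N\)) is immediate because \(M\le N\) means \(M=N\cdot M^*M\) and \(M^*M\subseteq A\). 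Finally one records \(B_1=\Slice(A,B)_1=A\), i.e.\ the unit slice is the largest slice contained in~\(A\), which is~\(A\) itself.

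The main obstacle I anticipate is step (iii): establishing \(M M^* M=M\) for slices, equivalently that \(\Slice(A,B)\) is regular as a semigroup. The subtlety is that a normaliser~\(n\) need not have a polar decomposition inside~\(B\), so one cannot literally write \(n=v|n|\) with \(v\) a partial isometry; instead one must work in the bidual or in a multiplier algebra and show that the relevant net \(n(n^*n+\varepsilon)^{-1}n^*n\to n\) in norm, using that \(n^*n\) generates the hereditary subalgebra controlling the ``left support'' of~\(n\). This requires a genuine \(\Cst\)\nb-argument (functional calculus plus an approximate-identity estimate), and all the other axioms are comparatively formal once this is in hand. I expect the cited references \cite{Kumjian:Diagonals} and \cite{Renault:Cartan.Subalgebras}, and especially \cite{Exel:noncomm.cartan}, to supply exactly this normaliser lemma, so in the write-up I would isolate it as a preliminary lemma and then assemble the rest quickly.
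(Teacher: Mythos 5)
The paper offers no argument of its own here --- it simply cites Exel's noncommutative Cartan paper and Lemma~6.25/Proposition~6.26 of the Stone-duality paper --- and your sketch is a correct reconstruction of exactly the argument those sources supply: the easy implications via \(B_tB_{t^*}\subseteq B_{tt^*}\subseteq B_1=A\), the normaliser lemma \(n=\lim_{\varepsilon\to0}n(n^*n+\varepsilon)^{-1}n^*n\) yielding \(M M^* M=M\), and the identification of the idempotent slices with the closed ideals of~\(A\), which gives commuting idempotents, the semilattice structure, and the saturated grading. Two justifications should be repaired in the write-up. First, your stated reason for uniqueness of the generalised inverse (``in any \Star{}semigroup an element satisfying both pentagon-type identities is unique'') is false as a general statement: in the multiplicative \Star{}semigroup of \(\Mat_2(\C)\), the projection \(e_{11}\) admits \(e_{11}+e_{12}\) as a second generalised inverse. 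What you actually need --- and what your idempotent analysis already delivers --- is the standard fact that a regular semigroup is inverse if and only if its idempotents commute. Second, the ``iff'' in ``\(M\) is idempotent iff \(M=M^*\) and \(M=\overline{M^2}\)'' conceals the only nontrivial step: from \(M\cdot M=M\) alone one must deduce \(M\subseteq A\). This follows from \(\overline{M^*M}=\overline{(M^*M)M}\subseteq \overline{AM}\subseteq M\) and then \(M^*=\overline{M^*MM^*}\subseteq\overline{MM^*}\subseteq A\); note that the inclusion \(\overline{MM^*}\subseteq A\) is itself not automatic from \(M\subseteq N(A,B)\) and requires the polarisation-plus-approximate-unit argument (using non-degeneracy), which you should record explicitly since it also underlies \(MM^*M\subseteq M\) and the claim that the unit fibre of the \(\Slice(A,B)\)-grading is \(A\). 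With those two points tightened, the proposal is complete and agrees with the cited proofs.
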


\begin{proof}
  This goes back to~\cite{Exel:noncomm.cartan}.  See also
  \cite{Kwasniewski-Meyer:Stone_duality}*{Lemma~6.25 and
    Proposition~6.26}.
\end{proof}

\begin{lemma}
  \label{lem:commutative_regular}
  Any non-degenerate \(\Cst\)\nb-inclusion \(A\subseteq B\) with
  commutative~\(B\) is regular.
\end{lemma}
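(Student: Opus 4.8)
The plan is to use Gelfand duality to identify~$B$ with $\Cont_0(Y)$ for a locally compact Hausdorff space~$Y$; then~$A$ becomes a \(\Cst\)\nb-subalgebra of $\Cont_0(Y)$, and non-degeneracy means precisely that the functions in~$A$ have no common zero in~$Y$. The first observation is that, since~$B$ is commutative, $b a b^* = b^* a b = \abs{b}^2 a$ for all $a\in A$ and $b\in B$, where $\abs{b}^2\defeq b^*b$; hence $N(A,B)=\setgiven{b\in B}{\abs{b}^2 A\subseteq A}$, and in particular $A\subseteq N(A,B)$. By the Stone--Weierstrass theorem in its \(\Cont_0\)\nb-form, a \(\Cst\)\nb-subalgebra of $\Cont_0(Y)$ equals $\Cont_0(Y)$ as soon as it separates points and vanishes nowhere. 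Since $A\subseteq N(A,B)$ already vanishes nowhere, the whole statement reduces to showing that $N(A,B)$ separates the points of~$Y$.

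To separate $y_1\neq y_2$ in~$Y$, I would distinguish two cases. If $a(y_1)\neq a(y_2)$ for some $a\in A$, then $a$ itself works, as $a\in N(A,B)$. The remaining, and only non-trivial, case is that every element of~$A$ takes the same value at $y_1$ and~$y_2$; here~$A$ is useless, and a general continuous function separating $y_1$ and~$y_2$ need not be a normaliser. The key point is that the normaliser condition $\abs{b}^2 A\subseteq A$ constrains only the \emph{modulus} of~$b$, leaving its phase at our disposal. So I would choose $a\in A$ with $a(y_1)\neq0$ (possible since~$A$ has no common zero), set $a_0\defeq a^*a\in A$, so that $a_0(y_1)=a_0(y_2)=\abs{a(y_1)}^2>0$, pick by Urysohn's lemma a function $\psi\in\Contc(Y)$ with $0\le\psi\le 1/2$, $\psi(y_1)=0$ and $\psi(y_2)=1/2$, and put $b\defeq a_0\cdot\exp(2\pi\mathrm{i}\psi)\in B$. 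Then $\abs{b}^2=a_0^2\in A$, so $b\in N(A,B)$, while $b(y_1)=a_0(y_1)>0$ and $b(y_2)=-a_0(y_2)<0$, so $b$ separates $y_1$ from~$y_2$.

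With point separation established, Stone--Weierstrass yields that the \(\Cst\)\nb-subalgebra of~$B$ generated by $N(A,B)$ is all of~$B$; together with the non-degeneracy assumed in the statement, this is exactly the assertion that $A\subseteq B$ is regular. The one step that calls for a genuine idea is the twisting construction in the degenerate case; the rest is Gelfand duality and a routine Stone--Weierstrass argument.
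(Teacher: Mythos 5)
Your proof is correct, but it takes a different route from the paper's. The paper's argument is a two-line trick: for any unitary $u$ in the unitisation $B^+$ of the commutative algebra $B$ and any $a\in A$, one has $(au)c(au)^* = aca^*uu^* = aca^*$, so $A\cdot u\subseteq N(A,B)$; since a unital \(\Cst\)\nb-algebra is spanned by its unitaries and $\overline{AB^+}=\overline{AB}=B$ by non-degeneracy, $B$ is even the \emph{closed linear span} of normalisers. Your argument instead passes through Gelfand duality and Stone--Weierstrass: you reduce to showing that $N(A,B)$ separates points of the spectrum $Y$, and in the only non-trivial case you build a separating normaliser $b=a_0\exp(2\pi\mathrm i\psi)$ whose modulus $\abs{b}^2=a_0^2$ lies in $A$. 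All the steps check out: non-degeneracy is indeed equivalent to $A$ having no common zero, $N(A,B)=\setgiven{b\in B}{\abs{b}^2A\subseteq A}$ in the commutative case, $b\in\Cont_0(Y)$ because $a_0$ vanishes at infinity even though $\exp(2\pi\mathrm i\psi)$ does not, and Stone--Weierstrass applies to the \(\Cst\)\nb-subalgebra generated by the self-adjoint, point-separating, nowhere-vanishing set $N(A,B)$. The two approaches share the same underlying insight -- in a commutative algebra the normaliser condition only constrains the modulus, so one may twist by an arbitrary phase -- but the paper exploits it globally (multiply $A$ by all unitaries of $B^+$) while you exploit it locally (construct one separating element per pair of points). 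The paper's version is shorter, avoids the two-case analysis, and yields the slightly stronger conclusion that normalisers span $B$ linearly rather than merely generating it as a \(\Cst\)\nb-algebra; yours is more constructive and makes visible exactly which normalisers do the separating.
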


\begin{proof}
  Let~\(B^+\) be the minimal unitisation of~\(B\).  If \(u\in B^+\)
  is unitary, then \(A\cdot u\subseteq N(A,B)\) because~\(B\) is
  commutative.  The unital \(\Cst\)\nb-algebra~\(B^+\) is spanned by
  the unitaries it contains.  Hence~\(B\) is the linear span of
  \(A\cdot u\) for unitaries \(u\in B^+\).
\end{proof}

\subsection{Dual groupoids}
\label{sec:dual_groupoids}

Let~\(A\) be a \(\Cst\)\nb-algebra with an action~\(\Hilm\) of a
unital inverse semigroup~\(S\).  Let \(\dual{A}\) and
\(\widecheck{A}=\Prim(A)\) be the space of irreducible
representations and the primitive ideal space of~\(A\),
respectively.  Open subsets in~\(\dual{A}\) and in~\(\widecheck{A}\)
are in natural bijection with ideals in~\(A\).  The action of~\(S\)
on~\(A\) induces an action \((\widecheck{\Hilm}_t)_{t\in S}\)
on~\(\widecheck{A}\) by partial homeomorphisms by
\cite{Buss-Meyer:Actions_groupoids}*{Lemma~6.12}.  We explain how
this action lifts to an action~\(\bigl(\dual{\Hilm_t}\bigr)_{t\in S}\)
of~\(S\) on~\(\dual{A}\).

Let \(t\in S\).  Then \(\dual{\s(\Hilm_t)}\) is an open subset
of~\(\dual{A}\), consisting of all \(\pi\in\dual{A}\) that are
non-degenerate on \(\s(\Hilm_t) = \braket{\Hilm_t}{\Hilm_t}\).  Let
\(\pi\in\dual{A}\).  The tensor product
\(\Hilm_t \otimes_A \Hils_\pi\) is non-zero if and only if~\(\pi\)
belongs to \(\dual{\s(\Hilm_t)}\).  And then the left multiplication
action of~\(A\) on \(\Hilm_t \otimes_A \Hils_\pi\) is another
irreducible representation of~\(A\), which we call
\(\dual{\Hilm_t}(\pi)\in\dual{A}\).  This defines a homeomorphism
\(\dual{\Hilm_t}\colon \dual{\s(\Hilm_t)} \congto
\dual{\rg(\Hilm_t)}\) with inverse~\(\dual{\Hilm_{t^*}}\).  The
family \(\dual{\Hilm}\defeq (\dual{\Hilm}_t)_{t\in S}\) forms an
action of~\(S\) on~\(\dual{A}\) by partial homeomorphisms.

\begin{definition}
  \label{def:dual_groupoid}
  We call \(\dual{\Hilm}\) the \emph{dual action} to the
  action~\(\Hilm\).  The transformation
  groupoid~\(\dual{A}\rtimes S\) is called the \emph{dual groupoid}
  of~\(\Hilm\).
\end{definition}

Let \(\Ideals(A)\) for a \(\Cst\)\nb-algebra~\(A\) denote the
lattice of (closed, two-sided) ideals in~\(A\).

\begin{definition}
  \label{def:invariant_ideal}
  We call \(I\in\Ideals(A)\) \emph{\(\Hilm[F]\)\nb-invariant} for a
  Hilbert \(A\)\nb-bimodule~\(\Hilm[F]\) if
  \(I\cdot \Hilm[F] = \Hilm[F]\cdot I\).  We call \(I\in\Ideals(A)\)
  \emph{\(\Hilm\)\nb-invariant} for an action~\(\Hilm\) of an
  inverse semigroup~\(S\) if~\(I\) is \(\Hilm_t\)\nb-invariant for
  all \(t\in S\).  Let \(\Ideals^{\Hilm}(A)\) denote the set of all
  \(\Hilm\)\nb-invariant ideals in~\(A\).  We call~\(\Hilm\)
  \emph{minimal} if \(\Ideals^{\Hilm}(A) = \{0,A\}\), that is, the
  only \(\Hilm\)\nb-invariant ideals in~\(A\) are \(0\) and~\(A\).
\end{definition}

\begin{remark}
  Let~\(B\) be an \(S\)\nb-graded \(\Cst\)\nb-algebra with grading
  \(\Hilm = (\Hilm_t)_{t\in S}\).  Then
  \(\Ideals^{\Hilm}(A)=\setgiven{J\cap A}{J\in \Ideals(B)}\) (see
  \cite{Kwasniewski-Meyer:Stone_duality}*{Proposition~6.19}).
\end{remark}

\begin{lemma}
  \label{lem:invariance_vs_duals}
  An ideal~\(I\) in~\(A\) is \(\Hilm\)\nb-invariant if and only if
  the corresponding open subset \(\dual{I} \subseteq \dual{A}\) is
  invariant for the dual groupoid \(\dual{A}\rtimes S\).
\end{lemma}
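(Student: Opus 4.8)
The plan is to reduce the statement to the definition of invariance for the transformation groupoid $\dual{A}\rtimes S$ (Remark~\ref{rem:invariance_transformation_groupoid}), which identifies $(\dual{A}\rtimes S)$\nb-invariance of a subset $Y\subseteq\dual{A}$ with $\dual{\Hilm}$\nb-invariance, i.e.\ with $\dual{\Hilm}_t(Y\cap\s(\dual{\Hilm}_t))\subseteq Y$ for all $t\in S$. So it suffices to show that an ideal $I\in\Ideals(A)$ is $\Hilm$\nb-invariant if and only if the open set $\dual{I}$ satisfies these inclusions for all $t$. Since both conditions are conjunctions over $t\in S$, I would fix $t\in S$ and prove the ``fibrewise'' equivalence: $I$ is $\Hilm_t$\nb-invariant $\iff$ $\dual{\Hilm}_t(\dual{I}\cap\s(\dual{\Hilm}_t))\subseteq\dual{I}$.

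The key step is to translate $\Hilm_t$\nb-invariance, $I\cdot\Hilm_t=\Hilm_t\cdot I$, into a statement about the Rieffel correspondence on $\dual{A}$. First I would record the standard consequences of the Rieffel correspondence for the Hilbert bimodule $\Hilm_t$ (which is an imprimitivity bimodule between its ideals $\s(\Hilm_t)=\braket{\Hilm_t}{\Hilm_t}$ and $\rg(\Hilm_t)$): the formula $I\cdot\Hilm_t=\Hilm_t\cdot I$ forces $I\subseteq\s(\Hilm_t)$ and says that $I$ corresponds under the Rieffel bijection to the ideal $\Hilm_t\cdot I\cdot\Hilm_t^*\subseteq\rg(\Hilm_t)$ on the other side; equivalently $\Hilm_t\otimes_A(A/I)$ is naturally a Hilbert $(A/I)$\nb-bimodule, so tensoring with $\Hilm_t$ carries representations killing $I$ to representations killing $I$. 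On the level of $\dual{A}$, recall from the construction of $\dual{\Hilm}_t$ that $\dual{\Hilm}_t(\pi)$ is the left $A$\nb-representation on $\Hilm_t\otimes_A\Hils_\pi$, defined exactly on $\pi\in\dual{\s(\Hilm_t)}=\s(\dual{\Hilm}_t)$. A representation $\pi$ lies in the open set $\dual{I}$ iff $\pi$ is non-degenerate on $I$, iff $I$ acts nontrivially (equivalently: $\pi|_I\neq0$). Hence I must show: for $\pi\in\s(\dual{\Hilm}_t)$, $I$ acts nondegenerately on $\Hils_\pi$ $\iff$ $I$ acts nondegenerately on $\Hilm_t\otimes_A\Hils_\pi$, under the hypothesis that $I$ is $\Hilm_t$\nb-invariant; and conversely, if this equivalence of ``$\dual{I}$ is $\dual{\Hilm}_t$\nb-invariant'' holds for all $\pi$, then $I\cdot\Hilm_t=\Hilm_t\cdot I$.

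For the forward direction, assuming $I\cdot\Hilm_t=\Hilm_t\cdot I$, the action of $I$ on $\Hilm_t\otimes_A\Hils_\pi$ factors as $I\cdot(\Hilm_t\otimes_A\Hils_\pi)=(I\cdot\Hilm_t)\otimes_A\Hils_\pi=(\Hilm_t\cdot I)\otimes_A\Hils_\pi=\Hilm_t\otimes_A(I\cdot\Hils_\pi)$; so $I$ acts nondegenerately on $\Hilm_t\otimes_A\Hils_\pi$ iff $\Hilm_t\otimes_A(I\Hils_\pi)=\Hilm_t\otimes_A\Hils_\pi$, and since $\Hilm_t\otimes_A(\blank)$ is exact on the one-sided quotient (or: $\Hilm_t\otimes_A(\Hils_\pi/\overline{I\Hils_\pi})=0$ iff $\Hils_\pi/\overline{I\Hils_\pi}$ is degenerate for $\s(\Hilm_t)\supseteq I$\nb---here I use $I\subseteq\s(\Hilm_t)$ and irreducibility of $\pi$, so $\Hils_\pi/\overline{I\Hils_\pi}$ is either $0$ or all of $\Hils_\pi$ with $\s(\Hilm_t)$ acting as zero, the latter impossible as $\pi\in\s(\dual{\Hilm}_t)$), this happens iff $I\Hils_\pi=\Hils_\pi$, i.e.\ $\pi\in\dual{I}$. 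Running this with $\dual{\Hilm}_{t^*}$ in place of $\dual{\Hilm}_t$ gives the reverse inclusion, so $\dual{I}$ is invariant. For the converse, if $\dual{I}$ is $\dual{\Hilm}_t$\nb-invariant for all $t$, then in particular the open set $\dual{I}$ is $(\dual{A}\rtimes S)$\nb-invariant, and one can pass back: the ideal $\Hilm_t\cdot I\cdot\Hilm_t^*$ corresponds to the open set $\dual{\Hilm}_t(\dual{I}\cap\s(\dual{\Hilm}_t))$, which is contained in $\dual{I}$; this forces $\Hilm_t\cdot I\cdot\Hilm_t^*\subseteq I$, and symmetrically $\Hilm_t^*\cdot I\cdot\Hilm_t\subseteq I$, from which $I\cdot\Hilm_t=\Hilm_t\cdot I$ follows by a short computation using saturation of the imprimitivity bimodule structure of $\Hilm_t$ over $\s(\Hilm_t)$ and $\rg(\Hilm_t)$.

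The main obstacle I anticipate is the bookkeeping around domains: $\Hilm_t$ is only an imprimitivity bimodule between the ideals $\s(\Hilm_t)$ and $\rg(\Hilm_t)$, not between $A$ and $A$, so one must everywhere intersect with these ideals and be careful that $I\cdot\Hilm_t=\Hilm_t\cdot I$ already encodes $I\subseteq\s(\Hilm_t)$ (and its image $\subseteq\rg(\Hilm_t)$). Once that is handled cleanly, the argument is essentially the Rieffel correspondence applied representation by representation, together with Remark~\ref{rem:invariance_transformation_groupoid} to rephrase ``invariant for $\dual{A}\rtimes S$'' as invariance under all the $\dual{\Hilm}_t$. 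I would also remark that this lemma is the $\dual{A}$\nb-analogue of \cite{Buss-Meyer:Actions_groupoids}*{Lemma~6.12} for $\widecheck{A}=\Prim(A)$, and could alternatively be deduced from that statement via the continuous open surjection $\dual{A}\onto\widecheck{A}$, since $\Hilm$\nb-invariant ideals, invariant open subsets of $\Prim(A)$, and invariant open subsets of $\dual{A}$ are all in natural bijection with each other.
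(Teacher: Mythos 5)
Your proof takes the same reduction as the paper's: Remark~\ref{rem:invariance_transformation_groupoid} converts invariance of $\dual{I}$ under the groupoid $\dual{A}\rtimes S$ into invariance under each partial homeomorphism $\dual{\Hilm}_t$, so everything comes down to a statement about a single Hilbert bimodule. The difference is that at this point the paper simply cites that single-bimodule fact as known (from Kwa\'sniewski's topological freeness paper, or the proof of Proposition~3.10 of Abadie--Abadie), whereas you prove it: in one direction representation by representation, using $I\cdot(\Hilm_t\otimes_A\Hils_\pi)=\Hilm_t\otimes_A(I\cdot\Hils_\pi)$ together with the dichotomy that $\overline{I\Hils_\pi}$ is $0$ or $\Hils_\pi$ by irreducibility; in the other direction via the order isomorphism between ideals and open subsets, which turns $\dual{\Hilm}_t(\dual{I}\cap \s(\dual{\Hilm}_t))\subseteq\dual{I}$ into $\Hilm_t I\Hilm_t^*\subseteq I$ and $\Hilm_t^* I\Hilm_t\subseteq I$, whence $I\Hilm_t=\Hilm_t I$ by inserting $\Hilm_t=\Hilm_t\Hilm_t^*\Hilm_t$. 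Both directions are sound, so your argument is a correct, self-contained substitute for the citation. One incidental claim is false: $I\cdot\Hilm_t=\Hilm_t\cdot I$ does \emph{not} force $I\subseteq\s(\Hilm_t)$ --- take $A=\C\oplus\C$ with $\Hilm_t$ the identity bimodule of the first summand and $I$ the second summand; then both products are zero. Fortunately this inclusion is not actually used: your argument only needs that either $I$ acts nondegenerately on $\Hils_\pi$ or acts as zero, and that $\Hilm_t\otimes_A\Hils_\pi\neq0$ for $\pi\in\dual{\s(\Hilm_t)}$, so nothing breaks; just delete the parenthetical appeal to $I\subseteq\s(\Hilm_t)$.
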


\begin{proof}
  By Remark~\ref{rem:invariance_transformation_groupoid},
  \(\dual{A}\rtimes S\)-invariance is the same as invariance under
  the dual \(S\)\nb-action \((\dual{\Hilm}_t)_{t\in S}\)
  on~\(\dual{A}\).  Therefore, we need to show that~\(I\) is
  \(\Hilm\)\nb-invariant if and only if
  \(\dual{\Hilm}_t(\dual{I}\cap \dual{D}_t)= \dual{I} \cap
  \dual{D}_{t^*}\) for all \(t\in S\).  This follows from a known
  fact for any Hilbert \(A\)\nb-bimodule~\(\Hilm[F]\): an
  ideal~\(I\) in~\(A\) is \(\Hilm[F]\)\nb-invariant if and only
  if~\(\dual{I}\) is invariant under the partial
  homeomorphism~\(\dual{\Hilm[F]}\)
  (see~\cite{Kwasniewski:Topological_freeness} or the proof of
  \cite{Abadie-Abadie:Ideals}*{Proposition~3.10}).
\end{proof}

\begin{definition}
  \label{def:Hausdorffness}
  An inverse semigroup action~\(\Hilm\) on a
  \(\Cst\)\nb-algebra~\(A\) is called \emph{closed} if the dual
  \(S\)\nb-action on~\(\dual{A}\) is closed.
\end{definition}

\begin{remark}
  Let~\(H\) be an étale locally compact groupoid with a Hausdorff
  unit space \(X\) and let~\(S\) be a wide inverse semigroup of
  bisections of~\(H\).  If we equip \(A=\Cont_0(X)\) with the
  canonical action of~\(S\), then the dual action on
  \(\dual{A} \cong X\) is the canonical \(S\)\nb-action and
  \(\dual{A}\rtimes S \cong H\).  Hence the \(S\)\nb-action on~\(A\)
  is closed if and only if~\(H\) is Hausdorff (see
  Remark~\ref{rem:Hausdorff_vs_closed}).
\end{remark}

\subsection{Non-Triviality conditions for étale groupoids}

We carefully distinguish several versions of the concept of
topological freeness.  They are all equivalent for groupoids that
are second countable, locally compact and Hausdorff.  We will,
however, meet groupoids where the object space is the spectrum of a
\(\Cst\)\nb-algebra, which is often badly non-Hausdorff, and the
unit space is not closed in the groupoid.

\begin{definition}
  \label{def:non-triviality_groupoids}
  Let~\(H\) be an étale groupoid and \(X\subseteq H\) its unit space.
  The \emph{isotropy group} of a point \(x\in X\) is \(H(x)\defeq
  \s^{-1}(x)\cap \rg^{-1}(x)\subseteq H\).  We call~\(H\)
  \begin{enumerate}
  \item \emph{effective} if any open subset \(U\subseteq H\) with
    \(\rg|_U = \s|_U\) is contained in~\(X\);
  \item \label{enu:non-triviality_groupoids2}%
    \emph{topologically free} if, for every bisection
    \(U\subseteq H\setminus X\), the set
    \(\setgiven{x\in X}{H(x)\cap U\neq \emptyset}\) has empty
    interior;
  \item\label{enu:A-S}%
    \emph{AS topologically free} if, for finitely many bisections
    \(U_1,\dotsc,U_n\subseteq H\setminus X\), the union
    \(\bigcup_{i=1}^n {}\setgiven{x\in X}{H(x)\cap U_i\neq
      \emptyset}\) has empty interior;
  \item \emph{topologically principal} if the set of points of~\(X\)
    with trivial isotropy is dense or, equivalently, the set
    \(\setgiven{x\in X}{H(x)\setminus X\neq \emptyset}\) has empty
    interior.
  \end{enumerate}
  An action of an inverse semigroup on a topological space~\(X\) or
  on a \(\Cst\)\nb-algebra~\(A\) is called \emph{topologically free}
  if the transformation groupoid \(X\rtimes S\) or the dual groupoid
  \(\dual{A}\rtimes S\) is topologically free, and similarly for
  effective, AS topologically free and topologically principal
  actions.
\end{definition}

\begin{remark}
  Let \(\alpha\colon G\to \Aut(A)\) be an action of a discrete group
  on a \(\Cst\)\nb-algebra~\(A\).  The transformation groupoid
  \(\dual{A}\rtimes G\) for the dual action is AS topologically free
  if and only if, for any \(t_1,\dotsc, t_n\in G\setminus\{1\}\), the set
  \(\bigcup_{i=1}^n \setgiven{x\in
    \dual{A}}{\dual{\alpha}_{t_i}(x)=x}\) has empty interior
  in~\(\dual{A}\).  This definition is due to Archbold and Spielberg
  (see \cite{Archbold-Spielberg:Topologically_free}*{Definition~1}),
  and this is what ``AS'' in
  Definition~\ref{def:non-triviality_groupoids}\ref{enu:A-S} stands
  for. 
\end{remark}

\begin{remark}
  Topological freeness for groupoids as defined in
  Definition~\ref{def:non-triviality_groupoids}.\ref{enu:non-triviality_groupoids2}
  has not yet received as much attention as it deserves.  This
  condition appears, for instance, in
  \cite{Brown-Clark-Farthing-Sims:Simplicity}*{Lemma~3.1.(3)} and in
  \cite{Exel-Pitts:Weak_Cartan}*{14.15(ii)}, where it is related to
  the conditions that we call ``effective'' and ``topologically
  principal'', respectively.
\end{remark}

We are going to describe the above properties in terms of an inverse
semigroup action \(\haction\colon S\to\pHom(X)\).  For \(t\in S\),
define
\[
  \Fix(\haction_t)\defeq
  \setgiven*{x\in \D_t\setminus \D_{1,t}}{\haction_t(x)=x},
  \qquad
  \underline{\Fix}(\haction_t)\defeq
  \setgiven*{x\in \D_t\setminus
    \overline{\D_{1,t}}}{\haction_t(x)=x}.
\]
By definition,
\(\underline{\Fix}(\haction_t) = \Fix(\haction_t)=\emptyset\) if
\(t\in E(S)\).  If~\(S\) is a group and \(t\in S\setminus \{1\}\),
then
\(\Fix(\haction_t)=\underline{\Fix}(\haction_t) = \setgiven{x\in
  X}{\haction_t(x)=x}\).

\begin{lemma}
  \label{lem:groupoid_vs_action}
  Let \(\haction\colon S\to\pHom(X)\) be an inverse semigroup action
  and \(H\defeq X\rtimes S\).
  \begin{enumerate}
  \item \label{en:groupoid_vs_action1}%
    If~\(H\) is effective, then~\(H\) is topologically free.  The
    converse holds if~\(X\) is closed in~\(H\).
  \item \label{en:groupoid_vs_action1.5}%
    \(H\) is topologically free if and only if \(\Fix(\haction_t)\)
    has empty interior for any \(t\in S\), if and only if
    \(\underline{\Fix}(\haction_t)\) has empty interior for
    any \(t\in S\).
  \item \label{en:groupoid_vs_action2}%
    \(H\) is AS topologically free if and only if\/
    \(\bigcup_{i=1}^n \underline{\Fix}(\haction_{t_i})\) has empty
    interior for any \(t_1,\dotsc, t_n\in S\).
  \item \label{en:groupoid_vs_action3}%
    \(H\) is topologically principal if and only if\/
    \(\bigcup_{t\in S} \Fix(\haction_t)\) has empty interior.
  \end{enumerate}
\end{lemma}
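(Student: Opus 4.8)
The plan is to translate each groupoid-level condition from Definition \ref{def:non-triviality_groupoids} into a statement about the partial homeomorphisms~\(\haction_t\), using the explicit description of the arrows of \(H = X\rtimes S\) as germ-equivalence classes~\([t,x]\). The key geometric fact I would establish first is a description of the isotropy: an arrow~\([t,x]\) lies in the isotropy group \(H(x)\) exactly when \(x\in\D_t\) and \(\haction_t(x)=x\), and it is a non-identity arrow exactly when in addition~\(x\notin\D_{1,t}\) (since~\([t,x]\) is an identity arrow iff it equals~\([e,x]\) for some \(e\in E(S)\), iff there is \(e\le t\) in \(E(S)\) with \(x\in\D_e\), iff \(x\in\D_{1,t}\)). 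Thus for a bisection \(U_t = \setgiven{[t,x]}{x\in\D_t}\), the set \(\setgiven{x\in X}{H(x)\cap U_t\neq\emptyset}\) is precisely \(\setgiven{x\in\D_t}{\haction_t(x)=x}\), and this set minus the "trivial part" \(\D_{1,t}\) is exactly \(\Fix(\haction_t)\). This identifies the bisections covering \(H\setminus X\) with the data carried by the~\(\D_t\setminus\D_{1,t}\), which is what drives all four parts.

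For part~\ref{en:groupoid_vs_action1.5} I would argue as follows. Every bisection \(U\subseteq H\setminus X\) is covered by bisections of the form \(V\cap U_t\) for \(t\in S\), where \(V\) is open in~\(\D_t\); since having empty interior is local, topological freeness of~\(H\) is equivalent to \(\setgiven{x\in\D_t}{\haction_t(x)=x}\setminus\D_{1,t}=\Fix(\haction_t)\) having empty interior for every~\(t\) — note we may remove~\(\D_{1,t}\) because on~\(\D_{1,t}\) the arrow~\([t,x]\) is an identity, hence not in \(H\setminus X\). For the equivalence with the \(\underline{\Fix}\) version, one inclusion is trivial since \(\underline{\Fix}(\haction_t)\subseteq\Fix(\haction_t)\); for the other, I would use that \(\overline{\D_{1,t}}\setminus\D_{1,t}\) has empty interior in~\(\D_t\) (its interior would be a nonempty open set contained in the boundary of an open set), so if every \(\Fix(\haction_t)\) has empty interior then so does every \(\underline{\Fix}(\haction_t)\); conversely, \(\Fix(\haction_t)\) differs from \(\underline{\Fix}(\haction_t)\) only within \(\overline{\D_{1,t}}\setminus\D_{1,t}\), and a Baire-category-style splitting of \(\Fix(\haction_t)\) along this nowhere-dense frontier set shows it has empty interior too. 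Part~\ref{en:groupoid_vs_action1} then follows: effective \(\Rightarrow\) topologically free is immediate from the definitions (an open \(U\) with \(\rg|_U=\s|_U\) sits inside \(\bigcup\setgiven{[t,x]}{\haction_t(x)=x}\)), and if~\(X\) is closed in~\(H\) (equivalently, by Lemma \ref{lem:closed_groupoid_vs_action}, each \(\D_{1,t}\) is relatively closed in~\(\D_t\), so \(\D_{1,t}=\overline{\D_{1,t}}\cap\D_t\) and \(\Fix=\underline{\Fix}\)), a non-contained open \(U\) with \(\rg|_U=\s|_U\) produces an open subset of some \(\Fix(\haction_t)=\underline{\Fix}(\haction_t)\), contradicting topological freeness — I would reconstruct the standard argument that for étale groupoids, non-effectiveness yields an open bisection of germs fixing an open set.

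Parts~\ref{en:groupoid_vs_action2} and~\ref{en:groupoid_vs_action3} are then formal consequences of the same bisection-covering bookkeeping: for AS topological freeness, a finite family of bisections \(U_1,\dots,U_n\subseteq H\setminus X\) is covered by finitely many sets \(V_{i,j}\cap U_{t_{i,j}}\), and \(\bigcup_i\setgiven{x}{H(x)\cap U_i\neq\emptyset}\) has empty interior iff every such finite union \(\bigcup \underline{\Fix}(\haction_{t_{i,j}})\) does (again one uses the \(\underline{\Fix}\) version to control the frontier sets uniformly over the finite family, via the fact that a finite union of nowhere-dense sets is nowhere dense); for topological principality, observe \(\setgiven{x\in X}{H(x)\setminus X\neq\emptyset}=\bigcup_{t\in S}\setgiven{x\in\D_t\setminus\D_{1,t}}{\haction_t(x)=x}=\bigcup_{t\in S}\Fix(\haction_t)\) directly from the isotropy description, so emptiness of interior transfers verbatim. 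The main obstacle I anticipate is the careful handling of the discrepancy between \(\Fix\) and \(\underline{\Fix}\) in the non-closed case — i.e., showing that replacing \(\D_{1,t}\) by its closure does not change whether these fixed-point sets have empty interior. This hinges on the observation that \(\overline{\D_{1,t}}\setminus\D_{1,t}\) is always nowhere dense (it is contained in the boundary of an open set), combined with the fact that the interior of \(\Fix(\haction_t)\), if nonempty, would have to meet either~\(\D_{1,t}\) (impossible, since~\(\haction_t\) is the identity there and those points are excluded from \(\Fix\)) or the complement of~\(\overline{\D_{1,t}}\), placing an open set inside \(\underline{\Fix}(\haction_t)\). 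Once that lemma-within-the-proof is isolated, the rest is routine covering arguments.
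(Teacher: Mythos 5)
Your treatment of the effectiveness, topological freeness and topological principality statements is essentially the paper's own argument: you identify the non-unit isotropy arrows \([t,x]\) with points of \(\Fix(\haction_t)\), observe that \(\Int(\Fix(\haction_t))=\Int(\underline{\Fix}(\haction_t))\) because a non-empty open set disjoint from \(\D_{1,t}\) is automatically disjoint from \(\overline{\D_{1,t}}\), and localise a bisection \(U\subseteq H\setminus X\) near a point of the interior of \(\setgiven{x\in X}{H(x)\cap U\neq\emptyset}\) to a single basic piece \(\setgiven{[t,y]}{y\in V_2}\subseteq U\), using that \(U\) is a bisection so that the witnessing isotropy arrow over each source point is unique. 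All of that is correct and matches the paper.

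The gap is in the AS topological freeness statement, in the direction ``all finite unions \(\bigcup_{i=1}^n\underline{\Fix}(\haction_{t_i})\) have empty interior \(\Rightarrow\) \(H\) is AS topologically free''. You assert that the bisections \(U_1,\dotsc,U_n\subseteq H\setminus X\) are ``covered by finitely many sets \(V_{i,j}\cap U_{t_{i,j}}\)''. That is false: a bisection of \(X\rtimes S\) is in general only covered by \emph{infinitely} many basic bisections \(\setgiven{[t,x]}{x\in W_t}\), so your reduction only yields \(\bigcup_{i}\setgiven{x\in X}{H(x)\cap U_i\neq\emptyset}\subseteq\bigcup_{t\in S}\underline{\Fix}(\haction_t)\), an infinite union to which the hypothesis about finite unions does not apply. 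The pointwise localisation that works for a single bisection does not carry over verbatim either: a point \(x\) in the interior of \(\bigcup_i\setgiven{y}{H(y)\cap U_i\neq\emptyset}\) may lie on the boundary of \(\s(U_i)\) for some \(i\), so arbitrarily close points \(y\) can be witnessed by arrows of \(U_i\) lying in basic pieces \(\setgiven{[t,z]}{z\in W_t}\) with \(t\) varying with \(y\), and one cannot fix finitely many \(t\)'s in advance. The paper repairs this by an iterative shrinking: after arranging that the non-empty open set \(W\) inside the union meets every \(\s(U_i)\), one chooses for \(U_1\) a basic piece \(\setgiven{[t_1,x]}{x\in W_{t_1}}\) with \(W_{t_1}\cap W\neq\emptyset\), replaces \(W\) by \(W\cap W_{t_1}\) and \(U_1\) by that single piece (the inclusion is preserved precisely because \(U_1\) is a bisection, so the only arrow of \(U_1\) over a point of \(W_{t_1}\) is \([t_1,\cdot]\)), and repeats for \(U_2,\dotsc,U_n\), dropping any \(U_i\) that the shrunken \(W\) no longer reaches. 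After \(n\) steps \(W\) is a non-empty open subset of a genuinely finite union \(\bigcup_i\underline{\Fix}(\haction_{t_i})\). Some such argument is needed; the finite-cover shortcut does not exist.
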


\begin{proof}
  The construction of \(H=X\rtimes S\) implies
  \[
    \setgiven{x\in X}{H(x)\setminus X\neq \emptyset} =
    \bigcup_{t\in S} \Fix(\haction_t).
  \]
  This readily implies~\ref{en:groupoid_vs_action3}.

  For a subset \(A\subseteq X\), let \(\Int(A)\) denote its interior
  in~\(X\).  If \(t\in S\), then
  \(\Int(\D_t\setminus \D_{1,t})=\D_t\setminus \overline{\D_{1,t}}\) and
  hence
  \[
    \Int\left(\Fix(\haction_t)\right)
    = \Int\left(\underline{\Fix}(\haction_t)\right).
  \]
  Thus the second equivalence in~\ref{en:groupoid_vs_action1.5} is
  valid.  The subset
  \(V_t\defeq \setgiven*{[t,x]}{x\in \D_t\setminus
    \overline{\D_{1,t}}}\) is a bisection contained in
  \(H\setminus X\), and
  \begin{equation}
    \label{eq:fixed_points_description}
    \setgiven{x\in X}{H(x)\cap V_t\neq \emptyset}
    = \underline{\Fix}(\haction_t).
  \end{equation}
  Hence, if~\(H\) is effective or topologically free, then
  \(\Int\left(\Fix(\haction_t)\right) =
  \Int\left(\underline{\Fix}(\haction_t)\right)=\emptyset\).
  Conversely, if~\(H\) is not topologically free, then there is a
  bisection \(U\subseteq H\setminus X\) such that the interior~\(V\)
  of \(\setgiven{x\in X}{H(x)\cap U\neq \emptyset}\) is non-empty.
  Let \(x\in V\).  Then \(x\in \s(U)\) and there is a unique
  \(\gamma\in U\) with \(\s(\gamma) = x\) because~\(U\) is a
  bisection.  And \(\rg(\gamma) = x\) because
  \(H(x)\cap U\neq \emptyset\).  There is \(t\in S\) with
  \(\gamma=[t,\s(\gamma)]\).  Since~\(U\) is open, it contains an
  open neighbourhood~\(U_2\) of~\(\gamma\), which we may take of the
  form \(U_2 = \setgiven{[t,x]}{x\in V_2}\) for an open subset
  \(V_2 \subseteq \D_t\).  Then \(V\cap V_2\) is an open
  neighbourhood of~\(\s(\gamma)\) such that \([t,x]\in U\) and
  \(H(x) \cap U\neq\emptyset\) for all \(x\in V\cap V_2\).  Thus
  \(\rg([t,x]) = \s([t,x])\) for all \(x\in V \cap V_2\).  So the
  interior of \(\Fix(\haction_t)\) is non-empty.  This finishes the
  proof of~\ref{en:groupoid_vs_action1.5}.

  As we noticed above, if~\(H\) is effective, then
  \(\Int\left(\Fix(\haction_t)\right) =\emptyset\) for all
  \(t\in S\), and hence \(H\) is topologically free by~\ref{en:groupoid_vs_action1.5}.  Suppose now that~\(H\) is not
  effective.
  So there is a bisection \(U\subseteq H\) with
  \(\rg|_U = \s|_U\) that is not contained in~\(X\).  If~\(X\) is
  closed in~\(H\), then \(V\defeq U\setminus X\) is a
  bisection contained in \(H\setminus X\), and it is still
  non-empty.  Since
  \(\s(V)=\setgiven{x\in X}{H(x)\cap V\neq \emptyset}\), the
  groupoid~\(H\) is not topologically free. Thus
  \ref{en:groupoid_vs_action1} is proved.

  The `only if' part in~\ref{en:groupoid_vs_action2} follows
  from~\eqref{eq:fixed_points_description}.  For the `if' part,
  suppose that there are bisections \(U_1,\dotsc,U_n\)
  contained in \(H\setminus X\) and a non-empty open subset
  \begin{equation}
    \label{eq:W_inclusion}
    W \subseteq
    \bigcup_{i=1}^n {}\setgiven{x\in X}{H(x)\cap U_i \neq \emptyset}.
  \end{equation}
  We may assume without loss of generality that
  \(W\cap \s(U_i)\neq \emptyset\) for \(i=1,\dotsc,n\).
  Since~\(U_1\) is open and contained in \(H\setminus X\) there are
  open sets \(W_t\subseteq \D_t\setminus \D_{1,t}\), \(t\in S\),
  with
  \(U_1 = \bigcup_{t\in S} \setgiven{[t,x]}{x\in W_t,\ t\in S}\).
  If \(t_1\in S\) is such that
  \(W_{t_1}\cap W\neq \emptyset\), then replacing~\(W\) by
  \(W\cap W_{t_1}\) and~\(U_1\) by
  \(\setgiven{[t_1,x]}{x\in W_{t_1}}\), the
  inclusion~\eqref{eq:W_inclusion} remains valid.  Proceeding in
  this way, we may arrange for the sets~\(U_i\) to be of the form
  \(U_i=\setgiven{[t_i,x]}{x\in W_i}\) for some \(t_i\in S\) and
  some open subsets~\(W_i\) of \(\D_{t_i}\setminus \D_{1,t_i}\) for
  \(i=1,\dotsc,n\).  Being open, these subsets are even contained in
  \(\D_t\setminus \overline{\D_{1,t}}\).  And
  \[
    W\subseteq \bigcup_{i=1}^n {}\setgiven{x}{H(x)\cap U_i\neq \emptyset}
    = \bigcup_{i=1}^n {}\setgiven{x\in W_{t_i}}{\haction_{t_i}(x)=x}
    \subseteq \bigcup_{i=1}^n \underline{\Fix}(\haction_{t_i}).
  \]
  This finishes the proof of~\ref{en:groupoid_vs_action2}.
\end{proof}

Lemma~\ref{lem:groupoid_vs_action} implies the following relations
between the properties in
Definition~\ref{def:non-triviality_groupoids}:
\[
  (\text{effective})\Rightarrow
  (\text{top.\@ free}) \Leftarrow
  (\text{AS top.\@ free}) \Leftarrow
  (\text{top.\@ principal}).
\]
And topological freeness and effectiveness are equivalent if the
unit space is closed.
Example~\ref{exa:aperiodic_inclusion_not_aperiodic_action} below
exhibits a topologically principal action of an inverse
semigroup~\(S\) on~\([0,1]\) that is not effective (see also
\cite{Clark-Exel-Pardo-Sims-Starling:Simplicity_non-Hausdorff}*{Example~5.1}
for such an example).  There are situations when topological
freeness implies topological principality.  The following result of
this nature is essentially due to Renault (see
\cite{Renault:Cartan.Subalgebras}*{Proposition~3.6.(ii)}).

\begin{proposition}
  \label{prop:groupoid_non-triviality_conditions}
  Let~\(X\) be a Hausdorff space.  An action~\(\haction\) of an
  inverse semigroup~\(S\) on~\(X\) is topologically free if and only
  if it is AS topologically free.  If, in addition, \(X\) is a Baire
  space and~\(S\) is countable, then~\(\haction\) is topologically
  free if and only if it is topologically principal.
\end{proposition}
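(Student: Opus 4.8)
The plan is to deduce Proposition~\ref{prop:groupoid_non-triviality_conditions} from the characterisations in Lemma~\ref{lem:groupoid_vs_action} together with the hypothesis that~\(X\) is Hausdorff. For the first equivalence, one direction is already recorded in the diagram after Lemma~\ref{lem:groupoid_vs_action}: AS topological freeness always implies topological freeness (take \(n=1\)). So the real content is the converse under the Hausdorff assumption. By Lemma~\ref{lem:groupoid_vs_action}\ref{en:groupoid_vs_action1.5} and~\ref{en:groupoid_vs_action2}, this amounts to showing: if \(\underline{\Fix}(\haction_t)\) has empty interior for every \(t\in S\), then \(\bigcup_{i=1}^n \underline{\Fix}(\haction_{t_i})\) has empty interior for all \(t_1,\dots,t_n\in S\). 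The key point is that when~\(X\) is Hausdorff, each set \(\underline{\Fix}(\haction_t) = \setgiven{x\in \D_t\setminus \overline{\D_{1,t}}}{\haction_t(x)=x}\) is \emph{relatively closed} in the open set \(\D_t\setminus\overline{\D_{1,t}}\): the fixed-point set of a continuous map into a Hausdorff space is closed, being the preimage of the diagonal under \(x\mapsto(x,\haction_t(x))\). Hence \(\underline{\Fix}(\haction_t)\) is locally closed in~\(X\), so it is the intersection of a closed set with an open set; a locally closed set with empty interior is nowhere dense (its closure adds only boundary points of the ambient open set, which cannot contain an open set either — one checks \(\overline{\underline{\Fix}(\haction_t)} \subseteq \underline{\Fix}(\haction_t) \cup (\D_t\setminus\overline{\D_{1,t}})^{\complement}\) has empty interior since both pieces do). A finite union of nowhere dense sets is nowhere dense, hence has empty interior, which is exactly AS topological freeness by Lemma~\ref{lem:groupoid_vs_action}\ref{en:groupoid_vs_action2}.

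For the second equivalence, the forward implication is again free from the diagram (topologically principal \(\Rightarrow\) AS topologically free \(\Rightarrow\) topologically free, with no countability needed). For the converse, assume \(X\) is a Baire space, \(S\) is countable, and~\(\haction\) is topologically free. By Lemma~\ref{lem:groupoid_vs_action}\ref{en:groupoid_vs_action1.5} each \(\underline{\Fix}(\haction_t)\) has empty interior, and by the local-closedness observation above each is nowhere dense in~\(X\). Since~\(S\) is countable, \(\bigcup_{t\in S}\underline{\Fix}(\haction_t)\) is a countable union of nowhere dense sets, hence meagre; as \(X\) is Baire, this union has empty interior. Finally, \(\Int\bigl(\Fix(\haction_t)\bigr) = \Int\bigl(\underline{\Fix}(\haction_t)\bigr)\) was shown in the proof of Lemma~\ref{lem:groupoid_vs_action}, and since \(\Fix(\haction_t) \subseteq \overline{\underline{\Fix}(\haction_t)} \cup \overline{\D_{1,t}}\) with both terms on the right... more directly: one shows \(\bigcup_{t\in S}\Fix(\haction_t)\) has empty interior because any open subset of it would, after intersecting with the open set \(\D_t\setminus\overline{\D_{1,t}}\) for a suitable~\(t\), land inside \(\underline{\Fix}(\haction_t)\), contradicting nowhere density — but one must handle the boundary part \(\overline{\D_{1,t}}\setminus\D_{1,t}\) carefully. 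Then Lemma~\ref{lem:groupoid_vs_action}\ref{en:groupoid_vs_action3} gives topological principality.

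The main obstacle I expect is the bookkeeping around \(\Fix\) versus \(\underline{\Fix}\) in the second equivalence: \(\Fix(\haction_t)\) lives in the larger set \(\D_t\setminus\D_{1,t}\), which need not be open, so \(\Fix(\haction_t)\) itself need not be locally closed or nowhere dense even when its interior is empty, and the union \(\bigcup_t\Fix(\haction_t)\) could in principle have nonempty interior hidden in the "thin boundary" pieces \(\overline{\D_{1,t}}\setminus\D_{1,t}\). The clean way around this is to argue entirely with \(\underline{\Fix}\): show that a nonempty open set contained in \(\setgiven{x\in X}{H(x)\setminus X \neq\emptyset} = \bigcup_{t\in S}\Fix(\haction_t)\) must meet some \(\Fix(\haction_t)\) in a set with nonempty interior (using that this is a \emph{countable} cover and \(X\) is Baire, so some \(\overline{\Fix(\haction_t)}\) has nonempty interior), and then use \(\Int(\Fix(\haction_t)) = \Int(\underline{\Fix}(\haction_t)) = \emptyset\) together with the fact that an open subset of \(\overline{\Fix(\haction_t)}\) meets \(\Fix(\haction_t)\), hence meets \(\D_t\setminus\overline{\D_{1,t}}\) in a nonempty open set contained in \(\underline{\Fix}(\haction_t)\) — contradiction. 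This reduces everything to the nowhere-density of the locally closed sets \(\underline{\Fix}(\haction_t)\), which is where Hausdorffness is used.
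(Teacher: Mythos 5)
Your proof is correct and follows essentially the same route as the paper's: Hausdorffness forces each fixed-point set to be nowhere dense (the paper constructs a dense open set avoiding its closure, you observe that a locally closed set with empty interior is nowhere dense), and then finite unions give AS topological freeness while countable unions plus Baire give topological principality. The \(\Fix\) versus \(\underline{\Fix}\) bookkeeping you flag as the main obstacle collapses at once, since \(\Fix(\haction_t)\setminus\underline{\Fix}(\haction_t)\) lies in \(\overline{\D_{1,t}}\setminus\D_{1,t}\), the boundary of an open set, which is closed and nowhere dense; hence \(\Fix(\haction_t)\) is nowhere dense as soon as \(\underline{\Fix}(\haction_t)\) is, and the countable union is meagre without any further case analysis.
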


\begin{proof}
  Assume~\(\haction\) to be topologically free.  We are going to
  prove that~\(\haction\) is AS topologically free.  This implies
  the first statement.  Let \(t\in S\).
  Lemma~\ref{lem:groupoid_vs_action} shows that \(\Fix(\haction_t)\)
  has empty interior in~\(X\).  Equivalently, the set of
  \(x\in X_t\) with \(\haction_t(x)\neq x\) is dense in~\(X_t\).
  This subset is open because~\(X\) is Hausdorff.  Therefore, the
  open subset~\(Y_t\) of all \(x\in X\) with either
  \(x\notin \overline{X_t}\) or \(x\in X_t\) and
  \(\haction_t(x)\neq x\) is dense in~\(X\).  No point in~\(Y_t\)
  can belong to the closure of~\(\Fix(\haction_t)\).  Therefore, the
  interior of \(\overline{\Fix(\haction_t)}\) is empty or,
  equivalently, \(\Fix(\haction_t)\) is nowhere dense in~\(X\).
  This is inherited by the subset \(\underline{\Fix}(\haction_t)\)
  of \(\Fix(\haction_t)\).  A finite union of nowhere dense subsets
  is again nowhere dense.  Hence every finite union
  \(\bigcup_{i=1}^n \underline{\Fix}(\haction_{t_i})\) for
  \(t_1,\dotsc,t_n\in S\) is nowhere dense and hence has empty
  interior.  Thus~\(\haction\) is AS topologically free by
  Lemma~\ref{lem:groupoid_vs_action}.  If, in addition, \(S\) is
  countable, and~\(X\) Baire, then the countable union
  \(\bigcup_{t\in S} \Fix(\haction_t)\) is still nowhere dense
  in~\(X\).  Hence Lemma~\ref{lem:groupoid_vs_action} implies
  that~\(\haction\) is topologically principal.
\end{proof}

In Theorem~\ref{the:aperiodic_top_non-trivial}, we will prove an
analogue of
Proposition~\ref{prop:groupoid_non-triviality_conditions} for the
dual groupoid \(H = \dual{A}\rtimes S\) of an inverse semigroup
action on a separable \(\Cst\)\nb-algebra~\(A\), so~\(\dual{A}\)
need not be Hausdorff.  Some further assumption besides~\(X\) being
Baire and~\(S\) countable is needed for this converse implication.
For instance, the action of the permutation group~\(S_3\) on the
three-element set~\(X\) with the chaotic
topology~\(\{\emptyset,X\}\) is topologically free, but not AS
topologically free.

The following lemma allows to relax the assumptions in
Proposition~\ref{prop:groupoid_non-triviality_conditions} slightly:

\begin{lemma}
  \label{lem:open_dense_subgroupoid}
  Let~\(H\) be an étale groupoid and~\(X\) its space of units.  Let
  \(X'\subseteq X\) be an open dense subset of~\(X\).  Then
  \(H'\defeq \s^{-1}(X')\cap \rg^{-1}(X')\) is an open dense
  subgroupoid of~\(H\), and
  \begin{enumerate}
  \item\label{lem:open_dense_subgroupoid1}%
    \(H\) is topologically free if and only if~\(H'\) is
    topologically free.
  \item\label{lem:open_dense_subgroupoid2}%
    \(H\) is AS topologically free if and only if~\(H'\) is AS
    topologically free.
  \item\label{lem:open_dense_subgroupoid3}%
    \(H\) is topologically principal if and only if~\(H'\) is
    topologically principal.
  \end{enumerate}
\end{lemma}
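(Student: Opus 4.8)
The plan is to reduce everything to a statement about interiors of subsets of $X$, via Lemma~\ref{lem:groupoid_vs_action} for $H$ and its analogue for $H'$, and then to exploit that $X'$ is open and dense in $X$. First I would verify the preliminary claims: $H' = \s^{-1}(X')\cap \rg^{-1}(X')$ is a subgroupoid because $X'$-invariance of the condition $\s(\gamma),\rg(\gamma)\in X'$ is preserved under multiplication and inversion; it is open because $\s,\rg$ are continuous and $X'$ is open; and it is dense because $H$ is étale, so it is covered by bisections $U$, and on each bisection $U$ the set $U\cap H' = (\s|_U)^{-1}(X')\cap U\cap \rg|_U^{-1}(X')$ is dense in $U$ since $(\s|_U)^{-1}$ and $(\rg|_U)^{-1}$ are homeomorphisms onto open subsets of $X$ and $X'$ is dense. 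The unit space of $H'$ is $X' = X\cap H'$.

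Next I would write $H$ as a transformation groupoid $H = X\rtimes S$ for a wide inverse semigroup $S\subseteq \Bis(H)$ (using Proposition~\ref{prop:wide_semigroup_isomorphism}; e.g.\ $S = \Bis(H)$ itself), and observe that $H'$ is then the transformation groupoid $X'\rtimes S|_{X'}$ for the restricted action $\haction|_{X'}$, where we restrict each $\haction_t$ to the open subset $\D_t\cap X'\cap \haction_t^{-1}(X')$. Then Lemma~\ref{lem:groupoid_vs_action} applies to both $H$ and $H'$, and each of the three equivalences becomes a comparison of interiors: for~\ref{lem:open_dense_subgroupoid1} we must compare $\Int_X(\Fix(\haction_t))$ with $\Int_{X'}(\Fix(\haction_t|_{X'}))$ for all $t$; for~\ref{lem:open_dense_subgroupoid2} the same with finite unions of $\underline{\Fix}$; for~\ref{lem:open_dense_subgroupoid3} the same with $\bigcup_{t\in S}\Fix$. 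In each case $\Fix(\haction_t|_{X'}) = \Fix(\haction_t)\cap X'$ (and likewise for $\underline{\Fix}$, using that $\overline{\D_{1,t}\cap X'}\cap X'$ need not equal $\overline{\D_{1,t}}\cap X'$ in general, so a little care is needed here — see below).

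The real content is the elementary topological fact: if $X'\subseteq X$ is open and dense and $Z\subseteq X$, then $Z$ has empty interior in $X$ if and only if $Z\cap X'$ has empty interior in $X'$. One direction is immediate since $X'$ is open, so $\Int_{X'}(Z\cap X') = \Int_X(Z\cap X')\subseteq \Int_X(Z)$. For the converse, if $\Int_X(Z) = U\neq\emptyset$, then $U\cap X'$ is non-empty (density of $X'$) and open in $X'$, and $U\cap X'\subseteq Z\cap X'$, so $\Int_{X'}(Z\cap X')\neq\emptyset$; it also follows that a countable (or finite) union $\bigcup Z_n$ has empty interior in $X$ iff $\bigcup(Z_n\cap X')$ has empty interior in $X'$, since intersection with $X'$ commutes with unions. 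Applying this with $Z = \Fix(\haction_t)$, $Z = \underline{\Fix}(\haction_t)$, and $Z = \bigcup_{t\in S}\Fix(\haction_t)$ respectively gives all three equivalences.

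The main obstacle is the bookkeeping around $\underline{\Fix}$ in part~\ref{lem:open_dense_subgroupoid2}: passing to $H'$, the relevant set $\D_{1,t}$ gets replaced by the union of those idempotent-domains intersected with $X'$, and its closure relative to $X'$ could a priori be strictly smaller than $\overline{\D_{1,t}}\cap X'$. I would handle this by recalling from the proof of Lemma~\ref{lem:groupoid_vs_action} that what matters is only $\Int(\Fix) = \Int(\underline{\Fix})$ within the relevant space, and that the AS-topologically-free condition is equivalently phrased with $\Fix$ in place of $\underline{\Fix}$ at the level of finite unions being nowhere dense is not automatic, but the empty-interior condition transfers cleanly because $\Int_X(\underline\Fix(\haction_t)) = \Int_X(\Fix(\haction_t))$ and the same identity holds relative to $X'$; combined with the general topological fact above for the sets $\Fix(\haction_{t_i})$, this yields~\ref{lem:open_dense_subgroupoid2}. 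Thus all three statements follow uniformly once the transfer lemma for interiors under restriction to an open dense subset is in hand.
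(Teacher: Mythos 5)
Your overall strategy --- reduce each condition to ``a certain subset of the unit space has empty interior'' and then use that such empty-interior statements transfer between \(X\) and an open dense \(X'\subseteq X\) --- is exactly the engine of the paper's proof, and your preliminary claims about \(H'\) together with the transfer lemma itself are correct. The paper, however, runs this directly on bisections and isotropy groups, via \(U\mapsto U\cap H'\) and the observation that a bisection of \(H'\) contained in \(H'\setminus X'\) is automatically a bisection of \(H\) contained in \(H\setminus X\); you instead route everything through Lemma~\ref{lem:groupoid_vs_action}, and that detour is where the problems lie.

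First, \(X'\) is in general not \(\haction\)-invariant, so ``\(H'=X'\rtimes S|_{X'}\)'' is not available as stated: the maps obtained by restricting \(\haction_t\) to \(\D_t\cap X'\cap\haction_t^{-1}(X')\) do not form a unital semigroup homomorphism \(S\to\pHom(X')\), because the domain of \(\haction_t|_{X'}\circ\haction_u|_{X'}\) excludes points \(x\in X'\) with \(\haction_u(x)\notin X'\) but \(\haction_{tu}(x)\in X'\), which do lie in the domain of \(\haction_{tu}|_{X'}\). So Lemma~\ref{lem:groupoid_vs_action} cannot be applied to \(H'\) with this ``restricted action''; you would have to re-present \(H'\) over \(\Bis(H')\), and then its fixed-point sets are no longer indexed compatibly with those of \(H\). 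Second, your treatment of \(\underline{\Fix}\) in part~(2) is simultaneously over- and under-cautious. The worry is unfounded: \(\D_{1,t}\) is open and \(X'\) is dense, so \(\D_{1,t}\cap X'\) is dense in \(\D_{1,t}\), whence \(\overline{\D_{1,t}\cap X'}\cap X'=\overline{\D_{1,t}}\cap X'\) and \(\underline{\Fix}\) restricts on the nose to \(\underline{\Fix}(\haction_t)\cap X'\). But the repair you actually invoke --- passing between \(\Fix\) and \(\underline{\Fix}\) via \(\Int(\Fix(\haction_t))=\Int(\underline{\Fix}(\haction_t))\) --- fails for part~(2): that identity does not commute with finite unions (a finite union of sets with empty interior can have non-empty interior, which is precisely why AS topological freeness is stronger than topological freeness), so you cannot trade \(\bigcup_i\underline{\Fix}(\haction_{t_i})\) for \(\bigcup_i\Fix(\haction_{t_i})\) in the empty-interior condition. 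Both defects are repairable, but as written the identification of \(H'\) as a transformation groupoid over \(S\) and the argument for~(2) are genuine gaps; the cleanest fix is to drop the inverse-semigroup picture and argue with bisections, as the paper does.
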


\begin{proof}
  Clearly, \(H'\) is an open subgroupoid of~\(H\).  It is dense
  in~\(H\) because for every bisection \(U\subseteq H\) the
  intersection \(U\cap H'\) is open and dense in~\(U\).  For any
  \(x\in X'\) the isotropy groups in \(H\) and~\(H'\) are the same.
  This immediately gives~\ref{lem:open_dense_subgroupoid3}.
  If~\(U\) is a bisection in~\(H\), then \(U\subseteq H\setminus X\)
  if and only if \(U\cap H' \subseteq H'\setminus X'\).  This
  readily implies~\ref{lem:open_dense_subgroupoid1}.
  Concerning~\ref{lem:open_dense_subgroupoid2}, it is easy to see
  that~\(H'\) is AS topologically free if~\(H\) is.  Conversely,
  let~\(H'\) be AS topologically free and let
  \(U_1,\dotsc,U_n\subseteq H\setminus X\) be bisections of~\(H\).
  Let \(U_j' \defeq U_j \cap H'\) for \(j=1,\dotsc,n\).  Then
  \begin{align*}
    \bigcup_{i=1}^n {}\setgiven{x\in X}{H(x)\cap U_i\neq \emptyset} \cap X'
      &=\bigcup_{i=1}^n {}\setgiven{x\in X'}{H'(x)\cap U_i'\neq \emptyset}.
  \end{align*}
  The set on the right has empty interior because~\(H'\) is AS
  topologically free.  Since~\(X'\) is dense in~\(X\), it follows
  that
  \(\bigcup_{i=1}^n {}\setgiven{x\in X}{H(x)\cap U_i\neq
    \emptyset}\) has empty interior.  Thus~\(H\) is AS topologically
  free.
\end{proof}

\begin{corollary}
  \label{cor:non-triviality_conditions_equiv}
  Suppose that the space of units~\(X\) of an étale groupoid~\(H\)
  contains an open dense subset \(X'\) which is Hausdorff.  Then~\(H\) is
  topologically free if and only if~\(H\) is AS topologically free.
  If~\(X'\) is Baire and~\(H\) has a countable cover by
  bisections, then~\(H\) is topologically free if and only if it is
  topologically principal.
\end{corollary}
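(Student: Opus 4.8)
The plan is to reduce to the case of a Hausdorff unit space and then invoke Proposition~\ref{prop:groupoid_non-triviality_conditions}. First I would let $X'\subseteq X$ be the given open dense Hausdorff subset and pass to the open dense subgroupoid $H'\defeq\s^{-1}(X')\cap\rg^{-1}(X')$, which has unit space $X'$. By Lemma~\ref{lem:open_dense_subgroupoid}, $H$ is topologically free (respectively, AS topologically free, respectively, topologically principal) if and only if $H'$ has the same property, so it suffices to prove both assertions for $H'$. I would also record here that, for the second assertion, $X'$ is again a Baire space, being an open subspace of the Baire space $X$, and that $\{U_n\cap H'\}_{n\in\N}$ is a countable cover of $H'$ by bisections whenever $\{U_n\}_{n\in\N}$ is such a cover of $H$.

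Next I would write $H'$ as a transformation groupoid. Choosing any wide unital inverse subsemigroup $S\subseteq\Bis(H')$, Proposition~\ref{prop:wide_semigroup_isomorphism} gives an isomorphism $H'\cong X'\rtimes S$, under which the three properties of $H'$ correspond, by Definition~\ref{def:non-triviality_groupoids}, to the same properties of the canonical action $\haction$ of $S$ on the \emph{Hausdorff} space $X'$. For the first assertion I would simply take $S=\Bis(H')$, which is wide because $\bigcup\Bis(H')=H'$ and an intersection of two bisections is again a bisection; Proposition~\ref{prop:groupoid_non-triviality_conditions} then yields that $\haction$ -- hence $H'$, hence $H$ -- is topologically free if and only if it is AS topologically free. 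For the second assertion I need $S$ to be \emph{countable} as well, so that Proposition~\ref{prop:groupoid_non-triviality_conditions} also gives the equivalence of topological freeness with topological principality over the Baire space $X'$.

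The only step that is not routine is producing a countable wide $S$. A subsemigroup merely generated by the countable cover $\{U_n\cap H'\}$ need not be wide, since wideness requires $U\cap V$ to be a union of members of $S$ for all $U,V\in S$. To get around this I would let $S$ be the smallest subset of $\Bis(H')$ containing the unit bisection $X'$ and all $U_n\cap H'$ and closed under products, inverses and pairwise intersections; building $S$ up in countably many steps, each adjoining only countably many new bisections, shows that $S$ is countable, and $S$ is a unital inverse subsemigroup since it is closed under products and inverses and contains the unit. As $S$ is closed under finite intersections and $\bigcup S=H'$, it is wide, so $H'\cong X'\rtimes S$ with $S$ countable and $X'$ a Hausdorff Baire space, and Proposition~\ref{prop:groupoid_non-triviality_conditions} applies to give that $H'$ -- hence $H$ -- is topologically free if and only if it is topologically principal. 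The main thing to get right is thus this bookkeeping: keeping the closure countable while ensuring it is genuinely wide rather than merely generated by a countable cover.
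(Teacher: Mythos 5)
Your proposal is correct and follows exactly the route the paper intends: its proof is literally ``combine Proposition~\ref{prop:groupoid_non-triviality_conditions} and Lemma~\ref{lem:open_dense_subgroupoid}'', i.e.\ pass to the open dense Hausdorff subgroupoid $H'$ and apply the Hausdorff-case proposition via a wide inverse subsemigroup of $\Bis(H')$. The one detail the paper leaves implicit --- upgrading a countable cover by bisections to a \emph{countable wide} unital inverse subsemigroup by closing under products, inverses and pairwise intersections in countably many countable stages --- is exactly the right bookkeeping, and you carry it out correctly.
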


\begin{proof}
  Combine Proposition~\ref{prop:groupoid_non-triviality_conditions}
  and Lemma~\ref{lem:open_dense_subgroupoid}.
\end{proof}

\section{Full and reduced crossed products for inverse semigroup
  actions}
\label{sec:prelim}

We first establish some basic notation about generalised conditional
expectations.  Then we construct the full and reduced crossed
products for inverse semigroup actions and prove that the canonical
weak expectation on the reduced crossed product is faithful.

\subsection{Generalised expectations}
\label{sec:gen_expectation}

Conditional expectations are crucial tools in the study of crossed
products for group actions.  For an inverse semigroup action, the
reduced crossed product is defined in~\cite{Buss-Exel-Meyer:Reduced}
using a ``weak conditional expectation''
\(E\colon A\rtimes S\to A''\), which takes values in the bidual von
Neumann algebra~\(A''\).  Pseudo-expectations, which take values in
the injective hull of~\(A\), have been studied in
\cites{Pitts:Regular_I, Pitts-Zarikian:Unique_pseudoexpectation,
  Zarikian:Unique_expectations}.  To define the essential crossed
product, we will use expectations with values in the local
multiplier algebra (see Definition~\ref{defn:essential_expectation}
below).  The following definition covers all these cases:

\begin{definition}
  \label{def:gen_expectation}
  Let \(A\subseteq B\) be a \(\Cst\)\nb-inclusion.  A
  \emph{generalised expectation} consists of another
  \(\Cst\)\nb-inclusion \(A\subseteq \tilde{A}\) and a completely
  positive, contractive map \(B \to \tilde{A}\) that restricts to
  the identity map on~\(A\).  If \(\tilde{A} = A\),
  \(\tilde{A} = A''\), or \(\tilde{A}\) is the injective envelope
  of~\(A\), then we speak of a \emph{conditional expectation}, a
  \emph{weak conditional expectation}, or a
  \emph{pseudo-expectation}, respectively.
\end{definition}

\begin{lemma}
  Any generalised expectation is an \(A\)\nb-bimodule map.
\end{lemma}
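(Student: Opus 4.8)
The statement is the familiar fact that a contractive completely positive map which restricts to the identity on a subalgebra has that subalgebra inside its multiplicative domain; I would prove it by the standard Schwarz/multiplicative-domain argument. Write \(\phi\colon B\to\tilde A\) for the generalised expectation, so \(\phi\) is completely positive, contractive, and \(\phi|_A=\Id_A\). The goal is to show \(\phi(a_1 b a_2)=a_1\phi(b)a_2\) for all \(a_1,a_2\in A\), \(b\in B\).

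The key input is the Schwarz inequality for a contractive completely positive map: \(\phi^{(2)}(x)^{*}\phi^{(2)}(x)\le\phi^{(2)}(x^{*}x)\) for all \(x\in\Mat_2(B)\), where \(\phi^{(2)}=\phi\otimes\Id_{\Mat_2}\colon\Mat_2(B)\to\Mat_2(\tilde A)\) is again completely positive and contractive. (If one prefers to invoke only the unital Kadison inequality, adjoin units: \(\phi\) extends to a \emph{unital} completely positive map \(B^{+}\to\tilde A^{+}\) precisely because \(\phi\) is contractive, and this extension still restricts to the identity on \(A^{+}\), so no generality is lost.) I would apply this with \(x=\begin{pmatrix} a & b\\ 0 & 0\end{pmatrix}\) for \(a\in A\), \(b\in B\). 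Then the \((1,1)\)\nb-entry of the positive element \(\phi^{(2)}(x^{*}x)-\phi^{(2)}(x)^{*}\phi^{(2)}(x)\) equals \(\phi(a^{*}a)-\phi(a)^{*}\phi(a)=a^{*}a-a^{*}a=0\), since \(a\) and \(a^{*}a\) lie in \(A\) and \(\phi|_A=\Id_A\). A positive \(2\times 2\) matrix over a \(\Cst\)\nb-algebra with a vanishing diagonal entry has vanishing off\nb-diagonal entries in the corresponding row and column; reading off the \((1,2)\)\nb-entry gives \(\phi(a^{*}b)=\phi(a)^{*}\phi(b)=a^{*}\phi(b)\). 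As \(A\) is \(*\)\nb-closed, this says \(\phi(cb)=c\,\phi(b)\) for all \(c\in A\), \(b\in B\).

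From this left-module identity the right one follows by taking adjoints: for \(c\in A\), \(b\in B\) we obtain \(\phi(bc)^{*}=\phi(c^{*}b^{*})=c^{*}\phi(b^{*})=c^{*}\phi(b)^{*}\), hence \(\phi(bc)=\phi(b)c\). Combining the two, \(\phi(a_1 b a_2)=a_1\phi(b a_2)=a_1\phi(b)a_2\), which is the assertion.

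I do not expect a genuine obstacle here: the body of the argument is the textbook multiplicative-domain computation. The only point needing a little care is the non-unital bookkeeping — that the Schwarz inequality is available for our (possibly non-unital) contractive completely positive \(\phi\), equivalently that the unitised map \(B^{+}\to\tilde A^{+}\) is completely positive — and this is exactly where the contractivity hypothesis on a generalised expectation is used; everything else is formal.
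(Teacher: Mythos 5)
Your proof is correct and follows essentially the same route as the paper: the paper also passes to the unital extension \(B^+\to\tilde A^+\) (using contractivity) and then invokes Choi's multiplicative-domain theorem, which is exactly the \(2\times 2\) Schwarz computation you carry out by hand. The only difference is that you reprove the multiplicative-domain step rather than citing it.
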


\begin{proof}
  The unique unital, linear extension
  \(E^+\colon B^+ \to \tilde{A}^+\) is still completely positive and
  contractive (see, for instance,
  \cite{Brown-Ozawa:Approximations}*{Subsection~2.2}).  And it is
  the identity map on~\(A^+\).  Then~\(E^+\) is \(A^+\)\nb-bilinear
  by Choi's Theorem, \cite{Choi:Schwarz}*{Theorem~3.1}.
\end{proof}

Any idempotent linear contraction \(E\colon B\to A\) is a completely
positive bimodule map and thus a conditional expectation.  This
result is due to Tomiyama~\cite{Tomiyama:Projection_norm_one}.

\begin{example}
 \label{exa:trivial_gen_expectation}
 For any \(\Cst\)\nb-inclusion \(A\subseteq B\), the identity map
 on~\(B\) is a generalised expectation with values in~\(B\).
\end{example}

\begin{example}[\cite{Pitts:Regular_I}]
  \label{exa:pseudo-expectation}
  The identity map on~\(A\) extends to a completely positive map from~\(B\)
  to the injective hull of~\(A\).  Thus, any \(\Cst\)\nb-inclusion
  \(A\subseteq B\) has a pseudo-expectation.
\end{example}

We will see more examples of generalised expectations in the
definitions of the reduced and essential crossed products for
inverse semigroup actions.

\begin{definition}
  \label{def:reduced_expectation}
  Let \(E\colon B\to \tilde{A} \supseteq A\) be a generalised
  expectation.  Let~\(\Null_E\) be the closed linear span of all
  \(J\in\Ideals(B)\) with \(J\subseteq \ker E\).  This is the
  largest two-sided ideal in~\(B\) that is contained in \(\ker E\).
  Let \(B_\red \defeq B/\Null_E\) and let
  \(\Lambda\colon B\to B_\red\) be the quotient map.  Since
  \(E|_{\Null_E}=0\), the expectation~\(E\) descends to a map
  \(E_\red \colon B_\red \to \tilde{A} \supseteq A\), called the
  \emph{reduced generalised expectation} of~\(E\).
\end{definition}

Since~\(E|_A=\Id_A\) and \(E|_{\Null_E} = 0\), it follows that
\(A\cap \Null_E = 0\).  Hence the composite map
\(A \to B \to B/\Null_E\) is injective.  The map~\(E_\red\) is a
generalised expectation for the inclusion
\(A \hookrightarrow B_\red\).

\begin{proposition}
  \label{pro:GNS_kernels}
  Let \(E\colon B\to \tilde{A} \supseteq A\) be a generalised
  expectation.  Let
  \begin{align*}
    \lNull_E&\defeq\setgiven{b\in B}{E(b^*b)=0},\\
    \rNull_E&\defeq\setgiven{b\in B}{E(bb^*)=0}.
  \end{align*}
  Then \(\lNull_E\) and~\(\rNull_E\) are the largest left and right
  ideals in~\(B\) contained in~\(\ker E\), respectively.  Hence
  \((\lNull_E)^*=\rNull_E\) and
   \(\Null_E \subseteq \lNull_E \cap \rNull_E\).  And
  \begin{align*}
    \Null_E
    &=\setgiven{b\in B}{E((b c)^*b c)=0 \text{ for all }c\in B}
    =\setgiven{b\in B}{b\cdot B \subseteq \lNull_E}\\
    &=\setgiven{b\in B}{E(x b y)=0 \text{ for all }x,y\in B}.
  \end{align*}
\end{proposition}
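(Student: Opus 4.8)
The plan is to base everything on the Schwarz inequality $E(b)^*E(b)\le E(b^*b)$, which holds for any completely positive contractive map because $E$ extends to a unital completely positive map on the unitisations (see \cite{Brown-Ozawa:Approximations}*{Subsection~2.2}), together with elementary order estimates. I would deal with $\lNull_E$ first; the statements for $\rNull_E$ then follow by applying the involution, since $b\in\lNull_E$ if and only if $E\bigl(b^*(b^*)^*\bigr)=0$, that is, $b^*\in\rNull_E$, and this already proves $(\lNull_E)^*=\rNull_E$.

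First I would verify that $\lNull_E$ is a closed left ideal contained in $\ker E$. Closedness is immediate from continuity of $b\mapsto E(b^*b)$; additivity follows from $(b_1+b_2)^*(b_1+b_2)\le 2b_1^*b_1+2b_2^*b_2$ together with positivity of $E$; and for $c\in B$ and $b\in\lNull_E$ one has $(cb)^*(cb)=b^*c^*cb\le\norm{c}^2\,b^*b$, so $0\le E\bigl((cb)^*(cb)\bigr)\le\norm{c}^2 E(b^*b)=0$, i.e.\ $cb\in\lNull_E$. The inclusion $\lNull_E\subseteq\ker E$ comes straight from the Schwarz inequality: $E(b^*b)=0$ forces $E(b)^*E(b)=0$, hence $E(b)=0$. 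Finally, $\lNull_E$ is the largest left ideal inside $\ker E$: if $L$ is a left ideal with $L\subseteq\ker E$ and $b\in L$, then $b^*b=b^*\cdot b\in B\cdot L\subseteq L\subseteq\ker E$, so $E(b^*b)=0$ and $b\in\lNull_E$. By symmetry $\rNull_E$ is the largest right ideal in $\ker E$; and because $\Null_E$ is a two-sided ideal contained in $\ker E$, it is in particular a left and a right ideal contained in $\ker E$, whence $\Null_E\subseteq\lNull_E\cap\rNull_E$.

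For the last two displayed equalities, I would note first that the two right-hand sets coincide by the definition of $\lNull_E$: the condition ``$E\bigl((bc)^*bc\bigr)=0$ for all $c\in B$'' says exactly that $bc\in\lNull_E$ for all $c$, that is, $b\cdot B\subseteq\lNull_E$; denote this set by $\M$. The plan is to show that $\M$ is a two-sided ideal contained in $\ker E$, so that $\M\subseteq\Null_E$ by maximality, and conversely that $\Null_E\subseteq\M$. That $\M$ is a closed linear subspace follows from the corresponding property of $\lNull_E$; it is a right ideal since $b\cdot B\subseteq\lNull_E$ gives $(bd)\cdot B\subseteq b\cdot B\subseteq\lNull_E$; and it is a left ideal since $(db)\cdot B=d\,(b\cdot B)\subseteq B\cdot\lNull_E\subseteq\lNull_E$, using that $\lNull_E$ is itself a left ideal. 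Choosing an approximate unit $(u_\lambda)$ of $B$ and using closedness of $\lNull_E$, we get $b=\lim_\lambda bu_\lambda\in\lNull_E$ for every $b\in\M$, so $\M\subseteq\lNull_E\subseteq\ker E$ and therefore $\M\subseteq\Null_E$. The reverse inclusion is clear: $\Null_E$ is a two-sided ideal with $\Null_E\subseteq\lNull_E$, so $b\in\Null_E$ gives $b\cdot B\subseteq\Null_E\subseteq\lNull_E$, i.e.\ $b\in\M$.

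I do not anticipate a genuine difficulty here. The only point that is not purely formal is the last identity: since $\lNull_E$ is merely a one-sided ideal, this is really the assertion that $\M=\{b:b\cdot B\subseteq\lNull_E\}$ is the largest two-sided ideal contained in $\lNull_E$, and that is exactly where the approximate-unit argument for $\M\subseteq\lNull_E$ and the two checks that $\M$ is a two-sided ideal are needed.
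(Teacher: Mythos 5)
Your proof is correct and follows essentially the same route as the paper: the Schwarz inequality $E(b)^*E(b)\le E(b^*b)$ (after unitising) gives $\lNull_E\subseteq\ker E$, and the remaining claims are ideal-theoretic bookkeeping, with the final identity reduced to showing that $\setgiven{b}{b\cdot B\subseteq\lNull_E}$ is a two-sided ideal contained in $\lNull_E$ via an approximate unit. The only difference is presentational: where the paper cites Choi for the fact that $\lNull_E$ is the largest left ideal in $\ker E$, you verify it directly with the elementary estimates $(cb)^*(cb)\le\norm{c}^2 b^*b$ and $(b_1+b_2)^*(b_1+b_2)\le 2b_1^*b_1+2b_2^*b_2$, which is a perfectly valid self-contained substitute.
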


The third description of~\(\Null_E\) was pointed out to us by Ruy
Exel.

\begin{proof}
  If necessary, adjoin units to \(B\) and~\(\tilde{A}\) to make them
  unital.  The unique unital extension of~\(E\) is unital and
  completely positive.  It is already observed in
  \cite{Choi:Schwarz}*{Remark~3.4} that~\(\lNull_E\) is the largest
  left ideal contained in \(\ker E\); this is a general feature of
  \(2\)\nb-positive maps.  The main point is the Schwarz inequality
  \(E(b^*)E(b)\le E(b^*b)\) for all \(b\in B\) (see
  \cite{Choi:Schwarz}*{Corollary~2.8}).  Since
  \(\rNull_E=(\lNull_E)^*\), it follows that~\(\rNull_E\) is the
  largest right ideal contained in~\(\ker E\).  Thus~\(\Null_E\) is
  contained in \(\lNull_E\) and~\(\rNull_E\).  If \(b\in \Null_E\),
  then \(b\cdot c \in \Null_E \subseteq \lNull_E\) for all \(c\in B\).
  Thus \(E((b c)^* b c)=0\) for all \(c\in B\).  Conversely, if
  \(E((b c)^* b c)=0\) for all \(c\in B\), then
  \(\overline{b\cdot B} \subseteq \lNull_E\); and then
  \(\overline{B\cdot b\cdot B} \subseteq \lNull_E\)
  because~\(\lNull_E\) is a left ideal.  This implies \(b\in \Null_E\)
  because \(\overline{B\cdot b\cdot B}\) is a two-sided ideal that
  contains~\(b\).  Thus \(b\in \Null_E\) if and only if
  \(E((b c)^* b c)=0\) for all \(c\in B\), if and only if
  \(b\cdot B\subseteq \lNull_E\).  And \(E(x b y)=0\) for all
  \(x,y\in B\) if and only if \(B \cdot b \cdot B \subseteq \ker
  E\), if and only if the closed two-sided ideal generated by~\(b\)
  is contained in \(\ker E\), if and only if \(b\in \Null_E\).
\end{proof}

\begin{definition}
  \label{def:faithful_expectation}
  A generalised expectation \(E\colon B\to \tilde{A} \supseteq A\)
  is \emph{faithful} if \(E(b^* b)=0\) for some \(b\in B\) implies
  \(b=0\).  It is \emph{almost faithful} if \(E((b c)^* b c)=0\) for
  all \(c\in B\) and some \(b\in B\) implies \(b=0\).  It is
  \emph{symmetric} if \(E(b^* b)=0\) for some \(b\in B\) implies
  \(E(b b^*)=0\).
\end{definition}

The concept of an almost faithful positive map plays an important
role in the theory of Exel's crossed products (see
\cite{Brownlowe-Raeburn-Vittadello:Exel}*{Definition~4.1} and
\cite{Kwasniewski:Exel_crossed}*{Subsection~2.1}).

\begin{corollary}
  \label{cor:GNS_kernels}
  Let \(E\colon B\to \tilde{A} \supseteq A\) be a generalised
  expectation.
  \begin{enumerate}
  \item \label{enu:GNS_kernels1}%
    \(E\) is symmetric if and only if
    \(\lNull_E = \rNull_E=\Null_E\);
  \item \label{enu:GNS_kernels2}%
    \(E\) is faithful if and only if
    \(\lNull_E = \rNull_E=\Null_E=0\);
  \item \label{enu:GNS_kernels3}%
    \(E\) is almost faithful if and only if \(\Null_E=0\), if and
    only if there are no non-zero ideals in~\(B\) contained
    in~\(\ker E\);
  \item \label{enu:GNS_kernels4}%
    \(E\) is faithful if and only if~\(E\) is almost faithful and
    symmetric.
  \end{enumerate}
\end{corollary}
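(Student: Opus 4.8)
The plan is to deduce all four statements from Proposition~\ref{pro:GNS_kernels} together with Definitions~\ref{def:reduced_expectation} and~\ref{def:faithful_expectation}; the conceptual work is already done there, so what remains is bookkeeping with the three ideals $\lNull_E$, $\rNull_E$, $\Null_E$.

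First I would translate the three properties into statements about these ideals. Faithfulness of~$E$ means exactly $\lNull_E=0$. Almost faithfulness means $b=0$ whenever $E((bc)^*bc)=0$ for all $c\in B$; by the identity $\Null_E=\setgiven{b\in B}{E((bc)^*bc)=0 \text{ for all }c\in B}$ from Proposition~\ref{pro:GNS_kernels}, this is equivalent to $\Null_E=0$, and since $\Null_E$ is the largest two-sided ideal contained in $\ker E$ (Definition~\ref{def:reduced_expectation}) this is in turn equivalent to there being no non-zero ideal of~$B$ contained in $\ker E$; this proves~\ref{enu:GNS_kernels3}. Symmetry of~$E$ means $\lNull_E\subseteq\rNull_E$; applying the involution and using $\rNull_E=(\lNull_E)^*$ turns this into $\rNull_E\subseteq\lNull_E$ as well, so symmetry is equivalent to $\lNull_E=\rNull_E$.

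For~\ref{enu:GNS_kernels1}, if $\lNull_E=\rNull_E$ then this common subspace is both a left and a right ideal contained in $\ker E$, hence a two-sided ideal contained in $\ker E$, hence contained in $\Null_E$; combined with $\Null_E\subseteq\lNull_E\cap\rNull_E$ from Proposition~\ref{pro:GNS_kernels} we get $\lNull_E=\rNull_E=\Null_E$. The converse is trivial, so~\ref{enu:GNS_kernels1} follows. For~\ref{enu:GNS_kernels2}, faithfulness gives $\lNull_E=0$, whence $\rNull_E=(\lNull_E)^*=0$ and $\Null_E\subseteq\lNull_E=0$; the converse is immediate from the definition.

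Finally~\ref{enu:GNS_kernels4} is formal: if~$E$ is faithful then~\ref{enu:GNS_kernels2} gives $\Null_E=0$, so~$E$ is almost faithful by~\ref{enu:GNS_kernels3} and symmetric by~\ref{enu:GNS_kernels1}; conversely, if~$E$ is almost faithful and symmetric then $\Null_E=0$ by~\ref{enu:GNS_kernels3}, and symmetry together with~\ref{enu:GNS_kernels1} upgrades this to $\lNull_E=\rNull_E=\Null_E=0$, so~$E$ is faithful by~\ref{enu:GNS_kernels2}. I do not expect a genuine obstacle here: the only points requiring slight care are that symmetry ($\lNull_E\subseteq\rNull_E$) self-improves to an equality via the involution, and that equality of the two one-sided kernels makes them two-sided --- both immediate once Proposition~\ref{pro:GNS_kernels} is in hand.
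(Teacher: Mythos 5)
Your proof is correct and follows exactly the route the paper intends: the paper's own proof is just the one-line remark that the corollary ``readily follows from Proposition~\ref{pro:GNS_kernels}'', and your write-up is the natural filling-in of that deduction (translating faithful/almost faithful/symmetric into $\lNull_E=0$, $\Null_E=0$, $\lNull_E\subseteq\rNull_E$, and then combining with $(\lNull_E)^*=\rNull_E$ and the maximality properties). No issues.
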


\begin{proof}
  This readily follows from Proposition~\ref{pro:GNS_kernels}.
\end{proof}

\begin{corollary}
  \label{cor:detects_implies_almost_faithfulness}
  If~\(A\) detects ideals in~\(B\), then~\(E\) is almost faithful.
\end{corollary}

\begin{proof}
  The intersection \(\Null_E \cap A\) is~\(0\) because~\(E\) is the
  identity map on~\(A\).  Therefore, if~\(A\) detects ideals
  in~\(B\), then \(\Null_E = \{0\}\), that is, \(E\) is almost
  faithful.
\end{proof}

\begin{lemma}
  \label{lem:reduced_almost_faithful}
  The reduced generalised expectation~\(E_\red\) is almost faithful.
  It is faithful if and only if~\(E\) is symmetric.
\end{lemma}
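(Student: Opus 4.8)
The plan is to reduce both assertions to the characterisations in Corollary~\ref{cor:GNS_kernels}, using the descriptions of \(\Null_E\), \(\lNull_E\) and \(\rNull_E\) from Proposition~\ref{pro:GNS_kernels} together with the relation \(E_\red\circ\Lambda = E\) and the fact that~\(\Lambda\) is surjective with kernel~\(\Null_E\).

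First I would prove that~\(E_\red\) is almost faithful. By Corollary~\ref{cor:GNS_kernels}\ref{enu:GNS_kernels3} this amounts to showing \(\Null_{E_\red}=0\). Let \(b\in B\). Applying the last description in Proposition~\ref{pro:GNS_kernels} to the expectation~\(E_\red\) and using that every element of~\(B_\red\) is of the form \(\Lambda(c)\) for some \(c\in B\), we get that \(\Lambda(b)\in \Null_{E_\red}\) if and only if \(E_\red\bigl((\Lambda(b)\Lambda(c))^*\Lambda(b)\Lambda(c)\bigr)=0\) for all \(c\in B\); since \(E_\red(\Lambda(x))=E(x)\) for all \(x\in B\), this is equivalent to \(E\bigl((bc)^*bc\bigr)=0\) for all \(c\in B\), hence, by Proposition~\ref{pro:GNS_kernels} once more, to \(b\in \Null_E\), that is, to \(\Lambda(b)=0\). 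Thus \(\Null_{E_\red}=0\), so~\(E_\red\) is almost faithful.

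Next I would handle the faithfulness criterion. By Corollary~\ref{cor:GNS_kernels}\ref{enu:GNS_kernels4} and what we just proved, \(E_\red\) is faithful if and only if it is symmetric, and by Corollary~\ref{cor:GNS_kernels}\ref{enu:GNS_kernels1} the latter holds if and only if \(\lNull_{E_\red}=\rNull_{E_\red}\). Now identify these null spaces: since \(E_\red(\Lambda(b)^*\Lambda(b))=E(b^*b)\) and~\(\Lambda\) is surjective, \(\lNull_{E_\red}=\Lambda(\lNull_E)\), and likewise \(\rNull_{E_\red}=\Lambda(\rNull_E)\). Because \(\ker\Lambda=\Null_E\subseteq \lNull_E\cap\rNull_E\) by Proposition~\ref{pro:GNS_kernels}, taking preimages gives \(\Lambda^{-1}(\lNull_{E_\red})=\lNull_E\) and \(\Lambda^{-1}(\rNull_{E_\red})=\rNull_E\). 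Hence \(\lNull_{E_\red}=\rNull_{E_\red}\) if and only if \(\lNull_E=\rNull_E\), which by Corollary~\ref{cor:GNS_kernels}\ref{enu:GNS_kernels1} is exactly the condition that~\(E\) be symmetric. Combining the two equivalences yields that~\(E_\red\) is faithful if and only if~\(E\) is symmetric.

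There is no serious obstacle here: the argument is entirely a translation through Propositions~\ref{pro:GNS_kernels} and Corollary~\ref{cor:GNS_kernels}. The one point deserving care is the compatibility of the GNS-type null spaces with the quotient by~\(\Null_E\), which rests precisely on the inclusion \(\Null_E\subseteq \lNull_E\cap\rNull_E\); without it one could not conclude that \(\lNull_{E_\red}=\rNull_{E_\red}\) forces \(\lNull_E=\rNull_E\) rather than merely \(\lNull_E+\Null_E=\rNull_E+\Null_E\).
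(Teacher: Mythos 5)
Your argument is correct and follows essentially the same route as the paper: both rest on the identifications \(\Null_{E_\red}=\Lambda(\Null_E)=0\), \(\lNull_{E_\red}=\Lambda(\lNull_E)\), \(\rNull_{E_\red}=\Lambda(\rNull_E)\) together with Proposition~\ref{pro:GNS_kernels} and Corollary~\ref{cor:GNS_kernels}. The only cosmetic point is that when you cite Corollary~\ref{cor:GNS_kernels}\ref{enu:GNS_kernels1} as ``symmetric iff \(\lNull=\rNull\)'' you are implicitly using the (immediate) fact that \(\lNull=\rNull\) is a \(^*\)-closed left ideal, hence two-sided and therefore equal to \(\Null\); this is harmless and does not affect the proof.
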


\begin{proof}
  Proposition~\ref{pro:GNS_kernels} implies
  \(\lNull_{E_\red} =\Lambda(\lNull_E)\),
  \(\rNull_{E_\red} = \Lambda(\rNull_E)\) and
  \(\Null_{E_\red} = \Lambda(\Null_E) = 0\).  This together with
  Corollary~\ref{cor:GNS_kernels} implies all the statements.
\end{proof}

\begin{lemma}
  \label{lem:abstract_uniqueness}
  Let \(E\colon B\to \tilde{A}\supseteq A\) be a generalised
  expectation, and \(\pi\colon B\to C\) a \Star{}homomorphism.  The
  following are equivalent:
  \begin{enumerate}
  \item \label{lem:abstract_uniqueness1}%
    \(\ker \pi \subseteq \Null_E\);
  \item \label{lem:abstract_uniqueness2}%
    there is a \Star{}homomorphism
    \(\varphi\colon \pi(B)\to B/\Null_E\) with
    \(\varphi\circ \pi = \Lambda\colon B \to B/\Null_E\);
  \item \label{lem:abstract_uniqueness3}%
    there is a map \(E_\pi\colon \pi(B)\to \tilde{A}\) with
    \(E_\pi\circ \pi = E\).
  \end{enumerate}
  If these conditions hold, then \(\pi|_A\) is injective,
  \(\varphi\) and~\(E_\pi\) are unique and, identifying~\(A\)
  with~\(\pi(A)\), \( E_\pi\) is a generalised expectation for the
  inclusion \(\pi(A)\subseteq \pi(B)\).  The
  \Star{}homomorphism~\(\varphi\) in~\ref{lem:abstract_uniqueness2}
  is faithful if and only if~\(E_\pi\) is almost faithful.
\end{lemma}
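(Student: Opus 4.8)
The plan is to prove the cycle of implications \ref{lem:abstract_uniqueness1}\(\Rightarrow\)\ref{lem:abstract_uniqueness2}\(\Rightarrow\)\ref{lem:abstract_uniqueness3}\(\Rightarrow\)\ref{lem:abstract_uniqueness1}, and then to deal with the ``uniqueness and extra properties'' tail separately. The first implication \ref{lem:abstract_uniqueness1}\(\Rightarrow\)\ref{lem:abstract_uniqueness2} is the standard universal property of quotient \Star{}homomorphisms: since \(\ker\pi\subseteq\Null_E=\ker\Lambda\), the map \(\Lambda\) factors through \(\pi(B)=B/\ker\pi\), giving a \Star{}homomorphism \(\varphi\colon\pi(B)\to B/\Null_E\) with \(\varphi\circ\pi=\Lambda\); it is automatically surjective because \(\Lambda\) is. For \ref{lem:abstract_uniqueness2}\(\Rightarrow\)\ref{lem:abstract_uniqueness3}, I would set \(E_\pi\defeq E_\red\circ\varphi\), where \(E_\red\colon B/\Null_E\to\tilde A\) is the reduced generalised expectation from Definition~\ref{def:reduced_expectation}; then \(E_\pi\circ\pi=E_\red\circ\varphi\circ\pi=E_\red\circ\Lambda=E\) by construction of \(E_\red\). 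For \ref{lem:abstract_uniqueness3}\(\Rightarrow\)\ref{lem:abstract_uniqueness1}: if \(b\in\ker\pi\), then \(E(b^*b)=E_\pi(\pi(b^*b))=E_\pi(0)=0\) and likewise \(E((bc)^*bc)=E_\pi(\pi((bc)^*bc))=0\) for all \(c\in B\), so \(b\in\Null_E\) by the description of \(\Null_E\) in Proposition~\ref{pro:GNS_kernels}; hence \(\ker\pi\subseteq\Null_E\).

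Assuming now these conditions hold, \(\pi|_A\) is injective because \(\ker\pi\cap A\subseteq\Null_E\cap A=0\) (the latter equality was noted right after Definition~\ref{def:reduced_expectation}, since \(E|_A=\Id_A\) and \(E|_{\Null_E}=0\)). Uniqueness of \(\varphi\) is clear: \(\pi\) is surjective, so a \Star{}homomorphism out of \(\pi(B)\) is determined by its composite with \(\pi\). Uniqueness of \(E_\pi\) follows the same way from surjectivity of \(\pi\) (no positivity or continuity needed beyond what \(E_\pi\) is posited to have, since two maps agreeing on \(\pi(B)=\{\pi(b):b\in B\}\) are equal). That \(E_\pi\) is a generalised expectation for \(\pi(A)\subseteq\pi(B)\) amounts to checking it is completely positive, contractive, and restricts to the identity on \(\pi(A)\cong A\); the cleanest route is to observe that the formula \(E_\pi=E_\red\circ\varphi\) (which, by uniqueness, is the only possibility) exhibits \(E_\pi\) as a composite of a \Star{}homomorphism with the generalised expectation \(E_\red\), and \(E_\red\) is completely positive and contractive and fixes \(A\) pointwise, so \(E_\pi\) inherits all three properties, using that \(\varphi\) maps \(\pi(A)\) identically onto the copy of \(A\) in \(B/\Null_E\).

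Finally, for the faithfulness equivalence: \(\varphi\colon\pi(B)\to B/\Null_E\) is a surjective \Star{}homomorphism, so it is faithful (injective) if and only if \(\ker\varphi=0\), and \(\ker\varphi\) is the image in \(\pi(B)\) of \(\Null_E\), i.e.\ \(\ker\varphi=\pi(\Null_E)\). Now \(E_\pi=E_\red\circ\varphi\) with \(E_\red\) almost faithful (Lemma~\ref{lem:reduced_almost_faithful}). If \(\varphi\) is faithful then it is an isomorphism and \(E_\pi\) is almost faithful because \(E_\red\) is. Conversely, if \(E_\pi\) is almost faithful, then by Corollary~\ref{cor:GNS_kernels}.\ref{enu:GNS_kernels3} there are no non-zero ideals of \(\pi(B)\) contained in \(\ker E_\pi\); but \(\ker\varphi\) is an ideal of \(\pi(B)\) on which \(E_\pi=E_\red\circ\varphi\) vanishes, so \(\ker\varphi=0\), i.e.\ \(\varphi\) is faithful. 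I expect the only mildly delicate point to be this last back-and-forth between ``\(\varphi\) injective'' and ``\(E_\pi\) almost faithful'', where one must be careful to invoke the characterisation of \(\Null_{E_\pi}\) via ideals rather than via the single-element kernel condition; everything else is a routine diagram chase using the universal property of quotients together with Proposition~\ref{pro:GNS_kernels}.
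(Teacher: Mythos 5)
Your proof is correct and follows essentially the same route as the paper: the universal property of the quotient for \ref{lem:abstract_uniqueness1}\(\Leftrightarrow\)\ref{lem:abstract_uniqueness2}, the fact that \(\ker\pi\) is an ideal contained in \(\ker E\) (hence in \(\Null_E\)) for \ref{lem:abstract_uniqueness3}\(\Rightarrow\)\ref{lem:abstract_uniqueness1}, uniqueness from surjectivity of \(\pi\), and the identification \(\ker\varphi=\pi(\Null_E)\) for the faithfulness statement. Your explicit factorisation \(E_\pi=E_\red\circ\varphi\) is a slightly more detailed justification of the paper's terse remark that \(E_\pi\) is automatically a completely positive contraction, but it is the same argument in substance.
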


\begin{proof}
  Since \(\Null_E=\ker \Lambda\), the
  \Star{}homomorphism~\(\Lambda\) descends to a \Star{}homomorphism
  from \(\pi(B) \cong B/\ker\pi\) to \(B/\Null_E\) if and
  only if \(\ker\pi\subseteq \Null_E\).
  Thus~\ref{lem:abstract_uniqueness1} is equivalent
  to~\ref{lem:abstract_uniqueness2}.  A map~\(E_\pi\) as
  in~\ref{lem:abstract_uniqueness3} exists if and only if
  \(E(\ker \pi) = 0\) or, equivalently,
  \(\ker \pi \subseteq \ker E\).  Since~\(\ker\pi\) is an ideal,
  this is equivalent to \(\ker \pi \subseteq \Null_E\).
  Thus~\ref{lem:abstract_uniqueness3} is equivalent
  to~\ref{lem:abstract_uniqueness1}.  Since \(\pi\colon B\to
  \pi(B)\) is surjective,
  the maps \(\varphi\) and~\(E_\pi\) are unique if they exist.
  Then~\(E_\pi\) is automatically a completely positive contraction.
  And~\(E_\pi\) is almost faithful if and only if
  \(\ker \pi = \Null_E\), if and only if~\(\varphi\) is injective.
\end{proof}

Lemma~\ref{lem:abstract_uniqueness} is a key point in the proof of
gauge-equivariant uniqueness theorems.  An action of a compact group
such as~\(\T\) defines a conditional expectation onto the fixed-point
algebra by averaging, and \(\T\)\nb-equivariant maps intertwine these
expectations.  Hence a conditional expectation as in
\ref{lem:abstract_uniqueness}.\ref{lem:abstract_uniqueness3} exists in
the situation of gauge-equivariant uniqueness theorems.

\subsection{Full and reduced crossed products for inverse semigroup
  actions}
\label{sec:crossed_isg}

We fix an action
\(\Hilm=\bigl((\Hilm_t)_{t\in S}, (\mu_{t,u})_{t,u\in S}\bigr)\) of
a unital inverse semigroup~\(S\) on a \(\Cst\)\nb-algebra~\(A\) as
in Definition~\ref{def:S_action_Cstar}.  We recall how the algebraic
crossed product \(A\rtimes_\alg S\), the full crossed product
\(A\rtimes S\) and the reduced crossed product \(A\rtimes_\red S\)
of~\(\Hilm\) are defined in~\cite{Buss-Exel-Meyer:Reduced}.  The
algebraic crossed products defined in~\cite{Buss-Exel-Meyer:Reduced}
and in~\cite{Exel:noncomm.cartan} differ.  The definition
in~\cite{Buss-Exel-Meyer:Reduced} has the merit that the canonical
maps from \(A\rtimes_\alg S\) to \(A\rtimes S\) and
\(A\rtimes_\red S\) are injective.

For any \(t\in S\), let \(\rg(\Hilm_t)\) and~\(\s(\Hilm_t)\) be the
ideals in~\(A\) generated by the left and right inner products of
elements in~\(\Hilm_t\), respectively.  Thus~\(\Hilm_t\) is an
\(\rg(\Hilm_t)\)-\(\s(\Hilm_t)\)-imprimitivity bimodule.  These
ideals satisfy
\[
  \s(\Hilm_t) = \s(\Hilm_{t^*t})
  = \rg(\Hilm_{t^*t})
  = \rg(\Hilm_{t^*}).
\]
If \(v\le t\), then the inclusion map~\(j_{t,v}\) restricts to a
Hilbert bimodule isomorphism from~\(\Hilm_v\) onto
\(\rg(\Hilm_v) \cdot \Hilm_t = \Hilm_t\cdot \s(\Hilm_v)\).  For
\(t,u\in S\) and \(v\le t,u\), this gives Hilbert bimodule
isomorphisms
\[
  \vartheta^v_{u,t}\colon
  \Hilm_t\cdot \s(\Hilm_v) \xrightarrow[\cong]{j^{-1}_{t,v}}
  \Hilm_v \xrightarrow[\cong]{j_{u,v}}
  \Hilm_u\cdot \s(\Hilm_v).
\]
Let
\begin{equation}
  \label{eq:Itu}
  I_{t,u} \defeq \overline{\sum_{v \le t,u} \s(\Hilm_v)}
\end{equation}
be the closed ideal generated by~\(\s(\Hilm_v)\) for \(v\le t,u\).
This is contained in \(\s(\Hilm_t)\cap\s(\Hilm_u)\), and the
inclusion may be strict.  There is a unique Hilbert bimodule
isomorphism
\begin{equation}
  \label{eq:Def-thetas}
  \vartheta_{u,t}\colon \Hilm_t\cdot I_{t,u}
  \congto \Hilm_u\cdot I_{t,u}
\end{equation}
that restricts to~\(\vartheta_{u,t}^v\) on
\(\Hilm_t\cdot \s(\Hilm_v)\) for all \(v\le t,u\) by
\cite{Buss-Exel-Meyer:Reduced}*{Lemma~2.4}.

Let \(A\rtimes_\alg S\) be the quotient vector space of
\(\bigoplus_{t\in S} \Hilm_t\) by the linear span of
\(\vartheta_{u,t}(\xi)\delta_u-\xi\delta_t\) for all \(t,u\in S\)
and \(\xi\in\Hilm_t\cdot I_{t,u}\).  The multiplication
maps~\(\mu_{t,u}\) and the involutions \(\Hilm_t^* \to \Hilm_{t^*}\)
turn \(A\rtimes_\alg S\) into a \Star{}algebra.

\begin{definition}[\cite{Buss-Exel-Meyer:Reduced}]
  The \emph{\textup{(}full\textup{)} crossed product} \(A\rtimes S\)
  of the action~\(\Hilm\) is the maximal \(\Cst\)\nb-completion of
  the \Star{}algebra \(A\rtimes_\alg S\) described above.
\end{definition}

The \(\Cst\)\nb-algebra \(A\rtimes S\) is canonically isomorphic to
the full section \(\Cst\)\nb-algebra of the Fell bundle over~\(S\)
corresponding to~\(\Hilm\) introduced in~\cite{Exel:noncomm.cartan}.
It is also characterised by a universal property.

\begin{definition}
  \label{def:representation}
  A \emph{representation} of~\(\Hilm\) in a
  \(\Cst\)\nb-algebra~\(B\) is a family of linear maps
  \(\pi_t\colon \Hilm_t\to B\) for \(t\in S\) such that
  \(\pi_{tu}(\mu_{t,u}(\xi\otimes\eta)) = \pi_t(\xi)\pi_u(\eta)\),
  \(\pi_t(\xi_1)^*\pi_t(\xi_2) = \pi_1(\braket{\xi_1}{\xi_2})\) and
  \(\pi_t(\xi_1)\pi_t(\xi_2)^* = \pi_1(\BRAKET{\xi_1}{\xi_2})\) for
  all \(t,u\in S\), \(\xi, \xi_1,\xi_2 \in\Hilm_t\),
  \(\eta\in\Hilm_u\).
  Here \(\braket{\cdot}{\cdot}\) and \(\BRAKET{\cdot}{\cdot}\)
  denote the right and left inner products, respectively.
\end{definition}

\begin{remark}
  \label{rem:representation_vs_grading}
  A representation~\(\pi\) of~\(\Hilm\) in~\(B\) induces a
  \Star{}homomorphism \(A\rtimes S\to B\) and, conversely, every
  \Star{}homomorphism \(A\rtimes S\to B\) is of this form for a
  unique representation~\(\pi\) (compare
  \cite{Buss-Exel-Meyer:Reduced}*{Proposition~2.9}).  This universal
  property determines \(A\rtimes S\) uniquely up to isomorphism.
  Let~\(B\) be a \(\Cst\)\nb-algebra with an \(S\)\nb-grading
  \((B_t)_{t\in S}\).  Then~\((B_t)_{t\in S}\) is a Fell bundle
  over~\(S\), and the inclusion maps \(B_t\to B\) are a
  representation of this Fell bundle.  The induced
  \Star{}homomorphism \(A\rtimes S \to B\) is surjective because
  \(\sum B_t\) is dense in~\(B\).  And its restriction to
  \(A\subseteq A\rtimes S\) is injective by construction.
  Conversely, any surjective \Star{}homomorphism
  \(A\rtimes S \to B\) that restricts to an injective map on
  \(A\subseteq A\rtimes S\) comes from a unique \(S\)\nb-grading
  on~\(B\) because \(A\rtimes  S\) is \(S\)\nb-graded by the images
  of the Hilbert bimodules~\(\Hilm_t\) for \(t\in S\).
\end{remark}

The reduced section \(\Cst\)\nb-algebra of a Fell bundle over~\(S\)
is introduced in~\cite{Exel:noncomm.cartan}.  An equivalent
definition appears in~\cite{Buss-Exel-Meyer:Reduced}, where it is
called the reduced crossed product \(A\rtimes_\red S\) of the
action~\(\Hilm\).  The main ingredient in the construction
in~\cite{Buss-Exel-Meyer:Reduced} is a canonical conditional
expectation \(E\colon A''\rtimes_\alg S \to A''\), involving the
unique normal extension of the \(S\)\nb-action~\(\Hilm\) to~\(A''\).
It is defined in \cite{Buss-Exel-Meyer:Reduced}*{Lemma~4.5} through
the formula
\begin{equation}
  \label{eq:formula-cond.exp}
  E(\xi\delta_t)=\stlim_i\vartheta_{1,t}(\xi\cdot u_i)
\end{equation}
for \(\xi\in \Hilm_t\) and \(t\in S\), where~\((u_i)\) is an
approximate unit for~\(I_{1,t}\) and \(\stlim\) denotes the limit in
the strict topology on \(\Mult(I_{1,t})\subseteq A''\).   In fact,
this net also converges in the strong topology on~\(A''\).  Let
\(x\in A\rtimes_\alg S\).  Then \(E(x^* x) \ge 0\), and
\(E(x^* x)=0\) implies \(x=0\) (see
\cite{Buss-Exel-Meyer:Reduced}*{Proposition~3.6}).  Thus
\(A''\rtimes_\alg S\) may be completed to a Hilbert
\(A''\)\nb-module \(\ell^2(S,A'')\) using the inner product
\(\braket{x}{y} \defeq E(x^* y)\).  The action of
\(A\rtimes_\alg S\) on \(A''\rtimes_\alg S\) by left multiplication
extends to a non-degenerate representation of \(A\rtimes S\) on
\(\ell^2(S,A'')\).

\begin{definition}[\cite{Buss-Exel-Meyer:Reduced}]
  \label{def:reduced_crossed}
  The \emph{reduced crossed product} \(A\rtimes_\red S\) of the
  action~\(\Hilm\) is the image of \(A\rtimes S\) in
  \(\Bound(\ell^2(S,A''))\).  Let
  \(\Lambda\colon A\rtimes S \to A\rtimes_\red S\) be the canonical
  map.
\end{definition}

This definition agrees with the one
in~\cite{Buss-Exel-Meyer:Reduced} by
\cite{Buss-Exel-Meyer:Reduced}*{Remark~4.2}.

\begin{proposition}
  \label{prop:reduced_through_E}
  The map \(E\colon A\rtimes_\alg S \to A''\) extends to a weak
  conditional expectation \(A\rtimes S \to A''\), which we also denote
  by~\(E\).  Moreover, \(\Null_E = \ker\Lambda\).  Thus
  \[
    A\rtimes_\red S \cong (A\rtimes S)/\Null_E
  \]
  is the reduced quotient of~\(A\rtimes S\) for the
  generalised expectation~\(E\), and \(E\) descends to an almost
  faithful weak conditional expectation
  \(E_\red\colon A\rtimes_\red S \to A''\).
\end{proposition}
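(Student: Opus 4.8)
The plan is to construct~\(E\) on the full crossed product from the defining representation of \(A\rtimes_\red S\) on \(\ell^2(S,A'')\), and then to obtain the identity \(\Null_E=\ker\Lambda\) by reducing it to an almost-faithfulness statement about the reduced crossed product. For the extension step, I would note that \(V\colon A''\to\ell^2(S,A'')\), \(a\mapsto a\cdot(1_{A''}\delta_1)\), is an adjointable isometric embedding of Hilbert \(A''\)\nobreakdash-modules, with adjoint determined by \(V^*(\eta\delta_t)=E(\eta\delta_t)\); this follows from formula~\eqref{eq:formula-cond.exp} together with the fact that the canonical expectation \(E\colon A''\rtimes_\alg S\to A''\) is an \(A''\)\nobreakdash-bimodule map. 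Then \(E_\red\) (the map \(b\mapsto V^*bV\) on \(A\rtimes_\red S\)) is a completely positive contraction, and a direct computation shows \(E_\red\circ\Lambda\) restricts to \(\Id_A\) on~\(A\) and agrees with the given~\(E\) on \(A\rtimes_\alg S\). Thus \(E\defeq E_\red\circ\Lambda\colon A\rtimes S\to A''\) is the asserted weak conditional expectation extending the algebraic~\(E\), and by construction \(E=E_\red\circ\Lambda\) with \(E_\red\) a weak conditional expectation for \(A\subseteq A\rtimes_\red S\).

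Next, \(\Null_E=\ker\Lambda\). Since \(E\) factors through~\(\Lambda\), the ideal \(\ker\Lambda\) lies in \(\ker E\), hence in \(\Null_E\). Conversely, surjectivity of~\(\Lambda\) gives \(\Null_E=\Lambda^{-1}(\Null_{E_\red})\); indeed this is exactly Lemma~\ref{lem:abstract_uniqueness} applied to \(\pi=\Lambda\), where condition~\ref{lem:abstract_uniqueness3} holds with \(E_\pi=E_\red\), so that the induced map \(A\rtimes_\red S\to(A\rtimes S)/\Null_E\) is faithful precisely when \(\Null_E=\ker\Lambda\). Consequently \(\Null_E=\ker\Lambda\) is equivalent to \(\Null_{E_\red}=0\), i.e.\ to \(E_\red\) being almost faithful on \(A\rtimes_\red S\) (Corollary~\ref{cor:GNS_kernels}.\ref{enu:GNS_kernels3}). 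By Proposition~\ref{pro:GNS_kernels} and the formula for~\(E_\red\), any \(b\in\Null_{E_\red}\) annihilates \(\overline{(A\rtimes_\red S)\cdot(1_{A''}\delta_1)}\); since \(\Lambda(c)\cdot(1_{A''}\delta_1)=c\) for \(c\in A\rtimes_\alg S\), it annihilates \(A\rtimes_\alg S\subseteq\ell^2(S,A'')\), and then, being adjointable and hence \(A''\)\nobreakdash-linear, it annihilates the \(A''\)\nobreakdash-submodule these elements generate. Invoking \cite{Buss-Exel-Meyer:Reduced}*{Proposition~3.6} -- the \(A''\)\nobreakdash-valued pairing \(\braket{x}{y}=E(x^*y)\) is definite -- together with a density argument in \(\ell^2(S,A'')\), one concludes \(b=0\); hence \(\Null_{E_\red}=0\) and \(\Null_E=\ker\Lambda\).

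The remaining assertions are then formal. From \(\Null_E=\ker\Lambda\) we get \(A\rtimes_\red S=\Lambda(A\rtimes S)\cong(A\rtimes S)/\Null_E\), which is by Definition~\ref{def:reduced_expectation} the reduced quotient of \(B=A\rtimes S\) for the generalised expectation~\(E\); the generalised expectation it carries is the descent of~\(E\), namely~\(E_\red\), and this is almost faithful by Lemma~\ref{lem:reduced_almost_faithful}.

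I expect the one genuinely delicate point to be the almost faithfulness of~\(E_\red\) on \(A\rtimes_\red S\): upgrading the algebraic faithfulness of the \(A''\)\nobreakdash-pairing inside \(A''\rtimes_\alg S\) -- where the \cstar fibres~\(\Hilm_t\) are only \(\sigma\)\nobreakdash-weakly, not norm, dense in the bidual fibres~\(\Hilm_t''\) -- to the statement that \(\ker E_\red\) contains no non-zero ideal of the \cstar algebra \(A\rtimes_\red S\) requires a careful density argument in the Hilbert module \(\ell^2(S,A'')\), and this is essentially the technical work carried out in~\cite{Buss-Exel-Meyer:Reduced}. Everything else in the statement follows by pure formalism from Section~\ref{sec:prelim}.
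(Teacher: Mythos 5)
Your construction of \(E\) is essentially the paper's: compressing by the vector \(1_{A''}\delta_1\) (your isometry \(V\)) gives \(E(b)=\braket{1\delta_1}{b\cdot 1\delta_1}\), and the formal reductions via Lemma~\ref{lem:abstract_uniqueness} and Proposition~\ref{pro:GNS_kernels} correctly boil everything down to showing \(\Null_{E_\red}=0\), i.e.\ that \(b\,c\,(1\delta_1)=0\) for all \(c\in A\rtimes_\red S\) forces \(b=0\). That last step is where your argument has a genuine gap. You deduce that \(b\) annihilates \(A\rtimes_\alg S\subseteq \ell^2(S,A'')\) and then, by right \(A''\)\nb-linearity, the closed \(A''\)\nb-submodule it generates. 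But that submodule is \(\overline{(A\rtimes_\alg S)\cdot A''}\), whose fibres are (contained in) the norm closures of \(\Hilm_t\cdot A''\) inside \(\Hilm_t''\) -- and these are proper in general. Already for \(A=c_0\) and \(\Hilm_1=A\) one has \(\Hilm_1\cdot A''=c_0\cdot\ell^\infty=c_0\subsetneq\ell^\infty=\Hilm_1''\). So an adjointable operator can kill this submodule without being zero, and neither \(A''\)\nb-linearity nor the definiteness of the pairing from \cite{Buss-Exel-Meyer:Reduced}*{Proposition~3.6} bridges the distance from \(A\rtimes_\alg S\) to a \emph{norm-dense} subspace of \(\ell^2(S,A'')\). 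Your closing remark that this is ``essentially the technical work carried out in \cite{Buss-Exel-Meyer:Reduced}'' is not accurate: that proposition only gives definiteness of the inner product on the algebraic crossed product (which is what makes \(\ell^2(S,A'')\) a Hilbert module in the first place) and does not address this density issue.

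The missing ingredient is the \emph{normality} of \(E\), which you never invoke. The paper's route is: \(\Lambda(b)=0\) iff \(E(c^*b^*bc)=0\) for all \(c\) in the norm-dense subspace \(A''\rtimes_\alg S\) of \(\ell^2(S,A'')\); since \(E\) restricted to each fibre \(\Hilm_u''\) is normal, the positive sesquilinear form \((c,d)\mapsto E(c^*b^*b\,d)\) is separately \(\sigma\)\nb-weakly continuous, so its vanishing on the \(\sigma\)\nb-weakly dense subspace \(A\rtimes_\alg S\subseteq A''\rtimes_\alg S\) already forces it to vanish on all of \(A''\rtimes_\alg S\). That weak-continuity step is exactly what replaces your (unavailable) norm-density claim, and it is the one substantive idea your proposal is missing. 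Everything else in your write-up is correct and matches the paper's argument.
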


\begin{proof}
  The unit element of~\(A''\) gives an element~\(1\) of
  \(A''\rtimes_\alg S \subseteq \Bound(\ell^2(S,A''))\), and
  \(\braket{1}{b\cdot 1} = E(b)\) for all \(b\in A\rtimes S\).  This
  provides the unique extension of~\(E\) to a completely positive,
  contractive map \(A\rtimes S \to A''\), which we also denote
  by~\(E\).  By construction, an element \(b\in A\rtimes S\)
  satisfies \(\Lambda(b)=0\) if and only if \(b\cdot c=0\) for all
  \(c\in \ell^2(S,A'')\).  This is equivalent to
  \(E(c^* b^* b c) = \braket{b\cdot c}{b\cdot c} = 0\) for all
  \(c\in \ell^2(S,A'')\).  This follows once it holds for all~\(c\)
  in the dense subspace \(A'' \rtimes_\alg S\).  Since~\(E\) is
  normal, we may further reduce to the weakly dense subspace
  \(A \rtimes_\alg S\).  This is norm dense in \(A \rtimes S\).  So
  \(\Lambda(b)=0\) if and only if \(E(c^* b^* b c) = 0\) for all
  \(c\in A \rtimes S\).  Then Proposition~\ref{pro:GNS_kernels} implies
  \(\ker \Lambda = \Null_E\).  Hence the weak conditional
  expectation on \(A \rtimes S\) descends to one on
  \(A \rtimes_\red S\), which is almost faithful.
\end{proof}

\begin{remark}
  \label{rem:cross_product_graded}
  The canonical map from \(A\rtimes_\alg S\) to
  \(A\rtimes_\red S\) is injective by
  \cite{Buss-Exel-Meyer:Reduced}*{Proposition~4.3}.
\end{remark}

\begin{remark}
  \label{rem:make_saturated}
  Let \((\A_t)_{t\in S}\) be a non-saturated Fell bundle over a
  unital inverse semigroup~\(S\).  It is turned into a saturated
  Fell bundle \((\Hilm_t)_{t\in \tilde{S}}\) over another inverse
  semigroup~\(\tilde{S}\)
  in~\cite{BussExel:InverseSemigroupExpansions}.  This construction
  does not change the full and reduced section \(\Cst\)\nb-algebras
  by \cite{BussExel:InverseSemigroupExpansions}*{Theorem~7.2}.
  Therefore, we usually restrict attention to saturated Fell
  bundles, which we replace by inverse semigroup actions as in
  Definition~\ref{def:S_action_Cstar}.
\end{remark}

We recall some known conditions for \(E\colon A\rtimes S \to A''\) to
be \(A\)\nb-valued, that is, a genuine conditional expectation.

\begin{proposition}[\cite{Buss-Exel-Meyer:Reduced}*{Proposition~6.3}]
  \label{pro:A_tilde_A}
  The map~\(E\) is \(A\)\nb-valued if and only if the
  ideal~\(I_{1,t}\) defined in~\eqref{eq:Itu} is complemented in the
  larger ideal~\(\s(\Hilm_t)\) for each \(t\in S\).
\end{proposition}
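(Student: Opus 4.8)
The plan is to compute $E(\xi\delta_t)$ for $\xi\in\Hilm_t$ straight from the defining formula~\eqref{eq:formula-cond.exp} and to read off when the result stays in $A$. Since $E$ is contractive and $A\rtimes_\alg S=\operatorname{span}\setgiven{\xi\delta_t}{t\in S,\ \xi\in\Hilm_t}$ is dense in $A\rtimes S$, the map $E$ is $A$\nb-valued exactly when $E(\xi\delta_t)\in A$ for all $t\in S$ and all $\xi\in\Hilm_t$. The forward implication is easy: if $\s(\Hilm_t)=I_{1,t}\oplus K$ for an ideal~$K$, there is a central projection $e\in\Mult(\s(\Hilm_t))$ with $e\s(\Hilm_t)=I_{1,t}$; for an approximate unit $(u_i)$ of $I_{1,t}$ we have $u_i=eu_i$, hence $\xi u_i=\xi e u_i\to\xi e\in\Hilm_t\cdot I_{1,t}$ in norm, and since $\vartheta_{1,t}$ is isometric on $\Hilm_t\cdot I_{1,t}$ the strict limit in~\eqref{eq:formula-cond.exp} equals the norm limit $\vartheta_{1,t}(\xi e)\in I_{1,t}\subseteq A$.

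For the converse I would pass to the bidual. Let $z_t\in A''$ be the open central projection of the ideal $I_{1,t}$, i.e.\ the weak-\(*\) limit of $(u_i)$. Then $\xi u_i\to\xi z_t$ weak-\(*\) in $\Hilm_t''$, and since $\vartheta_{1,t}$ extends to a weak-\(*\) continuous isomorphism $(\Hilm_t\cdot I_{1,t})''\congto I_{1,t}''=z_tA''$, formula~\eqref{eq:formula-cond.exp} gives $E(\xi\delta_t)=\vartheta_{1,t}''(\xi z_t)\in z_tA''$. I would then use that $z_tA''\cap A=I_{1,t}$ — because $z_t$ maps to $0$ under $A''\onto(A/I_{1,t})''$, so $a\in A\cap z_tA''$ satisfies $a=z_ta\mapsto0$ in $A/I_{1,t}$ — together with the fact that $\vartheta_{1,t}''$ carries $\Hilm_t\cdot I_{1,t}$ onto $I_{1,t}$. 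Hence $E(\xi\delta_t)\in A$ if and only if $\xi z_t\in\Hilm_t\cdot I_{1,t}$. Writing $J_t\defeq\setgiven{a\in\s(\Hilm_t)}{aI_{1,t}=0}$ and decomposing $\xi=\xi z_t+\xi(1-z_t)$ in $\Hilm_t''$ (legitimate since $\Hilm_t=\Hilm_t\cdot\s(\Hilm_t)$), one checks that $\xi z_t\in\Hilm_t\cdot I_{1,t}$ forces $\xi(1-z_t)\in\Hilm_t\cap\Hilm_t''(1-z_t)=\Hilm_t\cdot J_t$ (the last identity by the routine approximate-unit argument $\eta\langle\eta,\eta\rangle^{1/n}\to\eta$, noting $\langle\eta,\eta\rangle\in J_t$). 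So ``$\xi z_t\in\Hilm_t\cdot I_{1,t}$ for all $\xi\in\Hilm_t$'' is equivalent to $\Hilm_t=\Hilm_t\cdot I_{1,t}\oplus\Hilm_t\cdot J_t$, the sum being automatically direct since $\langle\Hilm_t\cdot I_{1,t},\Hilm_t\cdot J_t\rangle\subseteq I_{1,t}\s(\Hilm_t)J_t=0$.

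To finish I would use that $\Hilm_t$ is an $\rg(\Hilm_t)$-$\s(\Hilm_t)$-imprimitivity bimodule, hence a full right Hilbert $\s(\Hilm_t)$\nb-module: then $\Hilm_t\cdot I_{1,t}\oplus\Hilm_t\cdot J_t=\Hilm_t\cdot(I_{1,t}+J_t)$ (the ideal $I_{1,t}+J_t$ is closed), and taking closed linear spans of inner products together with fullness shows that this equals $\Hilm_t$ if and only if $I_{1,t}+J_t=\s(\Hilm_t)$, i.e.\ $I_{1,t}$ is complemented in $\s(\Hilm_t)$; any complement of the ideal $I_{1,t}$ must be its annihilator $J_t$, again by an approximate-unit argument. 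Quantifying over $t\in S$ yields the proposition. The main obstacle is the converse direction, specifically making the bidual picture precise: the identification $E(\xi\delta_t)=\vartheta_{1,t}''(\xi z_t)$, the equality $z_tA''\cap A=I_{1,t}$, and the module-theoretic bookkeeping that upgrades ``$\xi z_t$ stays inside $\Hilm_t$ for every $\xi$'' to a direct-sum decomposition of $\s(\Hilm_t)$.
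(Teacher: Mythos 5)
Your argument is correct; the paper does not reprove this proposition (it is quoted from Buss--Exel--Meyer), but your route --- computing \(E(\xi\delta_t)=\vartheta_{1,t}''(\xi\cdot[I_{1,t}])\) in the bidual, noting \([I_{1,t}]A''\cap A=I_{1,t}\), and reading off that \(E\) is \(A\)\nb-valued precisely when \(\Hilm_t\) splits as \(\Hilm_t\cdot I_{1,t}\oplus\Hilm_t\cdot I_{1,t}^\bot\) --- is exactly the mechanism the paper records in \eqref{eq:decompose_Hilm_closed} and reuses in the proof of Theorem~\ref{the:crossed_expectation_faithful}. Your \(J_t\) is the paper's \(\s(\Hilm_t)\cap I_{1,t}^\bot\), and the final fullness argument identifying \(\Hilm_t\cdot(I_{1,t}+J_t)=\Hilm_t\) with \(I_{1,t}+J_t=\s(\Hilm_t)\) correctly closes the loop.
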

 There is a largest ideal
\(J\in\Ideals(A)\) with \(J\cap I_{1,t}=\{0\}\), namely,
\begin{equation}
  \label{eq:I1t_orthogonal}
  I_{1,t}^\bot \defeq \setgiven{x\in A}{x\cdot I_{1,t}=0}.
\end{equation}
Therefore, \(I_{1,t}\) is complemented in~\(\s(\Hilm_t)\) if and
only if
\(\s(\Hilm_t) = I_{1,t} \oplus (\s(\Hilm_t) \cap I_{1,t}^\bot)\).
Then
\begin{equation}
  \label{eq:decompose_Hilm_closed}
  \Hilm_t = (\Hilm_t \cdot I_{1,t}) \oplus (\Hilm_t \cdot I_{1,t}^\bot)
  \xrightarrow[\cong]{\vartheta_{1,t} \oplus \Id}
  I_{1,t} \oplus (\Hilm_t \cdot I_{1,t}^\bot),
\end{equation}
and \(E|_{\Hilm_t}\) is the orthogonal projection onto the
summand~\(I_{1,t}\) by~\eqref{eq:formula-cond.exp}.

\begin{proposition}
  \label{prop:conditional_expectation}
  Let~\(\Hilm\) be an action of~\(S\) on~\(A\).  The following are
  equivalent:
  \begin{enumerate}
  \item \label{prop:conditional_expectation1}%
    the weak conditional expectation \(E\colon A\rtimes S\to A''\)
    is \(A\)\nb-valued;
  \item \label{prop:conditional_expectation2}%
    the subset of units~\(\widecheck{A}\) is closed in
    \(\widecheck{A}\rtimes S\);
  \item \label{prop:conditional_expectation3}%
    the action is closed, that is, the dual \(S\)\nb-action
    on~\(\dual{A}\) is closed.
  \end{enumerate}
\end{proposition}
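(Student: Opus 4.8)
The plan is to prove that each of \ref{prop:conditional_expectation1}, \ref{prop:conditional_expectation2} and~\ref{prop:conditional_expectation3} is equivalent to the purely \(\Cst\)\nb-algebraic statement that for every \(t\in S\) the ideal~\(I_{1,t}\) of~\eqref{eq:Itu} is complemented in the ideal~\(\s(\Hilm_t)\).  By Proposition~\ref{pro:A_tilde_A} this statement is precisely~\ref{prop:conditional_expectation1}, so it remains to match it with~\ref{prop:conditional_expectation2} and~\ref{prop:conditional_expectation3} through a small dictionary between ideals and the dual actions.

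First I would record a general fact: for closed ideals \(I\subseteq J\) in a \(\Cst\)\nb-algebra~\(A\), the ideal~\(I\) is complemented in~\(J\) if and only if the open subset \(\dual{I}\subseteq\dual{A}\) is relatively closed in the open subset~\(\dual{J}\), if and only if \(\widecheck{I}\subseteq\widecheck{A}\) is relatively closed in~\(\widecheck{J}\).  Indeed, a decomposition \(J=I\oplus K\) gives \(\dual{J}=\dual{I}\sqcup\dual{K}\) with both pieces open, so \(\dual{I}\) is even relatively clopen in~\(\dual{J}\).  Conversely, if \(\dual{I}\) is relatively closed in~\(\dual{J}\), then \(\dual{J}\setminus\dual{I}\) is open in~\(\dual{A}\) (as \(\dual{J}\) is), hence of the form~\(\dual{K}\) for a unique ideal~\(K\); then \(I\cap K=0\) and \(\overline{I+K}=J\) follow from the lattice isomorphism between ideals of~\(A\) and open subsets of~\(\dual{A}\), while \(I+K\) is already closed because a sum of two closed ideals in a \(\Cst\)\nb-algebra is closed, so \(J=I\oplus K\).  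The argument for~\(\widecheck{A}\) is identical.

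Next I would unravel the domains of the two dual actions.  For the dual \(S\)\nb-action on~\(\dual{A}\), the domain of~\(\dual{\Hilm_t}\) is \(\dual{\s(\Hilm_t)}\), so, in the notation of Definition~\ref{def:closed_action_A} applied to this action, \(\D_t=\dual{\s(\Hilm_t)}\) and
\[
  \D_{1,t}=\bigcup_{e\le t,\ e\in E(S)}\dual{\s(\Hilm_e)}=\dual{I_{1,t}},
\]
the last equality because this union corresponds, under the ideal/open-subset dictionary, to the ideal \(\overline{\sum_{e\le t,\ e\in E(S)}\s(\Hilm_e)}\), which is~\(I_{1,t}\) by~\eqref{eq:Itu} (note \(v\le 1\) in~\(S\) means exactly \(v\in E(S)\)).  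The same computation with~\(\widecheck{A}\) in place of~\(\dual{A}\) describes the dual \(S\)\nb-action on~\(\widecheck{A}\).  Combining with the first step, ``\(I_{1,t}\) is complemented in~\(\s(\Hilm_t)\) for all~\(t\)'' is equivalent to ``\(\D_{1,t}\) is relatively closed in~\(\D_t\) for all~\(t\)'', both for the action on~\(\dual{A}\) and for the action on~\(\widecheck{A}\); that is, it is equivalent to the dual action on~\(\dual{A}\) being closed --- which is~\ref{prop:conditional_expectation3} by Definition~\ref{def:Hausdorffness} --- and equivalent to the dual action on~\(\widecheck{A}\) being closed.

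Finally, Lemma~\ref{lem:closed_groupoid_vs_action} applied to the action of~\(S\) on the space~\(\widecheck{A}\) says that this action is closed if and only if~\(\widecheck{A}\) is closed in \(\widecheck{A}\rtimes S\), which is~\ref{prop:conditional_expectation2}; chaining the three equivalences yields \ref{prop:conditional_expectation1}\(\Leftrightarrow\)\ref{prop:conditional_expectation2}\(\Leftrightarrow\)\ref{prop:conditional_expectation3}.  I expect the only mildly delicate points to be the standard \(\Cst\)\nb-algebraic fact that a sum of two closed ideals is again closed --- which is what allows passing from a relatively closed open subset back to an honest complemented ideal --- and the bookkeeping identity \(\D_{1,t}=\dual{I_{1,t}}\); everything else is the established dictionary between ideals and the primitive ideal space together with results already proved in the excerpt, so I do not anticipate any serious obstacle.
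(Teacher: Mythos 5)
Your proof is correct, and it takes a noticeably different route from the paper's. The paper disposes of \ref{prop:conditional_expectation1}\(\Leftrightarrow\)\ref{prop:conditional_expectation2} by citing \cite{Buss-Exel-Meyer:Reduced}*{Theorem~6.5}, and then proves \ref{prop:conditional_expectation2}\(\Leftrightarrow\)\ref{prop:conditional_expectation3} by constructing the groupoid homomorphism \(\kappa\colon \dual{A}\rtimes S \to \widecheck{A}\rtimes S\), \([t,[\pi]]\mapsto [t,\ker\pi]\), observing that it is continuous, open and closed (because \(\dual{A}\) and \(\widecheck{A}\) share \(\Ideals(A)\) as their lattice of open subsets) and that \(\kappa^{-1}(\widecheck{A})=\dual{A}\), so closedness of the unit space transfers between the two transformation groupoids. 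You instead funnel all three conditions through the single \(\Cst\)\nb-algebraic statement ``\(I_{1,t}\) is complemented in \(\s(\Hilm_t)\) for all \(t\)'': Proposition~\ref{pro:A_tilde_A} identifies this with~\ref{prop:conditional_expectation1}, your dictionary between complemented ideals and relatively clopen open subsets (where the nontrivial ingredient is that a sum of two closed ideals is closed) together with the identification \(\D_{1,t}=\dual{I_{1,t}}\) identifies it with closedness of either dual action, and Lemma~\ref{lem:closed_groupoid_vs_action} converts the latter into~\ref{prop:conditional_expectation2}. What your approach buys is self-containedness --- you effectively reprove the cited BEM Theorem~6.5 from the stated Proposition~\ref{pro:A_tilde_A} rather than quoting it --- and you avoid introducing \(\kappa\) altogether, since the observation that \(\dual{A}\) and \(\widecheck{A}\) have the same open-set lattice lets you run the identical ideal-theoretic computation on both spaces. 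The paper's \(\kappa\) is slightly more robust in that it compares the two groupoids directly without reference to the specific form of \(\D_{1,t}\), but for this proposition your bookkeeping identity \(\D_{1,t}=\dual{I_{1,t}}\) (which holds because \(v\le 1\) means exactly \(v\in E(S)\), so the union defining \(\D_{1,t}\) corresponds to the ideal \(I_{1,t}\) of~\eqref{eq:Itu}) does all the work cleanly. All steps check out.
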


\begin{proof}
  The equivalence of the first two statements is
  \cite{Buss-Exel-Meyer:Reduced}*{Theorem 6.5}.  The map
  \(\dual{A}\ni [\pi]\mapsto \ker[\pi]\in \widecheck{A}\)
  intertwines the inverse semigroup actions
  \((\dual{\Hilm}_t)_{t\in S}\) and
  \((\widecheck{\Hilm}_t)_{t\in S}\).  It is continuous, open and
  closed because the spaces \(\widecheck{A}\) and~\(\dual{A}\) both
  have~\(\Ideals(A)\) as their lattice of open subsets.  So the map
  above extends to a continuous, open and closed groupoid
  homomorphism
  \[
  \kappa\colon \dual{A}\rtimes S \to \widecheck{A}\rtimes S,\qquad
  [t,[\pi]]\mapsto [t,\ker \pi].
  \]
  Since \(\kappa^{-1}(\widecheck{A})=\dual{A}\), the space of
  units~\(\dual{A}\) is closed in \(\dual{A}\rtimes S\) if and only
  if~\(\widecheck{A}\) is closed in \(\widecheck{A}\rtimes S\).  Now
  \ref{prop:conditional_expectation2}
  and~\ref{prop:conditional_expectation3} are equivalent by
  Lemma~\ref{lem:closed_groupoid_vs_action}.
\end{proof}

\subsection{The reduced weak conditional expectation is faithful}
\label{sec:faithful_weak_expectation}

It is claimed in~\cite{Buss-Exel-Meyer:Reduced} that the weak
conditional expectation~\(E\) becomes faithful on \(A\rtimes_\red S\).
But in \cite{Buss-Exel-Meyer:Reduced}*{Proposition~3.6}, this is shown
only for its restriction to \(A\rtimes_\alg S\).  Here we fill this
gap.

\begin{lemma}
  \label{lem:atomic_weak_expectation}
  Let
  \(\varrho\colon A'' \to \prod_{\pi\in\dual{A}} \Bound(\Hils_\pi)\)
  be the projection to the direct sum of all irreducible
  representations in the universal representation of~\(A''\).  Then
  \(\varrho\) is isometric on the \(\Cst\)\nb-algebra generated by
  the range of~\(E\) in~\(A''\).  Hence
  \(\norm{E(x)} = \norm{\varrho\circ E(x)}\) for all
  \(x\in A\rtimes S\) and, in particular,
  \(\ker E = \ker (\varrho\circ E)\).
\end{lemma}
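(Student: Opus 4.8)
The plan is to factor everything through the local multiplier algebra $\Locmult(A)$, viewed as a \(\Cst\)\nb-subalgebra of $A''$, and to observe that $\varrho$ is already isometric on $\Locmult(A)$. Recall that for a closed two-sided ideal $J\idealin A$ the multiplier algebra $\Mult(J)$ embeds isometrically into $A''$ as $\Mult(J)\hookrightarrow J''=z_J A''\subseteq A''$, where $z_J$ is the central support projection of $J$ (so $z_J=1$ when $J$ is essential), and that $\Locmult(A)=\varinjlim_J\Mult(J)$, the inductive limit over the essential ideals of $A$ with the restriction connecting maps, is a \(\Cst\)\nb-subalgebra of $A''$ in this way. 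The argument then falls into three steps.

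First I would show that $\varrho$ is isometric on $\Mult(J)$ for \emph{every} ideal $J\idealin A$. If $\pi\in\dual A$ satisfies $\pi(J)=0$, then the normal extension $\pi''$ kills $z_J$ (since $z_J\le z_{\ker\pi}$ and $\pi''(z_{\ker\pi})=0$ by normality), hence $\pi''$ vanishes on $\Mult(J)=z_J\Mult(J)$. If $\pi(J)\neq0$, then $\pi|_J$ is an irreducible, non-degenerate representation of $J$ and $\pi''|_{\Mult(J)}$ is its unique unital canonical extension to $\Mult(J)$. Thus, up to the vanishing components, $\varrho|_{\Mult(J)}$ is the canonical extension to $\Mult(J)$ of the atomic representation $\bigoplus_{\pi(J)\neq0}\pi|_J$ of $J$. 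The atomic representation of any \(\Cst\)\nb-algebra is faithful and non-degenerate, and the extension of a faithful non-degenerate representation of a \(\Cst\)\nb-algebra to its multiplier algebra is again faithful (if $\sigma(m)=0$ then $JmJ=0$ because $JmJ\subseteq J$, hence $am=0$ for all $a\in J$ by approximating $am\in J$ with $e_\lambda am$ along an approximate unit $(e_\lambda)$ of $J$, so $mJ=0$ and thus $m=0$). Therefore $\varrho|_{\Mult(J)}$ is injective, hence isometric. Since the essential ideals of $A$ are downward directed with $\Mult(J_1),\Mult(J_2)\subseteq\Mult(J_1\cap J_2)$, the map $\varrho$ is isometric on the directed $*$-subalgebra $\bigcup_J\Mult(J)$ of $A''$, hence on its norm closure $\Locmult(A)$.

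Second, I would check that the range of $E$, and hence the \(\Cst\)\nb-algebra $C$ it generates in $A''$, lies in $\Locmult(A)$. By~\eqref{eq:formula-cond.exp} we have $E(\xi\delta_t)\in\Mult(I_{1,t})$ for all $t\in S$ and $\xi\in\Hilm_t$. With $I_{1,t}^\bot$ as in~\eqref{eq:I1t_orthogonal}, the ideal $I_{1,t}\oplus I_{1,t}^\bot$ has trivial annihilator (because $I_{1,t}^\bot\cap(I_{1,t}^\bot)^\bot=0$), so it is essential, and $\Mult(I_{1,t})$ sits inside $\Mult(I_{1,t}\oplus I_{1,t}^\bot)$ — a multiplier of $I_{1,t}$ acts as $0$ on the complementary summand — compatibly with the inclusions into $A''$. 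Hence $E(\xi\delta_t)\in\Locmult(A)$. Since the $E(\xi\delta_t)$ linearly span $E(A\rtimes_\alg S)$, which is norm-dense in $E(A\rtimes S)$, and $\Locmult(A)$ is norm-closed, we get $E(A\rtimes S)\subseteq\Locmult(A)$ and therefore $C\subseteq\Locmult(A)$. Combining the two steps, $\varrho|_C$ is an isometric, in particular injective, \Star{}homomorphism; this yields $\norm{E(x)}=\norm{\varrho(E(x))}$ for every $x\in A\rtimes S$ and, by injectivity, $\ker E=\ker(\varrho\circ E)$.

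The delicate point is the second step, and specifically the passage from the range of $E$ to the \(\Cst\)\nb-algebra it generates: that $\varrho$ is isometric on each individual $\Mult(I_{1,t})$ does \emph{not} by itself imply isometry on the \(\Cst\)\nb-algebra generated by their union (a two-block finite-dimensional example already shows this), so one genuinely needs the observation that an arbitrary $\Mult(J)$ embeds into $\Locmult(A)$ via $J\oplus J^\bot$, together with the isometry of $\varrho$ on all of $\Locmult(A)$.
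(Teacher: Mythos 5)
Your Step 1 is correct: the atomic representation of an ideal \(J\) is faithful and non-degenerate, its canonical extension to \(\Mult(J)\) is again faithful, and this extension is what \(\varrho\) restricts to on the copy \(\iota_J(\Mult(J))\subseteq z_JA''\); so \(\varrho\) is isometric on each \(\iota_J(\Mult(J))\).  (Minor slip: to get \(am=0\) from \(JmJ=0\), approximate with \(ame_\lambda\in JmJ\), not \(e_\lambda am\).)  The fatal problem is in Step 2, exactly at the point you flagged as delicate.  The embedding \(\Mult(J)\hookrightarrow A''\) lands in the corner \(z_JA''\), and \(z_J\neq 1\) for essential ideals in general: for \(J=\Cont_0((0,1])\subseteq A=\Cont([0,1])\), the projection \(z_J\) acts as \(\chi_{(0,1]}\) on the atomic part \(\prod_{x\in[0,1]}\C\) of \(A''\).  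Consequently the connecting maps of the inductive system defining \(\Locmult(A)\) --- restriction \(\Mult(J_1)\to\Mult(J_2)\) for essential \(J_2\subseteq J_1\) --- are \emph{not} compatible with these embeddings, since \(\iota_{J_2}(m|_{J_2})=z_{J_2}\,\iota_{J_1}(m)\neq\iota_{J_1}(m)\).  So \(\Locmult(A)\) is not a \(\Cst\)\nb-subalgebra of \(A''\), the union \(\bigcup_J\iota_J(\Mult(J))\) over essential ideals is not closed under addition, and a finite sum \(\sum_t E(\xi_t\delta_t)\) with summands in different \(\Mult(J_t)\) need not lie in any single \(\iota_J(\Mult(J))\).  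This is not a repairable technicality.  In Example~\ref{exa:group_bundle} one has \(E(\delta_1-\delta_g)=1-z_{I_{1,g}}\), a non-zero projection of \(\Cont([0,1])''\) ``supported at the point \(0\)''; its image under \(\varrho\) is \(\chi_{\{0\}}\), which is not of the form ``continuous on a dense open \(U\), zero off \(U\)'', so this element of \(E(A\rtimes_\alg S)\) lies in no \(\iota_J(\Mult(J))\) with \(J\) essential.  More structurally: if the range of \(E\) sat inside a copy of \(\Locmult(A)\) in \(A''\) on which \(\varrho\) is isometric, then \(\ker E=\ker EL\) and \(A\rtimes_\red S=A\rtimes_\ess S\) would hold for every action, contradicting Example~\ref{exa:group_bundle}.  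The correct relationship is the one in Proposition~\ref{pro:Locmult_as_operator_families}: \(\Locmult(A)\) embeds isometrically only into the quotient~\(D\) of \(\prod_{\pi\in\dual{A}}\Bound(\Hils_\pi)\) by the essential-supremum seminorm, and \(q\circ\varrho\circ E=\iota\circ EL\) with \(EL\) a genuinely different map from~\(E\).

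Because of this, your argument cannot be salvaged by routing through \(\Locmult(A)\); as you yourself note, isometry of \(\varrho\) on each \(\Mult(I_{1,t})\) separately does not pass to the \(\Cst\)\nb-algebra these generate, and that is precisely the nontrivial content of the lemma.  The paper does not reprove it: it quotes \cite{Buss-Exel-Meyer:Reduced} (Definition~4.9 and Theorem~7.4 there), where the family of all irreducible representations of~\(A\) is shown to be \(E\)\nb-faithful, i.e.\ \(\bigoplus_{\pi\in\dual{A}}\pi''\) is faithful on the \(\Cst\)\nb-algebra generated by \(E(A\rtimes S)\) inside~\(A''\).  If you want a self-contained proof, you need to engage with that argument rather than with the local multiplier algebra.
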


\begin{proof}
  A family of representations \((\pi_i)_{i\in I}\) of~\(A\) is called
  \emph{\(E\)\nb-faithful} in
  \cite{Buss-Exel-Meyer:Reduced}*{Definition~4.9} if the extension of
  the representation \(\bigoplus_{i\in I} \pi_i\) to~\(A''\) restricts
  to a faithful representation on the \(\Cst\)\nb-subalgebra that is
  generated by the image of~\(E\).  The family of all irreducible
  representations of~\(A\) is \(E\)\nb-faithful by
  \cite{Buss-Exel-Meyer:Reduced}*{Theorem~7.4}.
\end{proof}

\begin{theorem}
  \label{the:crossed_expectation_faithful}
  The weak conditional expectation \(E\colon A\rtimes S \to A''\) is
  symmetric.  Equivalently, \(E_\red\colon A\rtimes_\red S \to A''\) is
  faithful.
\end{theorem}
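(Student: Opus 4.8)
The plan is to show that $E$ is symmetric, i.e.\ that $E(b^*b)=0$ implies $E(bb^*)=0$ for $b\in A\rtimes S$; by Lemma~\ref{lem:reduced_almost_faithful} this is equivalent to the faithfulness of $E_\red$. By Lemma~\ref{lem:atomic_weak_expectation} it suffices to work with $\varrho\circ E$, so I may test vanishing of $E(b^*b)$ one irreducible representation $\pi\in\dual A$ at a time: $E(b^*b)=0$ iff $\pi''(E(b^*b))=0$ for all $\pi$. The key observation is that, for each fixed $\pi$, the map $b\mapsto \pi''(E(b^*b))$ is computed by a ``GNS-type'' construction parallel to the one defining $A\rtimes_\red S$, but with the Hilbert $A''$-module $\ell^2(S,A'')$ replaced by the Hilbert space $\ell^2(S,\Hils_\pi)\defeq \ell^2(S,A'')\otimes_{A''}\Hils_\pi$ (using $\pi''$ to act on the right). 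Left multiplication gives a representation $\lambda_\pi$ of $A\rtimes S$ on this Hilbert space, and unwinding definitions one gets $\langle \delta_1\otimes v \mid \lambda_\pi(b)(\delta_1\otimes v)\rangle = \langle v\mid \pi''(E(b))v\rangle$ for $v\in\Hils_\pi$, so that $\pi''(E(b^*b))=0$ iff $\lambda_\pi(b)(\delta_1\otimes v)=0$ for all $v$.

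First I would make precise the vectors $\delta_t\otimes v$ in $\ell^2(S,\Hils_\pi)$ coming from $\Hilm_t\otimes_A\Hils_\pi$, note that these span a dense subspace, and record the formula for the inner product in terms of $E$ and the bimodule structure maps $\vartheta_{u,t}$. Next I would observe that $\delta_1$ is a cyclic-type vector in the following weak sense: the subspace $\lambda_\pi(A\rtimes S)(\delta_1\otimes \Hils_\pi)$ is dense in $\ell^2(S,\Hils_\pi)$, because each summand $\Hilm_t\otimes_A\Hils_\pi$ is hit by elements of the form $\xi\delta_t$ acting on $\delta_1\otimes(\text{approximate unit})\otimes v$. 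The symmetry of $E$ would then follow from the symmetry, for each $\pi$, of the ``vanishing ideal'' $\{b : \lambda_\pi(b)(\delta_1\otimes\Hils_\pi)=0\}$: if $b$ kills $\delta_1\otimes v$ for all $v$, I want to conclude $b^*$ does too. Since the set of $b$ with $\lambda_\pi(b)(\delta_1\otimes\Hils_\pi)=0$ is a closed right ideal (a ``left kernel'' of the cyclic-ish vector), passing to $b^*$ needs an extra input: I would use that this right ideal is actually $\lambda_\pi^{-1}$ of the left kernel of the projection onto $\overline{\delta_1\otimes\Hils_\pi}$, and that by the density statement above this projection is ``full'' enough that its left kernel $L$ satisfies $L=L^*$; concretely, $\lambda_\pi(b)(\delta_1\otimes\Hils_\pi)=0$ forces $\lambda_\pi(b)$ to vanish on the cyclic subspace $\overline{\lambda_\pi(A\rtimes S)(\delta_1\otimes\Hils_\pi)}=\ell^2(S,\Hils_\pi)$, hence $\lambda_\pi(b)=0$, hence also $\lambda_\pi(bb^*)=0$ and $\pi''(E(bb^*))=0$.

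Wait — that argument, if it works verbatim, proves more than symmetry, namely that $\ker(\varrho\circ E)$ is already a two-sided ideal, equivalently $\Null_E=\ker E$, which is exactly what symmetry gives via Corollary~\ref{cor:GNS_kernels}. So the real content, and the step I expect to be the main obstacle, is justifying the density claim that $\lambda_\pi(A\rtimes S)(\delta_1\otimes\Hils_\pi)$ is dense in $\ell^2(S,\Hils_\pi)$ for \emph{every} irreducible $\pi$, including those not in $\dual{\s(\Hilm_t)}$ for various $t$; one must check the interaction with the ideals $I_{1,t}$, $\s(\Hilm_t)$ and the identifications~\eqref{eq:Def-thetas} carefully, so that the relations defining $A\rtimes_\alg S$ are respected on the level of these Hilbert spaces. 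A clean way to organise this is to note $\ell^2(S,\Hils_\pi)=\bigoplus_{t}\Hilm_t\otimes_A\Hils_\pi$ modulo the relations $\vartheta_{u,t}(\xi)\otimes v\sim \xi\otimes v$, and that $\Hilm_t\otimes_A\Hils_\pi=\overline{\lambda_\pi(\Hilm_t)(\delta_1\otimes \s(\Hilm_t)\cdot\Hils_\pi)}$ since $\Hilm_t$ is an imprimitivity bimodule over $\s(\Hilm_t)$ and $\pi''$ is nondegenerate on $\s(\Hils_\pi)$ precisely when this summand is nonzero. Once density is in hand, the rest is the short Hilbert-space argument above together with Lemma~\ref{lem:atomic_weak_expectation} to return from $\bigoplus_\pi \Hils_\pi$ to $A''$; I would then invoke Lemma~\ref{lem:reduced_almost_faithful} for the equivalence with faithfulness of $E_\red$.
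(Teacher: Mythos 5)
Your reduction to testing $E(b^*b)$ against irreducible representations via Lemma~\ref{lem:atomic_weak_expectation}, and the identification $\braket{\delta_1\otimes v}{\lambda_\pi(b)(\delta_1\otimes v)}=\braket{v}{\pi''(E(b))v}$, are fine; and the density of $\lambda_\pi(A\rtimes S)(\delta_1\otimes\Hils_\pi)$ that you flag as the main obstacle is actually the easy part, since $\lambda_\pi(\xi\delta_t)(\delta_1\otimes v)=\xi\otimes v$ already spans each summand $\Hilm_t\otimes_A\Hils_\pi$. The genuine gap is the step ``$\lambda_\pi(b)(\delta_1\otimes\Hils_\pi)=0$ forces $\lambda_\pi(b)$ to vanish on $\overline{\lambda_\pi(A\rtimes S)(\delta_1\otimes\Hils_\pi)}$, hence $\lambda_\pi(b)=0$.'' This conflates \emph{cyclic} with \emph{separating}: from $\lambda_\pi(b)(\delta_1\otimes v)=0$ you learn nothing about $\lambda_\pi(b)\lambda_\pi(c)(\delta_1\otimes v)=\lambda_\pi(bc)(\delta_1\otimes v)$, because $\lambda_\pi(b)$ and $\lambda_\pi(c)$ do not commute; cyclicity of a vector makes it separating for the \emph{commutant}, not for the algebra itself. (Also, the ``vanishing ideal'' you introduce is a left, not a right, ideal.)

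Worse, the pointwise statement you are aiming for is false. Take $A=\C\oplus\C$ with $S=\Z/2\Z$ acting by the flip, so $A\rtimes S\cong \Mat_2(\C)$, $E$ is compression to the diagonal and $\dual{A}=\{\pi_1,\pi_2\}$. For $b=e_{12}$ one has $\pi_1''(E(b^*b))=\pi_1(e_{22})=0$, so $\lambda_{\pi_1}(b)$ kills $\delta_1\otimes\Hils_{\pi_1}$, yet $\lambda_{\pi_1}(b)\neq0$ and $\pi_1''(E(c^*b^*bc))=\pi_1(e_{11})=1$ for $c=e_{21}$; the vanishing ideal at a fixed $\pi$ is not $*$\nb-closed. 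The reason, and the actual content of the paper's proof, is the formula $\pi''(E(c^*dc))=T_c^*\,\omega''(E(d))\,T_c$ for $c\in\Hilm_t$, where $\omega=\dual{\Hilm_t}(\pi)$ is the \emph{translate} of $\pi$ under the dual action (Equation~\eqref{eq:prepare_faithful}). To kill $\pi''(E(c^*b^*bc))$ you must know that $E(b^*b)$ vanishes at the different representation $\omega$; this is available because the hypothesis $E(b^*b)=0$ gives vanishing at \emph{every} irreducible representation simultaneously, but it cannot be extracted one $\pi$ at a time. Your argument needs to be reorganised along these lines (or you must prove that $\delta_1\otimes\bigoplus_\pi\Hils_\pi$ is \emph{separating}, e.g.\ by producing a commuting right regular representation, which amounts to the same computation).
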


\begin{proof}
  By Proposition~\ref{pro:GNS_kernels} and
  Corollary~\ref{cor:GNS_kernels}, it suffices to show that if
  \(b\in A\rtimes S\) satisfies \(E(b^* b)=0\), then
  \(E(c^* b^* b c)=0\) for all \(c\in A\rtimes S\).  Since
  \(c\in A\rtimes S\) satisfies \(E(c^* b^* b c)=0\) if and only if
  \(b\cdot c \in \lNull_E\), the space of~\(c\) with this property
  is a closed linear subspace.  Hence it suffices to prove
  \(E(c^* b^* b c)=0\) for \(c\in \Hilm_t\) for some \(t\in S\).  Here we
  embed the Hilbert \(A\)\nb-bimodules~\(\Hilm_t\) of the \(S\)\nb-action
  on~\(A\) into \(A\rtimes_\alg S\) in the usual way.  So we may fix
  \(t\in S\) and \(c\in\Hilm_t\).  We must show that
  \(E(b^* b)=0\) implies \(E(c^* b^* b c)=0\).

  Let \(\pi\in\dual{A}\).  The tensor product
  \(\Hilm_t \otimes_A \Hils_\pi\) is non-zero if and only if~\(\pi\)
  belongs to \(\dual{\s(\Hilm_t)}\), and then the left
  multiplication action of~\(A\) on \(\Hilm_t \otimes_A \Hils_\pi\)
  is the irreducible representation
  \(\omega\defeq\dual{\Hilm_t}(\pi)\).  The element \(c\in\Hilm_t\)
  induces an adjointable linear map
  \[
    T_c\colon \Hils_\pi \to \Hilm_t \otimes_A \Hils_\pi,\qquad
    \xi\mapsto c\otimes \xi.
  \]
  Its adjoint is given by \(T_c^*(e\otimes \xi)=\pi(c^*e)\xi\) for
  \(e\in \Hilm_t\), \( \xi\in\Hils_\pi\).  Let \(\pi''\)
  and~\(\omega''\) be the weakly continuous extensions of \(\pi\)
  and~\(\omega\) to~\(A''\).  We claim that
  \begin{equation}
    \label{eq:prepare_faithful}
    \pi''(E(c^* d c)) =
    \begin{cases}
      T_c^* \omega''(E(d)) T_c&\text{if }\pi\in\dual{\s(\Hilm_t)},\\
      0&\text{otherwise,}
    \end{cases}
  \end{equation}
  for all \(d\in A\rtimes S\).  We are interested, of course, in
  \(d = b^* b\).  If \(\pi\notin\dual{\s(\Hilm_t)}\), then
  \(\pi''(E(c^* d c))=0\).  So it suffices to consider the case when
  \(\pi\in\dual{\s(\Hilm_t)}\).  The set of \(d\in A\rtimes S\) for
  which~\eqref{eq:prepare_faithful} holds is a closed linear
  subspace because the two sides of the equality are bounded linear
  operators of~\(d\).  Therefore, it suffices to
  prove~\eqref{eq:prepare_faithful} for \(d\in\Hilm_u\) with some
  \(u\in S\).  It is more convenient to work in the bidual~\(A''\)
  for this computation and to allow \(d\in\Hilm_u''\).  Let~\([I]\)
  for an ideal~\(I\) in~\(A\) denote the support projection
  of~\(I\), that is, the weak limit in~\(A''\) of an approximate
  unit in~\(I\).  There is a canonical Hilbert bimodule isomorphism
  \[
    \Theta_u\colon \Hilm_u''\cdot [I_{1,u}] \congto I_{1,u}'',
  \]
  and \(E(d) = \Theta_u(d\cdot [I_{1,u}]) = E(d\cdot [I_{1,u}])\).
  Similarly,
  \(E(c^* d c) = \Theta_{t^* u t}(c^* d c \cdot [I_{1, t^* u t}])\)
  because \(c^* d c \in \Hilm_{t^* u t}''\).  The Rieffel
  correspondence for~\(\Hilm_t''\) implies that
  \(\Hilm_t''\cdot [I] = (\Hilm_t\cdot I)'' = (J\cdot\Hilm_t)'' =
  [J]\cdot \Hilm_t''\), where
  \(\dual{J} = \dual{\Hilm_t}(\dual{I})\).  Therefore,
  \(c\cdot [I_{1,t^* u t}] = [J]\cdot c\), where
  \(\dual{J} = \dual{\Hilm_t}(\dual{I_{1,t^* u t}})\).  Since
  \[
    \dual{\Hilm_t}(\dual{I_{1,t^* u t}}) = \dual{I_{t t^*,u}}
    = \dual{I_{1,u} \cap \rg(\Hilm_t)},
  \]
  we get \([J]\cdot c = [I_{1,u}]\cdot c\).  So
  \[
    E(c^* d c)
    = \Theta_{t^* u t}(c^* d\cdot ([I_{1,u}]\cdot c))
    = E(c^* (d\cdot [I_{1,u}]) c).
  \]
  Therefore, the two sides in~\eqref{eq:prepare_faithful} do not
  change when we replace~\(d\) by \(d\cdot [I_{1,u}]\).  But
  \(\Theta_u(d\cdot [I_{1,u}]) \in I_{1,u}'' \subseteq A''\) and
  \(d \in\Hilm_u''\cdot [I_{1,u}] \subseteq \Hilm_u''\) are
  identified in~\(A''\rtimes S\).  Since the two sides
  in~\eqref{eq:prepare_faithful} only depend on the image of~\(d\)
  in~\(A''\rtimes S\), we are reduced to the case \(d\in A''\).
  Then \(c^* d c \in A''\) as well, and so \(E(d) = d\) and
  \(E(c^* d c) = c^* d c\).  And~\eqref{eq:prepare_faithful} follows
  because \(T_c^* \omega''(d) T_c=\pi''(c^*dc) \).  This finishes
  the proof of~\eqref{eq:prepare_faithful}.

  Now \(E(b^* b)=0\) is equivalent to \(\omega''(E(b^* b))=0\) for all
  \(\omega\in\dual{A}\) by Lemma~\ref{lem:atomic_weak_expectation}.
  This implies \(\pi(E(c^* b^* b c))= T_c^* \omega''(E(b^* b)) T_c = 0\)
  for all \(\pi\in\dual{A}\) by~\eqref{eq:prepare_faithful}.  This is
  equivalent to \(E(c^* b^* b c)=0\) by
  Lemma~\ref{lem:atomic_weak_expectation}.  This proves the claim
  about~\(E\) being symmetric.  And this is equivalent to~\(E_\red\)
  being faithful by Lemma~\ref{lem:reduced_almost_faithful}.
\end{proof}

\section{The essential crossed product}
\label{sec:essential_crossed}

We are going to define a variant of the reduced crossed product that
is based on a conditional expectation with values in the local
multiplier algebra.

\subsection{The definition of the essential crossed product}
\label{sec:def_essential_crossed}

\begin{definition}[\cite{Ara-Mathieu:Local_multipliers}]
  \label{def:local_multiplier_algebra}
  Let~\(A\) be a \(\Cst\)\nb-algebra.  The essential ideals in~\(A\)
  form a directed set for the relation~\(\supseteq\) because
  \(I\cap J\in\Ideals(A)\) is essential if \(I,J\in\Ideals(A)\) are
  essential.  If \(I\subseteq J \subseteq A\) are essential ideals,
  then a multiplier of~\(J\) restricts to a multiplier of~\(I\).  This
  defines a canonical, unital \Star{}homomorphism
  \(\Mult(J) \to \Mult(I)\), which is injective because~\(I\) is
  essential in~\(J\).  The \emph{local multiplier
    algebra}~\(\Locmult(A)\) of~\(A\) is the inductive limit
  \(\Cst\)\nb-algebra of this inductive system.
\end{definition}

The canonical map \(A \to \Locmult(A)\) is injective because
\(A \to \Mult(I)\) is injective for any essential ideal~\(I\)
in~\(A\).  We identify~\(A\) with its image in~\(\Locmult(A)\).

\begin{definition}
  \label{defn:essential_expectation}
  A conditional expectation with values in \(\Locmult(A)\) is called
  an \emph{\(\Locmult\)-expectation}.
\end{definition}


Now let us fix an action~\(\Hilm\) of a unital inverse
semigroup~\(S\) on a \(\Cst\)\nb-algebra~\(A\).  We modify the
construction of the reduced crossed product by replacing the weak
conditional expectation \(E\colon A\rtimes S \to A''\) by an
\(\Locmult\)-expectation
\(EL\colon A\rtimes S \to \Locmult(A)\).  We first define~\(EL\) on
the dense subalgebra \(A\rtimes_\alg S \subseteq A\rtimes S\).  This
is spanned by~\(\Hilm_t\) for \(t\in S\).  Let \(t\in S\) and
define~\(I_{1,t}\) as in~\eqref{eq:Itu}.  Recall that the
ideal~\(I_{1,t}^\bot\) defined in~\eqref{eq:I1t_orthogonal} is the
largest ideal \(J\in\Ideals(A)\) with \(J\cap I_{1,t}=\{0\}\).  The
ideal \(J_t \defeq I_{1,t} \oplus I_{1,t}^\bot\subseteq A\) is
essential by construction.  And
\[
  \Hilm_t \cdot J_t
  \cong  (\Hilm_t  \cdot  I_{1,t})\oplus (\Hilm_t\cdot  I_{1,t}^\bot)
  \xrightarrow[\cong]{\vartheta_{1,t} \oplus \Id}
  I_{1,t} \oplus  (\Hilm_t\cdot  I_{1,t}^\bot)
\]
as in~\eqref{eq:decompose_Hilm_closed}.  Any \(\xi\in\Hilm_t\)
defines a map \(J_t \to \Hilm_t \cdot J_t\),
\(a\mapsto \xi\cdot a\).  Composing with the orthogonal projection
to~\(I_{1,t}\) defines an adjointable operator
\(J_t \to I_{1,t} \subseteq J_t\) or, equivalently, a multiplier
of~\(J_t\).  We let \(EL(\xi) \in \Mult(J_t) \subseteq \Locmult(A)\)
be this multiplier of~\(J_t\).  

\begin{proposition}
  \label{pro:EL_exists}
  The map \(\bigoplus_{t\in S} \Hilm_t\ni  \xi \mapsto EL(\xi)  \in \Locmult(A)\)
  factors through to a map
  \(A\rtimes_\alg S \to \Locmult(A)\), which extends to an
  \(\Locmult\)-expectation
  \(EL\colon A\rtimes S \to \Locmult(A)\) that satisfies
  \(\norm{EL(x)} \le \norm{E(x)}\) for all \(x\in A\rtimes S\).
\end{proposition}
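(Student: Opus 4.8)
The plan is to obtain $EL$ from the weak conditional expectation $E\colon A\rtimes S\to A''$ of Proposition~\ref{prop:reduced_through_E} by a localisation procedure. The starting observation is that~\eqref{eq:formula-cond.exp} shows that $E$ maps $\Hilm_t$ into $\Mult(I_{1,t})\subseteq I_{1,t}''=[I_{1,t}]\cdot A''$, where $[I]$ denotes the support projection of an ideal~$I$ in~$A''$, and that the multiplier $E(\xi)\in\Mult(I_{1,t})$ acts on $I_{1,t}$ by $a\mapsto\vartheta_{1,t}(\xi\cdot a)$ (because $\vartheta_{1,t}$ is an \(A\)\nb-bimodule map and $(u_i)$ is an approximate unit for $I_{1,t}$). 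Comparing this with the definition of $EL(\xi)$ given above and with the direct sum decomposition $\Mult(J_t)=\Mult(I_{1,t})\oplus\Mult(I_{1,t}^\bot)$, one sees that $EL(\xi)$ is precisely the image of $E(\xi)\oplus 0\in\Mult(J_t)$ in $\Locmult(A)$. More generally, the linear extension of $\xi\mapsto EL(\xi)$ sends $x=\sum_{t\in F}\xi_t$ with finite support $F\subseteq S$ into $\Mult(K_x)$ for the essential ideal $K_x\defeq\bigcap_{t\in F}J_t$, and a short computation with the projections $[I_{1,t}]$ identifies this element with the image of $[K_x]\cdot E(x)\in\Mult(K_x)$ under the canonical map $\Mult(K_x)\hookrightarrow\Locmult(A)$. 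Here one must remember that $\Locmult(A)$ does \emph{not} embed into~$A''$ globally, so this description of $EL(x)$ as a corner of $E(x)$ is available only one essential ideal at a time.

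With this description at hand, the remaining points are bookkeeping, with $E$ supplying the substance. For well-definedness of $EL$ on $A\rtimes_\alg S$, I would check that it kills the defining relations $\vartheta_{u,t}(\zeta)\delta_u-\zeta\delta_t$ with $\zeta\in\Hilm_t\cdot I_{t,u}$: restricting $EL(\zeta)\in\Mult(J_t)$ and $EL(\vartheta_{u,t}(\zeta))\in\Mult(J_u)$ to the essential ideal $J_t\cap J_u$ turns them into $[J_t\cap J_u]\cdot E(\zeta)$ and $[J_t\cap J_u]\cdot E(\vartheta_{u,t}(\zeta))$, and these agree because $E$ itself factors through $A\rtimes_\alg S$. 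For the norm estimate, since $[K_x]$ is a projection in~$A''$ we get $\norm{EL(x)}=\norm{[K_x]\cdot E(x)}\le\norm{E(x)}$ for $x\in A\rtimes_\alg S$; as $E$ is contractive, $EL$ is a contraction on the dense \Star{}subalgebra $A\rtimes_\alg S$ and hence extends uniquely to a contraction $EL\colon A\rtimes S\to\Locmult(A)$, and the inequality $\norm{EL(x)}\le\norm{E(x)}$ passes to the norm limit. That $EL$ restricts to the identity on $A$ is immediate: $I_{1,1}=A=\s(\Hilm_1)$ forces $J_1=A$, so $EL(a)=a$ for $a\in\Hilm_1=A$.

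It remains to see that $EL$ is completely positive. Given $X\in M_n(A\rtimes_\alg S)$ with $X\ge0$, all entries of $X$ and of $EL^{(n)}(X)$ lie in $\Mult(K)$ for a single essential ideal~$K$, and there $EL^{(n)}(X)$ equals the compression of $E^{(n)}(X)$ by the projection $\operatorname{diag}([K],\dotsc,[K])\in M_n(A'')$; hence $EL^{(n)}(X)$ is positive in $M_n(\Mult(K))$, and this positivity survives in $M_n(\Locmult(A))$. For a general $X\in M_n(A\rtimes S)$ with $X\ge0$ I would write $X=Y^*Y$, approximate $Y$ in norm by elements of $M_n(A\rtimes_\alg S)$, and combine the continuity of $EL^{(n)}$ with the closedness of the positive cone in $M_n(\Locmult(A))$. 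Altogether this shows that $EL$ is an essentially defined conditional expectation with $\norm{EL(x)}\le\norm{E(x)}$, as claimed.

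I do not expect a single deep step to dominate the argument; the care goes into manipulating the support projections of the \emph{non-essential} ideals $I_{1,t}$ inside~$A''$ while keeping track of the fact that only the essential ideals $J_t$ and their finite intersections furnish honest pieces of $\Locmult(A)$. Once the local identification $EL(x)=[K_x]\cdot E(x)$ is set up correctly, well-definedness, the norm bound, contractivity and complete positivity all follow formally from the corresponding properties of~$E$.
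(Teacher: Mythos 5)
Your proposal is correct and follows essentially the same route as the paper: both identify \(EL(x)\), on the essential ideal \(K_x=\bigcap_{t\in F}J_t\), with the multiplier given by left multiplication by \(E(x)\) (your \([K_x]\cdot E(x)\) is exactly this compression), and then derive well-definedness, the bound \(\norm{EL(x)}\le\norm{E(x)}\), and complete positivity from the corresponding properties of \(E\). The only cosmetic difference is that the paper checks positivity of \(EL(x^*x)\) by testing against \(y\in\Mat_n(K_x)\) rather than by compressing \(E^{(n)}\) with the diagonal central support projection, but these amount to the same computation.
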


\begin{proof}
  If \(\xi \in \Hilm_t\cdot J_t\), then the above definition gives
  \(EL(\xi) \in I_{1,t} \subseteq A\).  In this case,
  \(EL(\xi) = E(\xi)\) because the strict limit
  in~\eqref{eq:formula-cond.exp} is equal to a norm limit.  More
  generally, let \(\xi\in A\rtimes_\alg S\).  Then there are a
  finite subset \(F\subseteq S\) and \(\xi_t \in \Hilm_t\) for
  \(t\in F\) with \(\xi= \sum_{t\in F} \xi_t\in A\rtimes_\alg S\).
  The ideal \(J \defeq \bigcap_{t\in F} J_t\) in~\(A\) is essential
  as a finite intersection of essential ideals.  The arguments above
  imply
  \[
    E(\xi)a
    = \sum_{t\in F} E(\xi_ta)
    = \sum_{t\in F} EL(\xi_ta)=\left(\sum_{t\in F} EL(\xi_t)\right) a\in J
  \]
  for every \(a\in J\).  Hence
  \(\sum_{t\in F} EL(\xi_t)\in \Mult(J)\subseteq \Locmult(A)\)
  coincides with the multiplier given by multiplication
  by~\(E(\xi)\).  This implies that
  \(EL(\xi)\defeq \sum_{t\in F} EL(\xi_t)\) is well defined on
  \(A\rtimes_\alg S\) (because~\(E\) is).  The norm of~\(EL(\xi)\)
  in \(\Locmult(A)\) is equal to its norm in~\(\Mult(J)\).  If
  \(a\in J\) satisfies \(\norm{a}\le1\), then
  \(\norm{EL(\xi)\cdot a} = \norm{E(\xi)\cdot a} \le
  \norm{E(\xi)}\).  Hence \(\norm{EL(\xi)} \le \norm{E(\xi)}\).
  Thus~\(EL\) extends to a contractive linear map
  \(EL\colon A\rtimes S \to \Locmult(A)\) because
  \(E\colon A\rtimes S \to A''\) is contractive.  The above
  considerations also imply \(EL|_A = \Id_A\).

  To see that~\(EL\) is completely positive, let
  \(x\in \Mat_n(A\rtimes_\alg S)\).  There are a finite subset
  \(F\subseteq S\) and \(x_t \in \Hilm_t\otimes \Mat_n\) for \(t\in
  F\) such that \(x = \sum_{t\in F} x_t\).  Then
  \(J \defeq \bigcap_{s,t\in F} J_{s^*t}\) is an essential ideal
  in~\(A\).  One checks as above that multiplication by
  \(E(x^*x)\) on \(\Mat_n(J)\) coincides with
  \(EL(x^*x)\in \Mult(\Mat_n(J))=\Mat_n(\Mult(J))\subseteq
  \Mat_n(\Locmult(A))\).  Therefore, if \(y\in \Mat_n(J)\), then
  \[
    y^*\cdot EL(x^* x)\cdot y = y^*\cdot E(x^* x)\cdot y = E((xy)^* xy) \ge 0
  \]
  because~\(E\) is completely positive.  Therefore, \(EL(x^* x)\) is
  positive in \(\Mat_n(\Locmult(A))\).  So~\(EL\) is completely
  positive on \(A\rtimes_\alg S\).  It follows that~\(EL\) is
  completely positive on~\(A\rtimes S\).  So~\(EL\) is an
  \(\Locmult\)-expectation.
\end{proof}

\begin{definition}
  \label{def:essential_crossed}
  The \emph{essential crossed product} \(A\rtimes_\ess S\) is the
  reduced quotient \((A\rtimes S)/\Null_{EL}\) for the generalised
  expectation~\(EL\).
\end{definition}

\begin{remark}
  Since \(\norm{EL(x)} \le \norm{E(x)}\) for all
  \(x\in A\rtimes S\), it follows that \(\ker EL \supseteq \ker E\).
  Thus \(A\rtimes_\ess S\) is also a quotient of
  \(A\rtimes_\red S\).  By construction, \(EL\) descends to an
  almost faithful \(\Locmult\)-expectation
  \(A\rtimes_\ess S\to \Locmult(A)\) for the inclusion
  \(A \hookrightarrow A\rtimes_\ess S\).
\end{remark}

\begin{remark}
  If the action is closed, then \(EL=E\) and
  \(A\rtimes_\ess S=A\rtimes_\red S\) by the construction
  and~\eqref{eq:decompose_Hilm_closed}.  Thus essential crossed
  products give something new only for non-closed actions.
\end{remark}

\begin{example}
  \label{exa:group_bundle}
  We show by an example that~\(A\rtimes_\ess S\) may differ from
  \(A\rtimes_\red S\).  Let~\(G\) be an amenable discrete group.
  View~\(G\) as an inverse semigroup and adjoin a zero element to
  it.  This gives the inverse semigroup \(S \defeq G \cup \{0\}\)
  with \(g\cdot 0 = 0\cdot g = 0\) for all \(g\in S\) and such that
  \(g\cdot h\) for \(g,h\in G\) is the usual product in~\(G\).
  Let~\(G\) act on \(A\defeq \Cont[0,1]\) by \(\Hilm_g \defeq
  \Cont[0,1]\) for
  \(g\in G\) and \(\Hilm_0 \defeq \Cont_0(0,1]\), equipped with the usual
  involution and multiplication maps.  If \(g\in G\setminus\{1\}\),
  then \(I_{g,1} = \Cont_0(0,1]\) because \(v\in S\) satisfies
  \(v\le 1,g\) if and only if \(v=0\).  The full crossed product for
  this action is a \(\Cont[0,1]\)-\(\Cst\)-algebra with fibres
  \(\Cst(G)\) at~\(0\) and \(\C\) at \(x\in (0,1]\).  The reduced
  crossed product is equal to the full one here because~\(G\) is
  amenable.  (If~\(G\) is non-amenable, then the fibre of the
  reduced crossed product at~\(0\) is \(\C\oplus \Cst_\red(G)\) and
  not \(\Cst_\red(G)\).  We restrict to amenable groups to avoid
  discussing this issue further.)  The essential crossed product in
  this case reduces to \(A=\Cont[0,1]\).  Indeed, let
  \(\xi_g\in \Hilm_g = \Cont[0,1]\).  Let \(\xi_1\in \Hilm_1\) be
  the same element of \(\Cont[0,1]\).  Then
  \(EL(\xi_g) = EL(\xi_1)\) because the ideal \(\Cont_0(0,1]\)
  in~\(A\) is essential.  Thus \(\ker EL\) contains the kernel of
  the \Star{}homomorphism \(A\rtimes_\alg S \to A\) that
  applies the trivial representation of the group in each fibre.  It
  follows that \(A\rtimes_\ess S\cong A\).

  In this example, \(EL\) takes values in~\(A\) although the action
  is not closed.  And the map
  \(A\rtimes_\alg S \to A\rtimes_\ess S\) is not injective.  In
  contrast, the map \(A\rtimes_\alg S \to A\rtimes_\red S\) is
  always injective.
\end{example}

\begin{remark}
  \label{rem:essential_crossed_not_functorial}
  Unlike the reduced and full crossed products, the essential
  crossed product is not functorial for \Star{}homomorphisms.  This
  is because the local multiplier algebra is not functorial.  The
  problem is very visible for quotient maps.  If \(I\subseteq A\) is
  an essential ideal, then a local multiplier of~\(A\) does not
  descend to anything on~\(A/I\).  For instance, consider the
  restriction map \(\Cont[0,1] \to \C\), \(f\mapsto f(0)\), in the
  situation of Example~\ref{exa:group_bundle}.  This is
  \(S\)\nb-equivariant, where~\(S\) acts on~\(\C\) by the trivial
  action, with~\(0\) acting by the ideal~\(\{0\}\).  Now
  \(\C\rtimes S \cong \Cst(G)\), and the weak conditional
  expectation \(\Cst(G) \to \C\) is the usual trace, which has
  values in \(\C=\C''=\Locmult(\C)\).  Hence the essential and
  reduced crossed products are the same for the action on~\(\C\).
  But the fibre restriction map \(A\rtimes_\red S \to \Cst_\red(G)\)
  does not factor through \(A\rtimes_\ess S = A\).
\end{remark}

In order to relate \(EL\colon A\rtimes_{\ess} S\to \Locmult(A)\) to
\(E\colon A\rtimes S\to A''\), we use
Lemma~\ref{lem:atomic_weak_expectation}.  It says that the
\Star{}homomorphism
\(\varrho\colon A'' \to \prod_{\pi\in\dual{A}} \Bound(\Hils_\pi)\)
is isometric on \(E(A\rtimes S)\).  We shall embed~\(\Locmult(A)\)
into a quotient of \(\prod_{\pi\in\dual{A}} \Bound(\Hils_\pi)\).

\begin{definition}
  \label{def:essential_supremum}
  A subset of~\(\dual{A}\) is \emph{nowhere dense} if its closure
  has empty interior.  It is \emph{meagre} if it may be written as a
  countable union of nowhere dense subsets.  It is \emph{comeagre}
  if its complement is meagre.  For
  \((f_\pi)_{\pi\in\dual{A}}\in \prod_{\pi\in\dual{A}}
  \Bound(\Hils_\pi)\), define
  \[
    \norm*{(f_\pi)_{\pi\in\dual{A}}}_\ess \defeq
    \inf
    {}\setgiven*{ \sup_{\pi\in R} {}\norm{f_\pi}}{
      R\subseteq\dual{A} \textup{ comeagre}}.
  \]
\end{definition}

By definition, a subset is meagre if and only if it is contained in
a union of countably many closed subsets with empty interior.  Thus
a subset is comeagre if and only if it contains an intersection of
countably many dense open subsets.
Any countable intersection of comeagre subsets is again comeagre,
and a subset that contains a comeagre subset is also comeagre.  It
follows that the infimum in Definition~\ref{def:essential_supremum}
is a minimum.  And the set of comeagre subsets of~\(\dual{A}\) is
directed by~\(\supseteq\).  The
values~\(\sup_{\pi\in R} {}\norm{f_\pi}\) for comeagre subsets
\(R\subseteq\dual{A}\) form a monotone net indexed by this directed
set.  Thus
\[
  \norm{(f_\pi)}_\ess
  = \min_{\twolinesubscript{R\subseteq\dual{A}}{\mathrm{comeagre}}}
  \sup_{\pi\in R} {}\norm{f_\pi}= \lim_{\twolinesubscript{R\subseteq\dual{A}}{\mathrm{comeagre}}}
  \sup_{\pi\in R} {}\norm{f_\pi}.
\]

\begin{proposition}
  \label{pro:Locmult_as_operator_families}
  The function~\(\norm{\cdot}_\ess\) on
  \(\prod_{\pi\in\dual{A}} \Bound(\Hils_\pi)\) is a
  \(\Cst\)\nb-seminorm.  Let~\(D\) be the quotient of\/
  \(\prod_{\pi\in\dual{A}} \Bound(\Hils_\pi)\) by the null space
  of~\(\norm{\cdot}_\ess\).  The faithful representation
  \(A \hookrightarrow \prod_{\pi\in\dual{A}} \Bound(\Hils_\pi)\)
  factors through to \(A \hookrightarrow D\), and the latter extends
  to an isometric \Star{}homomorphism
  \(\iota\colon \Locmult(A) \hookrightarrow D\) such that the
  following diagram commutes:
  \[
    \begin{tikzcd}
      A\rtimes S \arrow[r, "E"] \arrow[d, "EL"] &
      A'' \arrow[r, "\varrho"] &
      \prod_{\pi\in\dual{A}} \Bound(\Hils_\pi) \arrow[d, ->>, "q"] \\
      \Locmult(A) \arrow[rr, hookrightarrow, "\iota"] && D
    \end{tikzcd}
  \]
  If \(b\in A\rtimes S\), then \(EL(b)= 0\) if and only if
  \(\setgiven{\pi \in \dual{A}}{\pi''(E(b))=0}\) is comeagre
  in~\(\dual{A}\).
\end{proposition}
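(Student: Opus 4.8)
The plan is to establish the claims in turn: that \(\norm{\cdot}_\ess\) is a \(\Cst\)\nb-seminorm; that \(A\) and then \(\Locmult(A)\) embed isometrically into the quotient~\(D\); that the square commutes; and finally to read off the criterion for \(EL(b)=0\). For the \(\Cst\)\nb-seminorm property, homogeneity is immediate; for the triangle inequality and submultiplicativity one intersects comeagre sets realising \(\norm{x}_\ess\) and \(\norm{y}_\ess\) and uses \(\norm{x_\pi+y_\pi}\le\norm{x_\pi}+\norm{y_\pi}\) and \(\norm{x_\pi y_\pi}\le\norm{x_\pi}\,\norm{y_\pi}\); and the \(\Cst\)\nb-identity follows from \(\sup_{\pi\in R}\norm{x_\pi^*x_\pi}=\bigl(\sup_{\pi\in R}\norm{x_\pi}\bigr)^2\) on each comeagre~\(R\) together with the fact that \(s\mapsto s^2\) commutes with the infimum over~\(R\). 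Since \(\norm{\cdot}_\ess\le\norm{\cdot}_\infty\), the null space \(N_\ess\) is a \(\norm{\cdot}_\infty\)\nb-closed \Star{}ideal; and since the defining infimum is attained, every class in \(D=\prod_\pi\Bound(\Hils_\pi)/N_\ess\) has a representative obtained by setting the coordinates outside a realising comeagre set to zero, which has \(\norm{\cdot}_\infty\)\nb-norm equal to \(\norm{\cdot}_\ess\); hence the quotient \(\norm{\cdot}_\infty\)\nb-norm on~\(D\) coincides with \(\norm{\cdot}_\ess\), so~\(D\) is a \(\Cst\)\nb-algebra and \(q\colon\prod_\pi\Bound(\Hils_\pi)\onto D\) is a \Star{}homomorphism.

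For the embeddings I would use that \(\dual{A}\) has the same lattice of open subsets as the Baire space~\(\Prim(A)\), so every comeagre subset of~\(\dual{A}\) is dense, and that \(\pi\mapsto\norm{\pi(a)}\) is lower semicontinuous on~\(\dual{A}\) for each \(a\in A\); together these give \(\sup_{\pi\in R}\norm{\pi(a)}=\norm{a}\) for every comeagre~\(R\), so \(\norm{(\pi(a))_\pi}_\ess=\norm{a}\) and \(a\mapsto(\pi(a))_\pi\) descends to an isometric \(A\hookrightarrow D\). To extend this over \(\Locmult(A)\), for each essential ideal \(I\in\Ideals(A)\) put \(U_I\defeq\dual{I}\), a dense open, hence comeagre, subset; for \(\pi\in U_I\) the restriction \(\pi|_I\) is nondegenerate and irreducible, so it extends uniquely to \(\bar\pi\colon\Mult(I)\to\Bound(\Hils_\pi)\), and \(m\mapsto(\bar\pi(m))_{\pi\in U_I}\), extended by~\(0\) off~\(U_I\), is a \Star{}homomorphism \(\Mult(I)\to\prod_\pi\Bound(\Hils_\pi)\) which, composed with~\(q\), yields \(\iota_I\colon\Mult(I)\to D\). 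I would prove \(\iota_I\) isometric by combining \(\norm{m}_{\Mult(I)}=\sup_{\pi\in U_I}\norm{\bar\pi(m)}\) — the family \((\bar\pi)_{\pi\in U_I}\) being faithful on \(\Mult(I)\), since \(m\neq0\) forces \(mb\neq0\) for some \(b\in I\) (as \(I\) is essential in \(\Mult(I)\)) and then \(\pi(mb)\neq0\) for some \(\pi\in U_I\) — with the fact that \(\pi\mapsto\norm{\bar\pi(m)}=\sup\setgiven{\norm{\pi(mb)}}{b\in I,\ \norm{b}\le1}\) is lower semicontinuous on~\(U_I\), so that restricting the supremum over~\(U_I\) to the dense set \(R\cap U_I\), for a comeagre~\(R\), does not decrease it. A short check that \(\bar\pi\) is compatible with restriction of multipliers for \(I\subseteq J\) (both act as \(\pi(b)\xi\mapsto\pi(mb)\xi\) on the dense subspace \(\pi(I)\Hils_\pi\)) shows the \(\iota_I\) agree on overlaps, so they combine to an isometric \Star{}embedding \(\iota\colon\Locmult(A)\hookrightarrow D\) restricting to the above on~\(A\).

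To see the square commutes, note that \(\iota\circ EL\) and \(q\circ\varrho\circ E\) are both contractive, so it suffices to compare them on the \(\Hilm_t\). For \(\xi\in\Hilm_t\), I would recall from the proof of Proposition~\ref{pro:EL_exists} that \(EL(\xi)\in\Mult(J_t)\) for the essential ideal \(J_t=I_{1,t}\oplus I_{1,t}^\bot\) and that \(EL(\xi)\cdot a=E(\xi)\cdot a\) in~\(A\) for all \(a\in J_t\); hence for \(\pi\in U_{J_t}\) one gets \(\bar\pi(EL(\xi))\,\pi(a)=\pi(EL(\xi)a)=\pi''(E(\xi))\,\pi(a)\) for all \(a\in J_t\), so \(\bar\pi(EL(\xi))=\pi''(E(\xi))\) by nondegeneracy, and therefore \(\iota(EL(\xi))\) and \(q(\varrho(E(\xi)))\) have representatives agreeing on the comeagre set~\(U_{J_t}\), so they coincide in~\(D\). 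Finally, injectivity of~\(\iota\) together with \(\iota(EL(b))=q(\varrho(E(b)))\) gives \(EL(b)=0\) if and only if \(\norm{\varrho(E(b))}_\ess=0\), which — since the defining infimum is attained — holds exactly when \(\pi''(E(b))=0\) on some comeagre subset of~\(\dual{A}\), that is, when \(\setgiven{\pi\in\dual{A}}{\pi''(E(b))=0}\) is comeagre. The step I expect to be most delicate is the isometry of~\(\iota_I\): it couples the faithfulness of \((\bar\pi)_{\pi\in\dual{I}}\) on the multiplier algebra with the lower-semicontinuity argument needed to transfer a supremum over~\(\dual{I}\) to an arbitrary comeagre subset, and one must also verify that the~\(\iota_I\) are genuinely compatible as \(I\) ranges over the essential ideals; the remaining verifications are routine.
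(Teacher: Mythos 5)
Your proof is correct and follows essentially the same route as the paper: the same definition of \(\iota\) by assembling the maps \(\Mult(I)\to D\) over essential ideals \(I\), the same use of lower semicontinuity of \(\pi\mapsto\norm{\bar\pi(m)}\) together with the Baire property of \(\dual{A}\) to get isometry, and the same verification of the commuting square on \(A\rtimes_\alg S\) using that \(EL(\xi)\cdot a=E(\xi)\cdot a\) on the essential ideal \(J_t\). The only (harmless) variations are that you argue the isometry of \(\iota_I\) entirely inside \(\dual{I}\) via faithfulness of \(\bigoplus_{\pi\in\dual I}\bar\pi\) on \(\Mult(I)\) instead of passing through \(\dual{\Mult(I)}\), and that you check the square fibrewise on each \(\Hilm_t\) rather than on finite sums.
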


\begin{proof}
  If \(R\subseteq \dual{A}\) is a comeagre subset, then the supremum
  of~\(\norm{f_\pi}\) for \(\pi\in R\) is a \(\Cst\)\nb-seminorm on
  \(\prod_{\pi\in\dual{A}} \Bound(\Hils_\pi)\).  As the pointwise
  limit of these \(\Cst\)\nb-seminorms, \(\norm{\cdot}_\ess\) is a
  \(\Cst\)\nb-seminorm as well.  Since
  \(\prod_{\pi\in\dual{A}} \Bound(\Hils_\pi)\) is already a
  \(\Cst\)\nb-algebra, the quotient~\(D\) by the null space of this
  \(\Cst\)\nb-seminorm is complete, hence a \(\Cst\)\nb-algebra.

  We define~\(\iota\).  Let \(J\subseteq A\) be an essential ideal.
  Then \(\dual{J} \subseteq \dual{A}\) is a dense open subset, hence
  comeagre.  If \(\pi\in\dual{J}\), then~\(\pi\) extends uniquely to
  a representation~\(\bar\pi\) of~\(\Mult(J)\), and so
  \(\bar\pi(a)\in\Bound(\Hils_\pi)\) is defined for all
  \(a\in\Mult(J)\).  This defines a unital \Star{}homomorphism
  \(\iota_J\colon \Mult(J) \to D\).  If \(K\subseteq J\), then the
  composite of~\(\iota_K\) with the restriction map
  \(\Mult(J) \to \Mult(K)\) is equal to~\(\iota_J\).  Hence the
  maps~\(\iota_J\) for all essential ideals \(J\subseteq A\) combine
  to a unital \Star{}homomorphism \(\iota\colon \Locmult(A) \to D\).
  To see that \(\iota\) is isometric, it suffices to prove that
  \(\norm{a} \le \norm{\iota_J(a)}\) holds for all essential ideals
  \(J\subseteq A\) and all \(a\in\Mult(J)\).  Let \(\varepsilon>0\)
  and \(C\defeq \norm{a}_{\Mult(J)}\).  The function
  \(\dual{\Mult(J)}\ni \overline{\pi} \to \norm{\overline{\pi}(a)}\)
  has the supremum~\(\norm{a}\) and is lower semicontinuous (see
  \cite{Dixmier:Cstar-algebras}*{Proposition~3.3.2}),
  and~\(\dual{J}\) is an open dense subset in \(\dual{\Mult(J)}\).
  Therefore,
  \(U\defeq \setgiven{\pi \in \dual{J} }{ \norm{\bar\pi(a)}>
    C-\varepsilon}\) is a non-empty open subset of
  \(\dual{J}\subseteq \dual{A}\).
  Since~\(\dual{A}\) is a Baire space, every comeagre subset
  \(R\subseteq \dual{A}\) is also dense in~\(\dual{A}\). Thus
  \(R\cap U \neq \emptyset\), and the supremum of
  \(\norm{\bar\pi(a)}\) for \(\pi \in R\) is at least
  \(C-\varepsilon\).  Then \(\norm{\iota_J(a)} > C-\varepsilon\)
  follows.  Since \(\varepsilon>0\) is arbitrary, this implies
  \(\norm{\iota_J(a)} \ge C\).  Hence
  \(\iota\colon \Locmult(A) \to D\) is isometric.

  Now we show that the diagram in the proposition commutes.  Let
  \(b\in A\rtimes_\alg S\).
  Then \(b = \sum_{t\in F} \xi_t \delta_t\) for a finite subset
  \(F\subseteq S\) and \(\xi_t\in\Hilm_t\).  Let
  \(J_t \defeq I_{1,t} \oplus I_{1,t}^\bot\) and
  \(J = \bigcap_{t\in F} J_t\).  These are essential ideals
  in~\(A\).  We have already seen that \(E(b)\cdot a \in J\) for all
  \(a\in J\), and \(EL(b)\) is the resulting multiplier of~\(J\).
  If \(\pi\in \dual{J} \subseteq \dual{A}\), then
  \(\pi''(E(b)) = \bar\pi(EL(b))\).  Since~\(\dual{J}\) is comeagre,
  \(q\) maps \(\varrho\circ E(b)\) to \(\iota(EL(b))\).  Hence the
  diagram commutes on all elements of \(A\rtimes_\alg S\).  Since
  \(A\rtimes_\alg S\) is dense in \(A\rtimes S\) and all maps
  involved are contractive, it commutes on all of \(A\rtimes S\).
\end{proof}

\begin{theorem}
  \label{thm:essential_crossed_expectation_faithful}
  The \(\Locmult\)-expectation
  \(EL\colon A\rtimes S \to \Locmult(A)\) is symmetric and thus
  descends to a faithful \(\Locmult\)-expectation
  \(A\rtimes_\ess S \to \Locmult(A)\).
\end{theorem}

\begin{proof}
  The two statements in the assertion are equivalent by
  Lemma~\ref{lem:reduced_almost_faithful}.  Let~\(\Hilm_t\) be the
  Hilbert \(A\)\nb-bimodules that define the \(S\)\nb-action
  on~\(A\).  We embed these into \(A\rtimes_\ess S\) as usual.
  Arguing as in the beginning of the proof of
  Theorem~\ref{the:crossed_expectation_faithful}, it suffices to
  show that if \(b\in A\rtimes_{\ess} S\) satisfies \(EL(b^* b)=0\),
  then \(EL(c^* b^* b c)=0\) for all \(c\in \Hilm_t\), \(t\in S\).
  Thus let \(b\in A\rtimes_{\ess} S\) be such that \(EL(b^* b)=0\).
  By Proposition~\ref{pro:Locmult_as_operator_families}, the set of
  \(\omega\in\dual{A}\) with \(\omega''(E(b^* b))\neq 0\) is meagre.
  Equation~\eqref{eq:prepare_faithful} shows that
  \(\pi''(E(c^* b^* b c))\neq 0\) can only happen if
  \(\pi\in\s\bigl(\dual{\Hilm_t}\bigr)\) and
  \(\omega(E(b^* b))\neq0\) for \(\omega = \dual{\Hilm_t}(\pi)\).
  Therefore, the set
  \[
    \setgiven*{\pi \in \dual{A}}{\pi''(E(c^* b^* b c)))\neq 0}\subseteq  \dual{\Hilm_t}^{-1}\left(\setgiven*{\omega \in  \dual{\rg(\Hilm_t)} }{\omega''(E(b^* b))\neq 0}\right)
  \]
  is meagre as a subset of a meagre set.  Thus \(EL(c^* b^* b c)=0\)
  by Proposition~\ref{pro:Locmult_as_operator_families}.
\end{proof}

\begin{corollary}
  \label{cor:kernel_essential_expectation}
  \(\Null_{EL}= \setgiven*{b \in A\rtimes S}{ \setgiven{\pi \in \dual{A}}{\pi''(E(b^*b))=0}\text{ is comeagre}}\).
\end{corollary}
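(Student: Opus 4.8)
The plan is to read off the formula directly from the symmetry of $EL$ together with the pointwise description of $\ker EL$ obtained in Proposition~\ref{pro:Locmult_as_operator_families}; no new computation is needed.

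First I would invoke Theorem~\ref{thm:essential_crossed_expectation_faithful}, which says that the essentially defined conditional expectation $EL\colon A\rtimes S\to\Locmult(A)$ is symmetric. By Corollary~\ref{cor:GNS_kernels}.\ref{enu:GNS_kernels1}, symmetry is equivalent to $\lNull_{EL}=\rNull_{EL}=\Null_{EL}$. Combining this with the description of $\lNull_{EL}$ from Proposition~\ref{pro:GNS_kernels} gives
\[
  \Null_{EL}=\lNull_{EL}=\setgiven{b\in A\rtimes S}{EL(b^*b)=0}.
\]
It is worth noting that without symmetry one would only get $\Null_{EL}=\setgiven{b}{EL((bc)^*bc)=0\text{ for all }c\in A\rtimes S}$; so Theorem~\ref{thm:essential_crossed_expectation_faithful} is precisely the input that lets us express $\Null_{EL}$ through $b^*b$ alone.

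Next I would apply the last assertion of Proposition~\ref{pro:Locmult_as_operator_families} with $b$ replaced by the positive element $b^*b$: it states that $EL(b^*b)=0$ if and only if $\setgiven{\pi\in\dual{A}}{\pi''(E(b^*b))=0}$ is comeagre in~$\dual{A}$. Substituting this equivalence into the displayed identity for $\Null_{EL}$ yields exactly the claimed formula, finishing the proof.

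The step I expect to carry the most weight has already been done: the genuinely hard inputs are the symmetry of $EL$ (Theorem~\ref{thm:essential_crossed_expectation_faithful}, whose proof rests on the identity~\eqref{eq:prepare_faithful}) and the isometric embedding $\iota\colon\Locmult(A)\hookrightarrow D$ together with the commuting diagram of Proposition~\ref{pro:Locmult_as_operator_families}. Given these, the corollary is a two-line formal consequence, so I anticipate no obstacle in the argument itself.
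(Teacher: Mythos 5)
Your proposal is correct and is exactly the argument the paper intends (the paper leaves the proof implicit, placing the corollary directly after Theorem~\ref{thm:essential_crossed_expectation_faithful}): symmetry of $EL$ gives $\Null_{EL}=\lNull_{EL}=\setgiven{b}{EL(b^*b)=0}$ via Proposition~\ref{pro:GNS_kernels} and Corollary~\ref{cor:GNS_kernels}, and the last assertion of Proposition~\ref{pro:Locmult_as_operator_families} applied to $b^*b$ converts this into the comeagre condition. No gaps.
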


\subsection{Topological gradings}
\label{sec:top_grading}

The following
definition is an inverse semigroup analogue of
\cite{Exel:Amenability}*{Definition~3.4}.  We choose to use the
essential instead of the reduced crossed product here.

\begin{definition}
  \label{def:topological_grading}
  Let~\(B\) be a \(\Cst\)\nb-algebra with an \(S\)\nb-grading
  \((B_t)_{t\in S}\).  Let \(\pi\colon A\rtimes S \to B\) be the canonical \Star{}epimorphism
  as in Remark~\ref{rem:representation_vs_grading}.  The grading is called
  \emph{topological} and~\(B\) is called \emph{topologically
    \(S\)\nb-graded} if \(\ker \pi\) is contained in the
  kernel~\(\Null_{EL}\) of the quotient map
  \(A\rtimes S \to A\rtimes_\ess S\).
\end{definition}

By definition, a grading is topological if and only if there is a
(surjective) \Star{}homomorphism
\[
\varphi\colon B \to A\rtimes_\ess S
\]
with \(\varphi\circ \pi = \Lambda\).  So~\(B\) lies between the full
and the essential crossed products and may be called an \emph{exotic
  crossed product}.  Another equivalent characterisation when a
grading is topological is \(\norm{\pi(x)} \ge \norm{x}_\ess\) for
all \(x\in A\rtimes S\) or all \(x\in A\rtimes_\alg S\),
where~\(\norm{x}_\ess\) denotes the norm in the essential crossed
product \(A\rtimes_\ess S\).  Equivalently, \(B\) is isomorphic to a
completion of the \Star{}algebra \(A\rtimes_\alg S\) for
a \(\Cst\)\nb-norm that lies between the maximal and the essential
\(\Cst\)\nb-norm:

\begin{proposition}
  \label{pro:topological_grading_through_expectation}
  Let~\(B\) be an \(S\)\nb-graded \(\Cst\)\nb-algebra.  View the
  grading as an \(S\)\nb-action on its unit fibre~\(A\), and let
  \(\pi\colon A\rtimes S \to B\) be the induced surjective
  \Star{}homomorphism as in
  Remark~\textup{\ref{rem:representation_vs_grading}}.  Let
  \(EL\colon A\rtimes S \to \Locmult(A)\) be the canonical
  \(\Locmult\)-expectation.  The following are
  equivalent:
  \begin{enumerate}
  \item \label{enum:topological_grading_through_expectation_1}%
    the \(S\)\nb-grading on~\(B\) is topological;
  \item \label{enum:topological_grading_through_expectation_2}%
    there is an \(\Locmult\)-expectation
    \(P\colon B\to \Locmult(A)\) with \(P\circ \pi = EL\);
  \item \label{enum:topological_grading_through_expectation_3}%
    \(\norm{EL(x)}\le \norm{\pi(x)}\) for all
    \(x\in A\rtimes_\alg S\).
  \end{enumerate}
  If~\(P\) is an \(\Locmult\)-expectation as
  in~\ref{enum:topological_grading_through_expectation_2}, then the
  canonical surjective \Star{}homomorphism \(B\to A\rtimes_\ess S\)
  is an isomorphism if and only if~\(P\) is almost faithful, if and
  only if~\(P\) is faithful.
\end{proposition}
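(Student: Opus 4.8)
The plan is to prove the chain of equivalences by relating everything to the abstract machinery for generalised expectations developed in Section~\ref{sec:gen_expectation}, applied to the essentially defined conditional expectation $EL\colon A\rtimes S\to\Locmult(A)$. The key observation is that Proposition~\ref{pro:EL_exists} already gives us $EL$, and Theorem~\ref{thm:essential_crossed_expectation_faithful} tells us it is symmetric; so $\Null_{EL}=\lNull_{EL}=\rNull_{EL}$ by Corollary~\ref{cor:GNS_kernels}.\ref{enu:GNS_kernels1}, and $A\rtimes_\ess S=(A\rtimes S)/\Null_{EL}$ by Definition~\ref{def:essential_crossed}.

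First I would observe that Lemma~\ref{lem:abstract_uniqueness}, applied with $C=B$, $\pi$ the given epimorphism $A\rtimes S\to B$, and the generalised expectation $EL$, gives the equivalence of \ref{enum:topological_grading_through_expectation_1} and \ref{enum:topological_grading_through_expectation_2} almost immediately: condition~\ref{enum:topological_grading_through_expectation_1} is precisely ``$\ker\pi\subseteq\Null_{EL}$'' by Definition~\ref{def:topological_grading}, which is Lemma~\ref{lem:abstract_uniqueness}.\ref{lem:abstract_uniqueness1}; and the existence of $P\colon B\to\Locmult(A)$ with $P\circ\pi=EL$ is Lemma~\ref{lem:abstract_uniqueness}.\ref{lem:abstract_uniqueness3} (with $\tilde A=\Locmult(A)$), where one notes that any such $P$ is automatically a generalised expectation by the final clause of that lemma. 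The implication \ref{enum:topological_grading_through_expectation_2}$\Rightarrow$\ref{enum:topological_grading_through_expectation_3} is clear: if $P\circ\pi=EL$ and $P$ is contractive, then $\norm{EL(x)}=\norm{P(\pi(x))}\le\norm{\pi(x)}$. For the converse \ref{enum:topological_grading_through_expectation_3}$\Rightarrow$\ref{enum:topological_grading_through_expectation_2}, the inequality $\norm{EL(x)}\le\norm{\pi(x)}$ on the dense $*$-subalgebra $A\rtimes_\alg S$ says that $EL$ factors through $\pi$: if $\pi(x)=0$ then $EL(x)=0$, i.e.\ $\ker\pi\cap(A\rtimes_\alg S)\subseteq\ker EL$; since $\ker\pi$ is a closed ideal and $\pi(A\rtimes_\alg S)$ is dense in $B$, a density and continuity argument upgrades this to $\ker\pi\subseteq\ker EL$, hence $\ker\pi\subseteq\Null_{EL}$ because $\ker\pi$ is an ideal, and then $P$ exists by Lemma~\ref{lem:abstract_uniqueness}. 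This closes the cycle.

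For the final assertion about when $B\to A\rtimes_\ess S$ is an isomorphism, I would again invoke Lemma~\ref{lem:abstract_uniqueness}: the canonical map $\varphi\colon B\to A\rtimes_\ess S=B/\Null_{EL}/\!\sim$ (more precisely the $\varphi$ of \ref{lem:abstract_uniqueness2}) is faithful if and only if $E_\pi=P$ is almost faithful, by the very last sentence of Lemma~\ref{lem:abstract_uniqueness}; and since $\varphi$ is already surjective, faithful means it is an isomorphism. It remains to see that ``$P$ almost faithful'' is equivalent to ``$P$ faithful'' in this situation. This is where I would use that $EL$ is symmetric (Theorem~\ref{thm:essential_crossed_expectation_faithful}): one checks that $P$ is symmetric because $EL=P\circ\pi$ is symmetric and $\pi$ is a surjective $*$-homomorphism---if $P(b^*b)=0$ for $b=\pi(y)\in B$, lift to $y$, then $EL(y^*y)=0$, symmetry gives $EL(yy^*)=0$, hence $P(bb^*)=0$. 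An almost faithful symmetric generalised expectation is faithful by Corollary~\ref{cor:GNS_kernels}.\ref{enu:GNS_kernels4}. So the three conditions (isomorphism, $P$ almost faithful, $P$ faithful) coincide.

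The one step that needs a little care---the main obstacle, such as it is---is the upgrade in \ref{enum:topological_grading_through_expectation_3}$\Rightarrow$\ref{enum:topological_grading_through_expectation_2} from the norm inequality on the algebraic crossed product to the inclusion $\ker\pi\subseteq\Null_{EL}$ of closed ideals. The cleanest route is: the inequality $\norm{EL(x)}\le\norm{\pi(x)}$ for $x\in A\rtimes_\alg S$ extends by continuity and density to $\norm{EL(x)}\le\norm{\pi(x)}$ for all $x\in A\rtimes S$, since both $EL$ and $\pi$ are contractive and $A\rtimes_\alg S$ is dense; therefore $\ker\pi\subseteq\ker EL$, and as $\ker\pi$ is a two-sided ideal this forces $\ker\pi\subseteq\Null_{EL}$ by the maximality in Definition~\ref{def:reduced_expectation}. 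Everything else is a direct citation of the lemmas already proved.
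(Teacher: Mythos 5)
Your proposal is correct and follows essentially the same route as the paper, whose proof is simply the two-sentence citation of Lemma~\ref{lem:abstract_uniqueness} together with Theorem~\ref{thm:essential_crossed_expectation_faithful}; you have merely spelled out the details the paper leaves implicit (the continuity upgrade in \ref{enum:topological_grading_through_expectation_3}$\Rightarrow$\ref{enum:topological_grading_through_expectation_2} and the symmetry transfer from $EL$ to $P$), and those details are all sound.
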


\begin{proof}
  The assertions mostly follow from
  Lemma~\ref{lem:abstract_uniqueness}.  That~\(P\) is almost
  faithful if and only if it is faithful follows from
  Theorem~\ref{thm:essential_crossed_expectation_faithful}.
\end{proof}

\subsection{Coincidence of the reduced and essential crossed products}
\label{sec:compare}

The essential crossed product has the advantage of having the
``right'' ideal structure, even for non-Hausdorff groupoids and
similar situations.  A serious disadvantage is that it is not
functorial (see Remark~\ref{rem:essential_crossed_not_functorial}).
This suggests that it is not the right object for K\nb-theory
computations.  A question like exactness does not even make sense
for it.  So it is desirable to know when the reduced and
essential crossed products are the same, meaning that the canonical
quotient map \(A\rtimes_\red S \to A\rtimes_\ess S\) is an
isomorphism.

Let \(E\colon A\rtimes S \to A''\) be the canonical weak conditional
expectation and let \(EL\colon A\rtimes S \to \Locmult(A)\) be the
canonical \(\Locmult\)-expectation.  By
definition, \(A\rtimes_\red S = A\rtimes_\ess S\) if and only if
\(\Null_E = \Null_{EL}\).  A case when this is particularly clear is
when~\(E\) is a genuine conditional expectation, taking values in
\(A\subseteq A''\).  Then~\(EL\) also takes values in
\(A\subseteq \Locmult(A)\) and \(EL=E\).  If~\(E\) is not
\(A\)\nb-valued, then \(\Null_{E}\subseteq \Null_{EL}\).  The
difference between \(A\rtimes_\red S\) and \(A\rtimes_{\ess} S\)
is measured by the ideal
\[
  J_\sing\defeq \Lambda(\Null_{EL})
\]
in \(A\rtimes_\red S\) because
\[
(A\rtimes_\red S)/J_\sing\cong A\rtimes_{\ess} S.
\]
We call elements of~\(J_\sing\) \emph{singular}.
Proposition~\ref{pro:Locmult_as_operator_families} describes the
kernels of \(E\) and~\(EL\) using the supremum and the essential
supremum norm of functions on~\(\dual{A}\), where the essential
supremum is defined as the supremum over comeagre subsets.  We use
it to describe~\(J_\sing\).  If \(a\in A''\), then we define
\[
  \nu_{a}\colon \dual{A} \to [0,\infty),\qquad
  \pi\mapsto \norm{\pi''(a)},
\]
where \(\pi''\colon A'' \to \Bound(\Hils_\pi)\) denotes the unique
extension of~\(\pi\) to~\(A''\).  If \(b\in A\rtimes S\), then
\[
  \norm{E(b)} = \norm{\varrho\circ E(b)}
  = \sup_{\pi\in\dual{A}}\nu_{E(b)}(\pi)=\norm{\nu_{E(b)}}_\infty
\]
by Lemma~\ref{lem:atomic_weak_expectation}.  Similarly,
Proposition~\ref{pro:Locmult_as_operator_families} gives
\[
  \norm{EL(b)} = \norm{\iota\circ EL(b)}
  = \min_{\twolinesubscript{R\subseteq\dual{A}}{\mathrm{comeagre}}}
  \sup_{\pi\in R}\nu_{E(b)}(\pi)
  \eqdef \norm{\nu_{E(b)}}_\ess.
\]
In particular, \(EL(b)=0\) if and only if
\(\setgiven{\pi\in\dual{A}}{\nu_{E(b)}(\pi)\neq 0}\) is meagre.

\begin{proposition}
  \label{pro:essential_norm}
  Let \(b\in A\rtimes S\).  The function
  \(\nu_{E(b)}\colon \dual{A} \to [0,\infty)\) is lower semicontinuous
  on a comeagre subset.  The following are equivalent:
  \begin{enumerate}
  \item \label{enu:essential_norm1}%
    \(\setgiven{\pi\in\dual{A}}{\nu_{E(b)}(\pi)\neq0}\) is meagre;
  \item \label{enu:essential_norm2}%
    \(\setgiven{\pi\in\dual{A}}{\nu_{E(b)}(\pi)\neq0}\) has empty
    interior;
  \item \label{enu:essential_norm3}%
    the subsets
    \(\setgiven{\pi\in\dual{A}}{\nu_{E(b)}(\pi)> \varepsilon}\) have
    empty interior for all \(\varepsilon>0\).
  \end{enumerate}
\end{proposition}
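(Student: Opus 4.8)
The plan is to reduce everything to the dense subalgebra $A\rtimes_\alg S$ by a uniform approximation, exploiting that on that subalgebra $EL$ takes values in $\Mult(J)$ for a \emph{single} essential ideal~$J$, where the level sets of $\nu_{E(\,\cdot\,)}$ become tractable. So I would first fix $b_0\in A\rtimes_\alg S$, say $b_0=\sum_{t\in F}\xi_t\delta_t$ with $\xi_t\in\Hilm_t$, and put $J_0\defeq\bigcap_{t\in F}(I_{1,t}\oplus I_{1,t}^\bot)$. Then $J_0$ is an essential ideal, $EL(b_0)\in\Mult(J_0)$, and $\pi''(E(b_0))=\bar\pi(EL(b_0))$ for all $\pi\in\dual{J_0}$; all of this is already contained in the proofs of Propositions~\ref{pro:EL_exists} and~\ref{pro:Locmult_as_operator_families}. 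Since $\sigma\mapsto\norm{\sigma(EL(b_0))}$ is lower semicontinuous on $\dual{\Mult(J_0)}$ by \cite{Dixmier:Cstar-algebras}*{Proposition~3.3.2}, and $\dual{J_0}$ is an open subset of $\dual{\Mult(J_0)}$ whose subspace topology agrees with its topology as the open subset $\dual{J_0}\subseteq\dual{A}$, the restriction $\nu_{E(b_0)}|_{\dual{J_0}}$ is lower semicontinuous and, for each $\varepsilon>0$, the set $\setgiven{\pi\in\dual{J_0}}{\nu_{E(b_0)}(\pi)>\varepsilon}$ is open in $\dual{A}$. As $J_0$ is essential, $\dual{A}\setminus\dual{J_0}$ is closed with empty interior, so $\setgiven{\pi\in\dual{A}}{\nu_{E(b_0)}(\pi)>\varepsilon}$ is a union of an open set and a closed nowhere dense set.

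Next I would pass to a general $b\in A\rtimes S$: choose $b_m\in A\rtimes_\alg S$ with $\norm{b-b_m}<1/m$, let $J_m$ be the essential ideal attached to $b_m$ as above, and write $f\defeq\nu_{E(b)}$, $f_m\defeq\nu_{E(b_m)}$. Since $E$ and every $\pi''$ are contractive, $\abs{f(\pi)-f_m(\pi)}<1/m$ for all $\pi$, so $f_m\to f$ uniformly on $\dual{A}$. The set $R\defeq\bigcap_m\dual{J_m}$ is a countable intersection of dense open sets, hence comeagre, and each $f_m$ is lower semicontinuous on $\dual{J_m}\supseteq R$, hence on~$R$; a uniform limit of lower semicontinuous functions is lower semicontinuous, so $f|_R$ is lower semicontinuous. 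This proves the first assertion.

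For the equivalences I would run the cycle $\ref{enu:essential_norm1}\Rightarrow\ref{enu:essential_norm2}\Rightarrow\ref{enu:essential_norm3}\Rightarrow\ref{enu:essential_norm1}$. The first implication holds because a meagre subset of the Baire space~$\dual{A}$ has empty interior, and the second is immediate from $\setgiven{\pi}{f(\pi)>\varepsilon}\subseteq\setgiven{\pi}{f(\pi)\neq0}$. For $\ref{enu:essential_norm3}\Rightarrow\ref{enu:essential_norm1}$ it suffices to prove that each $\setgiven{\pi}{f(\pi)>\varepsilon}$ is nowhere dense, since then $\setgiven{\pi}{f(\pi)\neq0}=\bigcup_{n\ge1}\setgiven{\pi}{f(\pi)>1/n}$ is meagre. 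Assume not: then $G\defeq\Int\overline{\setgiven{\pi}{f(\pi)>\varepsilon}}$ is non-empty and $\setgiven{\pi}{f(\pi)>\varepsilon}$ is dense in~$G$. Pick $m$ with $1/m<\varepsilon/4$. By the first part, $\setgiven{\pi}{f(\pi)>\varepsilon}\subseteq\setgiven{\pi}{f_m(\pi)>3\varepsilon/4}=V_m\cup N_m$ with $V_m$ open and $N_m$ nowhere dense, so $V_m\cup N_m$ is dense in~$G$; since $N_m$ is nowhere dense, the open set $V_m\cap G$ is still dense in~$G$, in particular non-empty. On $V_m\cap G$ one has $f\ge f_m-1/m>\varepsilon/2$, so the non-empty open set $V_m\cap G$ lies inside $\setgiven{\pi}{f(\pi)>\varepsilon/2}$, contradicting~\ref{enu:essential_norm3}.

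The step I expect to be the main obstacle --- and the reason for the detour through $A\rtimes_\alg S$ --- is that for a general $b$ the element $EL(b)$ need not lie in $\Mult(J)$ for any single essential ideal~$J$; it is only a limit of such elements in $\Locmult(A)$. Consequently the level sets of $\nu_{E(b)}$ are not themselves ``open modulo nowhere dense'', only uniform limits of sets of that form, and the quantitative $\varepsilon/4$-bookkeeping in the last paragraph is precisely what is needed to transfer nowhere density back from the approximants $b_m$ to $b$. (Note that lower semicontinuity of $f$ on the comeagre set $R$ alone is \emph{not} enough for $\ref{enu:essential_norm2}\Rightarrow\ref{enu:essential_norm1}$, since an open-in-$R$ set of empty interior in $\dual A$ can fail to be meagre; the finer structure from the algebraic case is genuinely used.)
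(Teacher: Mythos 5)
Your proof is correct, and for the first assertion and the implications \ref{enu:essential_norm1}$\Rightarrow$\ref{enu:essential_norm2}$\Rightarrow$\ref{enu:essential_norm3} it follows the paper's argument exactly: reduce to $b_0\in A\rtimes_\alg S$, where $EL(b_0)$ lies in $\Mult(J_0)$ for a single essential ideal $J_0$, obtain lower semicontinuity of $\nu_{E(b_0)}$ on the dense open set $\dual{J_0}$ from \cite{Dixmier:Cstar-algebras}*{Proposition~3.3.2}, and pass to general $b$ by uniform approximation over the comeagre set $R=\bigcap_m \dual{J_m}$, then invoke Baireness of $\dual{A}$.

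The one place you genuinely diverge is \ref{enu:essential_norm3}$\Rightarrow$\ref{enu:essential_norm1}. The paper extracts slightly more from the uniform-limit step than you do: not just that the restriction $f|_R$ is lower semicontinuous, but that $f=\nu_{E(b)}$ is lower semicontinuous \emph{as a function on $\dual{A}$} at every point $\pi\in R$ --- each $f_m$ is lower semicontinuous on the open set $\dual{J_m}\ni\pi$, so the relevant inequalities hold on $\dual{A}$-neighbourhoods of $\pi$, and this pointwise property survives uniform limits. With that, the contrapositive is immediate: if $\setgiven{\pi}{f(\pi)\neq0}$ is not meagre, it cannot lie in the meagre complement of $R$, so there is $\pi\in R$ with $f(\pi)\neq0$, and lower semicontinuity at $\pi$ gives a whole neighbourhood on which $f>f(\pi)/2$, contradicting \ref{enu:essential_norm3}. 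Your closing caveat --- that lower semicontinuity of the restriction $f|_R$ alone would not suffice --- is accurate, but it concerns the weaker reading; your own uniform-convergence argument already yields the stronger pointwise version, which would have let you skip the $\varepsilon/4$ bookkeeping, the decomposition $\setgiven{\pi}{f_m(\pi)>3\varepsilon/4}=V_m\cup N_m$, and the density transfer inside $G$. That said, your detour is sound: the splitting into an open set plus a closed nowhere dense set outside $\dual{J_m}$ is correct, and the argument that $V_m\cap G$ is a non-empty open subset of $\setgiven{\pi}{f(\pi)>\varepsilon/2}$ goes through. So what you lose in brevity you make up for by making explicit exactly which structural feature of the approximants is being used; both proofs are valid.
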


\begin{proof}
  Assume first that
  \(b= \sum_{t\in F} \xi_t \delta_t\in A\rtimes_\alg S\).  In the
  last part of the proof of
  Proposition~\ref{pro:Locmult_as_operator_families}, we have
  defined an essential ideal~\(J\) in~\(A\) such that
  \(EL(b) \in\Mult(J)\) and \(\pi''(E(b)) = \bar\pi(EL(b))\) for all
  \(\pi\in \dual{J} \subseteq \dual{A}\), where~\(\bar\pi\) denotes
  the unique extension of~\(\pi\) to~\(\Mult(J)\).  The map
  \(\pi\mapsto \bar\pi\) identifies~\(\dual{J}\) with an open subset
  in \(\dual{\Mult(J)}\), and the function
  \(\omega \to \norm{\omega(EL(b))}\) is lower semicontinuous
  on~\(\dual{\Mult(J)}\) by
  \cite{Dixmier:Cstar-algebras}*{Proposition~3.3.2}.
  Hence~\(\nu_{E(b)}\) is lower semicontinuous on~\(\dual{J}\),
  which is a dense open subset in~\(\dual{A}\).  Now let
  \(b\in A\rtimes S\).  Then there is a sequence \((b_n)_{n\in\N}\)
  in \(A\rtimes_\alg S\) with \(\lim b_n = b\).  Since~\(E\) is
  contractive, the sequence of functions~\(\nu_{E(b_n)}\) converges
  uniformly towards~\(\nu_{E(b)}\).  For each \(n\in\N\), there is
  an essential ideal~\(J_n\) as above such that~\(\nu_{b_n}\) is
  lower semicontinuous on~\(\dual{J_n}\).  The intersection
  \(Y\defeq \bigcap \dual{J_n}\) is
  comeagre.
  If \(\pi\in Y\), then the functions~\(\nu_{E(b_n)}\) are lower
  semicontinuous at~\(\pi\).  Since they converge uniformly
  to~\(\nu_{E(b)}\), the latter is also lower semicontinuous
  at~\(\pi\).

  The statement~\ref{enu:essential_norm1}
  implies~\ref{enu:essential_norm2} because~\(\dual{A}\) is a Baire
  space (see \cite{Dixmier:Cstar-algebras}*{Proposition~3.4.13}).
  And~\ref{enu:essential_norm2} clearly
  implies~\ref{enu:essential_norm3}.  To complete the proof, we show
  that not~\ref{enu:essential_norm1} implies
  not~\ref{enu:essential_norm3}.  Let~\(Y\) be the comeagre subset
  of~\(\dual{A}\) defined above.  If
  \(\setgiven{\pi\in\dual{A}}{\nu_{E(b)}(\pi)\neq0}\) is not meagre,
  then it cannot be contained in the complement of~\(Y\).  So there
  is \(\pi\in Y\) with \(\nu_{E(b)}(\pi)\neq0\).
  Since~\(\nu_{E(b)}\) is lower semicontinuous at~\(\pi\), it
  follows that \(\nu_{E(b)}(\omega)>\nu_{E(b)}(\pi)/2>0\) for
  all~\(\omega\) in some neighbourhood of~\(\pi\).
\end{proof}

\begin{corollary}
  \label{cor:singular_ideal_description}
  An element \(b\in A\rtimes_\red S\) belongs to~\(J_\sing\) if and
  only if the subset
  \(\setgiven{\pi\in\dual{A}}{\nu_{E_\red(b^*b)}(\pi)\neq0}\) is
  meagre in~\(\dual{A}\), if and only if this subset has empty
  interior, if and only if
  \(\setgiven{\pi\in\dual{A}}{\nu_{E_\red(b^*b)}(\pi)>\varepsilon}\)
  has empty interior in~\(\dual{A}\) for all \(\varepsilon >0\).
\end{corollary}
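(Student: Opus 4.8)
Corollary~\ref{cor:singular_ideal_description} — the plan.

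The statement to prove describes membership in $J_\sing=\Lambda(\Null_{EL})$ for $b\in A\rtimes_\red S$ in four equivalent ways. The guiding observation is that this is essentially a translation of Proposition~\ref{pro:essential_norm} (with $E$ replaced by $E_\red$) together with the identification $\Null_{EL}=\ker(EL)$ on the kernel level, combined with Proposition~\ref{pro:GNS_kernels} which reduces a two-sided-ideal condition to the one-sided $\lNull$-type condition. So the strategy is: first reduce "$b\in J_\sing$" to "$E_\red((bc)^*bc)=0$ for all $c$", then show that this follows from the single condition "$\nu_{E_\red(b^*b)}$ vanishes off a meagre set", and finally invoke Proposition~\ref{pro:essential_norm} to get the equivalence of the three topological reformulations.

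First I would set up the dictionary. The canonical map $\Lambda\colon A\rtimes S\to A\rtimes_\red S$ identifies $J_\sing$ with $\Null_{EL}/\Null_E=\Null_{EL_\red}$ for the reduced essentially defined expectation $EL_\red\colon A\rtimes_\red S\to\Locmult(A)$. By Proposition~\ref{pro:GNS_kernels} applied to $EL_\red$, an element $b\in A\rtimes_\red S$ lies in $\Null_{EL_\red}$ if and only if $EL_\red((bc)^*bc)=0$ for all $c\in A\rtimes_\red S$. By Proposition~\ref{pro:Locmult_as_operator_families}, $EL_\red((bc)^*bc)=0$ is equivalent to $\setgiven{\pi\in\dual A}{\nu_{E_\red((bc)^*bc)}(\pi)\ne0}$ being meagre. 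So the naive reformulation of "$b\in J_\sing$" involves a meagreness condition for \emph{every} $c$, and the content of the corollary is that the single case $c=1$ (formally: $c$ ranging over an approximate unit, so that $(bc)^*bc\to b^*b$) already suffices.

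The key step — and the main obstacle — is therefore showing that $\setgiven{\pi}{\nu_{E_\red(b^*b)}(\pi)\ne0}$ meagre implies $\setgiven{\pi}{\nu_{E_\red((bc)^*bc)}(\pi)\ne0}$ meagre for all $c\in A\rtimes_\red S$. For this I would mimic the proof of Theorem~\ref{the:crossed_expectation_faithful}/Theorem~\ref{thm:essential_crossed_expectation_faithful}: the set of $c$ for which this implication holds is a closed subspace (the relevant operator families depend continuously on $c$), so it suffices to treat $c=c'\delta_t\in\Hilm_t$ for $t\in S$; and then Equation~\eqref{eq:prepare_faithful}, which reads $\pi''(E(c^*dc))=T_c^*\omega''(E(d))T_c$ with $\omega=\dual{\Hilm_t}(\pi)$ for $\pi\in\s(\dual{\Hilm_t})$ and $0$ otherwise, shows that $\nu_{E_\red((bc)^*bc)}(\pi)\ne0$ forces $\pi\in\s(\dual{\Hilm_t})$ and $\nu_{E_\red(b^*b)}(\dual{\Hilm_t}(\pi))\ne0$, i.e. the bad set for $bc$ is contained in $\dual{\Hilm_t}^{-1}$ of the bad set for $b$ intersected with $\rg(\dual{\Hilm_t})$. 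Since $\dual{\Hilm_t}$ is a homeomorphism between open subsets of $\dual A$, it carries meagre sets to meagre sets, and a meagre subset of $\dual A$ has meagre preimage; so the bad set for $bc$ is meagre. (One must note $E$ and $E_\red$ agree up to the identification $A\rtimes_\red S\cong(A\rtimes S)/\Null_E$, so~\eqref{eq:prepare_faithful} transfers verbatim.) This is exactly the argument already carried out in the proof of Theorem~\ref{thm:essential_crossed_expectation_faithful}, just phrased for $E_\red$ instead of $E$, so I would state it briefly and cite that proof.

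Finally, with "$b\in J_\sing\iff\setgiven{\pi}{\nu_{E_\red(b^*b)}(\pi)\ne0}$ is meagre" established, the remaining two equivalences — meagre $\iff$ empty interior $\iff$ the $\varepsilon$-superlevel sets have empty interior — are immediate from Proposition~\ref{pro:essential_norm} applied to $b^*b$ (lifted to a representative in $(A\rtimes S)^+$; note $E_\red(b^*b)$ equals $E$ of any such lift). I expect the write-up to be short: one paragraph doing the $\Null_E$/$\Null_{EL}$ bookkeeping and the reduction via Proposition~\ref{pro:GNS_kernels}, one sentence invoking the $c=c'\delta_t$ argument exactly as in Theorem~\ref{thm:essential_crossed_expectation_faithful}, and one sentence citing Proposition~\ref{pro:essential_norm}. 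The only genuine subtlety to be careful about is that Proposition~\ref{pro:GNS_kernels} requires quantifying over all $c$, and one must not forget to run the $\eqref{eq:prepare_faithful}$ reduction to collapse that quantifier — skipping it would leave the first stated equivalence unproven.
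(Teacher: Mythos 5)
Your proposal is correct and follows essentially the same route as the paper: the paper's proof is simply to combine Corollary~\ref{cor:kernel_essential_expectation} with Proposition~\ref{pro:essential_norm}, and Corollary~\ref{cor:kernel_essential_expectation} is exactly the ``collapse the quantifier over $c$'' step you carry out by hand, being an immediate consequence of the symmetry of $EL$ (Theorem~\ref{thm:essential_crossed_expectation_faithful}) together with Proposition~\ref{pro:Locmult_as_operator_families}. Your re-derivation of that step via Proposition~\ref{pro:GNS_kernels} and Equation~\eqref{eq:prepare_faithful} is sound but redundant, since you could cite the already-stated corollary directly.
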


\begin{proof}
  Combine Corollary~\ref{cor:kernel_essential_expectation} and
  Proposition~\ref{pro:essential_norm}.
\end{proof}

\begin{corollary}
  \label{cor:ess_vs_reduced}
  \(A\rtimes_\ess S = A\rtimes_\red S\) if and only if for every
  \(b\in (A\rtimes_\red S)^+\setminus \{0\}\) there is
  \(\varepsilon>0\) such that
  \(\setgiven{\pi\in\dual{A}}{\nu_{E_r(b)}(\pi)> \varepsilon}\) has
  non-empty interior, if and only if for every
  \(b\in (A\rtimes_\red S)^+\setminus \{0\}\) the set
  \(\setgiven{\pi\in\dual{A}}{\nu_{E_r(b)}(\pi)\neq0}\) is not
  meagre.
\end{corollary}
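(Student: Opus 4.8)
The plan is to deduce the statement from Corollary~\ref{cor:singular_ideal_description} together with the isomorphism \((A\rtimes_\red S)/J_\sing\cong A\rtimes_\ess S\) recorded above. By that isomorphism, the canonical quotient map \(A\rtimes_\red S\to A\rtimes_\ess S\) is an isomorphism if and only if \(J_\sing=\{0\}\). Since \(J_\sing\) is a closed two-sided ideal, it is zero if and only if it contains no non-zero positive element: if \(b\in J_\sing\) is non-zero, then \(b^*b\in J_\sing\) is non-zero and positive. So the task reduces to characterising when no non-zero element of \((A\rtimes_\red S)^+\) is singular.

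First I would record a functional-calculus observation: for \(b\in(A\rtimes_\red S)^+\) one has \(b\in J_\sing\) if and only if \(b^{1/2}\in J_\sing\). Indeed, \(b=(b^{1/2})^*b^{1/2}\) gives one implication since \(J_\sing\) is a two-sided ideal, while for the converse \(b^{1/2}=f(b)\) for \(f(t)=t^{1/2}\) with \(f(0)=0\), and continuous functional calculus by functions vanishing at~\(0\) carries a closed ideal into itself. Applying Corollary~\ref{cor:singular_ideal_description} to \(b^{1/2}\), for which \((b^{1/2})^*b^{1/2}=b\), now yields that a positive element \(b\) lies in \(J_\sing\) if and only if \(\setgiven{\pi\in\dual{A}}{\nu_{E_\red(b)}(\pi)\neq0}\) is meagre in~\(\dual{A}\), if and only if this set has empty interior, if and only if \(\setgiven{\pi\in\dual{A}}{\nu_{E_\red(b)}(\pi)>\varepsilon}\) has empty interior for every \(\varepsilon>0\).

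Combining the two points, \(A\rtimes_\ess S=A\rtimes_\red S\) holds precisely when every \(b\in(A\rtimes_\red S)^+\setminus\{0\}\) fails to be singular; by the previous display this means that \(\setgiven{\pi\in\dual{A}}{\nu_{E_\red(b)}(\pi)\neq0}\) is not meagre, equivalently --- negating the third characterisation above --- that there is some \(\varepsilon>0\) for which \(\setgiven{\pi\in\dual{A}}{\nu_{E_\red(b)}(\pi)>\varepsilon}\) has non-empty interior. These are exactly the two conditions in the statement. I do not expect a genuine obstacle here; the only point needing care is the passage from the quadratic expression \(b^*b\) in Corollary~\ref{cor:singular_ideal_description} to the quantity \(\nu_{E_\red(b)}\) attached to a positive \(b\) in the statement, which is handled by the substitution \(b\mapsto b^{1/2}\), and the rest is formal rewriting of results already established.
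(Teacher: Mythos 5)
Your proposal is correct and follows exactly the route the paper intends: the corollary is stated without proof as an immediate consequence of Corollary~\ref{cor:singular_ideal_description} together with \((A\rtimes_\red S)/J_\sing\cong A\rtimes_\ess S\), and your substitution \(b\mapsto b^{1/2}\) to pass from the quadratic form \(b^*b\) to a positive \(b\) is precisely the (routine) detail needed to make that deduction explicit.
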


\begin{corollary}
  \label{cor:sufficient_for_no_singular}
  If there is \(\varepsilon>0\) with
  \(\norm{\nu_{E(b)}}_\ess \ge \varepsilon\cdot
  \norm{\nu_{E(b)}}_\infty\) for all positive elements
  \(b\in (A\rtimes_\alg S)^+\), then
  \(A\rtimes_\ess S = A\rtimes_\red S\).
\end{corollary}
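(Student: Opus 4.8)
The goal $A\rtimes_\ess S = A\rtimes_\red S$ amounts to $\Null_E = \Null_{EL}$, since $A\rtimes_\red S\cong(A\rtimes S)/\Null_E$ (Proposition~\ref{prop:reduced_through_E}) and $A\rtimes_\ess S=(A\rtimes S)/\Null_{EL}$ (Definition~\ref{def:essential_crossed}). The inclusion $\Null_E\subseteq\Null_{EL}$ is automatic: $\norm{EL(x)}\le\norm{E(x)}$ gives $\ker E\subseteq\ker EL$, so $\Null_E$, being a two-sided ideal inside $\ker E\subseteq\ker EL$, is contained in $\Null_{EL}$. The plan is therefore to prove the reverse inclusion $\Null_{EL}\subseteq\Null_E$, and the only substantial step is to transport the assumed inequality from $(A\rtimes_\alg S)^+$ to the whole crossed product.

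First I would extend the estimate. Using $\norm{\nu_{E(x)}}_\infty=\norm{E(x)}$ (Lemma~\ref{lem:atomic_weak_expectation}) and $\norm{\nu_{E(x)}}_\ess=\norm{EL(x)}$ (Proposition~\ref{pro:Locmult_as_operator_families}), and the fact that $E$ and $EL$ are contractive, the maps $x\mapsto\norm{\nu_{E(x)}}_\infty$ and $x\mapsto\norm{\nu_{E(x)}}_\ess$ on $A\rtimes S$ are both $1$-Lipschitz. For $b\in A\rtimes S$, choose $d_n\in A\rtimes_\alg S$ with $d_n\to b$; then $d_n^*d_n\in(A\rtimes_\alg S)^+$ and $d_n^*d_n\to b^*b$, so letting $n\to\infty$ in the hypothesis $\norm{\nu_{E(d_n^*d_n)}}_\ess\ge\varepsilon\cdot\norm{\nu_{E(d_n^*d_n)}}_\infty$ yields
\[
  \norm{\nu_{E(b^*b)}}_\ess\ \ge\ \varepsilon\cdot\norm{\nu_{E(b^*b)}}_\infty
  \qquad\text{for all } b\in A\rtimes S .
\]

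Now I would conclude. Let $b\in\Null_{EL}$. Since $\Null_{EL}$ is a two-sided ideal, $b^*b\in\Null_{EL}\subseteq\ker EL$, so $\norm{\nu_{E(b^*b)}}_\ess=\norm{EL(b^*b)}=0$; the extended estimate then forces $\norm{E(b^*b)}=\norm{\nu_{E(b^*b)}}_\infty=0$, i.e.\ $b\in\lNull_E$. As $\lNull_E\subseteq\ker E$ by Proposition~\ref{pro:GNS_kernels}, every element of $\Null_{EL}$ lies in $\ker E$; since $\Null_{EL}$ is a two-sided ideal and $\Null_E$ is the largest two-sided ideal contained in $\ker E$ (Definition~\ref{def:reduced_expectation}), we get $\Null_{EL}\subseteq\Null_E$, hence $\Null_E=\Null_{EL}$ and $A\rtimes_\ess S=A\rtimes_\red S$. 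I expect no real obstacle: this is a density-and-continuity argument bolted onto the identifications $\norm{E(\cdot)}=\norm{\nu_{E(\cdot)}}_\infty$ and $\norm{EL(\cdot)}=\norm{\nu_{E(\cdot)}}_\ess$. The only point requiring a little care is that the hypothesis is stated only for positive elements of the \emph{algebraic} crossed product, so one must genuinely push it across the completion — which works because $A\rtimes_\alg S$ is norm-dense, is carried into $(A\rtimes_\alg S)^+$ by $d\mapsto d^*d$, and both norms involved are continuous.
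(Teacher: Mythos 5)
Your proof is correct and follows essentially the same route as the paper: extend the hypothesis by density and continuity to get \(\norm{EL(b^*b)}\ge\varepsilon\norm{E(b^*b)}\) for all \(b\in A\rtimes S\), then deduce \(\Null_{EL}=\Null_E\). The only difference is in the final deduction: the paper concludes \(\lNull_{EL}=\lNull_E\) and then invokes the symmetry of \(E\) and \(EL\) (Theorems~\ref{the:crossed_expectation_faithful} and~\ref{thm:essential_crossed_expectation_faithful}) to pass to the two-sided ideals, whereas you bypass symmetry entirely by observing that \(\Null_{EL}\subseteq\lNull_E\subseteq\ker E\) and appealing to the maximality of \(\Null_E\) among two-sided ideals contained in \(\ker E\). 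Your variant is marginally more self-contained, since it only needs the elementary Proposition~\ref{pro:GNS_kernels} rather than the two faithfulness theorems; both arguments are equally valid.
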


\begin{proof}
  By Proposition~\ref{pro:Locmult_as_operator_families}, the
  assumption implies
  \(\norm{EL(b^* b)} \ge \varepsilon \norm{E(b^* b)}\) for all
  \(b\in A\rtimes_\alg S\).  This inequality extends by continuity
  to all \(b\in A\rtimes S\) and then implies
  \(\lNull_{EL} = \lNull_E\).  Since both \(E\) and \(EL\) are
  symmetric, the latter is equivalent to \(\Null_{EL} = \Null_E\)
  and then to \(A\rtimes_\ess S = A\rtimes_\red S\).
\end{proof}

The condition in Corollary~\ref{cor:sufficient_for_no_singular} has
the advantage to involve only elements of~\(A\rtimes_\alg S\),
making it more checkable.  It is unclear, however, whether it is
necessary.

\begin{remark}
  The reader may readily rephrase the above results to characterise
  when \(\ker EL = \ker E\).  Namely, this is equivalent to
  conditions as above for all elements in \(A\rtimes_\red S\), not
  only in the positive cone \((A\rtimes_\red S)^+\).
\end{remark}

\subsection{The local multiplier algebra and the injective hull}
\label{sec:pseudo-expectations}

Let~\(I(A)\) be the injective hull of~\(A\)
(see~\cite{Hamana:Injective-Envelope-Cstar}).  The canonical
embedding \(A\hookrightarrow I(A)\) factors through the inclusion
\(A\subseteq \Locmult(A)\) and an embedding
\(\iota\colon \Locmult(A) \hookrightarrow I(A)\) by
\cite{Frank:Injective_local_multiplier}*{Theorem~1}.  And this is an
isomorphism if~\(A\) is commutative.  Hence every
\(\Locmult\)-expectation is a pseudo-expectation, and both the kernel and
the ideal~\(\Null_E\) do not change when we view an
\(\Locmult\)-expectation as a pseudo-expectation.  The two notions
are exactly the same if~\(A\) is commutative.

Let \(A=\Cont_0(X)\) be commutative.  Let \(\mathfrak{B}(X)\supseteq A\) be the
\(\Cst\)\nb-algebra of all bounded Borel functions on~\(X\).  The
subset
\[
  \mathfrak{M}(X)\defeq \setgiven{f\in \mathfrak{B}(X)}{ f \text{ vanishes on a comeagre set}}
\]
is an ideal in \(\mathfrak{B}(X)\) with \(\Cont_0(X)\cap \mathfrak{M}(X)=0\).  So
\(\Cont_0(X)\) embeds into \(\mathfrak{B}(X)/\mathfrak{M}(X)\). 
 Gonshor has identified
\(I(\Cont_0(X))\) with the algebra \(\mathfrak{B}(X)/\mathfrak{M}(X)\) (see
\cite{Gonshor:Injective_hulls_II}*{Theorem~1}).  In fact, this
follows from a much earlier result of
Dixmier~\cite{Dixmier:Sur_espaces_Stone}.
So
\[
  \Locmult(\Cont_0(X)) \cong \mathfrak{B}(X)/\mathfrak{M}(X) \cong I(\Cont_0(X))
\]
for any locally compact Hausdorff space~\(X\).  For Gonshor,
injectivity means that if \(A\to B\) is an injective
\Star{}homomorphism to another commutative \(\Cst\)\nb-algebra~\(B\)
such that~\(A\) detects ideals in~\(B\), then there is an injective
\Star{}homomorphism \(B \hookrightarrow \Locmult(A)\) that respects
the inclusions of~\(A\).  Roughly speaking, \(\Locmult(A)\) is the
largest commutative \(\Cst\)\nb-algebra in which~\(A\) detects
ideals.

If~\(A\) is simple, then \(\Locmult(A) = \Mult(A)\).
In particular, if~\(A\) is simple and unital, then
\(\Locmult(A) = A\).  A simple unital \(\Cst\)\nb-algebra need not
be injective; an example is the Calkin algebra,
see~\cite{Hamana:Injective-Envelope-Cstar}.  So~\(I(A)\) differs
from \(\Locmult(A)\) in general.

\section{Aperiodic inclusions and  generalised expectations}
\label{sec:hidden_ideal}

In this section, we fix a general \(\Cst\)\nb-inclusion
\(A \subseteq B\).  More generally, we often treat an injective
\Star{}homomorphism \(A\hookrightarrow B\) as if it were an
inclusion.  Recall that \(\Ideals(B)\) denotes the lattice of
(closed, two-sided) ideals in~\(B\).  We say that~\emph{\(A\)
  detects ideals in~\(B\)} if \(J\cap A\neq 0\) for any ideal
\(J\in\Ideals(B)\) with \(J\neq0\).  This is a fundamental property
in the study of the ideal structure of reduced crossed products (see
\cites{Archbold-Spielberg:Topologically_free,
  Sierakowski:IdealStructureCrossedProducts,
  Kwasniewski-Meyer:Aperiodicity, Kwasniewski-Meyer:Stone_duality}).
The usual assumptions that guarantee it imply a stronger property,
namely, that the full crossed product has a unique quotient in which
the coefficient algebra detects ideas.  In this section, we study
this generalised intersection property.  We introduce the concept of
aperiodicity for the \(\Cst\)\nb-inclusion \(A\subseteq B\), which
implies the generalised intersection property.  If, in addition,
\(E\colon B\to\tilde{A}\) is a generalised expectation, then we find
a criterion when~\(A\) detects ideals in~\(B/\Null_E\): this happens
if the conditional expectation is supportive.  We show that any
\(\Locmult\)-expectation has this property.  This
gives general simplicity and pure infiniteness criteria for~\(B\)
when~\(E\) is almost faithful.

\subsection{Generalised intersection property and hidden ideal}

Before we begin with the study of aperiodicity, we prove some
elementary results about detection of ideals in quotients of~\(B\).
Let
\[
\Ideals_0(B) \defeq \setgiven{J\in \Ideals(B)}{J\cap A=0}.
\]
So~\(A\) detects ideals in~\(B\) if and only if
\(\Ideals_0(B) = \{0\}\).

\begin{lemma}
  \label{lem:detection_in_quotient}
  Let \(J\in\Ideals(B)\).  The composite map \(A \to B \to B/J\) is
  injective if and only if \(J \in \Ideals_0(B)\).  If this is the
  case, then the inclusion \(A\hookrightarrow B/J\) detects ideals
  if and only if~\(J\) is a maximal element of\/~\(\Ideals_0(B)\).
\end{lemma}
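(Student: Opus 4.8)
The plan is to prove both equivalences directly from the definitions, using only elementary ideal theory and the correspondence between ideals of $B$ containing $J$ and ideals of $B/J$. First I would handle the initial claim: the map $A \to B/J$ is the composite of the inclusion $A \hookrightarrow B$ with the quotient $B \to B/J$, so its kernel is exactly $A \cap J$. Hence injectivity of $A \to B/J$ is equivalent to $A\cap J = 0$, i.e.\ $J \in \Ideals_0(B)$. This is immediate and needs no further work.

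For the second equivalence, assume $J \in \Ideals_0(B)$, so we may regard $A$ as a subalgebra of $B/J$ via the injective map above. Recall that the ideals of $B/J$ are precisely the quotients $K/J$ for $K \in \Ideals(B)$ with $K \supseteq J$, and $K/J = 0$ in $B/J$ exactly when $K = J$. Under this identification, $(K/J)\cap A$ corresponds to the image of $K\cap A$ in $B/J$; since $A\to B/J$ is injective, $(K/J)\cap A = 0$ if and only if $K\cap A \subseteq J$, hence (because $J\cap A = 0$) if and only if $K\cap A = 0$, i.e.\ $K \in \Ideals_0(B)$. Therefore $A$ detects ideals in $B/J$ if and only if: for every $K \in \Ideals(B)$ with $K\supseteq J$, the condition $K \in \Ideals_0(B)$ forces $K = J$. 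This is exactly the statement that $J$ is maximal among elements of $\Ideals_0(B)$ that contain $J$ — but since $\Ideals_0(B)$ is downward closed (any ideal contained in an element of $\Ideals_0(B)$ again meets $A$ trivially) and $J \in \Ideals_0(B)$, being maximal among the elements of $\Ideals_0(B)$ containing $J$ is the same as being a maximal element of $\Ideals_0(B)$.

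The only genuinely delicate point is the bookkeeping in the previous paragraph: one must check that for $K \supseteq J$ one indeed has $(K\cap A) \subseteq J \iff K\cap A = 0$, which uses $J\cap A = 0$, and that $K/J \ne 0 \iff K \supsetneq J$. Neither is hard, but they must be spelled out so that the two quantifiers — "$A$ detects ideals in $B/J$" and "$J$ maximal in $\Ideals_0(B)$" — are matched up correctly. I expect this matching of quantifiers to be the main (though modest) obstacle; everything else is formal. No results beyond the elementary correspondence theorem for ideals are needed.
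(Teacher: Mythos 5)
Your proposal is correct and follows essentially the same route as the paper: identify the kernel of $A\to B/J$ as $A\cap J$, then use the correspondence between ideals of $B/J$ and ideals $K\supseteq J$ of $B$, noting that $(K/J)\cap A=0$ if and only if $K\cap A=0$. The only superfluous step is the appeal to downward closedness of $\Ideals_0(B)$ at the end — ``no $K\in\Ideals_0(B)$ properly contains $J$'' is already verbatim the definition of $J$ being maximal in $\Ideals_0(B)$.
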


\begin{proof}
  The first claim holds because the kernel of the composite map
  \(A \to B/J\) is \(A\cap J\).  To prove the second claim, we use
  that any ideal in~\(B/J\) is the image of an ideal
  \(K\in\Ideals(B)\) with \(J\subseteq K\).  And \(A\cap K = 0\) is
  the preimage in~\(A\) of the ideal in~\(B/J\) corresponding
  to~\(K\).  So~\(A\) does not detect ideals in~\(B/J\) if and only
  if there is \(K\in\Ideals_0(B)\) with \(J\subsetneq K\).
\end{proof}

\begin{example}
  \label{exa:inclusion_no_hidden}
  Consider the unital inclusion
  \(\C\subseteq \Cst(\Z) \cong \Cont(\T)\).  An ideal
  \(J\in\Ideals(\Cont(\T))\) satisfies \(J\cap\C=0\) if and only if
  it is proper.  Thus \(\Ideals_0(\Cont(\T))\) has many different
  maximal elements, namely, all the maximal ideals
  \(\Cont_0(\T\setminus\{z\})\) for \(z\in\T\).  The resulting
  quotients are all isomorphic to~\(\C\), in which~\(\C\) detects
  ideals.  There is, however, no canonical way to choose one of
  these quotients.
\end{example}

\begin{lemma}
  \label{lem:hidden_ideals}
  We have \(0\in\Ideals_0(B)\).  If \(J_1,J_2\in \Ideals(B)\)
  satisfy \(J_1 \subseteq J_2\) and \(J_2\in\Ideals_0(B)\), then
  \(J_1\in\Ideals_0(B)\).  If \(X\subseteq \Ideals_0(B)\) with the
  partial order~\(\subseteq\) is directed, then the supremum
  of~\(X\) in~\(\Ideals(B)\) belongs to~\(\Ideals_0(B)\).  Any
  element of\/~\(\Ideals_0(B)\) is contained in a maximal element
  of\/~\(\Ideals_0(B)\).
\end{lemma}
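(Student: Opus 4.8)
The plan is to treat the four assertions in order. The first two are immediate from the definition of $\Ideals_0(B)$: we have $0\cap A=0$, so $0\in\Ideals_0(B)$; and if $J_1\subseteq J_2$ with $J_2\in\Ideals_0(B)$, then $J_1\cap A\subseteq J_2\cap A=0$, so $J_1\in\Ideals_0(B)$. The only real work is in the third assertion, and the fourth will then follow by a routine application of Zorn's lemma.

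For the directed-supremum claim, write the directed family as $X=(J_i)_{i\in I}\subseteq\Ideals_0(B)$. Since $X$ is directed, $\bigcup_{i}J_i$ is already a (not necessarily closed) two-sided ideal of~$B$, so its supremum in $\Ideals(B)$ is $J\defeq\overline{\bigcup_{i}J_i}$. I would show that the quotient map $q\colon B\to B/J$ is isometric on~$A$, which forces $J\cap A=\ker(q|_A)=0$, that is, $J\in\Ideals_0(B)$. For $a\in A$,
\[
  \norm{q(a)} = \operatorname{dist}(a,J) = \operatorname{dist}\Bigl(a,\bigcup_{i}J_i\Bigr)
    = \inf_{i}\operatorname{dist}(a,J_i) = \inf_{i}\norm{a+J_i},
\]
using that the distance to a subset equals the distance to its closure and that the distance to a union of sets is the infimum of the distances to the individual sets. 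Since $J_i\cap A=0$, the \Star{}homomorphism $A\to B/J_i$ is injective, hence isometric, so $\norm{a+J_i}=\norm{a}$ for every~$i$; therefore $\norm{q(a)}=\norm{a}$, and $q|_A$ is injective.

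Finally, to show that every $J_0\in\Ideals_0(B)$ lies inside a maximal element, I would apply Zorn's lemma to the poset $\mathcal{P}\defeq\setgiven{J\in\Ideals_0(B)}{J_0\subseteq J}$ ordered by inclusion. It is non-empty, as $J_0\in\mathcal{P}$. Any chain in $\mathcal{P}$ is directed, so by the third assertion its supremum belongs to $\Ideals_0(B)$; this supremum contains $J_0$ (it contains each member of the chain, or equals~$J_0$ if the chain is empty), hence lies in $\mathcal{P}$ and is an upper bound for the chain. Zorn's lemma then gives a maximal element $J_{\max}$ of $\mathcal{P}$, and $J_{\max}$ is in fact maximal in $\Ideals_0(B)$: if $J'\in\Ideals_0(B)$ with $J_{\max}\subseteq J'$, then $J_0\subseteq J'$, so $J'\in\mathcal{P}$, and maximality forces $J'=J_{\max}$. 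Thus $J_0\subseteq J_{\max}$ with $J_{\max}$ maximal in $\Ideals_0(B)$.

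The argument is essentially formal; the one ingredient worth stating explicitly is that an injective \Star{}homomorphism of \(\Cst\)\nb-algebras is isometric, together with the elementary identity $\operatorname{dist}\bigl(a,\overline{\bigcup_{i}J_i}\bigr)=\inf_{i}\norm{a+J_i}$ for a directed family of ideals. I do not anticipate a genuine obstacle here.
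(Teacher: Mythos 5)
Your proof is correct and follows essentially the same route as the paper: the first two claims are read off the definition, the directed-supremum claim is proved by showing the distance from $a\in A$ to the closed union is $\inf_i\operatorname{dist}(a,J_i)=\norm{a}$ (using that injective \Star{}homomorphisms are isometric), and the maximality claim follows by Zorn's lemma applied to the members of $\Ideals_0(B)$ containing a given ideal. No gaps.
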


\begin{proof}
  The claim about subideals and \(0\in \Ideals_0(B)\) are obvious.
  Let \(X\subseteq \Ideals_0(B)\) be directed.  Its supremum
  in~\(\Ideals(B)\) is the closure~\(K\) of the union
  \(\bigcup_{J\in X} J\), which is equal to the closed linear span
  because~\(X\) is directed.  If \(a\in A\) and \(J\in X\), then the
  distance between~\(a\) and~\(J\) is~\(\norm{a}\) because
  \(A\hookrightarrow B/J\) is injective.  Hence the distance
  between~\(a\) and~\(K\) is~\(\norm{a}\).  So \(K\in\Ideals_0(B)\).
  It follows that any chain of ideals in~\(\Ideals_0(B)\) has a
  supremum in~\(\Ideals_0(B)\).  So the set of elements
  in~\(\Ideals_0(B)\) containing a given \(J\in\Ideals_0(B)\) has a
  maximal element by Zorn's Lemma.  This element remains maximal
  in~\(\Ideals_0(B)\).
\end{proof}

\begin{proposition}
  \label{prop:hidden_ideal}
  The subset\/~\(\Ideals_0(B)\) has a unique maximal element if and
  only if \(J_1 + J_2 \in \Ideals_0(B)\) for all
  \(J_1,J_2 \in \Ideals_0(B)\).  If~\(\Null\) is this unique maximal
  element, then
  \(\Ideals_0(B) = \setgiven{J\in\Ideals(B)}{J\subseteq\Null}\).
\end{proposition}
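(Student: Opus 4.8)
The plan is to derive the whole statement formally from Lemma~\ref{lem:hidden_ideals}, which supplies exactly the two structural facts needed: that $\Ideals_0(B)$ is closed under passage to subideals, and that the supremum in $\Ideals(B)$ of any subfamily of $\Ideals_0(B)$ that is directed by inclusion again lies in $\Ideals_0(B)$.

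First I would treat the implication that the sum condition forces a unique maximal element. Assume $J_1+J_2\in\Ideals_0(B)$ whenever $J_1,J_2\in\Ideals_0(B)$. Since $J_1,J_2\subseteq J_1+J_2$, this says precisely that $\Ideals_0(B)$ is directed by inclusion. Hence, by Lemma~\ref{lem:hidden_ideals}, its supremum $\Null$ in $\Ideals(B)$ belongs to $\Ideals_0(B)$. Being an upper bound for $\Ideals_0(B)$ that itself lies in $\Ideals_0(B)$, it is the greatest element of $\Ideals_0(B)$, and in particular the unique maximal one.

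Conversely, suppose $\Ideals_0(B)$ has a unique maximal element $\Null$. By the last assertion of Lemma~\ref{lem:hidden_ideals}, every $J\in\Ideals_0(B)$ is contained in some maximal element of $\Ideals_0(B)$, which must be $\Null$; so $J\subseteq\Null$ for all $J\in\Ideals_0(B)$. Then, given $J_1,J_2\in\Ideals_0(B)$, the closed ideal $J_1+J_2$ is contained in $\Null$, and therefore $J_1+J_2\in\Ideals_0(B)$ by the subideal part of Lemma~\ref{lem:hidden_ideals}.

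Finally, for the identification of $\Ideals_0(B)$: in either case of the equivalence we have just shown that $J\subseteq\Null$ for every $J\in\Ideals_0(B)$, and conversely every closed ideal contained in $\Null$ lies in $\Ideals_0(B)$ by the subideal part of Lemma~\ref{lem:hidden_ideals} together with $\Null\in\Ideals_0(B)$. I do not expect a genuine obstacle here: the only point demanding a little care is that a \emph{unique maximal} element of $\Ideals_0(B)$ automatically becomes the \emph{greatest} element, which is exactly what the ``every element lies below some maximal element'' clause of Lemma~\ref{lem:hidden_ideals} delivers.
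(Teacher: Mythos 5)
Your proof is correct and follows essentially the same route as the paper, deriving the whole statement from Lemma~\ref{lem:hidden_ideals}. The only variation is in the forward direction: the paper notes that two distinct maximal elements would have a sum strictly containing both and hence lying outside \(\Ideals_0(B)\), whereas you observe that the sum condition makes \(\Ideals_0(B)\) directed and invoke the directed-supremum clause of the lemma to produce the greatest element directly; both are valid and equally short.
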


\begin{proof}
  By Lemma~\ref{lem:hidden_ideals}, \(J_1 + J_2 \in \Ideals_0(B)\)
  holds for all \(J_1,J_2 \in \Ideals_0(B)\) once it holds for all
  maximal elements of~\(\Ideals_0(B)\).  If \(J_1 \neq J_2\) for two
  maximal elements of~\(\Ideals_0(B)\), then
  \(J_1 + J_2 \supsetneq J_1,J_2\) and hence
  \(J_1 + J_2 \notin\Ideals_0(B)\) because otherwise \(J_1,J_2\)
  would not be maximal.  Hence \(J_1 + J_2 \in \Ideals_0(B)\) for
  all \(J_1,J_2 \in \Ideals_0(B)\) if and only if~\(\Ideals_0(B)\)
  has a unique maximal element.  Since~\(\Ideals(B)\) is a complete
  lattice, a subset of~\(\Ideals(B)\) is of the form
  \(\setgiven{J\in\Ideals(B)}{J\subseteq\Null}\) for some
  \(\Null\in\Ideals(B)\) if and only if it has~\(\Null\) as a unique
  maximal element.
\end{proof}

\begin{definition}
  \label{def:visible}
  The inclusion \(A\subseteq B\) has the \emph{generalised
    intersection property} if there is a unique maximal ideal
  \(\Null\in\Ideals(B)\) with \(\Null\cap A =0\).  Then we
  call~\(\Null\) the \emph{hidden ideal}, and we define
  \(B_\ess \defeq B/\Null\).
\end{definition}

Proposition~\ref{prop:hidden_ideal} shows that the hidden ideal
exists if and only if the sum of two ideals in \(\Ideals_0(B)\)
remains in \(\Ideals_0(B)\), if and only if
\(\Ideals_0(B) = \setgiven{J\in\Ideals(B)}{J\subseteq \Null}\) for
some \(\Null\in\Ideals(B)\), and then~\(\Null\) is the hidden ideal.

\begin{definition}
  A \(\Cst\)\nb-subalgebra \(A\subseteq B\) is called
  \emph{\(B\)\nb-minimal} if \(\overline{BIB}=B\) for all
  \(0\neq I\in \Ideals(A)\).
\end{definition}

In the following proposition, and in the whole paper, we assume
\(A\neq 0\).

\begin{proposition}
  \label{pro:simple_criterion}
  A \(\Cst\)\nb-inclusion \(A\subseteq B\) has the generalised
  intersection property if and only if there is a unique
  quotient~\(B/\Null\) of~\(B\) such that~\(A\cap \Null=0\) and the image of \(A\) detects ideals in~\(B/\Null\).  Then~\(\Null\) is the
  hidden ideal and \(B_\ess=B/\Null\).

  If the \(\Cst\)\nb-inclusion \(A\subseteq B\) has the generalised
  intersection property and~\(A\) is \(B\)\nb-minimal,
  then~\(B_\ess\) is the unique simple quotient of~\(B\) for which
  the quotient map is injective on~\(A\).  Moreover, \(B\) is simple
  if and only if~\(A\) detects ideals in~\(B\) and~\(A\) is
  \(B\)\nb-minimal.
\end{proposition}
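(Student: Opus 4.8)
The plan is to deduce everything from Lemma~\ref{lem:detection_in_quotient}, Lemma~\ref{lem:hidden_ideals} and Proposition~\ref{prop:hidden_ideal}, together with a straightforward analysis of simple quotients. First I would prove the equivalence in the first paragraph. Suppose the inclusion has the generalised intersection property, with hidden ideal~\(\Null\). Since \(\Null\cap A=0\), Lemma~\ref{lem:detection_in_quotient} says \(A\) embeds into \(B/\Null\); and since \(\Null\) is a \emph{maximal} element of \(\Ideals_0(B)\), the same lemma says \(A\) detects ideals in \(B/\Null\). Conversely, if \(B/J\) is any quotient into which \(A\) embeds and in which \(A\) detects ideals, then Lemma~\ref{lem:detection_in_quotient} forces \(J\) to be a maximal element of \(\Ideals_0(B)\), hence \(J=\Null\) by uniqueness. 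This gives both existence and uniqueness of the quotient. For the reverse implication: if there is a unique quotient \(B/\Null\) with this property, then \(\Null\) is a maximal element of \(\Ideals_0(B)\) by Lemma~\ref{lem:detection_in_quotient}, and any other maximal element \(\Null'\) would give another such quotient \(B/\Null'\) (again by Lemma~\ref{lem:detection_in_quotient}), so uniqueness forces \(\Null'=\Null\); thus \(\Ideals_0(B)\) has a unique maximal element, i.e.\ the generalised intersection property holds, with hidden ideal~\(\Null\) and \(B_\ess=B/\Null\).

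Next I would treat the second paragraph. Assume the generalised intersection property and that \(A\) is \(B\)\nb-minimal. I claim \(B_\ess=B/\Null\) is simple. Its ideals correspond to ideals \(K\in\Ideals(B)\) with \(\Null\subseteq K\). If such a \(K\) has \(K\cap A=0\) then \(K\in\Ideals_0(B)\), so \(K\subseteq\Null\) by Proposition~\ref{prop:hidden_ideal}, whence \(K=\Null\); if instead \(I\defeq K\cap A\neq0\), then \(B\)\nb-minimality gives \(\overline{BIB}=B\), and since \(I\subseteq K\) and \(K\) is an ideal, \(\overline{BIB}\subseteq K\), so \(K=B\). Thus \(B/\Null\) has no ideals other than \(0\) and itself, i.e.\ it is simple. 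For uniqueness of the simple quotient containing~\(A\): if \(B/K\) is simple and the composite \(A\to B/K\) is injective, then \(K\in\Ideals_0(B)\) and \(A\) trivially detects ideals in the simple algebra \(B/K\) (its only nonzero ideal is all of \(B/K\), which meets~\(A\)); so by the first paragraph \(K=\Null\).

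Finally, for the last sentence I would argue: if \(B\) is simple, then \(0\) detects ideals trivially, so \(A\) detects ideals in \(B=B/0\); and for any \(0\neq I\in\Ideals(A)\) the ideal \(\overline{BIB}\) is nonzero (it contains \(I\neq0\) since the inclusion is non-degenerate, using an approximate unit of~\(B\) — or simply because \(I\subseteq\overline{BIB}\) as \(A\subseteq B\)) and hence equals \(B\) by simplicity, so \(A\) is \(B\)\nb-minimal. Conversely, if \(A\) detects ideals in \(B\) then \(\Ideals_0(B)=\{0\}\), so the generalised intersection property holds with \(\Null=0\) and \(B_\ess=B\); combined with \(B\)\nb-minimality, the previous part shows \(B_\ess=B\) is simple.

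I expect the only mildly delicate point to be the bookkeeping around \(\overline{BIB}\subseteq K\) and \(I\subseteq\overline{BIB}\): the first is immediate since \(K\) is a closed two-sided ideal containing~\(I\); the second uses that \(A\subseteq B\) is non-degenerate, so an approximate unit of~\(B\) acts as an approximate unit on~\(I\subseteq A\subseteq B\), forcing \(I\subseteq\overline{BI}\subseteq\overline{BIB}\). Everything else is a direct translation through Lemma~\ref{lem:detection_in_quotient} and Proposition~\ref{prop:hidden_ideal}.
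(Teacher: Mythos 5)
Your proposal is correct and follows essentially the same route as the paper: the first paragraph is deduced from Lemma~\ref{lem:detection_in_quotient} and Proposition~\ref{prop:hidden_ideal}, simplicity of \(B_\ess\) comes from combining detection of ideals with \(\overline{B(K\cap A)B}=B\), and the last sentence is handled by the same (contrapositive) argument with \(\overline{BIB}\). The only remark is that your worry about \(I\subseteq\overline{BIB}\) is unnecessary: any approximate unit of \(B\) (or of \(I\) itself) gives this without any non-degeneracy hypothesis, exactly as you note in your fallback argument.
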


\begin{proof}
  The first part of the assertion follows from
  Lemma~\ref{lem:detection_in_quotient} and
  Proposition~\ref{prop:hidden_ideal}.  Assume that the
  \(\Cst\)\nb-inclusion \(A\subseteq B\) has the generalised
  intersection property and that~\(A\) is \(B\)\nb-minimal.
  Since~\(A\) detects ideals in \(B_\ess=B/\Null\), every non-zero
  ideal in~\(B_\ess\) is the image of an ideal \(J\in\Ideals(B)\),
  with \(\Null \subseteq J\) and \(J\cap A\neq 0\).  Then
  \(B = B(J\cap A) B \subseteq J\) by minimality.  So the ideal in
  question is~\(B_\ess\).  That is, \(B_\ess\) is simple.  If~\(J\)
  is any ideal in~\(\Ideals_0(B)\) such that \(B/J\) is simple,
  then~\(A\) detects ideals in~\(B/J\) and hence \(B/J=B_\ess\) by
  the first part.

  As a result, if~\(A\) detects ideals in~\(B\) and~\(A\) is
  \(B\)\nb-minimal, then \(B=B_\ess\) is simple.  Conversely,
  if~\(A\) does not detect ideals in~\(B\), then there is an ideal
  \(J\subseteq B\) with \(J\neq0\) and \(J\cap A = 0\).  It cannot
  be \(0\) or~\(B\).  So~\(B\) is not simple.  If~\(A\) is not
  \(B\)\nb-minimal, then there is \(0\neq I\in \Ideals(A)\) with
  \(\overline{BIB}\neq B\).  Then \(\overline{BIB}\in\Ideals(B)\) is
  not~\(B\) and not~\(0\) because it contains~\(I\).  So~\(B\) is
  not simple.
\end{proof}

\begin{proposition}
  \label{pro:abstract_uniqueness}
  Let \(A\subseteq B\) be a \(\Cst\)\nb-inclusion with a generalised
  expectation \(E\colon B\to \tilde{A} \supseteq A\).  The following
  are equivalent:
  \begin{enumerate}
  \item \label{enu:abstract_uniqueness0}%
    \(\Null_E\) is the hidden ideal for the inclusion
    \(A\subseteq B\);
  \item \label{enu:abstract_uniqueness0.5}%
    \(B/\Null_E\) is the unique quotient of~\(B\) for which the
    induced map \(A\to B/\Null_E\) is injective and detects ideals;
  \item \label{enu:abstract_uniqueness1}%
    if \(J\in\Ideals(B)\) satisfies \(J\cap A=0\), then
    \(J\subseteq \ker E\);
  \item \label{enu:abstract_uniqueness1b}%
    if \(J\in\Ideals(B)\) satisfies \(J\cap A=0\), then
    \(J\subseteq \Null_E\);
  \item \label{enu:abstract_uniqueness2}%
    for every \(\Cst\)\nb-algebra~\(C\) and \Star{}homomorphism
    \(\pi\colon B\to C\) that is injective on~\(A\),
    there is a \Star{}homomorphism \(\varphi\colon \pi(B)\to B/\Null_E\) with
    \(\varphi\circ \pi = \Lambda\colon B \to B/\Null_E\);
  \item \label{enu:abstract_uniqueness3}%
    for every \(\Cst\)\nb-algebra~\(C\) and \Star{}homomorphism
    \(\pi\colon B\to C\) that is injective on~\(A\),
    there is a generalised expectation
    \(E_\pi\colon \pi(B)\to \tilde{A}\) with \(E_\pi\circ \pi = E\).
  \end{enumerate}
  In particular, \(A\) detects ideals in~\(B\) if and only if the
  statements above hold and~\(E\) is almost faithful.
\end{proposition}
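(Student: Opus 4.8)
The plan is to deduce all the equivalences, and then the final clause, from results already in hand, with essentially no new computation. Two standing observations drive the argument. First, $\Null_E\in\Ideals_0(B)$, since $E|_A=\Id_A$ forces $A\cap\Null_E=0$. Second, every subideal of a member of $\Ideals_0(B)$ again lies in $\Ideals_0(B)$ by Lemma~\ref{lem:hidden_ideals}. Together these yield the inclusion $\setgiven{J\in\Ideals(B)}{J\subseteq\Null_E}\subseteq\Ideals_0(B)$ for free, so statement~\ref{enu:abstract_uniqueness1b} --- which reads $\Ideals_0(B)\subseteq\setgiven{J\in\Ideals(B)}{J\subseteq\Null_E}$ --- is exactly the assertion $\Ideals_0(B)=\setgiven{J\in\Ideals(B)}{J\subseteq\Null_E}$. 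Moreover, since $\Null_E$ is by construction the largest ideal of~$B$ contained in $\ker E$, an ideal $J$ satisfies $J\subseteq\ker E$ if and only if $J\subseteq\Null_E$; hence \ref{enu:abstract_uniqueness1} and \ref{enu:abstract_uniqueness1b} are literally the same statement.

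Next I would identify this with~\ref{enu:abstract_uniqueness0} via Proposition~\ref{prop:hidden_ideal} (and its proof), which shows that $\Ideals_0(B)$ has a unique maximal element $\Null$ precisely when $\Ideals_0(B)=\setgiven{J\in\Ideals(B)}{J\subseteq\Null}$, and that $\Null$ is then the hidden ideal of Definition~\ref{def:visible}. This gives \ref{enu:abstract_uniqueness0}$\Leftrightarrow$\ref{enu:abstract_uniqueness1b}. For \ref{enu:abstract_uniqueness0.5} I would appeal to Proposition~\ref{pro:simple_criterion}: if \ref{enu:abstract_uniqueness0} holds, then $\Null_E$ is maximal in $\Ideals_0(B)$, so by Lemma~\ref{lem:detection_in_quotient} the algebra $A$ embeds into $B/\Null_E$ and detects ideals there, and Proposition~\ref{pro:simple_criterion} makes $B/\Null_E$ the unique quotient with that property; conversely, \ref{enu:abstract_uniqueness0.5} exhibits such a quotient, so Proposition~\ref{pro:simple_criterion} yields the generalised intersection property with hidden ideal $\Null_E$, which is~\ref{enu:abstract_uniqueness0}.

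The two quotient-existence statements \ref{enu:abstract_uniqueness2} and \ref{enu:abstract_uniqueness3} I would handle uniformly through Lemma~\ref{lem:abstract_uniqueness}. A \Star{}homomorphism $\pi\colon B\to C$ is injective on~$A$ exactly when $\ker\pi\cap A=0$, i.e.\ $\ker\pi\in\Ideals_0(B)$; and as $C$ and~$\pi$ vary, $\ker\pi$ realises every member of $\Ideals_0(B)$ (take $\pi$ the quotient map $B\to B/J$). For a fixed such~$\pi$, Lemma~\ref{lem:abstract_uniqueness} says that the map $\varphi$ of~\ref{enu:abstract_uniqueness2} exists, and likewise that the map $E_\pi$ of~\ref{enu:abstract_uniqueness3} exists (and is then automatically a generalised expectation for $\pi(A)\subseteq\pi(B)$), if and only if $\ker\pi\subseteq\Null_E$. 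Hence each of \ref{enu:abstract_uniqueness2} and \ref{enu:abstract_uniqueness3} asserts precisely that every $J\in\Ideals_0(B)$ satisfies $J\subseteq\Null_E$, i.e.\ each coincides with~\ref{enu:abstract_uniqueness1b}. This closes the web of equivalences.

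Finally, for the ``in particular'' clause: $A$ detects ideals in~$B$ means $\Ideals_0(B)=\{0\}$. If this holds, then $\Null_E\in\Ideals_0(B)$ forces $\Null_E=0$, so $\Null_E$ ($=0$) is the hidden ideal and~\ref{enu:abstract_uniqueness0} holds, while $\Null_E=0$ makes $E$ almost faithful by Corollary~\ref{cor:GNS_kernels}.\ref{enu:GNS_kernels3}. Conversely, if~\ref{enu:abstract_uniqueness0} holds and $E$ is almost faithful, then $\Null_E=0$ by the same corollary, so $0$ is the hidden ideal and $\Ideals_0(B)=\setgiven{J\in\Ideals(B)}{J\subseteq0}=\{0\}$, i.e.\ $A$ detects ideals in~$B$. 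I do not expect a genuine obstacle here; the only delicate points are keeping the universal quantifier over~$\pi$ in \ref{enu:abstract_uniqueness2}--\ref{enu:abstract_uniqueness3} in step with letting $\ker\pi$ run over $\Ideals_0(B)$, and remembering that \ref{enu:abstract_uniqueness1} is phrased with $\ker E$ whereas \ref{enu:abstract_uniqueness1b} uses $\Null_E$ --- harmless, as the two agree on ideals.
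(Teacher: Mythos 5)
Your proof is correct and follows essentially the same route as the paper's: the definition of $\Null_E$ for the equivalence of \ref{enu:abstract_uniqueness1} and \ref{enu:abstract_uniqueness1b}, Proposition~\ref{prop:hidden_ideal} and Proposition~\ref{pro:simple_criterion} for \ref{enu:abstract_uniqueness0} and \ref{enu:abstract_uniqueness0.5}, and Lemma~\ref{lem:abstract_uniqueness} applied to the quotient maps $B\to B/J$ for \ref{enu:abstract_uniqueness2} and \ref{enu:abstract_uniqueness3}. The only difference is that you spell out the bookkeeping with $\Ideals_0(B)$ more explicitly than the paper does.
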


\begin{proof}
  Statements \ref{enu:abstract_uniqueness1}
  and~\ref{enu:abstract_uniqueness1b} are equivalent by the
  definition of~\(\Null_E\).  Since \(\Null_E \cap A = 0\),
  \ref{enu:abstract_uniqueness1b} is equivalent
  to~\ref{enu:abstract_uniqueness0}.  And
  \ref{enu:abstract_uniqueness0}
  and~\ref{enu:abstract_uniqueness0.5} are equivalent by
  Proposition~\ref{pro:simple_criterion}.  Any ideal~\(J\) in~\(B\)
  is the kernel of a \Star{}homomorphism \(B \to B/J\).  And a
  \Star{}homomorphism~\(\pi\) of~\(B\) is faithful on~\(A\) if and
  only if its kernel \(J\defeq \ker \pi\) satisfies \(J\cap A=0\).
  Hence
  \ref{enu:abstract_uniqueness1b}--\ref{enu:abstract_uniqueness3}
  are equivalent by Lemma~\ref{lem:abstract_uniqueness}.

  If~\(A\) detects ideals in~\(B\), then \(\Null_E=0\) is the hidden
  ideal and~\(E\) is almost faithful.  Conversely, if~\(\Null_E\) is
  the hidden ideal and~\(E\) is almost faithful, then \(\Null_E=0\)
  and~\(A\) detects ideals in~\(B\)
  by~\ref{enu:abstract_uniqueness0.5}.
\end{proof}

\subsection{Aperiodic inclusions}
\label{sec:aperiodic_hidden}

Kishimoto's condition for automorphisms was extended to bimodules
in~\cite{Kwasniewski-Meyer:Aperiodicity} in order to generalise the
known criteria for detection of ideals in reduced crossed products
for group actions to Fell bundles over groups.  Here we rename
Kishimoto's condition, speaking more briefly of \emph{aperiodicity}.

Let \(\Her(A)\) denote the set of \emph{non-zero}, hereditary
\(\Cst\)\nb-subalgebras of~\(A\).  Let~\(A^+\) be the cone of
positive elements in~\(A\).

\begin{definition}[\cite{Kwasniewski-Meyer:Aperiodicity}]
  \label{def:aperiodic_E}
  Let~\(X\) be a normed \(A\)\nb-bimodule.  We say that \(x\in X\)
  satisfies \emph{Kishimoto's condition} if, for all \(D\in \Her(A)\)
  and \(\varepsilon>0\), there is \(a\in D^+\) with \(\norm{a}=1\)
  and \(\norm{a x a}<\varepsilon\).  We call~\(X\) \emph{aperiodic}
  if Kishimoto's condition holds for all \(x\in X\).
\end{definition}

\begin{lemma}
  \label{lem:anti-Kishimoto}
  Consider the \(\Cst\)\nb-algebra \(A\) as an \(A\)\nb-bimodule.
  No non-zero positive element of~\(A\) satisfies Kishimoto's
  condition.
\end{lemma}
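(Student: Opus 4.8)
The plan is to argue by contradiction using the defining property of a hereditary subalgebra together with the positivity of the element. Suppose some non-zero \(a\in A^+\) satisfies Kishimoto's condition. The natural hereditary subalgebra to test against is \(D\defeq \overline{aAa}\), which lies in \(\Her(A)\) precisely because \(a\neq0\). For a given \(\varepsilon>0\), Kishimoto's condition then produces \(b\in D^+\) with \(\norm b=1\) and \(\norm{bab}<\varepsilon\). The key point is that elements of \(D^+\) can be approximated by elements of the form \(a x a\) (or, more carefully, that \(a\) acts as a kind of local unit on \(D\)): since \(b\in\overline{aAa}\), the element \(b\) is arbitrarily close to \(a c a\) for suitable \(c\in A\), and iterating this one sees that \(b\) is arbitrarily close to \(b^{1/2} a b^{1/2}\cdot(\text{bounded})\). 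I would make this precise as follows.

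First, I would record the elementary fact that if \(b\in\overline{aAa}^+\) with \(\norm b\le 1\), then \(\norm{b - b^{1/2} a b^{1/2}}\) is small whenever \(a\) acts nearly as a unit on \(b\); more usefully, I would use that for \(b\in\overline{aAa}\) and an approximate unit argument, \(\norm{b}=\lim\norm{a^{1/n} b a^{1/n}}\)-type estimates hold. Concretely: choose \(b\in D^+\) with \(\norm b =1\). Then \(b^{1/2}\in D = \overline{aAa}\), so there is \(d\in A\) with \(\norm{b^{1/2} - a d}\) as small as we like, whence \(\norm{b - b^{1/2} a d}\) is small, and then \(\norm{b} \le \norm{b^{1/2} a} \norm{d} + (\text{small})\). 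Thus \(\norm{b^{1/2} a}\) is bounded below away from \(0\), say \(\norm{b^{1/2} a}\ge\tfrac12\) once the approximations are good enough, so \(\norm{b a b} = \norm{b^{1/2}(b^{1/2} a b^{1/2}) b^{1/2}}\) — hmm, this isn't quite the right grouping. Better: \(\norm{b^{1/2} a b^{1/2}} \ge \norm{b^{1/2} a b^{1/2} \cdot (\text{something})}\); I would instead estimate directly \(\norm{(b^{1/2} a^{1/2})(a^{1/2} b^{1/2})} = \norm{a^{1/2} b^{1/2}}^2\) and relate \(\norm{a^{1/2} b^{1/2}}\) to \(\norm b\) using \(b\in\overline{a^{1/2}Aa^{1/2}}\). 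The cleanest route is: take \(D = \overline{a^{1/2} A a^{1/2}}\); then for \(b\in D^+\), \(b = \lim a^{1/2} c_n a^{1/2}\), so \(\norm b \le \norm{a^{1/2}}\cdot\norm{c_n}\cdot\norm{a^{1/2}}\)-type bounds fail to help, but \(\norm{a b a} \ge \norm{(\text{unit vector realizing }\norm b) \text{ pushed through}}\). I would ultimately just quote or reprove the standard fact that for \(b\) in the hereditary algebra generated by \(a\), one has \(\norm{a^{1/2} b a^{1/2}}\to\norm{b}\) along an approximate identity, forcing \(\norm{aba}\) bounded below by (a constant times) \(\norm b = 1\), contradicting \(\norm{bab}<\varepsilon\) for small \(\varepsilon\) — here using \(\norm{aba} = \norm{a^{1/2}(a^{1/2}ba^{1/2})a^{1/2}} \le \norm a \cdot\norm{a^{1/2}ba^{1/2}}\) in one direction and a matching lower bound in the other.

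The main obstacle, and the step I would spend the most care on, is exactly this quantitative claim that a positive element \(a\) behaves like an approximate unit on \(\overline{aAa}\) in a uniform enough way to get a fixed lower bound on \(\norm{aba}\) for \(\norm b = 1\), \(b\in\overline{aAa}^+\). This is a standard fact about hereditary subalgebras (an approximate unit for \(\overline{aAa}\) can be taken inside \(\overline{aAa}\), and then functional calculus on \(a\) squeezes), but stating it with the right constant so that it contradicts \(\norm{bab}<\varepsilon\) requires choosing \(\varepsilon\) after the constant. So the logical order is: fix \(a\neq0\), set \(D\defeq \overline{aAa}\in\Her(A)\); derive the uniform lower bound \(\norm{aba}\ge c>0\) for all \(b\in D^+\) with \(\norm b=1\) (this is the heart); then apply Kishimoto's condition with \(\varepsilon < c\) to get \(b\in D^+\), \(\norm b=1\), \(\norm{bab}<\varepsilon\); finally observe \(\norm{bab} = \norm{(aba)}\) up to taking adjoints/\(\Cst\)-identity, i.e. \(\norm{bab}=\norm{a^{1/2}b a b a^{1/2}}^{1/2}\)-type manipulations give \(\norm{bab}\ge c'\) as well, contradiction. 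I expect everything after the uniform lower bound to be routine \(\Cst\)-norm bookkeeping.
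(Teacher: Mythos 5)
There is a genuine gap, and it sits exactly at the step you yourself flag as the heart of the argument. The uniform lower bound you need --- \(\norm{bab}\ge c>0\) for all \(b\in D^+\) with \(\norm{b}=1\), where \(D\defeq\overline{aAa}\) --- is false in general. Take \(A=\Cont_0((0,1])\) and \(a(t)=t\). Then \(a\) is strictly positive, so \(\overline{aAa}=A\), and norm-one bump functions \(b_n\) supported in \((0,1/n)\) satisfy \(\norm{b_n a b_n}\le 1/n\to 0\). The ``standard fact'' you want to invoke is that \((a^{1/n})_{n}\) is an approximate unit for \(\overline{aAa}\), so \(\norm{a^{1/n}ba^{1/n}}\to\norm{b}\) for each \emph{fixed} \(b\); this involves \(a^{1/n}\) rather than \(a\) itself and is not uniform in \(b\), which is exactly what the example exploits. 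Your intermediate estimate \(\norm{b^{1/2}a}\ge\tfrac12\) fails for the same reason: writing \(b^{1/2}\approx ad\) only yields \(\norm{b^{1/2}a}\ge(1-\delta)/\norm{d}\), and \(\norm{d}\) blows up as \(b\) concentrates where \(a\) is small. So the contradiction-by-testing strategy is right, but the hereditary subalgebra \(\overline{aAa}\) cannot be made to work.

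The correct choice is to test against a hereditary subalgebra on which \(a\) is bounded \emph{below}, i.e.\ to localise to where the spectrum of \(a\) is close to \(\norm{a}\). Normalising \(\norm{a}=1\) and fixing \(\varepsilon\in(0,1)\), let \(D_0\in\Her(A)\) be the hereditary subalgebra generated by \((a-(1-\varepsilon))_+\). For \(x\in D_0^+\) one has \(x a x\ge(1-\varepsilon)x^2\) and \(xax\le x^2\), hence \(\norm{xax}\ge\norm{x^2}-\norm{xax-x^2}>(1-\varepsilon)\norm{x}^2\). This is precisely the content of \cite{Kwasniewski-Meyer:Aperiodicity}*{Lemma~2.9}, which is what the paper's proof cites. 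Applying Kishimoto's condition to \(D_0\) with tolerance \(\delta<1-\varepsilon\) then produces \(x\in D_0^+\) with \(\norm{x}=1\) and \(\norm{xax}<\delta<1-\varepsilon\le\norm{xax}\), a contradiction. Everything after that choice is, as you expected, routine; but the choice itself is the whole lemma.
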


\begin{proof}
  Given \(b\in A^+\) with \(\norm{b}=1\),
  \cite{Kwasniewski-Meyer:Aperiodicity}*{Lemma~2.9} provides a
  hereditary subalgebra \(D_0\subseteq A\) such that
  \(\norm{x b x} \ge \norm{x^2} - \norm{x b x - x^2} >
  (1-\varepsilon)\norm{x}^2\) for all \(x\in D_0^+\).
\end{proof}

\begin{lemma}[\cite{Kwasniewski-Meyer:Aperiodicity}*{Lemma~4.2}]
  \label{lem:Kish_vector_subspace}
  The subset of elements in a normed \(A\)\nb-bimodule that satisfy
  Kishimoto's condition is a closed vector subspace.
\end{lemma}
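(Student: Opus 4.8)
The plan is to verify separately that the set~\(\mathcal{K}\subseteq X\) of elements satisfying Kishimoto's condition is norm-closed and is a linear subspace. Closedness is quick: if \(x_n\in\mathcal{K}\) and \(x_n\to x\), then given \(D\in\Her(A)\) and \(\varepsilon>0\) I would pick \(n\) with \(\norm{x-x_n}<\varepsilon/2\) and then \(a\in D^+\) with \(\norm{a}=1\) and \(\norm{a x_n a}<\varepsilon/2\); since the bimodule structure is contractive one has \(\norm{a x a}\le\norm{a x_n a}+\norm{a}^2\norm{x-x_n}<\varepsilon\), so \(x\in\mathcal{K}\). Closure under scalar multiplication is the same idea: for \(\lambda\neq0\) apply Kishimoto's condition for~\(x\) with tolerance \(\varepsilon/\abs{\lambda}\), and the case \(\lambda=0\) is trivial.

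The real content is closure under addition. Let \(x,y\in\mathcal{K}\), fix \(D\in\Her(A)\) and \(\varepsilon>0\); we may assume \(x\neq0\). First apply Kishimoto's condition for~\(x\) to~\(D\) to get \(a_1\in D^+\) with \(\norm{a_1}=1\) and \(\norm{a_1 x a_1}<\varepsilon/3\). The decisive auxiliary step is to shrink to a sub-corner on which~\(a_1\) behaves like a unit: I claim there is a non-zero hereditary \(\Cst\)\nb-subalgebra \(D_1\subseteq\overline{a_1 A a_1}\subseteq D\) with \(\norm{a_1 z-z}\le\delta\norm{z}\) and \(\norm{z a_1-z}\le\delta\norm{z}\) for all \(z\in D_1\), where \(\delta>0\) is chosen in advance so that \((2\delta+\delta^2)\norm{x}<\varepsilon/3\). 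One takes \(D_1\defeq\overline{h(a_1)Ah(a_1)}\) for a continuous \(h\colon[0,1]\to[0,1]\) with \(h(1)=1\) supported in a small interval about~\(1\): since \(\norm{a_1}=1\) the spectrum of~\(a_1\) contains~\(1\), so \(D_1\neq0\); writing \(h(t)=t g(t)\) with \(g\) continuous and \(g(0)=0\) shows \(D_1\subseteq\overline{a_1 A a_1}\), which lies in~\(D\) because \(D\) is hereditary and \(a_1\in D\); and the approximate-unit estimates follow from \(\norm{a_1 h^{1/m}(a_1)-h^{1/m}(a_1)}\le\delta\) together with \(h^{1/m}(a_1)\,z\,h^{1/m}(a_1)\to z\) for \(z\in D_1\) and \(\norm{h^{1/m}(a_1)}=1\). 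This is essentially \cite{Kwasniewski-Meyer:Aperiodicity}*{Lemma~2.9}.

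Next I would apply Kishimoto's condition for~\(y\) to the corner~\(D_1\) to obtain \(a_2\in D_1^+\) with \(\norm{a_2}=1\) and \(\norm{a_2 y a_2}<\varepsilon/3\); then \(a_2\in D^+\), \(\norm{a_2}=1\), and it remains only to bound \(\norm{a_2 x a_2}\). Writing \(a_2=a_2 a_1+(a_2-a_2 a_1)=a_1 a_2+(a_2-a_1 a_2)\) and expanding \(a_2 x a_2\) into four terms, the main term \(a_2 a_1 x a_1 a_2=a_2(a_1 x a_1)a_2\) has norm \(<\varepsilon/3\), while each of the three remaining terms carries a factor \(a_2-a_2 a_1\) or \(a_2-a_1 a_2\) of norm \(\le\delta\) and so contributes at most \((2\delta+\delta^2)\norm{x}<\varepsilon/3\) altogether; hence \(\norm{a_2 x a_2}<2\varepsilon/3\) and \(\norm{a_2(x+y)a_2}\le\norm{a_2 x a_2}+\norm{a_2 y a_2}<\varepsilon\). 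Thus \(x+y\in\mathcal{K}\).

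The main obstacle is precisely the auxiliary corner step. Applying Kishimoto's condition twice does not in itself help, since re-sandwiching the element produced for~\(x\) inside elements of a sub-corner can destroy the smallness of \(\norm{a_1 x a_1}\), and there is no control on the norm of products such as \(a_2 a_1 a_2\) that one might try instead. The point is that one must first pass to a corner on which~\(a_1\) acts as an approximate unit, so that the element~\(a_2\) produced by Kishimoto's condition for~\(y\) can itself be used for \(x+y\) without any renormalisation; everything else is a routine triangle-inequality estimate.
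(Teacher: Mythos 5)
Your proof is correct and follows essentially the same route as the cited source and as the paper's own analogous argument for extensions in Lemma~\ref{lem:Kish_hereditary}: apply Kishimoto's condition for~\(x\), pass via \cite{Kwasniewski-Meyer:Aperiodicity}*{Lemma~2.9} to a hereditary subalgebra on which the resulting element acts as an approximate unit, and then apply Kishimoto's condition for~\(y\) there. The quantitative bookkeeping \((2\delta+\delta^2)\norm{x}<\varepsilon/3\) and the four-term expansion are exactly right, so there is nothing to add.
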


\begin{lemma}
  \label{lem:Kish_hereditary}
  Subbimodules, quotient bimodules, extensions, finite direct sums,
  and inductive limits of aperiodic normed \(A\)\nb-bimodules remain
  aperiodic.  If \(f\colon X\to Y\) is a bounded \(A\)\nb-bimodule
  homomorphism with dense range and~\(X\) is aperiodic, then so
  is~\(Y\).  If \(D\in\Her(A)\), then an aperiodic \(A\)\nb-bimodule
  is also aperiodic as a \(D\)\nb-bimodule.  If \(J\in\Ideals(A)\)
  is an essential ideal and~\(X\) an \(A\)\nb-bimodule, then
  \(J X J\) is aperiodic as a \(J\)\nb-bimodule if and only if~\(X\)
  is aperiodic as an \(A\)\nb-bimodule.
\end{lemma}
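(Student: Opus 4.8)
The plan is to verify each claimed closure property by unwinding Definition~\ref{def:aperiodic_E}, using Lemma~\ref{lem:Kish_vector_subspace} whenever several pieces must be combined. For subbimodules the statement is immediate, since Kishimoto's condition for $x\in X_0\subseteq X$ uses exactly the same $D\in\Her(A)$ and the same $a\in D^+$. For a quotient bimodule $X/X_0$ with quotient norm, given $\bar x$ choose a representative $x$ with $\norm{x}\le \norm{\bar x}+\varepsilon$; an $a\in D^+$ with $\norm{a}=1$ and $\norm{axa}$ small forces $\norm{a\bar x a}\le\norm{axa}$ small as well, because the quotient map is contractive and $A$-bilinear. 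For extensions $0\to X_0\to X\to X/X_0\to 0$ where $X_0$ and $X/X_0$ are aperiodic, I would first shrink to an element $y\in X_0$ using aperiodicity of the quotient to make $axa$ close to some $a$-conjugate of an element of $X_0$, then apply aperiodicity of $X_0$ \emph{inside the hereditary subalgebra} $\overline{aAa}$ — this uses the hereditary-subalgebra statement of the same lemma, so the argument is slightly circular in presentation and it is cleaner to prove the $\Her(A)$-restriction claim first. Finite direct sums follow from the extension case by induction (or directly, conjugating by a single $a$ that works for finitely many coordinates successively, shrinking each in turn). Inductive limits follow from Lemma~\ref{lem:Kish_vector_subspace}: the union of the images of the $X_i$ is a dense subspace on which Kishimoto's condition holds, and the set of elements satisfying it is closed.

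For the map $f\colon X\to Y$ with dense range: if $y=f(x)$ then $\norm{aya}=\norm{f(axa)}\le\norm{f}\,\norm{axa}$, so $f(X)$ consists of elements satisfying Kishimoto's condition; since that set is closed (Lemma~\ref{lem:Kish_vector_subspace}) and $f(X)$ is dense, all of $Y$ satisfies it. For $D\in\Her(A)$: the only subtlety is that the family $\Her(D)$ over which we test is smaller than $\Her(A)$; but $\Her(D)\subseteq\Her(A)$ since a hereditary subalgebra of $D$ is hereditary in $A$ (hereditarity is transitive), so Kishimoto's condition for $X$ over $A$ restricts verbatim to the condition for $X$ over $D$, and the $D$-bimodule structure on $X$ is the one inherited from the $A$-bimodule structure. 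The main work is then the essential-ideal statement: if $J\idealin A$ is essential and $X$ is an $A$-bimodule, I must show $JXJ$ is aperiodic over $J$ iff $X$ is aperiodic over $A$.

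For that last equivalence, the direction ``$X$ aperiodic over $A$ $\Rightarrow$ $JXJ$ aperiodic over $J$'' follows from the two previous claims: $JXJ\subseteq X$ is a subbimodule, hence aperiodic over $A$, and then aperiodic over $J$ since $J=\overline{JAJ}\in\Her(A)$ is a hereditary subalgebra of $A$. The reverse direction is the heart of the matter. Suppose $JXJ$ is aperiodic over $J$; let $x\in X$, $D\in\Her(A)$, $\varepsilon>0$. Because $J$ is essential, $D\cap J$ is a non-zero hereditary $\Cst$-subalgebra of $J$ — one checks $D\cap J\neq 0$ using essentiality (if $DJ$ or $JD$ were zero then $D$ would be contained in the annihilator of $J$, contradicting essentiality) — so $D\cap J\in\Her(J)$. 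Pick an approximate unit element $e\in(D\cap J)^+$, $\norm{e}=1$; then $exe$ is close in norm to an element of $\overline{JXJ}=JXJ$ (since $e\in J$, actually $exe\in JXJ$ already, using $J$-bilinearity of the module and that $J$ is an ideal in $A$ — here one may need $e x e$ interpreted via the $A$-module structure, and $e\in J$ forces $exe=e(exe)e\in JXJ$). Now apply aperiodicity of $JXJ$ over $J$ in the hereditary subalgebra $\overline{e J e}\in\Her(J)$: there is $a\in(\overline{eJe})^+$, $\norm a=1$, with $\norm{a(exe)a}<\varepsilon$. Since $a\in\overline{eJe}$ we have $a=eae+o(1)$, so $\norm{axa}$ is within $\varepsilon$-order of $\norm{a(exe)a}<\varepsilon$, and $a\in D^+$ because $\overline{eJe}\subseteq\overline{eAe}\subseteq D$ (as $e\in D$ and $D$ is hereditary). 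Thus $x$ satisfies Kishimoto's condition over $A$. The main obstacle is bookkeeping the approximation $a(exe)a\approx axa$ and verifying $D\cap J\neq 0$ from essentiality; both are short once set up, but they are exactly where the hypothesis ``$J$ essential'' is used, and it is worth isolating the elementary fact that a hereditary subalgebra meets every essential ideal nontrivially.
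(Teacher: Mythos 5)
Your overall architecture matches the paper's: closure of the Kishimoto set under the listed operations, the hereditary-restriction claim proved first, and the essential-ideal case reduced to the fact that \(D\cap J\neq 0\) for \(D\in\Her(A)\) and \(J\) essential (your justification of that fact is correct). However, there is a recurring gap in the two approximation steps. In the extension argument you apply aperiodicity of \(X_0\) over the hereditary subalgebra \(\overline{a_0Aa_0}\) to get \(b\) with \(\norm{b x_1 b}\) small, and in the essential-ideal converse you claim \(a=eae+o(1)\) for \(a\in(\overline{eJe})^+\). Both steps implicitly use that an element \(b\) of the hereditary subalgebra generated by a positive norm-one element \(a_0\) satisfies \(b a_0\approx b\). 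This is false in general: take \(b=a_0\) with spectrum \([0,1]\), so that \(\norm{a_0-a_0^2}=1/4\). Without this, \(b(a_0xa_0)b=(ba_0)x(a_0b)\) does not control \(bxb\), and your chain of estimates does not close. The paper's proof repairs exactly this point by invoking \cite{Kwasniewski-Meyer:Aperiodicity}*{Lemma~2.9}, which for given \(a_0\in D^+\) with \(\norm{a_0}=1\) produces a possibly smaller hereditary subalgebra \(D_0\subseteq D\) with \(\norm{a-a\cdot a_0}\le\varepsilon\norm{a}\) for all \(a\in D_0\); Kishimoto's condition for \(x_1\) is then applied in \(D_0\), and the error terms are genuinely controlled. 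You need this (or an equivalent functional-calculus shrinking argument) in both places.

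A second, smaller divergence: in the essential-ideal converse the paper does not work with \(exe\) at all. It first upgrades ``\(JXJ\) aperiodic over \(J\)'' to ``\(X\) aperiodic over \(J\)'' using Cohen--Hewitt factorisation and the two extensions \(JXJ\into JX\onto JX/JXJ\) and \(JX\into X\onto X/JX\), whose quotients are aperiodic for the trivial reason that \(J\) annihilates them on one side. After that, Kishimoto's condition is applied to \(x\) itself with the hereditary subalgebra \(D\cap J\in\Her(J)\), yielding \(a\in(D\cap J)^+\subseteq D^+\) with \(\norm{axa}<\varepsilon\) directly and with no approximation whatsoever. Your route through \(exe\) is workable after the Lemma~2.9 repair, but the paper's reduction avoids the issue entirely and is the cleaner path; I recommend adopting it.
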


\begin{proof}
  The claims about subbimodules and about aperiodicity as a
  \(D\)\nb-bimodule are trivial.  If \(f\colon X\to Y\) is a bounded
  \(A\)\nb-bimodule homomorphism and \(x\in X\), then~\(f(x)\)
  inherits Kishimoto's condition from~\(x\).  Hence the second
  statement follows from Lemma~\ref{lem:Kish_vector_subspace}.  This
  implies the claim about quotient bimodules.  The claim about
  extensions is proved as in the proof of
  \cite{Kwasniewski-Meyer:Aperiodicity}*{Lemma~4.2}.  Namely, let
  \(M_1 \into M_2 \onto M_3\) be an extension of \(A\)\nb-bimodules
  such that \(M_1\) and~\(M_3\) are aperiodic.  Let \(x\in M_2\),
  \(\varepsilon>0\), and \(D\in\Her(A)\).  We may assume without
  loss of generality that \(\norm{x}=1\).  Since~\(M_3\) is
  aperiodic, there is \(a_0\in D^+\) with \(\norm{a_0}=1\) and
  \(\norm{a_0 x a_0+M_1}_{M_3}<\varepsilon/2\).  Hence
  \(a_0 x a_0 = x_1 + x_2\) with \(x_1\in M_1\) and
  \(\norm{x_2}<\varepsilon\).  By
  \cite{Kwasniewski-Meyer:Aperiodicity}*{Lemma~2.9}, there is
  \(D_0\in \Her(A)\) such that \(D_0\subseteq D\) and
  \(\norm{a - a\cdot a_0} \le \varepsilon \norm{a}\) for all
  \(a\in D_0\).  Kishimoto's condition for~\(x_1\) gives
  \(a\in D_0^+ \subseteq D^+\) with \(\norm{a}=1\) and
  \(\norm{a x_1 a}<\varepsilon\).  Then
  \(\norm{a x a}<4\varepsilon\).  Thus~\(M_2\) is aperiodic.

  A direct sum of two aperiodic normed \(A\)\nb-bimodules is also an
  extension, hence inherits aperiodicity.  By induction, this
  remains true for direct sums of finitely many aperiodic normed
  \(A\)\nb-bimodules; here the norm should be one that defines the
  product topology.  The claim about inductive limits follows from
  Lemma~\ref{lem:Kish_vector_subspace}.

  Now let \(J\in\Ideals(A)\) be an essential ideal and~\(X\) an
  \(A\)\nb-bimodule.  The claims in the lemma already proven show
  that~\(J X J\) is aperiodic as a \(J\)\nb-bimodule if~\(X\) is
  aperiodic as an \(A\)\nb-bimodule.  Conversely, assume~\(J X J\)
  to be aperiodic as a \(J\)\nb-bimodule.  The Cohen--Hewitt
  Factorisation Theorem shows that \(J X\), \(X J\) and \(J X J\)
  are closed \(J\)\nb-subbimodules in~\(X\).  There are extensions
  \(J X J \into J X \onto J X/J X J\) and
  \(J X \into X \onto X/ J X\).  The quotients in both are aperiodic
  as \(J\)\nb-bimodules because they satisfy \(x a=0\) or \(a x=0\)
  for all \(a\in J\), respectively.  Hence the claim about
  extensions shows that~\(X\) is also aperiodic as a
  \(J\)\nb-bimodule.  Let \(D\in\Her(A)\), \(x\in X\), and
  \(\varepsilon>0\).  Since~\(J\) is essential, the intersection
  \(D\cap J\) is still non-zero.  It is a hereditary
  \(\Cst\)\nb-subalgebra in~\(J\), and Kishimoto's condition
  for~\(x\) gives \(a\in (D\cap J)^+\) with \(\norm{a}=1\) and
  \(\norm{a x a}<\varepsilon\).  This witnesses that~\(X\) is
  aperiodic as an \(A\)\nb-bimodule.
\end{proof}

\begin{remark}
  An infinite direct sum of aperiodic normed bimodules inherits
  aperiodicity when it is given a norm that defines the product
  topology on each finite sub-sum.  This follows from
  Lemma~\ref{lem:Kish_hereditary} by viewing it as an inductive
  limit of finite direct sums.
\end{remark}

For any \(\Cst\)\nb-inclusion \(A\subseteq B\), both \(A\) and~\(B\)
are naturally normed \(A\)\nb-bimodules.  So is the quotient Banach
space~\(B/A\) with the quotient norm.

\begin{definition}
  A \(\Cst\)\nb-inclusion \(A\subseteq B\) is \emph{aperiodic} if
  the Banach \(A\)\nb-bimodule \(B/A\) is aperiodic.
\end{definition}

\begin{proposition}
  \label{pro:aperiodicity_vs_quotients}
  If \(A\subseteq B\) is aperiodic and \(A\subseteq C \subseteq B\),
  then the inclusion \(A\subseteq C\) is aperiodic.  If
  \(A\subseteq B\) is aperiodic and \(J\in\Ideals(B)\) satisfies
  \(J\cap A = 0\), then the induced inclusion
  \(A \hookrightarrow B/J\) is aperiodic.  Let \(I\in\Ideals(A)\).
  If \(A\subseteq B\) is aperiodic, then \(I\subseteq I B I\) is
  aperiodic; conversely, \(A\subseteq B\) is aperiodic if
  \(I\subseteq I B I\) is aperiodic and~\(I\) is essential.
\end{proposition}

\begin{proof}
  This follows from Lemma~\ref{lem:Kish_hereditary} because \(C/A\)
  and \(I B I/I\) are isometrically isomorphic to
  \(A\)\nb-subbimodules of~\(B/A\) and \((B/J)/A \cong B/(J+A)\) is
  isometrically isomorphic to a quotient bimodule of~\(B/A\).
\end{proof}

\begin{proposition}
  \label{pro:inclusion_aperiodic_visible_ideal}
  Let \(A\subseteq B\) be aperiodic and \(J\in\Ideals(B)\).  Then
  \(J\cap A = 0\) if and only if~\(J\) is an aperiodic
  \(A\)\nb-bimodule.
\end{proposition}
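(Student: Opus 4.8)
The plan is to prove the two implications separately. The forward direction "$J$ aperiodic $\Rightarrow$ $J\cap A = 0$" is the easy one and follows immediately from Lemma \ref{lem:anti-Kishimoto}. Indeed, $J\cap A$ is a closed two-sided ideal in $A$, hence a hereditary $\Cst$\nb-subalgebra; if it were non-zero it would contain a non-zero positive element $a$. Viewing $J$ as a normed $A$\nb-bimodule, this element $a$ sits inside $J$, and since $J$ is aperiodic it would satisfy Kishimoto's condition. But $a$ also lies in the $A$\nb-bimodule $A$, and Kishimoto's condition for an element of a bimodule depends only on the bimodule structure and the element, so $a$ would satisfy Kishimoto's condition as an element of $A$ — contradicting Lemma \ref{lem:anti-Kishimoto}. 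Hence $J\cap A = 0$.

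For the converse "$J\cap A = 0 \Rightarrow J$ aperiodic", the idea is to relate $J$ to the quotient bimodule $B/A$, which is aperiodic by hypothesis. Since $J\cap A = 0$, the composite $J\hookrightarrow B \to B/A$ is an injective $A$\nb-bimodule map. I would like to say $J$ is (isomorphic to) an $A$\nb-subbimodule of $B/A$, but one must be careful: the map $J \to B/A$ need not be isometric, only contractive, and a priori need not have closed range. The key observation is that for a bounded $A$\nb-bimodule homomorphism, an element inherits Kishimoto's condition from any preimage — this is used repeatedly in the proof of Lemma \ref{lem:Kish_hereditary}. So I would argue in the reverse direction: it suffices to exhibit, for each $x\in J$, an element $\tilde{x}$ in some aperiodic bimodule mapping to $x$ under a bounded $A$\nb-bimodule homomorphism. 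The natural candidate is to use that the quotient map $q\colon B \to B/(J+A)$... no — the right move is the other way.

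Let me restructure the converse. Consider the inclusion $A + J \subseteq B$; since $J\cap A = 0$, we have $A + J$ is a $\Cst$\nb-subalgebra (in fact $J$ is an ideal in it complementing $A$), and by Proposition \ref{pro:aperiodicity_vs_quotients} the inclusion $A \subseteq A+J$ is aperiodic, i.e.\ $(A+J)/A$ is an aperiodic $A$\nb-bimodule. Now the composite $J \hookrightarrow A+J \twoheadrightarrow (A+J)/A$ is a \emph{bijective} bounded $A$\nb-bimodule homomorphism. By the open mapping theorem it is an isomorphism of Banach spaces, hence in particular has dense (indeed full) range, so by the second statement of Lemma \ref{lem:Kish_hereditary} the aperiodicity of $(A+J)/A$ transfers to $J$. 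This completes the converse. The main obstacle I anticipate is precisely this point about the quotient norm: one needs $J$ and $(A+J)/A$ to be linked by a bounded bimodule map with dense range so that Lemma \ref{lem:Kish_hereditary} applies, and checking that the algebraic decomposition $A+J = A \oplus J$ (as Banach spaces, using $J\cap A = 0$ and closedness of both) makes $J \to (A+J)/A$ a topological isomorphism is the one spot requiring genuine care; everything else is bookkeeping with results already established.
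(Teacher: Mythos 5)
Your proof is correct and follows essentially the same route as the paper: the forward direction via Lemma~\ref{lem:anti-Kishimoto} applied to a positive element of \(J\cap A\), and the converse by showing that \(A+J\) is a closed \(\Cst\)\nb-subalgebra so that the open mapping theorem makes \(J\to (A+J)/A\) (equivalently, the paper's \(J\hookrightarrow B/A\)) a topological \(A\)\nb-bimodule isomorphism onto its image, whence Lemma~\ref{lem:Kish_hereditary} transfers aperiodicity. Your detour through the intermediate inclusion \(A\subseteq A+J\) and Proposition~\ref{pro:aperiodicity_vs_quotients} is only a cosmetic repackaging of the paper's direct argument.
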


\begin{proof}
  First assume \(J\cap A\neq0\).  Then there is \(b\in J\cap A\)
  with \(b\neq0\).  Then~\(b^* b\) is a non-zero positive element of
  \(J\cap A\).  By Lemma~\ref{lem:anti-Kishimoto}, it does not
  satisfy Kishimoto's condition.  Then~\(J\) is not aperiodic.
  Another proof goes as follows.  The intersection \(J\cap A\) is an
  \(A\)\nb-subbimodule in~\(J\) and an ideal in~\(A\).
  Lemma~\ref{lem:anti-Kishimoto} shows that \(J\cap A\) is not
  aperiodic as a \(J\cap A\)-bimodule.  Then
  Lemma~\ref{lem:Kish_hereditary} implies that~\(J\) is not
  aperiodic as an \(A\)\nb-bimodule.

  Now assume that
  \(J\cap A = 0\).  Then the composite \Star{}homomorphism
  \(A\hookrightarrow B \onto B/J\) is injective, hence isometric.
  So its image is closed.  Thus the map \(A\oplus J \to B\),
  \((a,x)\mapsto a+x\), is a continuous bijection onto a closed
  subspace of~\(B\).  It follows that it is a topological
  isomorphism.  Then the injective map \(J \hookrightarrow B/A\) is
  also a topological isomorphism onto its image.  Since~\(B/A\) is
  aperiodic by assumption, so is~\(J\) by
  Lemma~\ref{lem:Kish_hereditary}.
\end{proof}

\begin{theorem}
  \label{the:aperiodic_hidden_ideal}
  Every aperiodic inclusion \(A\subseteq B\) has the generalised
  intersection property.  The hidden ideal is the largest ideal that
  is aperiodic as an \(A\)\nb-bimodule.
\end{theorem}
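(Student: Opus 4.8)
The plan is to use Proposition~\ref{prop:hidden_ideal} together with the characterisation of ideals with zero intersection provided by Proposition~\ref{pro:inclusion_aperiodic_visible_ideal}. By Proposition~\ref{pro:inclusion_aperiodic_visible_ideal}, for an aperiodic inclusion $A \subseteq B$ we have $\Ideals_0(B) = \setgiven{J \in \Ideals(B)}{J \text{ is aperiodic as an } A\text{-bimodule}}$. So the generalised intersection property amounts to showing that this set of ``aperiodic ideals'' is closed under sums, and that it has a largest element; the largest element will then automatically be the hidden ideal by Proposition~\ref{prop:hidden_ideal}, and it will be the largest ideal aperiodic as an $A$-bimodule, which is the stated conclusion.

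First I would verify closure under finite sums. Given $J_1, J_2 \in \Ideals_0(B)$, both aperiodic as $A$-bimodules by Proposition~\ref{pro:inclusion_aperiodic_visible_ideal}, consider $J_1 + J_2 \in \Ideals(B)$. There is a short exact sequence of $A$-bimodules $J_1 \into J_1 + J_2 \onto (J_1+J_2)/J_1 \cong J_2/(J_1 \cap J_2)$, where the last isomorphism is the standard one for ideals in a $\Cst$-algebra (it is isometric, hence an $A$-bimodule isomorphism of normed bimodules). The quotient $J_2/(J_1 \cap J_2)$ is aperiodic as a quotient of the aperiodic bimodule $J_2$ by Lemma~\ref{lem:Kish_hereditary}. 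Hence $J_1 + J_2$ is aperiodic by the extension part of Lemma~\ref{lem:Kish_hereditary}, so $J_1 + J_2 \in \Ideals_0(B)$ by Proposition~\ref{pro:inclusion_aperiodic_visible_ideal} again. By Proposition~\ref{prop:hidden_ideal}, $\Ideals_0(B)$ then has a unique maximal element $\Null$, which is the hidden ideal, and $\Ideals_0(B) = \setgiven{J \in \Ideals(B)}{J \subseteq \Null}$.

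It remains to identify $\Null$ as the largest ideal aperiodic as an $A$-bimodule. By Proposition~\ref{pro:inclusion_aperiodic_visible_ideal}, an ideal $J$ of $B$ is aperiodic as an $A$-bimodule exactly when $J \cap A = 0$, i.e.\ exactly when $J \in \Ideals_0(B)$; and we have just shown $\Ideals_0(B) = \setgiven{J}{J \subseteq \Null}$, with $\Null$ itself a member. So $\Null$ is the largest ideal of $B$ that is aperiodic as an $A$-bimodule, as claimed. The only point requiring any care — the ``main obstacle'', though it is mild — is the handling of infinite sums or arbitrary suprema of aperiodic ideals: closure under finite sums is what Proposition~\ref{prop:hidden_ideal} actually needs, but one should note that Lemma~\ref{lem:hidden_ideals} already guarantees that directed suprema stay in $\Ideals_0(B)$, so no separate argument about infinite direct sums of aperiodic bimodules is needed here. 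One should also double-check that the isometric identification $(J_1+J_2)/J_1 \cong J_2/(J_1\cap J_2)$ is indeed an isomorphism of \emph{normed} $A$-bimodules and not merely a bicontinuous one, since Kishimoto's condition is sensitive only to the topology (by Lemma~\ref{lem:Kish_hereditary}, a bounded bimodule map with dense range suffices), so even a topological isomorphism would do.
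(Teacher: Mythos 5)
Your proposal is correct and follows essentially the same route as the paper: both rest on Proposition~\ref{pro:inclusion_aperiodic_visible_ideal} to identify $\Ideals_0(B)$ with the aperiodic ideals, the extension $J_1 \into J_1+J_2 \onto J_2/(J_1\cap J_2)$ together with Lemma~\ref{lem:Kish_hereditary} for closure under finite sums, and closure under directed suprema to obtain the largest element. Your explicit appeal to Proposition~\ref{prop:hidden_ideal} and Lemma~\ref{lem:hidden_ideals}, and your remark that a topological (not necessarily isometric) bimodule isomorphism suffices, are only cosmetic differences from the paper's argument.
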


\begin{proof}
  Let~\(\Ideals_a(B)\) be the set of all ideals in~\(B\) that are
  aperiodic as an \(A\)\nb-bimodule.  Certainly,
  \(0\in\Ideals_a(B)\).  Let \(J_1,J_2\in\Ideals_a(B)\).  Then there
  is a Banach \(A\)\nb-bimodule extension
  \(J_1 \into J_1 + J_2 \onto J_2/(J_1 \cap J_2)\).  So
  Lemma~\ref{lem:Kish_hereditary} implies
  \(J_1 + J_2\in \Ideals_a(B)\).  Hence
  \(\Ideals_a(B)\subseteq\Ideals(B)\) is closed under finite joins.
  Lemma~\ref{lem:Kish_hereditary} implies that~\(\Ideals_a(B)\) is
  also closed under increasing unions.  Hence it is closed under
  arbitrary joins.  Let~\(\Null\) be the join of all ideals
  in~\(\Ideals_a(B)\).  Then \(J\in\Ideals(B)\) satisfies
  \(J\subseteq \Null\) if and only if \(J\in\Ideals_a(B)\).
  Proposition~\ref{pro:inclusion_aperiodic_visible_ideal} says that
  \(\Ideals_a(B) = \Ideals_0(B)\).  So~\(\Null\) is the hidden
  ideal.
\end{proof}

\begin{remark}
  \label{rem:max_aperiodic_ideal}
  Let \(A\subseteq B\) be a \(\Cst\)\nb-inclusion that need not be
  aperiodic.  The proof of Theorem~\ref{the:aperiodic_hidden_ideal}
  still gives \(\Null\in\Ideals(B)\) such that \(J\in\Ideals(B)\) is
  aperiodic as an \(A\)\nb-bimodule if and only if
  \(J\subseteq\Null\).  That is, \(\Null\) is the largest aperiodic
  ideal in~\(B\).  The proof of
  Proposition~\ref{pro:inclusion_aperiodic_visible_ideal} still
  shows that \(\Null \cap A = 0\).  So we get an induced inclusion
  \(A\hookrightarrow B/\Null\).  Lemma~\ref{lem:Kish_hereditary}
  implies that no non-zero ideal in~\(B/\Null\) is aperiodic as an
  \(A\)\nb-bimodule.  But~\(A\) need not detect ideals
  in~\(B/\Null\).  For instance, in
  Example~\ref{exa:inclusion_no_hidden}, the largest aperiodic ideal
  is~\(0\) because a unital \(\C\)\nb-bimodule is never aperiodic.
\end{remark}

\subsection{Supportive generalised expectations}
\label{sec:aperiodicity_with_expectation}

Now assume the inclusion \(A\subseteq B\) to be aperiodic and let
\(E\colon B\to \tilde{A} \supseteq A\) be a generalised expectation.
Then \(A\subseteq B\) has the generalised intersection property by
Theorem~\ref{the:aperiodic_hidden_ideal}.  The hidden
ideal~\(\Null\) contains~\(\Null_E\) because \(\Null_E \cap A = 0\).
When is~\(\Null\) equal to~\(\Null_E\)?  We cannot expect this for
all generalised expectations.  For instance, for the trivial
generalised expectation in Example~\ref{exa:trivial_gen_expectation}
we always have \(\Null_E=0\) independently of~\(\Null\).  More
importantly, there are examples of aperiodic inclusions coming from
non-Hausdorff groupoids where \(\Null_E\neq \Null\) for the
canonical weak conditional expectation \(E\colon B\to A''\).  We are
going to identify an extra property of generalised conditional
expectations that implies that the hidden ideal is~\(\Null_E\).
Even more, it implies that the positive elements in~\(A\) support
all positive elements in~\(B/\Null_E\).

\begin{definition}
  \label{def:supportive_weak_conditional_expectation}
  A generalised expectation \(E\colon B\to \tilde{A} \supseteq A\)
  is called \emph{supportive} if, for any \(b\in B^+\) with
  \(E(b)\neq0\), there are \(\delta>0\) and a hereditary
  \(\Cst\)\nb-subalgebra \(D\in\Her(A)\) such that \(\norm{xE(b) x}\ge
  \delta\) for all \(x\in D^+\) with \(\norm{x}=1\).
\end{definition}

\begin{remark}
  By definition, \(E\) is supportive if and only if no non-zero
  element of~\(E(B^+)\) satisfies Kishimoto's condition.  Then
  any aperiodic ideal in~\(B\) is contained in~\(\ker E\) and hence
  in~\(\Null_E\).  If \(A\subseteq B\) is aperiodic, then
  Proposition~\ref{pro:inclusion_aperiodic_visible_ideal} implies
  that~\(\Null_E\) is aperiodic because \(\Null_E\cap A = 0\); so
  the unique maximal aperiodic ideal is~\(\Null_E\) if
  \(A\subseteq B\) is aperiodic and~\(E\) is supportive.  This short
  argument justifies our definition of supportive conditional
  expectations.  Theorem~\ref{the:aperiodic_consequences} will prove
  the stronger statement that~\(A\) supports~\(B\).
\end{remark}

\begin{remark}
  Since the property of being supportive depends only on \(E(B^+)\),
  a generalised expectation \(E\colon B\to \tilde{A} \supseteq A\)
  is supportive if and only if the corresponding reduced generalised
  expectation \(B/\Null_E \to \tilde{A}\) is supportive.
\end{remark}

\begin{proposition}
  \label{pro:local_expectation_supportive}
  Any \(\Locmult\)-expectation \(E\colon B\to \Locmult(A)\) and, in
  particular, any genuine conditional expectation \(E\colon B\to A\)
  is supportive.
\end{proposition}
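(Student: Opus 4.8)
The plan is to reduce the general statement to the case of a genuine conditional expectation $E\colon B\to A$, and then to show directly that such an expectation is supportive by applying Lemma~\ref{lem:anti-Kishimoto}. For the reduction, let $E\colon B\to \Locmult(A)$ be an essentially defined conditional expectation and let $b\in B^+$ with $E(b)\neq 0$. Since $\Locmult(A)$ is an inductive limit of multiplier algebras $\Mult(J)$ over the essential ideals $J$ of $A$, the element $E(b)$ already lies in $\Mult(J)$ for some essential ideal $J\idealin A$, and $E(b)\neq 0$ there. The key observation is that the inclusion $J\subseteq \Mult(J)$ is ``locally'' the situation of a genuine expectation: more precisely, what we really need is a statement about the single positive element $a\defeq E(b)\in\Mult(J)^+$, namely that it does \emph{not} satisfy Kishimoto's condition with respect to the $A$-bimodule structure; equivalently (by Lemma~\ref{lem:Kish_hereditary}, using that $J$ is essential in $A$ so that hereditary subalgebras of $J$ are cofinal in those of $A$) that $a$ does not satisfy Kishimoto's condition as an element of the $J$-bimodule $\Mult(J)$.

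So the heart of the matter is: \emph{no non-zero positive element of $\Mult(J)$ satisfies Kishimoto's condition with respect to the $J$-bimodule structure}. This is exactly the analogue of Lemma~\ref{lem:anti-Kishimoto} but with the coefficient algebra $J$ acting on its multiplier algebra rather than on itself. I would prove it by the same device used in the cited \cite{Kwasniewski-Meyer:Aperiodicity}*{Lemma~2.9}: given $a\in\Mult(J)^+$ with $\norm{a}=1$, pick, for small $\varepsilon>0$, an element $b_0\in J^+$ with $\norm{b_0}\le 1$ and $\norm{a^{1/2}b_0 a^{1/2}-a}$ small (possible because $J$ is dense-ish in $\Mult(J)$ in the sense that $a$ can be approximated on the relevant cut-down by elements of $J$, since $a=a^{1/2}\cdot a^{1/2}$ and $a^{1/2}\in\Mult(J)$ multiplies $J$ into $J$ — more concretely, choose $b_0$ from an approximate unit of $J$ so that $\norm{a^{1/2}b_0a^{1/2}}$ is close to $\norm{a}=1$). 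Then $\norm{a}$ is ``witnessed'' inside $J$, and Lemma~2.9 of \cite{Kwasniewski-Meyer:Aperiodicity} produces a hereditary $\Cst$-subalgebra $D_0\subseteq J$ and $\delta>0$ with $\norm{x a x}\ge\delta$ for all $x\in D_0^+$ with $\norm{x}=1$; this contradicts Kishimoto's condition for $a$.

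Having established this, the proof concludes quickly: for $b\in B^+$ with $E(b)\neq 0$, the element $E(b)\in\Mult(J)^+$ is non-zero, hence by the above does not satisfy Kishimoto's condition as a $J$-bimodule element, hence (Lemma~\ref{lem:Kish_hereditary}, essentiality of $J$) does not satisfy it as an $A$-bimodule element; that is, there are $\delta>0$ and $D\in\Her(A)$ with $\norm{xE(b)x}\ge\delta$ for all $x\in D^+$ of norm one. This is precisely the definition of $E$ being supportive. The statement for genuine conditional expectations $E\colon B\to A$ is the special case $J=A$, which is literally Lemma~\ref{lem:anti-Kishimoto} applied to $E(b)$. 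The main obstacle is the lemma in the second paragraph: one must be slightly careful that $a\in\Mult(J)^+$ cannot ``hide'' its norm outside $J$ — but since $a$ acts as a genuine operator on the ideal $J$ (via the faithful representation $\Mult(J)\hookrightarrow\Bound(J)$, $J$ being essential) and $\norm{a}_{\Mult(J)}$ is the operator norm on $J$, one recovers the norm of $a$ from its action on positive elements of $J$, and the Lemma~2.9 machinery of \cite{Kwasniewski-Meyer:Aperiodicity} then applies verbatim inside $J$.
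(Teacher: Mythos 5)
Your core idea is the right one and is essentially the paper's: witness the norm of the local multiplier inside the ideal \(J\) on which it acts, pass to a positive element of \(J\subseteq A\), and then invoke \cite{Kwasniewski-Meyer:Aperiodicity}*{Lemma~2.9} to produce the hereditary subalgebra \(D\) and the lower bound \(\norm{xE(b)x}\ge\delta\). The reduction from \(\Her(J)\) to \(\Her(A)\) is also fine, since a hereditary subalgebra of an ideal is hereditary in \(A\). However, there is a genuine gap at the very first step: you assert that \(E(b)\) ``already lies in \(\Mult(J)\) for some essential ideal \(J\)''. This is false in general. \(\Locmult(A)\) is the \(\Cst\)\nb-inductive limit, i.e.\ the \emph{completion} of the increasing union \(\bigcup_J\Mult(J)\); a general element of \(\Locmult(A)\) (and in particular a general value \(E(b)\) of an essentially defined expectation, which is only a norm limit of values on a dense subalgebra) need not belong to any single \(\Mult(J)\). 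Already for \(A=\Cont_0(X)\) a limit of functions \(f_n\in\Contb(U_n)\) need not be continuous on any dense open subset. So your key lemma about positive elements of \(\Mult(J)\), while correct, does not apply to \(E(b)\) directly, and the proof as written does not close.

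The repair is an approximation argument, and it is exactly what the paper does: choose \(\varepsilon>0\) with \(\norm{E(b)}>\varepsilon\), pick an essential ideal \(I\) and \(c\in\Mult(I)^+\) with \(\norm{E(b)-c}<\varepsilon/4\), run your \(\Mult(I)\)-argument on \(c\) (producing \(d=(c^{1/2}a^2c^{1/2})^{1/2}\in I\) with \(\norm{d}^2>3\varepsilon/4\) and then \(D\in\Her(A)\) with \(\norm{xcx}\ge\norm{xd}^2\gtrsim 3\varepsilon/8\)), and finally absorb the perturbation via \(\norm{xE(b)x}\ge\norm{xcx}-\norm{E(b)-c}\). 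One smaller point: your first formulation, asking for \(\norm{a^{1/2}b_0a^{1/2}-a}\) small with \(b_0\in J^+\), is also not achievable in general (take \(a=1\in\Mult(J)\) with \(J\) non-unital: then \(\norm{(u_i-1)a(u_i-1)}=1\) for every approximate unit); only your fallback version, making \(\norm{a^{1/2}b_0a^{1/2}}\) close to \(\norm{a}\), is correct, and that is the one the argument actually needs. With these two corrections your proof coincides with the paper's.
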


\begin{proof}
  Let \(b\in B^+\) satisfy \(E(b) \neq0\).  Then there is
  \(\varepsilon>0\) with \(\norm{E(b)}>\varepsilon\) and
  \((1-\varepsilon)^2>1/2\).  By the definition of \(\Locmult(A)\)
  as an inductive limit, there are an essential ideal
  \(I\subseteq A\) and \(c\in \Mult(I)\subseteq \Locmult(A)\) with
  \(\norm{E(b)-c}<\varepsilon/4\).  Since \(E(b)\ge 0\), we may
  assume without loss of generality that \(c\ge0\).  Then
  \(\norm{c}> 3\varepsilon/4\).  Hence there is \(a\in I\) with
  \(0 \le a \le 1\), \(\norm{a}=1\), and
  \(\norm{a c^{1/2}}> (3\varepsilon/4)^{1/2}\).  Let
  \(d \defeq (c^{1/2} a^2 c^{1/2})^{1/2} \in I\).
  \cite{Kwasniewski-Meyer:Aperiodicity}*{Lemma~2.9} gives a
  hereditary \(\Cst\)\nb-subalgebra \(D\in\Her(A)\) such that
  \(\norm{x d - x}<\varepsilon \norm{x} \cdot \norm{d}\) and
  \(\norm{x d}>(1-\varepsilon) \norm{x} \cdot \norm{d}\) for all
  \(x\in D\).  Let \(x\in D^+\) satisfy \(\norm{x}=1\).  Then
  \(x c x \ge x c^{1/2} a^2 c^{1/2} x = x d (x d)^*\).  Hence
  \[
    \norm{x c x} \geq \norm{x d}^2
    > (1-\varepsilon)^2 \norm{d}^2
    > (1-\varepsilon)^2\cdot 3\varepsilon/4
    >  3\varepsilon/8.
  \]
  Then
  \[
    \norm{x E(b) x} \ge \norm{x c x} - \norm{x}^2 \norm{E(b)-c}
    > 3\varepsilon/8 - \varepsilon/4
    = \varepsilon/8.
  \]
  Since this holds for all \(x\in D^+\) with \(\norm{x}=1\), \(E\)
  is supportive.
\end{proof}

\begin{lemma}
  \label{lem:supportive_condition}
  A generalised expectation \(E\colon B\to \tilde{A} \supseteq A\)
  is supportive if, for any \(b\in B^+\) with \(E(b)\neq0\), there is
  \(a\in A^+\setminus\{0\}\) with \(a \le E(b)\).
\end{lemma}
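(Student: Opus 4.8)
The plan is to reproduce the argument from the proof of Proposition~\ref{pro:local_expectation_supportive} in simplified form: since the hypothesis now provides a positive element dominated by~$E(b)$ already inside~$A$, the approximation step inside~$\Locmult(A)$ is no longer needed.

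So I would fix $b\in B^+$ with $E(b)\neq0$ and use the hypothesis to get $a\in A^+\setminus\{0\}$ with $a\le E(b)$; write $c\defeq\norm{a}=\norm{a^{1/2}}^2>0$. Conjugation by a self-adjoint element is order preserving, so for every $x\in A^+$ we have $xE(b)x\ge xax\ge0$ and hence $\norm{xE(b)x}\ge\norm{xax}$. The only computation involved is the identity $xax=(xa^{1/2})(xa^{1/2})^*$, valid because $x$ and $a^{1/2}$ are self-adjoint, which gives $\norm{xax}=\norm{xa^{1/2}}^2$.

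Next I would apply \cite{Kwasniewski-Meyer:Aperiodicity}*{Lemma~2.9} to the positive element $a^{1/2}$ with, say, $\varepsilon\defeq1/2$: this produces a hereditary $\Cst$-subalgebra $D\in\Her(A)$ with $\norm{xa^{1/2}}>(1-\varepsilon)\norm{x}\cdot\norm{a^{1/2}}$ for all $x\in D$. Combining the three estimates, every $x\in D^+$ with $\norm{x}=1$ satisfies $\norm{xE(b)x}\ge\norm{xa^{1/2}}^2>(1-\varepsilon)^2c=c/4$. Hence $\delta\defeq c/4=\norm{a}/4$ and this~$D$ witness that $E(b)\in E(B^+)$ does not satisfy Kishimoto's condition. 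As $b$ was arbitrary, no non-zero element of $E(B^+)$ satisfies Kishimoto's condition, i.e., $E$ is supportive.

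There is no serious obstacle here; the statement is essentially a special case of the reasoning behind Proposition~\ref{pro:local_expectation_supportive}. The only points needing minor care are to feed $a^{1/2}$ (rather than~$a$) into Lemma~2.9 so that the rewriting $\norm{xax}=\norm{xa^{1/2}}^2$ can be used, and to recall that Lemma~2.9 is available for arbitrary positive elements, so that keeping $\norm{a}$ as an explicit constant is harmless — one cannot in general rescale~$a$ up to norm one while preserving $a\le E(b)$.
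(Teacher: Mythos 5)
Your proof is correct and follows essentially the same route as the paper's: both feed $a^{1/2}$ (suitably normalised) into \cite{Kwasniewski-Meyer:Aperiodicity}*{Lemma~2.9} to get a hereditary subalgebra $D$ on which $\norm{xa^{1/2}}$ is bounded below, and then use $xE(b)x\ge xax$ together with $\norm{xax}=\norm{a^{1/2}x}^2$ to obtain the uniform lower bound $\delta$; the paper merely keeps $\varepsilon$ generic where you fix $\varepsilon=1/2$. No gaps.
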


\begin{proof}
  Choose \(0<\delta < \norm{a^{1/2}}\) and let \(\varepsilon = 1 -
  \delta/\norm{a^{1/2}}\).  The element \(\norm{a}^{-1/2}a^{1/2} \in
  A^+\) has norm~\(1\), and
  \cite{Kwasniewski-Meyer:Aperiodicity}*{Lemma~2.9} gives a hereditary
  subalgebra \(D\in\Her(A)\) with \(\norm{a^{1/2} x}>
  (1-\varepsilon)\norm{a^{1/2}} \norm{x} = \delta \norm{x}\) for all
  \(x\in D\).  Let \(x\in D\).  Since \(x^* E(b) x \ge x^* a x\), we
  may estimate \(\norm{x^* E(b) x} \ge \norm{x^* a x} = \norm{a^{1/2}
    x}^2 \ge \delta^2 \norm{x}^2\) as desired.
\end{proof}

\begin{definition}[\cite{Kwasniewski:Crossed_products}*{Definition~2.39}]
  We say that \emph{\(A^+\) supports~\(B\)} if, for every \(b\in
  B^+\setminus\{0\}\), there is \(a\in A^+\setminus\{0\}\) with \(a
  \precsim b\) in the Cuntz preorder
  (see~\cite{Cuntz:Dimension_functions}); that is, for every
  \(\varepsilon>0\), there is \(x \in B\) with \(\norm{a-x^* b x}
  <\varepsilon\).
\end{definition}

\begin{definition}[\cite{Kwasniewski-Szymanski:Pure_infinite}*{Lemma~2.1}]
  \label{def:infinite}
  An element \(a\in B^+\setminus \{0\}\) is \emph{infinite} in~\(B\)
  if and only if there is \(b\in B^+\setminus\{0\}\) such that for
  all \(\varepsilon >0\) there are \(x,y\in a B\) with
  \(\norm{x^*x-a}<\varepsilon\), \(\norm{y^*y-b}<\varepsilon\) and
  \(\norm{x^*y}<\varepsilon\).
\end{definition}

\begin{proposition}
  \label{prop:support_vs_pure_infinite}
  If~\(B\) is simple, then~\(B\) is purely infinite if and only if
  \(A^+\subseteq B\) supports~\(B\) and all elements of
  \(A^+\setminus\{0\}\) are infinite in~\(B\).
\end{proposition}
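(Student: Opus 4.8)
The plan is to reduce everything to the Cuntz--comparison characterisation of pure infiniteness: a nonzero simple \Cst-algebra~\(B\) is purely infinite if and only if \(B\neq\C\) and \(b\precsim a\) for all \(a,b\in B^+\setminus\{0\}\). I will use two structural facts about a simple \Cst-algebra~\(B\), going back to the work of Kirchberg--R\o rdam on (non-simple) purely infinite \Cst-algebras (see also \cite{Kwasniewski-Szymanski:Pure_infinite}): (a)~an infinite positive element \(e\in B^+\setminus\{0\}\) is properly infinite, that is, \(e\oplus e\precsim e\); and (b)~a full, properly infinite \(e\in B^+\) satisfies \(b\precsim e\) for every \(b\in B^+\). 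In our setting fullness is automatic, since every nonzero element of a simple \Cst-algebra is full. For~(a) one can argue directly with Cuntz subequivalence: by Definition~\ref{def:infinite} there is \(d\in B^+\setminus\{0\}\) with \(e\oplus d\precsim e\); fullness of~\(d\) gives \(e\precsim d^{\oplus n}\) for some \(n\in\N\), and iterating \(e\oplus d\precsim e\) yields \(e\oplus d^{\oplus n}\precsim e\), whence \(e\oplus e\precsim e\oplus d^{\oplus n}\precsim e\). I expect to cite~(b) rather than reprove it.

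For the forward implication, assume \(B\) is simple and purely infinite. Then \(B\neq\C\) and \(b\precsim a\) for all \(a,b\in B^+\setminus\{0\}\); restricting~\(a\) to \(A^+\setminus\{0\}\) shows that \(A^+\) supports~\(B\). Moreover every nonzero positive element of~\(B\) is properly infinite, hence infinite in the sense of Definition~\ref{def:infinite} (take the witnessing element of Definition~\ref{def:infinite} to be the element itself); in particular every element of \(A^+\setminus\{0\}\) is infinite in~\(B\).

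For the converse, assume \(A^+\) supports~\(B\) and that every element of \(A^+\setminus\{0\}\) is infinite in~\(B\). Since \(A\neq0\) there is some \(e\in A^+\setminus\{0\}\); as~\(e\) is infinite, \(B\neq\C\), because \(\C\) has no infinite positive elements. Now fix arbitrary \(a,b\in B^+\setminus\{0\}\); I must show \(b\precsim a\). By the support hypothesis there is \(a_0\in A^+\setminus\{0\}\) with \(a_0\precsim a\). By hypothesis \(a_0\) is infinite, hence properly infinite by~(a), hence \(b\precsim a_0\) by~(b); therefore \(b\precsim a_0\precsim a\). Since \(a,b\in B^+\setminus\{0\}\) were arbitrary and \(B\neq\C\), the comparison characterisation shows that~\(B\) is purely infinite.

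The only genuine work is in the two structural inputs~(a) and~(b); the domination statement~(b) for full, properly infinite positive elements is the most substantial, and I would cite it from the Kirchberg--R\o rdam theory (cf.\ \cite{Kwasniewski-Szymanski:Pure_infinite}) rather than include a proof. Granting these, the argument is a straightforward specialisation of the comparison characterisation to elements furnished by the supporting subalgebra~\(A\).
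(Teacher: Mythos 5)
Your proof is correct and follows essentially the same route as the paper: both directions rest on the Kirchberg--R\o{}rdam facts that in a simple \(\Cst\)\nb-algebra infinite positive elements are properly infinite and that a full, properly infinite positive element Cuntz-dominates every element of the ideal it generates, the only cosmetic difference being that you conclude via the comparison characterisation while the paper concludes via ``all nonzero positive elements are infinite'' (\cite{Kirchberg-Rordam:Non-simple_pi}*{Theorem~4.16}). One small caveat: in your sketch of~(a), fullness of \(d\) only yields \((e-\varepsilon)_+\precsim d^{\oplus n}\) for each \(\varepsilon>0\) rather than \(e\precsim d^{\oplus n}\) outright, but this is harmless, and in any case (a) is exactly \cite{Kirchberg-Rordam:Non-simple_pi}*{Proposition~3.14}, which the paper simply cites.
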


\begin{proof}
  Since~\(B\) is simple, every infinite element in~\(B\) is properly
  infinite by
  \cite{Kirchberg-Rordam:Non-simple_pi}*{Proposition~3.14}.  Hence
  by \cite{Kirchberg-Rordam:Non-simple_pi}*{Theorem~4.16}, \(B\) is
  purely infinite if and only if all elements of
  \(B^+\setminus\{0\}\) are infinite in~\(B\).  If~\(B\) is purely
  infinite, then elements of \(A^+\setminus\{0\}\) are infinite
  in~\(B\), and~\(A^+\) supports~\(B\) by
  \cite{Kirchberg-Rordam:Non-simple_pi}*{Definition~4.1} and the
  simplicity of~\(B\).  Conversely, assume that \(A^+\)
  supports~\(B\) and that all elements of \(A^+\setminus\{0\}\) are
  infinite in~\(B\).  Let \(b\in B^+\setminus\{0\}\).  Then there is
  \(a\in A^+\setminus\{0\}\) with \(a \precsim b\).  Since~\(B\) is
  simple, \(b\in \overline{BaB} = B\).  This implies \(b\precsim a\)
  by \cite{Kirchberg-Rordam:Non-simple_pi}*{Proposition~3.5}.  Hence
  \(a\) and~\(b\) are Cuntz equivalent.  So~\(b\) is infinite.
\end{proof}

\begin{lemma}
  \label{lem:generalised_support}
  If \(\Null \in \Ideals_0(B)\) is such that for every \(b\in B^+\)
  with \(b\notin \Null\), there is \(a\in A^+\setminus\{0\}\) with
  \(a \precsim b\), then~\(\Null\) is the hidden ideal for
  \(A\subseteq B\) and~\(A^+\) supports~\(B/\Null\).  In particular,
  if~\(A^+\) supports~\(B\), then~\(A\) detects ideals in~\(B\).
\end{lemma}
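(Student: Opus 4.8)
The plan is to prove the two conclusions in order and then obtain the ``in particular'' clause by specialising to \(\Null=0\). \emph{Step 1 (\(\Null\) is the hidden ideal).} By Proposition~\ref{prop:hidden_ideal} and Definition~\ref{def:visible} it suffices to show that every \(J\in\Ideals_0(B)\) satisfies \(J\subseteq\Null\); then \(\Null\) is the unique maximal element of~\(\Ideals_0(B)\), hence the hidden ideal. I would argue by contradiction: suppose \(J\cap A=0\) but \(J\not\subseteq\Null\), and choose \(b_0\in J\setminus\Null\). Since~\(\Null\) is a closed two-sided ideal, \(b_0^*b_0\in\Null\) would force \(b_0\in\Null\) (because \((b_0^*b_0)^{1/n}\in\Null\) and \(b_0(b_0^*b_0)^{1/n}\to b_0\)); so the positive element \(b\defeq b_0^*b_0\in J^+\) does not lie in~\(\Null\). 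The hypothesis then yields \(a\in A^+\setminus\{0\}\) with \(a\precsim b\). The routine fact that Cuntz subequivalence puts \(a\) into the closed ideal generated by~\(b\) gives \(a\in\overline{BbB}\subseteq J\), so \(a\in J\cap A=0\), contradicting \(a\neq0\).

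\emph{Step 2 (\(A^+\) supports~\(B/\Null\)).} Because \(\Null\cap A=0\), the quotient map \(\Lambda\colon B\to B/\Null\) is isometric on~\(A\), and we identify \(A\) with its image in~\(B/\Null\). Given \(\bar b\in(B/\Null)^+\setminus\{0\}\), lift it to some \(b\in B^+\) using the standard positive-lifting property of quotient \Star{}homomorphisms; then \(b\notin\Null\), so the hypothesis provides \(a\in A^+\setminus\{0\}\) with \(a\precsim b\) in~\(B\). Since Cuntz subequivalence is preserved by \Star{}homomorphisms, \(a=\Lambda(a)\precsim\Lambda(b)=\bar b\) in~\(B/\Null\), and \(a\neq0\) there because \(\Lambda|_A\) is injective. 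This is exactly the definition of \(A^+\) supporting~\(B/\Null\).

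\emph{Step 3 (the corollary and the main difficulty).} If \(A^+\) supports~\(B\), the hypothesis of the lemma holds verbatim with \(\Null=0\), which lies in~\(\Ideals_0(B)\) by Lemma~\ref{lem:hidden_ideals}; by Step~1, \(0\) is then the hidden ideal, so Proposition~\ref{prop:hidden_ideal} gives \(\Ideals_0(B)=\{0\}\), i.e.\ \(A\) detects ideals in~\(B\). I do not expect a genuine obstacle here: the argument only uses two classical facts about the Cuntz preorder — that \(a\precsim b\) implies \(a\in\overline{BbB}\), and that \(\precsim\) passes through \Star{}homomorphisms — together with positive lifting along quotients. The one spot that needs a little care is Step~1, where one must first replace an arbitrary element of \(J\setminus\Null\) by a \emph{positive} such element before invoking the hypothesis; passing to \(b_0^*b_0\) and using that closed ideals are hereditary settles this cleanly.
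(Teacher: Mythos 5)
Your proposal is correct and follows essentially the same route as the paper: both arguments rest on the two facts that \(a\precsim b\) places \(a\) in the closed ideal generated by \(b\) (forcing \(J\cap A\neq 0\) whenever \(J\not\subseteq\Null\)), and that \(\precsim\) is preserved under the quotient map \(B\to B/\Null\). The only difference is cosmetic — you phrase Step 1 as a contradiction and spell out the passage from an arbitrary element of \(J\setminus\Null\) to a positive one, which the paper leaves implicit.
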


\begin{proof}
  Let \(J\in\Ideals(B)\) satisfy \(J\not\subseteq \Null\).  Then
  there is \(b\in J^+\setminus \Null\).  By assumption, there is
  \(a\in A^+\setminus\{0\}\) with \(a \precsim b\).  This implies
  \(a\in B b B \subseteq J\), and so \(J\cap A \neq\{0\}\).
  Therefore, if \(J\cap A=\{0\}\) for \(J\in\Ideals(B)\), then
  \(J\subseteq \Null\).  The converse also holds because
  \(\Null \cap A=0\).  So~\(\Null\) is the hidden ideal.  And
  \(a \precsim b\) in~\(B\) implies \(a \precsim q(b)\), where
  \(q\colon B\to B/\Null\) is the quotient map.  Hence~\(A^+\)
  supports~\(B/\Null\).
\end{proof}

\begin{theorem}
  \label{the:aperiodic_consequences}
  Let \(A\subseteq B\) be an aperiodic \(\Cst\)\nb-inclusion.
  Let~\(E\) be an \(\Locmult\)-expectation or,
  more generally, a supportive generalised expectation
  \(E\colon B\to \tilde{A} \supseteq A\).  Then
  \begin{enumerate}
  \item \label{enu:aperiodic_consequences1}%
    for every \(b\in B^+\) with \(b\notin \Null_E\), there is
    \(a\in A^+\setminus\{0\}\) with \(a \precsim b\);
  \item \label{enu:aperiodic_consequences2}%
    \(A^+\) supports \(B/\Null_E\);
  \item \label{enu:aperiodic_consequences3}%
    \(A\) detects ideals in~\(B/\Null_E\);
  \item \label{enu:aperiodic_consequences4}%
    \(\Null_E\) is the hidden ideal for the inclusion
    \(A\subseteq B\), and so all the equivalent statements in
    Proposition~\textup{\ref{pro:abstract_uniqueness}} hold;
  \item \label{enu:aperiodic_consequences5}%
    \(B\) is simple if and only if \(A\) is \(B\)-minimal and~\(E\)
    is almost faithul;
  \item \label{enu:aperiodic_consequences6}%
    if~\(B\) is simple, then~\(B\) is purely infinite if and only if
    all elements of \(A^+\setminus\{0\}\) are infinite in~\(B\).
  \end{enumerate}
\end{theorem}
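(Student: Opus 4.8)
The plan is to establish~\ref{enu:aperiodic_consequences1} by a Kishimoto-type argument and then read off~\ref{enu:aperiodic_consequences2}--\ref{enu:aperiodic_consequences6} from the machinery already in place. By Proposition~\ref{pro:local_expectation_supportive} an essentially defined conditional expectation is supportive, so it suffices to treat a supportive generalised expectation $E\colon B\to\tilde A\supseteq A$. First I would reduce to the case $E(b)\neq0$: for $b\in B^+\setminus\Null_E$, Proposition~\ref{pro:GNS_kernels} gives $c\in B$ with $E((bc)^*bc)\neq0$, and the element $b_1\defeq(bc)^*(bc)=c^*b^2c\in B^+$ satisfies $E(b_1)\neq0$ as well as $b_1\precsim b$ (since $x^*x\sim xx^*$ in the Cuntz preorder and $bcc^*b\precsim\norm{cc^*}\,b^2\sim b$). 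By transitivity of~$\precsim$ it is then enough to find $a\in A^+\setminus\{0\}$ with $a\precsim b_1$, so I replace $b$ by $b_1$ and assume $E(b)\neq0$.

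Now supportiveness furnishes $\delta>0$ and $D\in\Her(A)$ with $\norm{xE(b)x}\ge\delta$ for all $x\in D^+$ with $\norm{x}=1$; fix $\varepsilon>0$ a small fraction of~$\delta$. Since $A\subseteq B$ is aperiodic, the Banach $A$-bimodule $B/A$ satisfies Kishimoto's condition; applied to $b+A$ with this $D$ and~$\varepsilon$ it produces $a_0\in D^+$, $\norm{a_0}=1$, together with $a'\in A$, which we may take self-adjoint (replace it by its real part, which does not increase the distance as $a_0ba_0$ is self-adjoint), such that $\norm{a_0ba_0-a'}<\varepsilon$ in~$B$. The crucial point is to exploit~$E$ to see that $a'$ is large: as $E$ is a contractive $A$-bimodule map fixing~$A$ pointwise, $\norm{a_0E(b)a_0-a'}=\norm{E(a_0ba_0-a')}<\varepsilon$ in~$\tilde A$, and $\norm{a_0E(b)a_0}\ge\delta$, whence $\norm{a'}\ge\delta-\varepsilon$. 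Working back in~$B$, this forces $\norm{a_0ba_0}\ge\norm{a'}-\varepsilon$ to be bounded below away from~$0$; and since $a_0ba_0\ge0$ is $\varepsilon$-close to~$a'$, its positive part $a'_+$ satisfies $\norm{a'_+}\ge\norm{a_0ba_0}-2\varepsilon>0$ and $\norm{a_0ba_0-a'_+}<2\varepsilon$. A standard Cuntz-comparison estimate (see~\cite{Kirchberg-Rordam:Non-simple_pi}) then gives $a\defeq(a'_+-2\varepsilon)_+\in A^+\setminus\{0\}$ with $a\precsim a_0ba_0$, and $a_0ba_0=(a_0b^{1/2})(a_0b^{1/2})^*\sim b^{1/2}a_0^2b^{1/2}\le b$ shows $a_0ba_0\precsim b$, hence $a\precsim b$. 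This proves~\ref{enu:aperiodic_consequences1}.

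The rest follows formally. Since $\Null_E\cap A=0$ we have $\Null_E\in\Ideals_0(B)$, so Lemma~\ref{lem:generalised_support} applied to~\ref{enu:aperiodic_consequences1} yields~\ref{enu:aperiodic_consequences4} (hence all statements of Proposition~\ref{pro:abstract_uniqueness}) together with~\ref{enu:aperiodic_consequences2}, and, via the ``in particular'' of that lemma applied to $A\hookrightarrow B/\Null_E$,~\ref{enu:aperiodic_consequences3}. For~\ref{enu:aperiodic_consequences5}: by~\ref{enu:aperiodic_consequences4} and the last sentence of Proposition~\ref{pro:abstract_uniqueness}, $A$ detects ideals in~$B$ if and only if $E$ is almost faithful, so Proposition~\ref{pro:simple_criterion} gives that $B$ is simple if and only if $E$ is almost faithful and $A$ is $B$-minimal. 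For~\ref{enu:aperiodic_consequences6}: if $B$ is simple then $\Null_E\neq B$ (because $\Null_E\cap A=0$ and $A\neq0$), hence $\Null_E=0$, so $A^+$ supports~$B$ by~\ref{enu:aperiodic_consequences2}, and Proposition~\ref{prop:support_vs_pure_infinite} gives that $B$ is purely infinite if and only if every element of $A^+\setminus\{0\}$ is infinite in~$B$.

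I expect the main obstacle to be~\ref{enu:aperiodic_consequences1}, and within it the passage between aperiodicity and the expectation: Kishimoto's condition only says that $a_0ba_0$ is close \emph{to some element $a'$ of~$A$}, with no a priori control on~$a'$, and it is applying~$E$ that converts the lower bound $\norm{a_0E(b)a_0}\ge\delta$ coming from supportiveness into a lower bound on~$\norm{a'}$, and thence into a genuinely \emph{non-zero} positive element of~$A$ dominated by~$b$ in the Cuntz preorder. A secondary subtlety is that $a_0ba_0\in B$ and $a_0E(b)a_0\in\tilde A$ live in a priori different algebras, which is why the comparison must be routed through the common element~$a'$ rather than made directly.
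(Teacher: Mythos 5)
Your proposal is correct and follows essentially the same route as the paper: reduce to $E(b)\neq 0$ via Proposition~\ref{pro:GNS_kernels} and $x^*bx\precsim b$, combine supportiveness of~$E$ with Kishimoto's condition for~$B/A$ on the hereditary subalgebra~$D$, use that $E$ is a contractive $A$-bimodule map fixing~$A$ to bound the positive part of the approximating element of~$A$ away from zero, and conclude with the Kirchberg--R{\o}rdam comparison lemma; parts (2)--(6) are then deduced from Lemma~\ref{lem:generalised_support}, Propositions \ref{pro:simple_criterion}, \ref{pro:abstract_uniqueness} and~\ref{prop:support_vs_pure_infinite} exactly as in the paper. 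The only cosmetic difference is that you bound $\norm{a'}$ first and then pass to $a'_+$, whereas the paper estimates $\norm{c_+}=\norm{E(c_+)}$ directly; this changes nothing of substance.
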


\begin{proof}
  \ref{enu:aperiodic_consequences1} implies
  \ref{enu:aperiodic_consequences2}--\ref{enu:aperiodic_consequences4}
  by Lemma~\ref{lem:generalised_support}.
  \ref{enu:aperiodic_consequences4} implies
  \ref{enu:aperiodic_consequences5} by
  Proposition~\ref{pro:simple_criterion},
  and~\ref{enu:aperiodic_consequences2}
  implies~\ref{enu:aperiodic_consequences6} by
  Proposition~\ref{prop:support_vs_pure_infinite}.  So everything
  follows once we show~\ref{enu:aperiodic_consequences1}.  Moreover,
  \(\Locmult\)-expectations are supportive by
  Proposition~\ref{pro:local_expectation_supportive}.  So we may
  assume~\(E\) to be a supportive generalised expectation.
	
  Let \(b\in B^+\) with
  \(b\notin \Null_E\).  Then~\(b^{1/2}\notin\Null_E\).
  Proposition~\ref{pro:GNS_kernels} gives \(x\in B^+\) with
  \(E(x^* b x)\neq0\).  Since \(x^* b x \precsim b\), we may
  replace~\(b\) by~\(x^* b x\) and assume without loss of generality
  that \(E(b)\neq0\).
  Since~\(E\) is supportive, there are \(\delta>0\) and
  \(D\in\Her(A)\) such that \(\norm{h E(b) h} \ge \delta\) for all
  \(h\in D^+\) with \(\norm{h}=1\).  Choose
  \(\varepsilon\defeq \delta/4\).  Since~\(B/A\) is aperiodic, there
  is \(h\in D^+\) with \(\norm{h}=1\) and
  \(\norm{h b h}_{B/A} < \varepsilon\).  That is, there is \(c\in A\)
  with \(\norm{h b h - c}< \varepsilon\).  Since~\(h b h\) is
  self-adjoint, even positive, we may replace \(c\) by its real part
  \((c^*+c)/2\).  This makes~\(c\) self-adjoint and does not increase
  \(\norm{h b h - c}\).  Next, decompose~\(c\) into its positive and
  negative parts, \(c= c_+ - c_-\) with \(c_\pm\ge0\) and
  \(c_+\cdot c_- = c_- \cdot c_+ = 0\).  We claim that
  \(\norm{h b h - c_+}< 2\varepsilon\).  First,
  \(\norm{h b h - c}<\varepsilon\) implies
  \(\varepsilon \ge h b h - c \ge -c = c_- - c_+\) because
  \(h b h \ge 0\).  Since \(c_\pm\) are orthogonal, this implies
  \(\varepsilon \ge \norm{c_-} = \norm{c - c_+}\).  So
  \(\norm{h b h - c_+}\le \norm{h b h - c} + \norm{c-c_+}<
  2\varepsilon\).  Now we estimate
  \[
  \norm{c_+}
  = \norm{E(c_+)}
  \ge  \norm{h E(b) h} - \norm{E(h b h - c_+)}
  > \delta - 2\varepsilon = 2\varepsilon.
  \]
  Hence \((c_+-\varepsilon)_+ \neq0\).  Let
  \(a\defeq (c_+ - \varepsilon)_+ \in A^+ \setminus\{0\}\).  Since
  \(\norm{h b h - c_+} < 2\varepsilon\),
  \cite{Kirchberg-Rordam:Infinite_absorbing}*{Lemma~2.2} gives a
  contraction \(y\in B\) with \(a = y^* h b h y\).  Thus
  \(a\precsim b\).
\end{proof}

By Proposition~\ref{pro:aperiodicity_vs_quotients}, the reduced inclusion
\(A\hookrightarrow B/\Null_E\) is aperiodic if \(A\subseteq B\) is
aperiodic.  The converse is false:

\begin{example}
  Embed \(A=\C\) diagonally into \(B=\C\oplus \C\) and define
  \(E\colon B\to A\), \(E(x,y) \defeq x\).  Then the reduced inclusion
  is an isomorphism \(A \cong B/\Null_E\) and hence aperiodic.  But the
  inclusion \(A\subseteq B\) is not aperiodic.
\end{example}

\section{Aperiodicity for inverse semigroup actions}
\label{sec:aperiodic_isg}

In this section, we characterise when the inclusion
\(A\subseteq A\rtimes S\) for an inverse semigroup action is
aperiodic.  In this case, \(A\rtimes_\ess S\) is the unique quotient
of \(A\rtimes S\) in which~\(A\) embeds and detects ideals.  Even
more, \(A^+\) supports \(A\rtimes_\ess S\).  Using
previous results in~\cite{Kwasniewski-Meyer:Aperiodicity}, we relate
aperiodicity of an inverse semigroup action to topological freeness
of the dual groupoid and pure outerness of the action.  Following
Archbold and Spielberg~\cite{Archbold-Spielberg:Topologically_free},
we also show directly that a variant of topological freeness implies
detection of ideals.

\subsection{Aperiodic inverse semigroup actions}
\label{sec:aperiodic_isg_characterisation}

\begin{definition}
  \label{def:aperiodic_action}
  An inverse semigroup action~\(\Hilm\) is called \emph{aperiodic}
  if the Hilbert \(A\)\nb-bimodules \(\Hilm_t \cdot I_{1,t}^\bot\)
  are aperiodic for all \(t\in S\), where~\(I_{1,t}^\bot\) is
  defined in~\eqref{eq:I1t_orthogonal}.
\end{definition}

Recall the inverse semigroup \(\Slice(A,B)\) for a regular inclusion
defined in Proposition~\ref{prop:regular_vs_inverse_semigroups}.

\begin{lemma}
  \label{lem:aperiodicity_vs_slices}
  Let \(A\subseteq B\) be a regular \(\Cst\)\nb-inclusion and let
  \(\mathcal{S}\subseteq \Slice(A,B)\) be a subset with closed
  linear span~\(B\); for instance, \(\mathcal{S}= \Slice(A,B)\).
  Let \((N\cap A)^\bot\) for \(N\in \Slice(A,B)\) be the annihilator
  of the ideal~\(N\cap A\) in~\(A\).  The inclusion \(A\subseteq B\)
  is aperiodic if and only if the image of \(N\cdot (N\cap A)^\bot\)
  in~\(B/A\) is an aperiodic \(A\)\nb-bimodule for all
  \(N\in\mathcal{S}\).
\end{lemma}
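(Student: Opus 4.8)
The plan is to combine the $\Slice(A,B)$\nb-grading of~$B$ from Proposition~\ref{prop:regular_vs_inverse_semigroups} with the permanence properties of aperiodicity in Lemmas~\ref{lem:Kish_vector_subspace} and~\ref{lem:Kish_hereditary}, the link being one algebraic identity about slices. Let $q\colon B\to B/A$ denote the quotient map; for a slice $N\in\Slice(A,B)$ write $I_N\defeq N\cap A$, which is a closed two\nb-sided, hence self-adjoint, ideal of~$A$, and $J_N\defeq I_N\oplus I_N^\bot$, where $I_N^\bot$ is the annihilator ideal of~$I_N$; then $J_N$ is an essential ideal of~$A$, exactly as for the ideal~$J_t$ in Section~\ref{sec:essential_crossed}. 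The ``only if'' direction is immediate: if $A\subseteq B$ is aperiodic then so is~$B/A$, and for each $N\in\mathcal S$ the image $q\bigl(N\cdot I_N^\bot\bigr)$ is an $A$\nb-subbimodule of~$B/A$ (using $AN\subseteq N$, $NA\subseteq N$ and that $I_N^\bot$ is an ideal), hence aperiodic by Lemma~\ref{lem:Kish_hereditary}.

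The key to the converse is the identity $N\cdot I_N\subseteq A$ and $I_N\cdot N\subseteq A$, valid for every slice~$N$. Since $I_N=N\cap A$ is self-adjoint, $I_N\subseteq N\cap N^*$; and $\overline{N\cdot N^*}\subseteq A$, by a polarisation argument from $N\subseteq N(A,B)$ or directly from the imprimitivity bimodule structure of slices. For $n\in N$ and $i\in I_N$ with $i\ge 0$ one has $i^{1/2}\in I_N\subseteq N^*$ and $n i^{1/2}\in N\cdot I_N\subseteq N$, so $ni=(ni^{1/2})\cdot i^{1/2}\in N\cdot N^*\subseteq A$; as the positive elements span~$I_N$, this gives $N\cdot I_N\subseteq A$, and $I_N\cdot N\subseteq A$ follows by applying this to~$N^*$ (note $N^*\cap A=I_N$) and taking adjoints. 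I expect this identity to be the one genuine obstacle; it becomes short once one recalls that $N\cap A$ is automatically self-adjoint, and everything afterwards is bookkeeping.

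Now fix $N\in\mathcal S$ and put $M\defeq\overline{q(N)}\subseteq B/A$, a Banach $A$\nb-bimodule. It suffices to show that~$M$ is aperiodic: since $\sum_{N\in\mathcal S}N$ is dense in~$B$, once each such~$M$ is aperiodic the subspace of elements of~$B/A$ satisfying Kishimoto's condition — closed by Lemma~\ref{lem:Kish_vector_subspace} — contains all the $q(n)$ and hence equals~$B/A$, so $A\subseteq B$ is aperiodic. Writing a general element of~$J_N$ as a sum of a part in~$I_N$ and a part in~$I_N^\bot$ and using the identity above together with $I_N\cdot I_N\subseteq A$ and $I_N\cdot I_N^\bot=0$, one computes $J_N\,M\,J_N=\overline{q\bigl(I_N^\bot\cdot N\cdot I_N^\bot\bigr)}$ inside~$B/A$. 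Now $I_N^\bot\cdot N\cdot I_N^\bot\subseteq N\cdot I_N^\bot=N\cdot(N\cap A)^\bot$, and by hypothesis $q\bigl(N\cdot(N\cap A)^\bot\bigr)$ is an aperiodic $A$\nb-bimodule; hence its subbimodule $q\bigl(I_N^\bot\cdot N\cdot I_N^\bot\bigr)$, and therefore its closure $J_N\,M\,J_N$, is aperiodic as an $A$\nb-bimodule by Lemmas~\ref{lem:Kish_vector_subspace} and~\ref{lem:Kish_hereditary}, and a fortiori as a $J_N$\nb-bimodule, since $J_N\in\Her(A)$. Because $J_N$ is essential in~$A$, the last statement of Lemma~\ref{lem:Kish_hereditary} yields that~$M$ is aperiodic as an $A$\nb-bimodule, which completes the proof.
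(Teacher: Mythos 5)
Your proof is correct and follows essentially the same route as the paper: reduce to the closed images $\overline{q(N)}$ via Lemma~\ref{lem:Kish_vector_subspace}, cut down by the essential ideal $J_N=(N\cap A)\oplus(N\cap A)^\bot$, observe that the $N\cdot(N\cap A)$ part lands in $A$ and so dies in $B/A$, and invoke the essential-ideal permanence statement of Lemma~\ref{lem:Kish_hereditary}. The only cosmetic differences are that you use the two-sided compression $J_N\,M\,J_N$ where the paper uses the one-sided $N\cdot J_N$, and that you spell out the inclusion $N\cdot(N\cap A)\subseteq A$ which the paper takes for granted.
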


\begin{proof}
  Since \(\sum N\) is dense in~\(B\), the images of the
  \(A\)\nb-submodules \(N \subseteq B\) in~\(B/A\) are linearly
  dense.  By Lemma~\ref{lem:Kish_vector_subspace}, \(B/A\) is
  aperiodic if and only if these images, equipped with the quotient
  norm from~\(B/A\), are all aperiodic.  Fix \(N\in\mathcal{S}\) and
  put \(I\defeq N\cap A\).  Let \(J \defeq I + I^\bot\).  This is an
  essential ideal in~\(A\) and
  \(N\cdot J = N\cdot I \oplus N\cdot I^\bot = N\cap A \oplus N\cdot
  I^\bot\).  By Lemma~\ref{lem:Kish_hereditary}, the image of~\(N\)
  in~\(B/A\) is an aperiodic \(A\)\nb-bimodule if and only
  if the image of \(J\cdot N\cdot J\) in~\(B/A\) is an aperiodic
  \(J\)\nb-bimodule, if and only if the image of
  \(J\cdot N\cdot J\) in~\(B/A\) is an aperiodic
  \(A\)\nb-bimodule.  And the same proof works with \(N\cdot J\)
  instead of \(J\cdot N\cdot J\).  Finally, the subspace
  \(N\cdot J\) has the same image in~\(B/A\) as
  \(N\cdot (N\cap A)^\bot\).
\end{proof}

\begin{proposition}
  \label{pro:aperiodic_isg}
  Let~\(B\) be an \(S\)\nb-graded \(\Cst\)\nb-algebra with a grading
  \(\Hilm=(\Hilm_t)_{t\in S}\).  Let \(A\defeq \Hilm_1\subseteq B\)
  and turn~\(\Hilm\) into an \(S\)\nb-action on~\(A\).  If this
  action is aperiodic, then the inclusion \(A\subseteq B\) is
  aperiodic.  The converse holds if the grading is topological as in
  Definition~\textup{\ref{def:topological_grading}}.
\end{proposition}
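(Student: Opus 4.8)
\emph{The plan} is to prove the forward implication via the slice characterisation of aperiodicity in Lemma~\ref{lem:aperiodicity_vs_slices}, and the converse via the essentially defined conditional expectation that comes with a topological grading.

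\emph{Forward implication.} By Proposition~\ref{prop:regular_vs_inverse_semigroups}, the inclusion \(A\subseteq B\) is regular, and the grading subspaces \(\Hilm_t=B_t\) are slices in \(\Slice(A,B)\): each \(\Hilm_t\) is a closed \(A\)\nb-subbimodule of \(B\) contained in \(N(A,B)\), because \(B_t B_1 B_{t^*}\subseteq B_{tt^*}\subseteq B_1=A\) and likewise \(B_{t^*}B_1 B_t\subseteq A\); and \(\{\Hilm_t:t\in S\}\) has closed linear span \(B\). So by Lemma~\ref{lem:aperiodicity_vs_slices} it will be enough to check that the image of \(\Hilm_t\cdot(\Hilm_t\cap A)^\bot\) in \(B/A\) is aperiodic for each \(t\). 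Here I would use that, by the definition of \(\vartheta_{1,t}\) together with the grading relations \(B_v\subseteq B_t\) for \(v\le t\), the subspace \(\Hilm_t\cdot I_{1,t}\) already lies in \(A\) inside \(B\), with image the ideal \(I_{1,t}\); hence \(I_{1,t}\subseteq\Hilm_t\cap A\) and therefore \((\Hilm_t\cap A)^\bot\subseteq I_{1,t}^\bot\). Consequently the image of \(\Hilm_t\cdot(\Hilm_t\cap A)^\bot\) in \(B/A\) is an \(A\)\nb-subbimodule of the image of \(\Hilm_t\cdot I_{1,t}^\bot\); the latter is the range of a bounded \(A\)\nb-bimodule homomorphism out of the Hilbert bimodule \(\Hilm_t\cdot I_{1,t}^\bot\), which is aperiodic by hypothesis, hence aperiodic by Lemma~\ref{lem:Kish_hereditary}, and then so is its subbimodule. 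Lemma~\ref{lem:aperiodicity_vs_slices} then yields that \(A\subseteq B\) is aperiodic.

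\emph{Converse implication.} Assume \(A\subseteq B\) aperiodic and the grading topological. Then Proposition~\ref{pro:topological_grading_through_expectation} supplies an essentially defined conditional expectation \(P\colon B\to\Locmult(A)\) with \(P\circ\pi=EL\), where \(\pi\colon A\rtimes S\to B\) is the canonical epimorphism. Fix \(t\in S\); the goal is aperiodicity of \(\Hilm_t\cdot I_{1,t}^\bot\). First I would observe that \(\Hilm_t\) embeds isometrically into \(B\): if \(\xi\in\Hilm_t\) maps into \(\Null_{EL}\supseteq\ker\pi\), then \(EL(\xi^*\xi)=\xi^*\xi=0\) since \(EL\) is the identity on \(A\ni\braket{\xi}{\xi}\), so \(\xi=0\); and \(\norm{\pi(\xi)}_B^2=\norm{\pi(\xi^*\xi)}_B=\norm{\xi^*\xi}_A=\norm{\xi}_{\Hilm_t}^2\). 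Next, \(EL\) vanishes on \(\Hilm_t\cdot I_{1,t}^\bot\) straight from its definition (for \(\xi\in\Hilm_t\cdot I_{1,t}^\bot\) the \(I_{1,t}\)\nb-component of \(\xi\cdot a\) is zero for all \(a\in J_t\)), so \(P\) vanishes on the image of \(\Hilm_t\cdot I_{1,t}^\bot\) in \(B\). Hence for \(\xi\in\Hilm_t\) and \(a\in A\) one gets both \(\norm{\pi(\xi)-a}_B\ge\norm{P(\pi(\xi)-a)}=\norm{a}_A\) and \(\norm{\pi(\xi)-a}_B\ge\norm{\xi}_{\Hilm_t}-\norm{a}_A\), so \(\norm{\pi(\xi)+A}_{B/A}\ge\tfrac12\norm{\xi}_{\Hilm_t}\). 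Thus the composite \(A\)\nb-bimodule map \(\Hilm_t\cdot I_{1,t}^\bot\to B\to B/A\) is a topological isomorphism onto its image, which is a subbimodule of the aperiodic bimodule \(B/A\) and so aperiodic by Lemma~\ref{lem:Kish_hereditary}; applying the ``dense range'' clause of Lemma~\ref{lem:Kish_hereditary} to the inverse isomorphism then shows \(\Hilm_t\cdot I_{1,t}^\bot\) is aperiodic. As \(t\) was arbitrary, the action is aperiodic.

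\emph{Main obstacle.} The subtle point in both directions is that \(A\rtimes S\to B\) need not be injective on the fibres \(\Hilm_t\), so one cannot directly view \(\Hilm_t\cdot I_{1,t}^\bot\) as a subspace of \(B/A\) and read off aperiodicity. The forward direction sidesteps this by passing through \emph{images} of slices in Lemma~\ref{lem:aperiodicity_vs_slices}; the converse is exactly where the topological-grading hypothesis does its work, forcing \(\ker\pi\subseteq\Null_{EL}\) and hence the bi-Lipschitz embedding \(\Hilm_t\cdot I_{1,t}^\bot\hookrightarrow B/A\) needed to pull aperiodicity back.
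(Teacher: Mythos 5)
Your proof is correct. The forward implication is essentially the paper's argument verbatim: slices via Lemma~\ref{lem:aperiodicity_vs_slices}, the inclusion \(I_{1,t}\subseteq\Hilm_t\cap A\) coming from \(\Hilm_v\subseteq\Hilm_t\cap\Hilm_1\) for \(v\le 1,t\), and heredity of aperiodicity from Lemma~\ref{lem:Kish_hereditary}. The converse follows the same skeleton as the paper (show the quotient map \(B\to B/A\) is bounded below on \(\Hilm_t\cdot I_{1,t}^\bot\), then transfer aperiodicity back along the inverse using the dense-range clause of Lemma~\ref{lem:Kish_hereditary}), but your mechanism for the lower bound is genuinely different. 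The paper reduces to \(B=A\rtimes_\ess S\) and computes in the Hilbert \(\Locmult(A)\)-module completion of \((A\rtimes_\alg S)\odot\Locmult(A)\): orthogonality of \(x\odot 1\) and \(a\odot 1\) gives \(\norm{x+a}\ge\norm{x}\), i.e.\ the quotient seminorm on \(\Hilm_t\cdot I_{1,t}^\bot\) \emph{equals} the \(\Cst\)-norm. You instead work directly with the expectation \(P\colon B\to\Locmult(A)\) from Proposition~\ref{pro:topological_grading_through_expectation} and combine \(\norm{\pi(\xi)-a}\ge\norm{P(\pi(\xi)-a)}=\norm{a}\) with the triangle inequality to get the two-sided estimate \(\norm{\pi(\xi)+A}_{B/A}\ge\tfrac12\norm{\xi}\). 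This buys you a more elementary argument — no GNS-type Hilbert module is needed, only contractivity of \(P\), \(P|_A=\Id\), and the (correct) observation that \(EL\) kills \(\Hilm_t\cdot I_{1,t}^\bot\) — at the cost of a bi-Lipschitz rather than isometric embedding; since Lemma~\ref{lem:Kish_hereditary} only needs a bounded bimodule map with dense range, that is enough. Two cosmetic points: in the converse you write ``for \(\xi\in\Hilm_t\)'' where you mean \(\xi\in\Hilm_t\cdot I_{1,t}^\bot\) (the identity \(P(\pi(\xi))=0\) holds only there); and since the grading realises \(\Hilm_t\) as the subspace \(B_t\subseteq B\), the map \(\pi|_{\Hilm_t}\) is literally the inclusion, so your preliminary isometry argument is automatic.
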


\begin{proof}
  Suppose first that~\(\Hilm\) is aperiodic.  Any \(S\)\nb-grading
  satisfies \(\Hilm_t\subseteq \Hilm_u\) for \(t \le u\) in~\(S\).
  Hence the ideal~\(I_{1,t}\) used in
  Definition~\ref{def:aperiodic_action} is contained in
  \(\Hilm_t\cap A\) for all \(t\in S\).  So
  \(\Hilm_t \cdot (\Hilm_t \cap A)^\bot\) is a subbimodule of
  \(\Hilm_t \cdot I_{1,t}^\bot\) and hence inherits aperiodicity
  from the latter by Lemma~\ref{lem:Kish_hereditary}.  Thus
  \(A\subseteq B\) is aperiodic by
  Lemma~\ref{lem:aperiodicity_vs_slices}.

  Conversely, assume that \(A\subseteq B\) is aperiodic.  Let
  \(t\in S\).  Using that the grading is topological, we are going
  to prove that the seminorm on~\(\Hilm_t I_{1,t}^\bot\) induced by
  the quotient norm on~\(B/A\) is equal to the usual norm in~\(B\).
  Hence~\(\Hilm_t I_{1,t}^\bot\) inherits aperiodicity from~\(B/A\)
  by Lemma~\ref{lem:Kish_hereditary}.  It is clear that
  \(\norm{x}_{B/A} \le \norm{x}_B\) for all \(x\in B\).  It remains
  to prove the opposite inequality for
  \(x\in \Hilm_t I_{1,t}^\bot\).  Since
  \(\norm{x}_B^2 = \norm{x^* x}_A\), it follows that
  \(\norm{x}_B = \norm{x}_{A\rtimes_\ess S}\).  By
  Proposition~\ref{pro:topological_grading_through_expectation},
  \(\norm{x}_{B/A} \ge \norm{x}_{(A\rtimes_\ess S)/A}\).  So we may
  assume without loss of generality that \(B=A\rtimes_\ess S\).  By
  definition of \(EL\), \(EL(a^* x)=0\) for all \(a\in A\).  Hence
  \(x \odot 1\) and \(a \odot 1\) in
  \((A\rtimes_\alg S)\odot \Locmult(A)\) are orthogonal.  The
  following norms are computed in the Hilbert module
  completion~\(\Hilm[F]\) of
  \((A \rtimes_\alg S) \odot \Locmult(A)\) or in~\(\Locmult(A)\):
  \[
    \norm{(x+a)\odot 1}^2
    \ge \norm{x\odot 1}^2
    = \norm{\braket{x\odot 1}{x\odot 1}}
    = \norm{EL(x^* x)}
    = \norm{x^* x}
    = \norm{x}^2.
  \]
  The first inequality follows because \(x \odot 1\) and
  \(a \odot 1\) are orthogonal, and \(EL(x^* x) = x^* x\) because
  \(x^* x \in A\).  Hence the norm of the operator of left
  multiplication by \(x+a\) on~\(\Hilm[F]\) is at least~\(\norm{x}\)
  for all \(a\in A\).  This implies
  \(\norm{x}_{(A\rtimes_\ess S)/A} \ge \norm{x}_B\) as desired.
\end{proof}

\begin{example}
  \label{exa:aperiodic_inclusion_not_aperiodic_action}
  We are going to build a regular inclusion \(A\subseteq B\) with
  two different wide gradings, such that one grading gives an aperiodic
  action and the other not.  By Proposition~\ref{pro:aperiodic_isg},
  the inclusion \(A\subseteq B\) is aperiodic although it can be
  equipped with a wide grading which is not aperiodic.  So the
  equivalence in the second part of
  Proposition~\ref{pro:aperiodic_isg} fails for non-topological
  gradings, even if they are wide.

  Let~\(B\) be the \(\Cst\)\nb-algebra of all bounded functions
  \(f\colon [0,1] \to \C\) such that~\(f\) is continuous at
  irrational numbers and, at rational numbers in~\([0,1]\), still
  satisfies \(f(t) \defeq \lim_{x\nearrow t} f(x)\) and that the
  limit from the right \(\lim_{x\searrow t} f(x)\) exists.  Let
  \(A\defeq \Cont([0,1]) \subseteq B\).  A character on~\(B\) maps
  \(f\colon [0,1]\to\C\) to the value \(f(t)\) for some
  \(t\in[0,1]\) or to the right limit
  \(f(t^+) \defeq \lim_{x\searrow t} f(x)\) for some \(t\in\Q\).
  The set \(\R\setminus\Q\) of irrational numbers is dense in the
  spectrum of~\(B\).  This implies that~\(A\) detects ideals
  in~\(B\).  The inclusion \(A\subseteq B\) is unital, and it is
  regular by Lemma~\ref{lem:commutative_regular} because~\(B\) is
  commutative.  If \(u\in B\) is unitary, then
  \(u\cdot A \subseteq B\) belongs to \(\Slice(A,B)\) (see the proof
  of Lemma~\ref{lem:commutative_regular}).  These
  elements generate a subgroup of \(\Slice(A,B)\) that is isomorphic
  to the quotient group
  \(\Gamma \defeq \mathrm{U}(B)/\mathrm{U}(A)\).

  Let \(\Gamma_0\subseteq \Gamma\) be the subgroup of all unitaries
  that are discontinuous at only finitely many points of~\([0,1]\).
  These separate the characters of~\(B\) and hence generate~\(B\) as
  a \(\Cst\)\nb-algebra by the Stone--Weierstraß Theorem.  So~\(B\)
  is graded by the group~\(\Gamma_0\).  And then it is also graded
  by the larger group~\(\Gamma\).  If \(u\in\Gamma_0\), then
  \(f\in A\) satisfies \(f\cdot u \in A\) if and only if \(f(t)=0\)
  at all \(t\in[0,1]\) where~\(u\) is not continuous.  So
  \(A\cap u\cdot A = \Cont_0([0,1]\setminus F)\) for a finite set
  \(F\subseteq\Q\).  In order for this to be useful, the
  subgroup~\(\Gamma_0\) of \(\Slice(A,B)\) must be enlarged to the
  wide inverse subsemigroup that it generates.  Or we may as well
  take
  \(\bar\Gamma_0 \defeq \setgiven{u\cdot J}{u\in \Gamma_0,\
    J\idealin A}\).  Since \([0,1]\setminus F\) is dense
  in~\([0,1]\), the ideals \(I_{t,1}^\bot\) are zero for all
  \(t\in\bar\Gamma_0\).  Hence the \(\bar\Gamma_0\)\nb-action
  on~\(A\) defined by the \(\bar\Gamma_0\)\nb-grading of~\(B\) is
  aperiodic.

  This action is topologically principal as well because
  its isotropy is trivial in \([0,1] \setminus \Q\).  At the same
  time, this action is trivial: any slice \(u\cdot J\)
  in~\(\bar\Gamma_0\) is isomorphic to~\(J\) as an
  \(A\)\nb-bimodule, and the multiplication in the Fell bundle
  over~\(\bar\Gamma_0\) is also the usual multiplication in~\(A\).
  The \(\Gamma_0\)\nb-grading on~\(B\) is not aperiodic, however,
  because for any \(t\in\Gamma_0\setminus\{1\}\), there is no
  element in~\(\Gamma_0\) below \(1\) and~\(t\), so that
  \(I_{1,t}=0\).  We thank Jonathan Taylor for pointing this out to
  us.

  Let
  \(\bar\Gamma \defeq \setgiven{u\cdot J}{u\in \Gamma,\ J\idealin
    A}\).  Then~\(B\) carries a wide \(\bar\Gamma\)\nb-grading.
  There is a unitary \(u\in B\) that is discontinuous at all
  rational points in~\([0,1]\).  Then \(u\cdot f\) for \(f\in A\) is
  only continuous if \(f=0\).  So \(u\cdot A \cap A = 0\).  Then
  \(I_{A,u\cdot A}=0\) and aperiodicity of the
  \(\bar\Gamma\)\nb-grading on~\(B\) would ask for \(u\cdot A\) to
  be aperiodic as an \(A\)\nb-bimodule.  This contradicts
  Lemma~\ref{lem:anti-Kishimoto} because \(u\cdot A\) carries the
  standard \(A\)\nb-bimodule structure.  Hence the
  \(\bar\Gamma\)\nb-action on~\(A\) defined by the
  \(\bar\Gamma\)\nb-grading on~\(B\) is not aperiodic.
\end{example}

\subsection{Properties of aperiodic inverse semigroup actions}
\label{sec:properties_aperiodic_isg}

The following theorem specialises
Theorem~\ref{the:aperiodic_consequences} to aperiodic inverse
semigroup actions:

\begin{theorem}
  \label{the:aperiodic_action_consequences}
  Let~\(\Hilm\) be an aperiodic action of a unital inverse
  semigroup~\(S\) on a \(\Cst\)\nb-algebra~\(A\).  Let
  \(EL\colon A\rtimes S \to \Locmult(A)\) be the canonical
  \(\Locmult\)-expectation.
  \begin{enumerate}
  \item \label{en:aperiodic_uniqueness1}%
    The ideal \(\Null_{EL}\) is the hidden ideal of the
    \(\Cst\)\nb-inclusion \(A\subseteq A\rtimes S\).
  \item \label{en:aperiodic_uniqueness2}%
    \(A^+\) supports~\(A\rtimes_\ess S\).
  \item \label{en:aperiodic_uniqueness3}%
    \(A\) detects ideals in~\(A\rtimes_\ess S\), and
    \(A\rtimes_\ess S\) is the unique quotient of \(A\rtimes S\)
    with this property.
  \item \label{en:aperiodic_uniqueness11}%
    For any representation~\(\pi\) of~\((\Hilm_t)_{t\in S}\) in a
    \(\Cst\)\nb-algebra~\(B\) that is faithful on~\(A\), the image
    of the induced \Star{}homomorphism \(A\rtimes S\to B\) is an
    exotic crossed product, that is, \(\pi(B)\) is topologically
    graded by~\((\Hilm_t)_{t\in S}\).
  \end{enumerate}
\end{theorem}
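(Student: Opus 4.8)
The plan is to recognise this as the specialisation of Theorem~\ref{the:aperiodic_consequences} to the \(\Cst\)\nb-inclusion \(A\subseteq A\rtimes S\) together with the essentially defined conditional expectation~\(EL\).  First I would check the two hypotheses of that theorem.  The full crossed product \(A\rtimes S\) is \(S\)\nb-graded by the images of the Hilbert bimodules~\(\Hilm_t\), with unit fibre~\(A\) — here I use that the canonical map \(A\rtimes_\alg S\to A\rtimes S\) is injective by Remark~\ref{rem:cross_product_graded}.  Since the action~\(\Hilm\) is aperiodic by assumption, the first implication in Proposition~\ref{pro:aperiodic_isg} shows that the inclusion \(A\subseteq A\rtimes S\) is aperiodic.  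And \(EL\colon A\rtimes S\to\Locmult(A)\) is an essentially defined conditional expectation by Proposition~\ref{pro:EL_exists}, hence a supportive generalised expectation by Proposition~\ref{pro:local_expectation_supportive}.

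With the hypotheses in place, I would apply Theorem~\ref{the:aperiodic_consequences} with \(B\defeq A\rtimes S\), \(\tilde{A}\defeq\Locmult(A)\) and \(E\defeq EL\), and translate its conclusions using that \(A\rtimes_\ess S=(A\rtimes S)/\Null_{EL}\) by Definition~\ref{def:essential_crossed}.  Part~\ref{enu:aperiodic_consequences4} of that theorem gives statement~\ref{en:aperiodic_uniqueness1}, that \(\Null_{EL}\) is the hidden ideal for \(A\subseteq A\rtimes S\); part~\ref{enu:aperiodic_consequences2} gives~\ref{en:aperiodic_uniqueness2}, that \(A^+\) supports \(A\rtimes_\ess S\); and part~\ref{enu:aperiodic_consequences3} gives that~\(A\) detects ideals in \(A\rtimes_\ess S\).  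For the remaining assertion of~\ref{en:aperiodic_uniqueness3} — uniqueness of the quotient — I would invoke that, by~\ref{en:aperiodic_uniqueness1}, all the equivalent conditions of Proposition~\ref{pro:abstract_uniqueness} hold, in particular condition~\ref{enu:abstract_uniqueness0.5}, which is precisely the statement that \((A\rtimes S)/\Null_{EL}\) is the \emph{unique} quotient of \(A\rtimes S\) into which~\(A\) embeds and in which it detects ideals.

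For~\ref{en:aperiodic_uniqueness11} I would argue directly from the hidden ideal.  Given a representation \(\pi=(\pi_t)_{t\in S}\) of \((\Hilm_t)_{t\in S}\) in a \(\Cst\)\nb-algebra~\(B\) that is faithful on~\(A\), Remark~\ref{rem:representation_vs_grading} produces an induced \Star{}homomorphism \(\hat\pi\colon A\rtimes S\to B\) whose image \(\hat\pi(A\rtimes S)\) is \(S\)\nb-graded by the subspaces \(\hat\pi(\Hilm_t)\), with unit fibre \(\hat\pi(A)\cong A\).  Faithfulness of~\(\pi\) on~\(A\) means \(\ker\hat\pi\cap A=0\), so \(\ker\hat\pi\in\Ideals_0(A\rtimes S)\); since \(\Null_{EL}\) is the hidden ideal by~\ref{en:aperiodic_uniqueness1}, Proposition~\ref{prop:hidden_ideal} gives \(\ker\hat\pi\subseteq\Null_{EL}\).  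By Definition~\ref{def:topological_grading} this is exactly what it means for the \(S\)\nb-grading on \(\hat\pi(A\rtimes S)\) to be topological, i.e., \(\hat\pi(A\rtimes S)\) is an exotic crossed product, which is~\ref{en:aperiodic_uniqueness11}.

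I do not expect a genuine obstacle here, since the theorem is essentially a dictionary translation of Theorem~\ref{the:aperiodic_consequences} into the language of inverse semigroup actions.  The only points that require care are verifying that the inclusion \(A\subseteq A\rtimes S\) really is aperiodic — this uses the easy implication of Proposition~\ref{pro:aperiodic_isg}, which, importantly, does not require the grading to be topological — that~\(EL\) is supportive (Proposition~\ref{pro:local_expectation_supportive}), and correctly matching the notation \(A\rtimes_\ess S=(A\rtimes S)/\Null_{EL}\) coming from Definition~\ref{def:essential_crossed}.
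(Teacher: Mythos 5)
Your proposal is correct and follows essentially the same route as the paper: aperiodicity of the inclusion via Proposition~\ref{pro:aperiodic_isg}, then Theorem~\ref{the:aperiodic_consequences} for parts (1)--(3), and for part (4) the observation that \(\ker\hat\pi\cap A=0\) forces \(\ker\hat\pi\subseteq\Null_{EL}\) because \(\Null_{EL}\) is the hidden ideal, which is exactly the definition of a topological grading. The paper's proof is just a terser version of the same argument, and your extra care about which direction of Proposition~\ref{pro:aperiodic_isg} is being used is appropriate.
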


\begin{proof}
  Proposition~\ref{pro:aperiodic_isg} implies that the inclusion
  \(A\subseteq A\rtimes S\) is aperiodic.  Then
  \ref{en:aperiodic_uniqueness1}--\ref{en:aperiodic_uniqueness3}
  follow from
  Theorem~\ref{the:aperiodic_consequences}.  
  In the situation of~\ref{en:aperiodic_uniqueness11}, the theorem
  implies \(\ker \pi \subseteq \Null_{EL}\).  And
  then~\ref{en:aperiodic_uniqueness11} follows easily.
\end{proof}

\begin{theorem}
  \label{the:simple_crossed_minimal}
  Let~\(B\) be an \(S\)\nb-graded \(\Cst\)\nb-algebra and turn the
  grading into an action~\(\Hilm\) of~\(S\) on~\(A\).  Then~\(B\) is
  simple if and only if~\(A\) detects ideals in~\(B\) and the
  action~\(\Hilm\) is minimal.  In particular, if the
  \(S\)\nb-action on~\(A\) is aperiodic and minimal, then
  \(A\rtimes_\ess S\) is simple.
\end{theorem}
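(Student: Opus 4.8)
The plan is to derive the first assertion from Proposition~\ref{pro:simple_criterion}, which already states that $B$ is simple if and only if $A$ detects ideals in $B$ and $A$ is $B$-minimal. So the entire content of the first sentence is the equivalence: $A$ is $B$-minimal (that is, $\overline{BIB}=B$ for every non-zero $I\in\Ideals(A)$) if and only if $\Hilm$ is minimal (that is, $\Ideals^{\Hilm}(A)=\{0,A\}$, in the sense of Definition~\ref{def:invariant_ideal}). The two ingredients I would invoke are: (i) the identity $\Ideals^{\Hilm}(A)=\setgiven{J\cap A}{J\in\Ideals(B)}$ from the Remark after Definition~\ref{def:invariant_ideal}; and (ii) the fact that $A\subseteq B$ is non-degenerate, which follows because $\Hilm_t=\overline{\Hilm_t\cdot\s(\Hilm_t)}$ and $\s(\Hilm_t)\subseteq A$ give $B_t\subseteq\overline{BA}$ for all $t\in S$, hence $\overline{BA}=B$; consequently any closed two-sided ideal of $B$ that contains $A$ is all of $B$.

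Given these, the equivalence is immediate. For ``$\Hilm$ minimal $\Rightarrow$ $A$ is $B$-minimal'': take $0\neq I\in\Ideals(A)$ and set $J\defeq\overline{BIB}\in\Ideals(B)$; then $J\cap A$ is $\Hilm$-invariant and contains $I\neq 0$, so $J\cap A=A$ by minimality, hence $A\subseteq J$ and therefore $B=\overline{BA}\subseteq J$, i.e.\ $\overline{BIB}=B$. For the converse: take $0\neq I\in\Ideals^{\Hilm}(A)$ and write $I=J\cap A$ with $J\in\Ideals(B)$; $B$-minimality gives $\overline{BIB}=B$, while $I\subseteq J$ and $J$ being a two-sided ideal force $\overline{BIB}\subseteq J$, so $J=B$ and $I=J\cap A=A$. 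Neither implication uses saturation of the grading, so this argument applies to an arbitrary $S$-graded $B$.

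For the ``in particular'' clause I would apply the first sentence with $B$ replaced by $A\rtimes_\ess S$. Two points need checking. First, $A$ detects ideals in $A\rtimes_\ess S$: this is exactly (part of) Theorem~\ref{the:aperiodic_action_consequences}, where aperiodicity is used. Second, $A\rtimes_\ess S$ is $S$-graded with unit fibre $A$ and the associated action on $A$ is (isomorphic to) $\Hilm$, hence minimal: the canonical quotient map $\Lambda\colon A\rtimes S\to A\rtimes_\ess S$ is isometric on $A$ since $\Null_{EL}\cap A=0$, and isometric on each $\Hilm_t$ since $\norm{\Lambda(\xi)}^2=\norm{\Lambda(\xi^*\xi)}$ with $\xi^*\xi\in A$, so $\Lambda$ restricts to a Fell-bundle isomorphism from $(\Hilm_t)_{t\in S}$ onto the grading $(\Lambda(\Hilm_t))_{t\in S}$ of $A\rtimes_\ess S$, and invariance of ideals depends only on this bimodule structure. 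The first sentence then gives that $A\rtimes_\ess S$ is simple. I do not expect a genuine obstacle here: the substantive work is already packaged into Proposition~\ref{pro:simple_criterion}, the identification of $\Ideals^{\Hilm}(A)$, and Theorem~\ref{the:aperiodic_action_consequences}. The only place that needs a moment's care is the last paragraph—confirming that the grading induced on the quotient $A\rtimes_\ess S$ recovers the original action $\Hilm$ rather than a proper quotient of it—which comes down precisely to the fibrewise isometry of $\Lambda$.
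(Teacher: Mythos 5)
Your proof is correct and follows essentially the same route as the paper: the paper also reduces minimality of~\(\Hilm\) to \(B\)\nb-minimality of~\(A\) via the identity \(\Ideals^{\Hilm}(A)=\setgiven{J\cap A}{J\in\Ideals(B)}\), invokes Proposition~\ref{pro:simple_criterion} for the first claim, and cites Theorem~\ref{the:aperiodic_action_consequences} for the ``in particular'' clause. The only difference is that you spell out the equivalence of the two minimality notions and the fibrewise isometry of~\(\Lambda\) explicitly, details the paper delegates to a citation and leaves implicit.
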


\begin{proof}
  It is shown in
  \cite{Kwasniewski-Meyer:Stone_duality}*{Section~6.3} that an ideal
  in~\(A\) is of the form \(J\cap A\) for \(J\in\Ideals(B)\) if and
  only if it is invariant.  Therefore, the minimality assumption
  here is equivalent to the \(B\)\nb-minimality assumption in
  Proposition~\ref{pro:simple_criterion}, and the first claim
  follows.  The second claim follows because~\(A\) detects ideals
  in~\(A\rtimes_\ess S\) for any aperiodic action
  (Theorem~\ref{the:aperiodic_action_consequences}).
\end{proof}

\begin{corollary}
  \label{cor:simple_crossed_pi}
  Let~\(\Hilm\) be an aperiodic, minimal action of a unital inverse
  semigroup~\(S\) on a \(\Cst\)\nb-algebra~\(A\).  Then
  \(A\rtimes_\ess S\) is simple and purely infinite if and only if
  every element in \(A^+\setminus\{0\}\) is infinite in
  \(A\rtimes_\ess S\).
\end{corollary}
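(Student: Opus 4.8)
The plan is to obtain this as a combination of Theorem~\ref{the:simple_crossed_minimal} with part~\ref{enu:aperiodic_consequences6} of Theorem~\ref{the:aperiodic_consequences}, applied to the inclusion \(A\subseteq A\rtimes_\ess S\) rather than to \(A\subseteq A\rtimes S\). First I would verify the hypotheses of the latter theorem for this inclusion. Aperiodicity: by Proposition~\ref{pro:aperiodic_isg} the inclusion \(A\subseteq A\rtimes S\) is aperiodic because the action~\(\Hilm\) is, and since \(\Null_{EL}\cap A = 0\), Proposition~\ref{pro:aperiodicity_vs_quotients} passes aperiodicity to the quotient \(A\rtimes_\ess S = (A\rtimes S)/\Null_{EL}\). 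Supportiveness: the descended map \(EL_{\ess}\colon A\rtimes_\ess S\to\Locmult(A)\) is an essentially defined conditional expectation (even a faithful one, by Theorem~\ref{thm:essential_crossed_expectation_faithful}), hence a supportive generalised expectation by Proposition~\ref{pro:local_expectation_supportive}.

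Next I would dispose of simplicity once and for all: since~\(\Hilm\) is aperiodic and minimal, \(A\rtimes_\ess S\) is simple by Theorem~\ref{the:simple_crossed_minimal}. Consequently the property ``\(A\rtimes_\ess S\) is simple and purely infinite'' reduces to ``\(A\rtimes_\ess S\) is purely infinite''. Now I apply Theorem~\ref{the:aperiodic_consequences}.\ref{enu:aperiodic_consequences6} with \(B \defeq A\rtimes_\ess S\) and \(E \defeq EL_{\ess}\); this is legitimate because \(B\) is simple and \(A\subseteq B\) is an aperiodic inclusion equipped with the supportive expectation~\(E\). It yields that~\(B\) is purely infinite if and only if every element of \(A^+\setminus\{0\}\) is infinite in~\(B\). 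Chaining the two equivalences gives the corollary.

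I do not anticipate a genuine obstacle, since the substantive analytic input---producing, for each positive \(b\notin\Null_{EL}\), a nonzero \(a\in A^+\) that is Cuntz-below~\(b\), and then extracting pure infiniteness from the Kirchberg--R{\o}rdam criteria---has already been carried out in the proof of Theorem~\ref{the:aperiodic_consequences} and in Proposition~\ref{prop:support_vs_pure_infinite}. The only points needing attention are to apply Theorem~\ref{the:aperiodic_consequences} to \(A\rtimes_\ess S\), whose simplicity we control, and not to \(A\rtimes S\), whose pure infiniteness is not under discussion, and to observe that minimality makes the ``simple and'' clause redundant on the right-hand side of the equivalence. An equivalent route would go through Theorem~\ref{the:aperiodic_action_consequences}.\ref{en:aperiodic_uniqueness2}, which already asserts that \(A^+\) supports \(A\rtimes_\ess S\), combined with Proposition~\ref{prop:support_vs_pure_infinite} and the simplicity supplied by Theorem~\ref{the:simple_crossed_minimal}.
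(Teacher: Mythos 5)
Your proposal is correct and follows essentially the same route as the paper: simplicity from Theorem~\ref{the:simple_crossed_minimal}, and the pure infiniteness equivalence from the supporting property of \(A^+\) in \(A\rtimes_\ess S\) combined with Proposition~\ref{prop:support_vs_pure_infinite} (which is exactly what Theorem~\ref{the:aperiodic_consequences}.\ref{enu:aperiodic_consequences6} packages). The ``equivalent route'' you mention at the end is precisely the paper's one-line proof.
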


\begin{proof}
  This follows from the previous theorem and
  Proposition~\ref{prop:support_vs_pure_infinite}.
\end{proof}

\begin{proposition}
  \label{prop:visible_crossed_products}
  Let~\(\Hilm\) be an aperiodic action of a unital inverse
  semigroup~\(S\) on a \(\Cst\)\nb-algebra~\(A\).  Let
  \(\norm{\cdot}_{\min}\) be the infimum of all
  \(\Cst\)\nb-seminorms on \(A\rtimes_\alg S\) that restrict to the
  given \(\Cst\)\nb-norm on~\(A\).  This is a \(\Cst\)\nb-seminorm
  on \(A\rtimes_\alg S\).  The Hausdorff completion of
  \(A\rtimes_\alg S\) in this seminorm is canonically isomorphic to
  \(A\rtimes_\ess S\).
\end{proposition}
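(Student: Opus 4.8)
The plan is to show that, on \(A\rtimes_\alg S\), the seminorm \(\norm{\cdot}_{\min}\) coincides with the \(\Cst\)-seminorm \(\norm{\cdot}_\ess\) obtained by pulling back the norm of \(A\rtimes_\ess S\) along the canonical map \(A\rtimes_\alg S\to A\rtimes S\to A\rtimes_\ess S\). Granting this identification, the whole proposition follows at once: \(\norm{\cdot}_{\min}\) is a \(\Cst\)-seminorm because it equals \(\norm{\cdot}_\ess\), and its Hausdorff completion is \(A\rtimes_\ess S\) because \(A\rtimes_\alg S\) is dense in \(A\rtimes S\) and the quotient map \(A\rtimes S\to A\rtimes_\ess S\) is a continuous surjection, so the image of \(A\rtimes_\alg S\) is dense in \(A\rtimes_\ess S\).

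One inequality is easy. Since \(\Null_{EL}\cap A=0\) and \(EL\) restricts to the identity on \(A\), the composite \(A\to A\rtimes_\ess S\) is isometric; hence \(\norm{\cdot}_\ess\) is a \(\Cst\)-seminorm on \(A\rtimes_\alg S\) restricting to the given norm on \(A\), so it is one of the seminorms entering the infimum defining \(\norm{\cdot}_{\min}\), and therefore \(\norm{\cdot}_{\min}\le\norm{\cdot}_\ess\) pointwise.

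The substantive step is the reverse inequality: \(\norm{x}_\ess\le p(x)\) for every \(\Cst\)-seminorm \(p\) on \(A\rtimes_\alg S\) restricting to the norm on \(A\). Given such a \(p\), I would first note that \(p\) is dominated by the maximal \(\Cst\)-norm -- for \(\xi\in\Hilm_t\) one has \(p(\xi)^2=p(\xi^*\xi)=\norm{\xi^*\xi}_A\), which pins \(p\) down on each fibre and forces boundedness on \(A\rtimes_\alg S\) -- so the natural map into the Hausdorff completion \(B_p\) of \((A\rtimes_\alg S,p)\) extends to a surjective \Star{}homomorphism \(A\rtimes S\to B_p\). The inclusions \(\Hilm_t\hookrightarrow A\rtimes_\alg S\to B_p\) form a representation of the Fell bundle \((\Hilm_t)_{t\in S}\) in \(B_p\) that is faithful on \(A\), since \(p|_A\) is the norm of \(A\), and by Remark~\ref{rem:representation_vs_grading} its integrated form is precisely the map \(A\rtimes S\to B_p\). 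Because \(\Hilm\) is aperiodic, Theorem~\ref{the:aperiodic_action_consequences}.\ref{en:aperiodic_uniqueness11} applies and shows that \(B_p\) is an exotic crossed product, that is, its \((\Hilm_t)_{t\in S}\)-grading is topological. Then Proposition~\ref{pro:topological_grading_through_expectation} provides a surjective \Star{}homomorphism \(\varphi\colon B_p\to A\rtimes_\ess S\) whose composition with the map \(A\rtimes_\alg S\to B_p\) is the canonical map \(A\rtimes_\alg S\to A\rtimes_\ess S\); since \(\varphi\) is contractive, \(\norm{x}_\ess\le\norm{x}_{B_p}=p(x)\) for all \(x\in A\rtimes_\alg S\). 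Taking the infimum over all admissible \(p\) yields \(\norm{\cdot}_\ess\le\norm{\cdot}_{\min}\), whence \(\norm{\cdot}_{\min}=\norm{\cdot}_\ess\).

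The real content sits entirely in the appeal to Theorem~\ref{the:aperiodic_action_consequences}.\ref{en:aperiodic_uniqueness11}: an infimum of \(\Cst\)-seminorms need not even satisfy the triangle inequality in general, but aperiodicity forces the family of \(\Cst\)-seminorms on \(A\rtimes_\alg S\) extending the norm on \(A\) to have a least element, namely \(\norm{\cdot}_\ess\). Everything else -- extending a \(\Cst\)-seminorm to \(A\rtimes S\), passing between gradings and graded completions, and the density argument for the completion -- is routine.
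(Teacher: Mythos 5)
Your proposal is correct and follows essentially the same route as the paper: both arguments realise an arbitrary admissible \(\Cst\)\nb-seminorm as \(\norm{\pi(\cdot)}\) for a representation that is faithful (hence isometric) on~\(A\), and then invoke Theorem~\ref{the:aperiodic_action_consequences}.\ref{en:aperiodic_uniqueness11} to conclude that the resulting completion surjects onto \(A\rtimes_\ess S\), so that the essential norm is the least admissible seminorm. The paper's proof is just a terser version of yours; your extra details (the easy inequality, boundedness of \(p\) by the universal norm, and the passage through Proposition~\ref{pro:topological_grading_through_expectation}) are all consistent with it.
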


\begin{proof}
  Any \(\Cst\)\nb-seminorm on~\(A\rtimes_\alg S\) is of the
  form~\(\norm{\pi(x)}\) for a representation~\(x\).  And~\(\pi\) is
  injective on~\(A\) if and only if \(\norm{\pi(x)} = \norm{x}\) for
  all \(x\in A\).  Hence
  Theorem~\ref{the:aperiodic_action_consequences}.\ref{en:aperiodic_uniqueness11}
  says that the \(\Cst\)\nb-norm of~\(A\rtimes_\ess S\), restricted
  to~\(A\rtimes_\alg S\), is the minimal \(\Cst\)\nb-seminorm
  on~\(A\rtimes_\alg S\) that restricts to the given
  \(\Cst\)\nb-norm on~\(A\).
\end{proof}

\subsection{Aperiodicity versus topological freeeness and pure outerness}
\label{sec:aperiodic_top_free}

Aperiodicity of single Hilbert bimodules is shown
in~\cite{Kwasniewski-Meyer:Aperiodicity} to be equivalent to several
other properties, under some mild assumptions.  This allows us to
compare aperiodicity of inverse semigroup actions to the
non-triviality conditions in
Definition~\ref{def:non-triviality_groupoids}.

\begin{definition}
  A Hilbert \(A\)\nb-bimodule~\(\Hilm[H]\)
  over a \(\Cst\)\nb-algebra~\(A\)
  is \emph{purely outer} if there is no non-zero ideal
  \(J\in\Ideals(A)\)
  with \(\Hilm[H]\cdot J \cong J\) as a Hilbert bimodule
  (see~\cite{Kwasniewski-Meyer:Aperiodicity}).
  An action~\(\Hilm\) of an inverse semigroup on a
  \(\Cst\)\nb-algebra~\(A\) is \emph{purely outer} if the
  Hilbert \(A\)\nb-bimodules \(\Hilm_t\cdot I_{1,t}^\bot\) are
  purely outer for all \(t\in S\).
\end{definition}

\begin{definition}[\cite{Kwasniewski-Meyer:Aperiodicity}]
  A Hilbert \(A\)\nb-bimodule~\(\Hilm[H]\) over a
  \(\Cst\)\nb-algebra~\(A\) is \emph{topologically non-trivial} if
  the set
  \(\setgiven*{[\pi]\in \dual{A}}{\dual{\Hilm[H]}[\pi]=[\pi]}\) has
  empty interior.  Here \(\dual{\Hilm[H]}[\pi]=[\pi]\) means that
  the irreducible representations \(\Hilm[H] \otimes_A \pi\)
  and~\(\pi\) are unitarily equivalent.
\end{definition}

\begin{theorem}[\cite{Kwasniewski-Meyer:Aperiodicity}*{Theorem~8.1}]
  \label{the:non-trivial_Hilbi}
  Let~\(A\) be a \(\Cst\)\nb-algebra and let~\(\Hilm[H]\) be a
  Hilbert \(A\)\nb-bimodule.  Consider the following conditions:
  \begin{enumerate}
  \item \label{the:non-trivial_Hilbi_1}%
    \(\Hilm[H]\) is aperiodic;
  \item \label{the:non-trivial_Hilbi_2}%
    the partial homeomorphism \(\dual{\Hilm[H]}\) of~\(\dual{A}\) is
    topologically non-trivial;
  \item \label{the:non-trivial_Hilbi_3}%
    \(\Hilm[H]\) is purely outer.
  \end{enumerate}
  Then \ref{the:non-trivial_Hilbi_1}
  or~\ref{the:non-trivial_Hilbi_2}
  implies~\ref{the:non-trivial_Hilbi_3}.  If~\(A\) contains a
  separable essential ideal, then \ref{the:non-trivial_Hilbi_1}
  and~\ref{the:non-trivial_Hilbi_2} are equivalent.  If~\(A\)
  contains a simple, essential ideal, then
  \ref{the:non-trivial_Hilbi_1} and~\ref{the:non-trivial_Hilbi_3}
  are equivalent.  If~\(A\) contains an essential ideal of Type~I,
  then \ref{the:non-trivial_Hilbi_1}--\ref{the:non-trivial_Hilbi_3}
  are equivalent.
\end{theorem}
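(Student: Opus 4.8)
The plan is to first dispose of the two implications that hold with no extra hypothesis, and then to reduce the three conditional equivalences to the case that \(A\) itself is separable, simple, or of Type~I.

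\textbf{The unconditional implications.} I would prove \ref{the:non-trivial_Hilbi_1}\(\Rightarrow\)\ref{the:non-trivial_Hilbi_3} and \ref{the:non-trivial_Hilbi_2}\(\Rightarrow\)\ref{the:non-trivial_Hilbi_3} together, by contraposition. Suppose \(\Hilm[H]\) is not purely outer, so there is a non-zero \(J\in\Ideals(A)\) with \(\Hilm[H]\cdot J\cong J\) as Hilbert \(A\)\nb-bimodules. By the Rieffel correspondence \(\Hilm[H]\cdot J=J'\cdot\Hilm[H]\) with \(\dual{J'}=\dual{\Hilm[H]}(\dual J)\); since the identity bimodule \(J\) has equal left and right inner-product ideals, \(\Hilm[H]\cdot J\cong J\) forces \(J'=J\) and hence \(\Hilm[H]\cdot J=J\Hilm[H]J\). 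For \([\pi]\in\dual J\) we get \(\Hilm[H]\otimes_A\Hils_\pi\cong(\Hilm[H]\cdot J)\otimes_J\Hils_\pi\cong J\otimes_J\Hils_\pi\cong\Hils_\pi\), so \(\dual{\Hilm[H]}[\pi]=[\pi]\); thus \(\setgiven{[\pi]\in\dual A}{\dual{\Hilm[H]}[\pi]=[\pi]}\) contains the non-empty open set \(\dual J\), and \ref{the:non-trivial_Hilbi_2} fails. Also \(J\Hilm[H]J\cong J\) is the standard \(J\)\nb-bimodule, which by Lemma~\ref{lem:anti-Kishimoto} has a non-zero positive element violating Kishimoto's condition; since \(J\Hilm[H]J\) is a \(J\)\nb-subbimodule of \(\Hilm[H]\) viewed as a \(J\)\nb-bimodule and \(J\in\Her(A)\), an aperiodic \(\Hilm[H]\) would make it aperiodic by Lemma~\ref{lem:Kish_hereditary}, a contradiction, so \ref{the:non-trivial_Hilbi_1} fails.

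\textbf{Reduction to an essential ideal.} Next I would observe that each of \ref{the:non-trivial_Hilbi_1}--\ref{the:non-trivial_Hilbi_3} is unchanged when \(A\) is replaced by an essential ideal \(J\in\Ideals(A)\) and \(\Hilm[H]\) by \(J\Hilm[H]J\): for \ref{the:non-trivial_Hilbi_1} this is the final clause of Lemma~\ref{lem:Kish_hereditary}; for \ref{the:non-trivial_Hilbi_3} it holds because every non-zero ideal of \(A\) meets \(J\), ideals of \(J\) are ideals of \(A\), and the Rieffel correspondence respects invariance; for \ref{the:non-trivial_Hilbi_2} it holds because \(\dual J\) is dense open in \(\dual A\), so having empty interior is detected on \(\dual J\). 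Hence it suffices to prove the three equivalences when \(A\) itself is separable, simple, or of Type~I.

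\textbf{The conditional converses.} The substantive input is then the following. First, over a simple \(A\) pure outerness says exactly \(\Hilm[H]\not\cong A\) (such \(\Hilm[H]\) is either zero or an \(A\)\nb-imprimitivity bimodule), and one shows directly that such a bimodule is aperiodic; this is a rigidity statement for non-trivial elements of the Picard group of a simple \(\Cst\)\nb-algebra, proved by a Kishimoto-style averaging argument carried out inside hereditary subalgebras, giving \ref{the:non-trivial_Hilbi_1}\(\Leftrightarrow\)\ref{the:non-trivial_Hilbi_3}. Second, over a separable \(A\) the direction \ref{the:non-trivial_Hilbi_2}\(\Rightarrow\)\ref{the:non-trivial_Hilbi_1} is a non-commutative Rokhlin construction: given \(D\in\Her(A)\) and \(\varepsilon>0\), one disintegrates a faithful representation of \(A\) over \(\dual A\) and exploits the absence of fixed points of \(\dual{\Hilm[H]}\) on a dense open set to produce \(a\in D^+\) with \(\norm{a}=1\) and \(\norm{axa}<\varepsilon\) for finitely many prescribed \(x\in\Hilm[H]\), which suffices by Lemma~\ref{lem:Kish_vector_subspace}; together with the easier reverse implication (a fixed irreducible representation with an intertwining unitary produces, via transport of a rank-one projection, an element of \(\Hilm[H]\) witnessing failure of Kishimoto's condition by an argument as in Lemma~\ref{lem:anti-Kishimoto}), this gives \ref{the:non-trivial_Hilbi_1}\(\Leftrightarrow\)\ref{the:non-trivial_Hilbi_2}. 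The Type~I case combines both: the explicit description of \(\dual A\) and of the hereditary subalgebras along a composition series lets one run the Rokhlin construction and the Picard-rigidity argument simultaneously, yielding \ref{the:non-trivial_Hilbi_1}\(\Leftrightarrow\)\ref{the:non-trivial_Hilbi_2}\(\Leftrightarrow\)\ref{the:non-trivial_Hilbi_3}. I expect the non-commutative Rokhlin construction to be the main obstacle: it is the one step that genuinely goes beyond the Rieffel correspondence and bimodule bookkeeping, and it is precisely the core of \cite{Kwasniewski-Meyer:Aperiodicity}*{Theorem~8.1}, from which the present statement is quoted.
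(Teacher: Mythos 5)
The paper offers no proof of this statement: it is imported wholesale from \cite{Kwasniewski-Meyer:Aperiodicity}*{Theorem~8.1}, so there is no internal argument to compare yours against. Your treatment of the unconditional implications is correct: if \(\Hilm[H]\cdot J\cong J\) for a non-zero ideal \(J\), then \(J\) is \(\dual{\Hilm[H]}\)-invariant and \(\dual{\Hilm[H]}\) fixes the non-empty open set \(\dual{J}\) pointwise, which kills \ref{the:non-trivial_Hilbi_2}; and \(J\Hilm[H]J\cong J\) is then a \(J\)-subbimodule violating Kishimoto's condition by Lemma~\ref{lem:anti-Kishimoto}, so Lemma~\ref{lem:Kish_hereditary} kills \ref{the:non-trivial_Hilbi_1}. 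The reduction to an essential ideal via the last clause of Lemma~\ref{lem:Kish_hereditary} is likewise sound (the transfer of pure outerness needs a line of Rieffel-correspondence bookkeeping that you elide, but it goes through). The three conditional converses, by contrast, you only describe --- ``Kishimoto-style averaging'', ``non-commutative Rokhlin construction'' --- and these are precisely the analytic core of the cited theorem; since the statement is explicitly presented as a quotation of that theorem, deferring to the citation is consistent with the paper's own treatment, but your sketches should not be mistaken for arguments. In particular, in the separable case the source does not disintegrate over \(\dual{A}\) directly but passes through a Morita globalisation to a genuine group action and then invokes Olesen--Pedersen/Kishimoto-type machinery (compare how this paper itself argues in the proof of Lemma~\ref{lem:topological_freeness_vs_nontriviality}). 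If a self-contained proof were demanded, those converses would be the genuine gap; as a gloss on a cited result, your proposal is adequate.
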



\begin{lemma}[\cite{Kwasniewski-Meyer:Aperiodicity}*{Proposition~9.7}]
  \label{lem:topological_freeness_vs_nontriviality}
  Assume that~\(A\) contains an essential ideal which is separable
  or whose spectrum is Hausdorff.  Let \((\Hilm[H]_i)_{i\in I}\) be
  a countable family of Hilbert \(A\)\nb-bimodules.  The following
  are equivalent:
  \begin{enumerate}
  \item \label{en:topological_freeness_vs_nontriviality1}%
    \(\dual{\Hilm[H]}_i\) is topologically non-trivial for every
    \(i\in I\);
  \item \label{en:topological_freeness_vs_nontriviality2}%
    the union
    \(\bigcup_{i\in I} \setgiven*{[\pi]\in
      \dual{A}}{\dual{\Hilm[H]_i}[\pi]=[\pi]}\) has empty interior.
  \end{enumerate}
\end{lemma}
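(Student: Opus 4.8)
The plan is to prove the two implications separately. Write $F_i\defeq\setgiven{[\pi]\in\dual A}{\dual{\Hilm[H]_i}[\pi]=[\pi]}$ for the fixed‑point set of the partial homeomorphism~$\dual{\Hilm[H]_i}$, so that $\dual{\Hilm[H]_i}$ is topologically non‑trivial exactly when $\Int F_i=\emptyset$. The implication $(2)\Rightarrow(1)$ is immediate, since each~$F_i$ is contained in $\bigcup_{j\in I}F_j$. The substance is the converse, and the strategy is to upgrade ``empty interior'' to ``nowhere dense'' for every~$F_i$ and then apply the Baire category theorem, using that $\dual A$ is a Baire space by \cite{Dixmier:Cstar-algebras}*{Proposition~3.4.13}.

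I would first reduce to the case that~$A$ itself is separable or has Hausdorff spectrum. Let $J\idealin A$ be an essential ideal with one of these properties; then $\dual J$ is open and dense in~$\dual A$, so for $Y\subseteq\dual A$ the set~$Y$ has empty interior in~$\dual A$ if and only if $Y\cap\dual J$ has empty interior in~$\dual J$, and~$Y$ is nowhere dense in~$\dual A$ if and only if $Y\cap\dual J$ is nowhere dense in~$\dual J$. The set $D_i\defeq\setgiven{x\in\dual J\cap\Dom(\dual{\Hilm[H]_i})}{\dual{\Hilm[H]_i}(x)\in\dual J}$ is open in~$\dual J$, the restriction $\dual{\Hilm[H]_i}|_{D_i}$ is a partial homeomorphism of~$\dual J$, and its fixed‑point set is $F_i\cap\dual J$. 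Since~$\dual J$ is again a Baire space, it now suffices to show that each $F_i\cap\dual J$ is nowhere dense in~$\dual J$, for then $\bigcup_i(F_i\cap\dual J)$ is meagre in~$\dual J$, hence has empty interior there.

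When~$\dual J$ is Hausdorff this is clean. The map $x\mapsto\bigl(x,\dual{\Hilm[H]_i}(x)\bigr)$ is continuous on the open set~$D_i$, and the diagonal of $\dual J\times\dual J$ is closed, so $F_i\cap\dual J$ is relatively closed in~$D_i$, hence locally closed in~$\dual J$. A locally closed $L$ with $\Int L=\emptyset$ is nowhere dense: writing $L=C\cap O$ with~$C$ closed and~$O$ open gives $\overline L\cap O=L$, so the open set $\Int(\overline L)\cap O$ lies in $\Int L=\emptyset$; as~$L$ is dense in~$\overline L$, a non‑empty $\Int(\overline L)$ would have to meet $L\subseteq O$, a contradiction, so $\Int(\overline L)=\emptyset$. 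By~$(1)$ each $\Int F_i=\emptyset$, hence each $F_i\cap\dual J$ is nowhere dense in~$\dual J$, which settles this case.

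The separable case is the main obstacle: a separable \(\Cst\)\nb-algebra need not have any non‑empty open subset of its spectrum that is Hausdorff, so the~$F_i$ need not be locally closed and the argument above breaks down. Here I would instead invoke Theorem~\ref{the:non-trivial_Hilbi}, which for~$A$ with a separable essential ideal identifies topological non‑triviality of~$\dual{\Hilm[H]_i}$ with aperiodicity of~$\Hilm[H]_i$, so that~$(1)$ makes every~$\Hilm[H]_i$ aperiodic. Assuming $\bigcup_iF_i$ had non‑empty interior~$V$, I would pass to the ideal $A_V\idealin A$ with $\dual{A_V}=V$, restrict to the bimodules $A_V\Hilm[H]_iA_V$ (which stay aperiodic by Lemma~\ref{lem:Kish_hereditary}), observe that their fixed‑point sets then exhaust~$\dual{A_V}$, and use that the countable direct sum $\bigoplus_iA_V\Hilm[H]_iA_V$ is aperiodic (again Lemma~\ref{lem:Kish_hereditary}). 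The delicate point — and the place where second countability is genuinely used — is to turn ``the fixed‑point sets of the summands exhaust~$\dual{A_V}$'' into a contradiction with aperiodicity (or pure outerness) of $\bigoplus_iA_V\Hilm[H]_iA_V$; this is what must be done to make the non‑Hausdorffness of the spectrum harmless in the separable case.
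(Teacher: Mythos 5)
Your treatment of the implication (2)\(\Rightarrow\)(1), of the reduction to the case where \(A\) itself is separable or has Hausdorff spectrum, and of the Hausdorff case is correct and essentially matches the paper's argument; your observation that the fixed-point sets are only locally closed (rather than closed, as the paper loosely states) is the more careful formulation, and either version suffices for the Baire category argument.

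The separable case, however, contains a genuine gap, and you have in effect flagged it yourself: the step you describe as ``the delicate point \dots{} this is what must be done'' is the entire content of the lemma in that case, and nothing in your sketch supplies it. Aperiodicity of the countable direct sum \(\bigoplus_i A_V\Hilm[H]_i A_V\) does not by itself contradict the fixed-point sets exhausting \(\dual{A_V}\): Theorem~\ref{the:non-trivial_Hilbi} relates aperiodicity to topological non-triviality only for a single \emph{Hilbert} bimodule, whereas the direct sum is merely a Banach bimodule with no dual partial homeomorphism attached to it, so there is no fixed-point set for it that you could control. A naive Baire argument also fails here for exactly the reason you name: since the \(F_i\) need not be locally closed when \(\dual{A_V}\) is non-Hausdorff, ``some \(\overline{F_i}\) has non-empty interior'' does not yield ``some \(F_i\) has non-empty interior''. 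The paper closes this gap by a reduction to automorphisms: it assembles the \(\Hilm[H]_i\) into a Fell bundle over the free group \(\mathbb{F}\) on \(I\), passes to the Morita globalisation \(\gamma\colon \mathbb{F}\to\Aut(C)\) of that bundle (with \(C\) separable), checks that each \(\gamma_i\) is topologically non-trivial because the bimodules \(\Hilm_{t i t^{-1}}\) covering \(C_{\gamma_i}\) up to Morita equivalence are, and then invokes the Olesen--Pedersen argument, which shows that for a countable family of topologically non-trivial automorphisms of a separable \(\Cst\)\nb-algebra the union of the fixed-point sets has empty interior. That last input is a non-trivial theorem specific to automorphisms, not a formal direct-sum or category manipulation; without it (or an equivalent substitute) your argument does not close.
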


\begin{proof}
  We only need to show that~\ref{en:topological_freeness_vs_nontriviality1}
  implies~\ref{en:topological_freeness_vs_nontriviality2}.  By
  \cite{Kwasniewski-Meyer:Aperiodicity}*{Corollary~6.9} we may assume
  that~\(A\) is separable or that~\(\dual{A}\) is Hausdorff (compare
  Corollary~\ref{cor:non-triviality_conditions_equiv}).
  If~\(\dual{A}\) is Hausdorff, then the sets
  \(\setgiven*{[\pi]\in \dual{A}}{\dual{\Hilm[H]_i}[\pi]=[\pi]}\) for
  \(i\in I\) are closed in~\(\dual{A}\).
  Hence~\ref{en:topological_freeness_vs_nontriviality1}
  implies~\ref{en:topological_freeness_vs_nontriviality2}
  because~\(\dual{A}\) is a Baire space.

  Now assume~\(A\) to be separable.  We will reduce our
  considerations to the case when the Hilbert
  bimodules~\(\Hilm[H]_i\) come from automorphisms of~\(A\).  We may
  assume that \(A\) and \((\Hilm[H]_i)_{i\in I}\) are embedded into
  a \(\Cst\)\nb-algebra~\(B\) in such a way that all the structure
  of these objects is inherited from~\(B\).  In addition, we may
  assume that the \(\Cst\)\nb-algebra~\(B\) is generated by \(A\)
  and \((\Hilm[H]_i)_{i\in I}\).  Let~\(\mathbb{F}\) be the free
  group on the set~\(I\).  We define an \(\mathbb{F}\)\nb-grading
  \((\Hilm_t)_{t\in\mathbb{F}}\) on~\(B\) with unit fibre~\(A\).
  Let
  \[
    \Hilm_t\defeq \Hilm[H]_t\quad\text{ if }t\in I,
    \quad \text{ and } \quad
    \Hilm_t\defeq \Hilm[H]_t^*\quad\text{ if }t\in I^{-1}.
  \]
  If \(t_1\cdots t_n\in \mathbb{F}\) is a reduced word, that is,
  \(t_k\in I\cup I^{-1}\) for \(1\le k \le n\) and
  \(t_k\neq t_{k+1}^{-1}\) for \(1\le k <n\), then
  \[
    \Hilm_{t_1\cdots t_n} \defeq  \Hilm_{t_1}\Hilm_{t_2}\dotsm \Hilm_{t_n}.
  \]
  One readily sees that \((\Hilm_t)_{t\in \mathbb{F}}\) is indeed an
  \(\mathbb{F}\)\nb-grading on~\(B\) and hence a Fell bundle
  over~\(\mathbb{F}\).  Let \(\gamma\colon \mathbb{F}\to \Aut(C)\)
  be the Morita globalisation of \((\Hilm_t)_{t\in \mathbb{F}}\) in
  \cite{Kwasniewski-Meyer:Aperiodicity}*{Proposition~7.1}.  So~\(C\)
  is a separable \(\Cst\)\nb-algebra and for each \(i\in I\), the
  Hilbert bimodules \((\Hilm_{tit^{-1}})_{t\in \mathbb{F}}\)
  cover~\(C_{\gamma_i}\) up to Morita equivalence.  For each
  \(t\in \mathbb{F}\), either
  \(\Hilm_{tit^{-1}}=\Hilm_t\Hilm[H]_i\Hilm_{t^{-1}}\) or
  \(\Hilm_{tit^{-1}}=\Hilm_u\Hilm[H]_i\Hilm_{u^{-1}}\) with
  \(u=t i^{\pm1}\).  Hence~\(\Hilm_{tit^{-1}}\) is topologically
  non-trivial if~\(\Hilm[H]_i\) is.  Thus by
  \cite{Kwasniewski-Meyer:Aperiodicity}*{Proposition~6.8.(2)}
  \(\gamma_i\) is topologically non-trivial, that is, the sets
  \(\setgiven*{[\pi]\in
    \dual{C}}{\dual{C_{\gamma_i}}[\pi]=[\pi]}=\setgiven*{[\pi]\in
    \dual{C}}{[\pi\circ \gamma_i]=[\pi]}\) for \(i\in I\) have empty
  interiors.  As in the proof of
  \cite{Olesen-Pedersen:Applications_Connes_2}*{Proposition~4.4},
  one shows that the union
  \(\bigcup_{i\in I} \setgiven*{[\pi]\in \dual{C}}{[\pi\circ
    \gamma_i]=[\pi]}\) has empty interior.  Since
  \(\gamma\colon \mathbb{F}\to \Aut (C)\) is a Morita globalisation
  of \((\Hilm_t)_{t\in \mathbb{F}}\), the partial action
  \((\dual{\Hilm}_t)_{t\in \mathbb{F}}\) may be identified with the
  restriction of~\(\dual{\gamma}\) to an open subset.  Hence the
  union
  \(\bigcup_{i\in I} \setgiven*{[\pi]\in
    \dual{A}}{\dual{\Hilm[H]_i}[\pi]=[\pi]}\) has empty interior.
\end{proof}

\begin{theorem}
  \label{the:aperiodic_top_non-trivial}
  Let~\(A\) be a \(\Cst\)\nb-algebra, \(S\) a unital inverse
  semigroup, and~\(\Hilm\) an \(S\)\nb-action on~\(A\) by Hilbert
  bimodules.  Consider the following conditions:
  \begin{enumerate}
  \item \label{en:aperiodic_top_non-trivial1}%
    the action~\(\Hilm\) is aperiodic;
  \item \label{en:aperiodic_top_non-trivial15}%
    for each \(t\in S\) and \(0 \neq K\in\Ideals(I_{1,t}^\bot) \)
    there is \([\pi]\in \dual{K}\) with
    \(\dual{\Hilm}_t[\pi] \neq [\pi]\);
  \item \label{en:aperiodic_top_non-trivial3b}%
    the dual groupoid \(\dual{A}\rtimes S\) is topologically free;
  \item \label{en:aperiodic_top_non-trivial3c}%
    the dual groupoid \(\dual{A}\rtimes S\) is AS topologically
    free;
  \item \label{en:aperiodic_top_non-trivial4b}%
    the dual groupoid \(\dual{A}\rtimes S\) is topologically
    principal;
  \item \label{en:aperiodic_top_non-trivial5}%
    the action~\(\Hilm\) is purely outer.
  \end{enumerate}
  If~\(A\) contains an essential ideal which is separable or of
  Type~I, then
  \ref{en:aperiodic_top_non-trivial1}--\ref{en:aperiodic_top_non-trivial3c}
  are equivalent.  If, in addition, \(S\) is countable, then
  \ref{en:aperiodic_top_non-trivial1}--\ref{en:aperiodic_top_non-trivial4b}
  are equivalent.  In general, each of the conditions
  \ref{en:aperiodic_top_non-trivial1}--\ref{en:aperiodic_top_non-trivial4b}
  implies~\ref{en:aperiodic_top_non-trivial5}.  Conversely,
  \ref{en:aperiodic_top_non-trivial5}
  implies~\ref{en:aperiodic_top_non-trivial1} if~\(A\) contains an
  essential ideal which is of Type~I or simple.
\end{theorem}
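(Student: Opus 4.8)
The plan is to transfer everything, fibre by fibre, to the Hilbert \(A\)\nb-bimodules \(\Hilm[H]_t\defeq\Hilm_t\cdot I_{1,t}^\bot\) (\(t\in S\)) of Definition~\ref{def:aperiodic_action}, and then to invoke Theorem~\ref{the:non-trivial_Hilbi} together with the groupoid characterisations in Lemma~\ref{lem:groupoid_vs_action}.  The first step is a dictionary.  For the dual action \(\dual{\Hilm}\) on \(\dual{A}\), the idempotent domain \(\bigcup_{e\le t,\,e\in E(S)}\dual{\s(\Hilm_e)}\) is the open set \(\dual{I_{1,t}}\) corresponding to the ideal \(I_{1,t}\) of~\eqref{eq:Itu}, so \(I_{1,t}^\bot\) of~\eqref{eq:I1t_orthogonal} corresponds to the complement of \(\overline{\dual{I_{1,t}}}\); hence \(\s(\Hilm[H]_t)=I_{1,t}^\bot\cap\s(\Hilm_t)\) corresponds to \(\dual{\s(\Hilm_t)}\setminus\overline{\dual{I_{1,t}}}\) — the set appearing in the definition of \(\underline{\Fix}(\dual{\Hilm}_t)\) — and \(\dual{\Hilm[H]_t}\) is the restriction of \(\dual{\Hilm}_t\) to it.  Thus \(\setgiven*{[\pi]\in\dual{A}}{\dual{\Hilm[H]_t}[\pi]=[\pi]}=\underline{\Fix}(\dual{\Hilm}_t)\), with the convention that a \([\pi]\) outside the domain of \(\dual{\Hilm}_t\) counts as not fixed.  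Reading this together with Lemma~\ref{lem:groupoid_vs_action}: \ref{en:aperiodic_top_non-trivial3b} holds iff every \(\underline{\Fix}(\dual{\Hilm}_t)\) has empty interior iff every \(\Hilm[H]_t\) is topologically non-trivial; \ref{en:aperiodic_top_non-trivial15} says that no non-zero ideal \(K\subseteq I_{1,t}^\bot\) has all of \(\dual{K}\) fixed by \(\dual{\Hilm}_t\), and as such a \(\dual{K}\) would be a non-empty open subset of \(\underline{\Fix}(\dual{\Hilm}_t)\), this is \emph{also} equivalent to every \(\underline{\Fix}(\dual{\Hilm}_t)\) having empty interior, so \ref{en:aperiodic_top_non-trivial15}\(\iff\)\ref{en:aperiodic_top_non-trivial3b} with no hypotheses on~\(A\); and, since \(\Fix(\dual{\Hilm}_t)\) and \(\underline{\Fix}(\dual{\Hilm}_t)\) have the same interior, \ref{en:aperiodic_top_non-trivial3c} and \ref{en:aperiodic_top_non-trivial4b} become ``\(\bigcup_{i=1}^n\underline{\Fix}(\dual{\Hilm}_{t_i})\) has empty interior for all finite tuples'' and ``\(\bigcup_{t\in S}\Fix(\dual{\Hilm}_t)\) has empty interior''.

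Next I would dispatch the unconditional implications.  Since \(\underline{\Fix}\subseteq\Fix\), a finite union \(\bigcup_{i=1}^n\underline{\Fix}(\dual{\Hilm}_{t_i})\) lies inside \(\bigcup_{t\in S}\Fix(\dual{\Hilm}_t)\), so \ref{en:aperiodic_top_non-trivial4b}\(\Rightarrow\)\ref{en:aperiodic_top_non-trivial3c}\(\Rightarrow\)\ref{en:aperiodic_top_non-trivial3b}\(\iff\)\ref{en:aperiodic_top_non-trivial15}.  Applying the general part of Theorem~\ref{the:non-trivial_Hilbi} to each \(\Hilm[H]_t\), both aperiodicity of \(\Hilm[H]_t\) and topological non-triviality of \(\dual{\Hilm[H]_t}\) imply that \(\Hilm[H]_t\) is purely outer; summing over \(t\) gives \ref{en:aperiodic_top_non-trivial1}\(\Rightarrow\)\ref{en:aperiodic_top_non-trivial5} and \ref{en:aperiodic_top_non-trivial3b}\(\Rightarrow\)\ref{en:aperiodic_top_non-trivial5}, so each of \ref{en:aperiodic_top_non-trivial1}--\ref{en:aperiodic_top_non-trivial4b} implies~\ref{en:aperiodic_top_non-trivial5}.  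If \(A\) has a simple or Type~I essential ideal, the corresponding clause of Theorem~\ref{the:non-trivial_Hilbi} turns ``aperiodic \(\Rightarrow\) purely outer'' into an equivalence for each \(\Hilm[H]_t\), giving \ref{en:aperiodic_top_non-trivial5}\(\Rightarrow\)\ref{en:aperiodic_top_non-trivial1}.

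For the equivalences under a separable or Type~I essential ideal, Theorem~\ref{the:non-trivial_Hilbi} yields ``aperiodic \(\iff\) topologically non-trivial'' for each \(\Hilm[H]_t\), whence \ref{en:aperiodic_top_non-trivial1}\(\iff\)\ref{en:aperiodic_top_non-trivial3b}, so by the dictionary \ref{en:aperiodic_top_non-trivial1}\(\iff\)\ref{en:aperiodic_top_non-trivial15}\(\iff\)\ref{en:aperiodic_top_non-trivial3b}.  The one remaining implication is \ref{en:aperiodic_top_non-trivial3b}\(\Rightarrow\)\ref{en:aperiodic_top_non-trivial3c}: here I would use Lemma~\ref{lem:topological_freeness_vs_nontriviality}, whose hypothesis holds in both cases (a separable essential ideal is admitted directly; a Type~I essential ideal of \(A\) contains a dense continuous-trace ideal, which is then an essential ideal of \(A\) with Hausdorff spectrum).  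Applied to the finite family \((\Hilm[H]_{t_i})_{i=1}^n\), condition~\ref{en:aperiodic_top_non-trivial3b} makes each \(\dual{\Hilm[H]_{t_i}}\) topologically non-trivial, so \(\bigcup_{i=1}^n\underline{\Fix}(\dual{\Hilm}_{t_i})\) has empty interior, i.e.\ \ref{en:aperiodic_top_non-trivial3c} holds.  When moreover \(S\) is countable, the same lemma applied to \((\Hilm[H]_t)_{t\in S}\) shows \(\bigcup_{t\in S}\underline{\Fix}(\dual{\Hilm}_t)\) is meagre; since \(\Fix(\dual{\Hilm}_t)\setminus\underline{\Fix}(\dual{\Hilm}_t)\) lies in the closed, nowhere dense boundary of \(\dual{I_{1,t}}\), the union \(\bigcup_{t\in S}\Fix(\dual{\Hilm}_t)\) is again meagre, hence has empty interior in the Baire space \(\dual{A}\), which is~\ref{en:aperiodic_top_non-trivial4b} (alternatively one may invoke Corollary~\ref{cor:non-triviality_conditions_equiv} for this last step).

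I expect the main obstacle to be making the dictionary of the first paragraph fully precise — identifying \(\s(\Hilm[H]_t)\) with \(\dual{\s(\Hilm_t)}\setminus\overline{\dual{I_{1,t}}}\), verifying that \(\dual{\Hilm[H]_t}\) is genuinely the restriction of \(\dual{\Hilm}_t\), and matching the ``undefined means not fixed'' convention implicit in~\ref{en:aperiodic_top_non-trivial15}.  Two further points need care: that Lemma~\ref{lem:topological_freeness_vs_nontriviality} applies in the Type~I case, which rests on the classical fact that a non-zero Type~I \(\Cst\)\nb-algebra has a dense (hence essential) continuous-trace ideal; and, underneath it all, the separable-case argument inside Lemma~\ref{lem:topological_freeness_vs_nontriviality} (the passage to a Morita globalisation and the Olesen--Pedersen-type estimate), which is the deep input that makes \ref{en:aperiodic_top_non-trivial3b}\(\Rightarrow\)\ref{en:aperiodic_top_non-trivial3c},\ref{en:aperiodic_top_non-trivial4b} survive the non-Hausdorffness of \(\dual{A}\).
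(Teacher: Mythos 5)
Your proposal is correct and follows essentially the same route as the paper's proof: the same dictionary identifying the fixed-point sets of \(\dual{\Hilm_t\cdot I_{1,t}^\bot}\) with \(\underline{\Fix}(\dual{\Hilm}_t)\), then Lemma~\ref{lem:groupoid_vs_action}, Theorem~\ref{the:non-trivial_Hilbi} and Lemma~\ref{lem:topological_freeness_vs_nontriviality} in the same roles. You are in fact slightly more explicit than the paper on two points it leaves implicit — that a Type~I essential ideal yields an essential ideal with Hausdorff spectrum so that Lemma~\ref{lem:topological_freeness_vs_nontriviality} applies, and that \(\Fix\setminus\underline{\Fix}\) is nowhere dense so the countable-\(S\) case passes from \(\underline{\Fix}\) to \(\Fix\) — both of which are handled correctly.
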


\begin{proof}
  By definition, the action~\(\Hilm\) has one of the properties in
  Definition~\ref{def:non-triviality_groupoids} if and only if the
  dual groupoid~\(\dual{A}\rtimes S\) has that property.  The
  groupoid \(\dual{A} \rtimes S\) is the union of the
  bisections~\(\dual{\Hilm_t}\).  And
  \(\dual{\Hilm_t} \cap \dual{A} = \dual{I_{1,t}}\).  The
  restriction of~\(\dual{\Hilm_t}\) to~\(\dual{I_{1,t}^\bot}\) is
  the complement of the closure of \(\dual{\Hilm_t} \cap \dual{A}\)
  in~\(\dual{\Hilm_t}\).  So
  \begin{equation}
    \label{eq:Fix_dual_action}
    \setgiven*{[\pi]\in
      \dual{I_{1,t}^\bot}}{\dual{\Hilm_t}[\pi]=[\pi]} =
    \underline{\Fix}(\dual{\Hilm_t})
  \end{equation}
  in the notation of Lemma~\ref{lem:groupoid_vs_action}.
  Condition~\ref{en:aperiodic_top_non-trivial15} for \(t\in S\)
  makes precise what it means for the partial
  homeomorphism
  of~\(\dual{A}\) induced by \(\Hilm_t\cdot I_{1,t}^\bot\) to be
  topologically non-trivial.  So
  \ref{en:aperiodic_top_non-trivial15}
  and~\ref{en:aperiodic_top_non-trivial3b} are equivalent by
  Lemma~\ref{lem:groupoid_vs_action}.  If~\(A\) contains an
  essential ideal which is separable or of Type~I,
  then~\ref{en:aperiodic_top_non-trivial1} is equivalent
  to~\ref{en:aperiodic_top_non-trivial15} by
  Theorem~\ref{the:non-trivial_Hilbi}.  By
  Lemma~\ref{lem:topological_freeness_vs_nontriviality},
  condition~\ref{en:aperiodic_top_non-trivial15} holds if and only
  if countable unions of the subsets
  \(\underline{\Fix}(\dual{\Hilm_t})\) have empty interior.
  Accordingly, \ref{en:aperiodic_top_non-trivial15}
  and~\ref{en:aperiodic_top_non-trivial3c} are equivalent by
  Lemma~\ref{lem:groupoid_vs_action}.\ref{en:groupoid_vs_action2}.
  Also by Lemma~\ref{lem:groupoid_vs_action},
  conditions~\ref{en:aperiodic_top_non-trivial15} and
  \ref{en:aperiodic_top_non-trivial4b} are equivalent if~\(S\) is
  countable.  The implications concerning
  condition~\ref{en:aperiodic_top_non-trivial5} follow from
  Theorem~\ref{the:non-trivial_Hilbi}.
\end{proof}

Following Archbold and
Spielberg~\cite{Archbold-Spielberg:Topologically_free}, we prove
that~\(A\) detects ideals in \(A\rtimes_\ess S\) for AS
topologically free actions, without going through aperiodicity and
thus without separability assumptions on~\(A\):

\begin{theorem}
  \label{the:top_free_uniqueness}
  Let~\(A\) be a \(\Cst\)\nb-algebra with an AS topologically free
  action~\(\Hilm\) of a unital inverse semigroup~\(S\).  Then
  \(A\rtimes_\ess S\) is the unique quotient of \(A\rtimes S\) in
  which~\(A\) embeds and detects ideals.
\end{theorem}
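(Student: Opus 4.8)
The plan is to verify condition~\ref{enu:abstract_uniqueness1} of Proposition~\ref{pro:abstract_uniqueness} for the essentially defined conditional expectation \(EL\colon A\rtimes S\to\Locmult(A)\): I will show that every ideal \(J\in\Ideals(A\rtimes S)\) with \(J\cap A=0\) lies in \(\ker EL\), hence in \(\Null_{EL}\).  By Proposition~\ref{pro:abstract_uniqueness} this is equivalent to statement~\ref{enu:abstract_uniqueness0.5}, which is exactly the assertion of the theorem.  Since \(\Null_{EL}\) is the largest ideal inside \(\ker EL\) and \(EL\) is completely positive, it suffices to prove \(EL(x)=0\) for every \(x\in J^+\).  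So I argue by contradiction: given \(x\in J^+\) with \(EL(x)\neq0\), I produce a non-zero element of \(J\cap A\), which is impossible because \(J\cap A=0\).

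First I localise.  By Proposition~\ref{pro:essential_norm} (through Proposition~\ref{pro:Locmult_as_operator_families}), \(EL(x)\neq0\) forces \(\setgiven{\pi\in\dual A}{\lVert\pi''(E(x))\rVert>2\varepsilon}\) to have non-empty interior for some \(\varepsilon>0\).  Using that \(EL\) is contractive, pick a self-adjoint \(x_0=\sum_{t\in F}\xi_t\delta_t\in A\rtimes_\alg S\) with \(F\subseteq S\) finite, \(1\in F\), and \(\lVert x-x_0\rVert<\varepsilon/8\); then \(U\defeq\Int\setgiven{\pi\in\dual A}{\lVert\pi''(E(x_0))\rVert>\varepsilon}\) is non-empty and open, and \(E(x_0)\ge-\varepsilon/8\) (since \(x_0\ge x-\varepsilon/8\ge-\varepsilon/8\) and \(E\) is positive).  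As in the proof of Proposition~\ref{pro:EL_exists}, \(J_t\defeq I_{1,t}\oplus I_{1,t}^\bot\) (with \(I_{1,t}^\bot\) as in~\eqref{eq:I1t_orthogonal}) and \(J_F\defeq\bigcap_{t\in F}J_t\) are essential ideals in~\(A\), \(EL(x_0)\in\Mult(J_F)\), one has \(E(x_0)a=EL(x_0)a\) for \(a\in J_F\), and \(\pi''(E(x_0))=\bar\pi(EL(x_0))\) for \(\pi\in\dual{J_F}\); moreover, splitting each \(\xi_t\) along the decomposition \(\Hilm_t\cdot J_t\cong I_{1,t}\oplus\Hilm_t\cdot I_{1,t}^\bot\) of~\eqref{eq:decompose_Hilm_closed}, the \(I_{1,t}\)-components add up to \(E(x_0)\) and the \(I_{1,t}^\bot\)-components give ``off-diagonal slices'' \(\eta_t\delta_t\) with \(\eta_t\in\Hilm_t\cdot I_{1,t}^\bot\).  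Now AS topological freeness enters: by~\eqref{eq:Fix_dual_action} and Lemma~\ref{lem:groupoid_vs_action}.\ref{en:groupoid_vs_action2}, the set \(\bigcup_{t\in F}\underline{\Fix}(\dual{\Hilm}_t)=\bigcup_{t\in F}\setgiven{[\pi]\in\dual{I_{1,t}^\bot}}{\dual{\Hilm}_t[\pi]=[\pi]}\) has empty interior, so its complement meets the non-empty open set \(U\cap\dual{J_F}\).  Choose \([\pi_0]\) in this intersection; then \(\lVert\bar\pi_0(EL(x_0))\rVert>\varepsilon\) and, for every \(t\in F\), either \([\pi_0]\notin\dual{I_{1,t}^\bot}\) or \(\dual{\Hilm}_t[\pi_0]\neq[\pi_0]\).

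The technical heart is an excision at~\(\pi_0\), done as in the proofs of the main theorems of~\cite{Archbold-Spielberg:Topologically_free}.  Because \(x_0\) has \emph{finite} support, only the finitely many representations \(\dual{\Hilm}_t[\pi_0]\) with \(t\in F\) and \([\pi_0]\in\dual{I_{1,t}^\bot}\) are relevant, and each of them is inequivalent to~\(\pi_0\); so, using Kadison transitivity together with these inequivalences, one constructs a positive contraction \(h\in J_F^+\) that is ``concentrated at~\(\pi_0\)'' in the sense that \(\lVert h\,\eta_t\delta_t\,h\rVert_{A\rtimes S}<\varepsilon/(8\lvert F\rvert)\) for all \(t\in F\), while \(\lVert h\,EL(x_0)\,h\rVert>\varepsilon/2\) (here one uses that \(\pi_0''(E(x_0))\) is self-adjoint, \(\ge-\varepsilon/8\), and of norm \(>\varepsilon\), hence has a unit vector \(\eta\) with \(\langle\pi_0''(E(x_0))\eta,\eta\rangle>\varepsilon/2\), and takes \(h\in J_F^+\) with \(\pi_0(h)\) the rank-one projection onto~\(\eta\)).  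After cutting down by~\(h\) the off-diagonal slices become negligible while the \(I_{1,t}\)-components reassemble, so \(h x_0 h\) is within \(\varepsilon/8\) of \(a\defeq h\,E(x_0)\,h=h\,EL(x_0)\,h\), and \(a\in J_F\subseteq A\) because \(h\in J_F\) and \(EL(x_0)\in\Mult(J_F)\).  Hence \(\lVert h x h-a\rVert_{A\rtimes S}<\varepsilon/4\) and \(\lVert a\rVert>\varepsilon/2\).  But \(h\in A\subseteq A\rtimes S\) and \(x\in J\) give \(hxh\in J\), so \(\operatorname{dist}(a,J)<\varepsilon/4\); since \(J\cap A=0\) the map \(A\hookrightarrow(A\rtimes S)/J\) is injective, hence isometric, so \(\operatorname{dist}(a,J)=\lVert a\rVert>\varepsilon/2\) --- a contradiction.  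Therefore \(EL\) vanishes on \(J^+\), so \(J\subseteq\Null_{EL}\), and the theorem follows from Proposition~\ref{pro:abstract_uniqueness}; one even obtains, via Lemma~\ref{lem:generalised_support}, that \(A^+\) supports~\(A\rtimes_\ess S\).

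The main obstacle is the excision, i.e.\ the construction of~\(h\) with \(\lVert h\,\eta_t\delta_t\,h\rVert\) small and \(\lVert h\,EL(x_0)\,h\rVert\) not small: in the present generality \(A\) is neither separable nor of Type~I and \(\dual A\) is badly non-Hausdorff, so \(\pi_0\) and \(\dual{\Hilm}_t[\pi_0]\) cannot in general be separated by disjoint open subsets of~\(\dual A\) (inequivalent irreducible representations of~\(A\) may share a primitive ideal), and one must exploit their inequivalence together with the finite support of~\(x_0\) through a transitivity argument rather than a Urysohn-type one.  This is precisely why AS topological freeness --- empty interior of a \emph{finite} union of fixed-point sets, matching the finite support of~\(x_0\) --- is needed here instead of plain topological freeness, and it is what lets the theorem dispense with the separability or Type~I hypothesis under which topological freeness already implies aperiodicity (Theorem~\ref{the:non-trivial_Hilbi}).
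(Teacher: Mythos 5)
Your global strategy matches the paper's: reduce to Proposition~\ref{pro:abstract_uniqueness}, pass to a finitely supported element, split off \(EL(x_0)\in A\) from the ``off-diagonal'' parts \(\eta_t\in\Hilm_t\cdot I_{1,t}^\bot\), and use AS topological freeness via \eqref{eq:Fix_dual_action} and Lemma~\ref{lem:groupoid_vs_action} to find \([\pi_0]\) in the open set where \(EL(x_0)\) is large but off all the fixed-point sets \(\underline{\Fix}(\dual{\Hilm}_t)\), \(t\in F\). The gap is exactly where you flag ``the main obstacle'': the excision by a positive element \(h\in J_F^+\subseteq A\) does not work, and no amount of Kadison transitivity at the finitely many representations \(\pi_0\) and \(\omega_t=\dual{\Hilm_t}[\pi_0]\) will produce it. The quantity you need to make small is \(\norm{h\,\eta_t\delta_t\,h}_{A\rtimes S}=\norm{h\eta_t h}_{\Hilm_t}=\norm{h\langle \eta_t, h^2\eta_t\rangle h}_A^{1/2}\), which is a supremum over \emph{all} irreducible representations of \(A\) in the open support of \(h\), not just over \(\pi_0\) and \(\omega_t\). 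AS topological freeness only says that \(\underline{\Fix}(\dual{\Hilm}_t)\) has empty interior; it may still be dense, so every ideal of \(A\) to which \(h\) could belong contains in its spectrum points \(\pi\) with \(\dual{\Hilm_t}[\pi]=[\pi]\), at which the two-sided cut-down by \(h\) need not shrink \(\eta_t\) at all. Producing such an \(h\) is essentially Kishimoto's condition for \(\Hilm_t\cdot I_{1,t}^\bot\) localised near \(\pi_0\), i.e.\ aperiodicity, which is only known to follow from topological freeness under separability or Type~I hypotheses (Theorem~\ref{the:non-trivial_Hilbi}); whether it holds in general is explicitly left open in Section~\ref{sec:aperiodic_top_free}. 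So your argument would secretly reprove (a form of) the hard implication that this theorem is designed to avoid, and as written the step fails.

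The repair is the one the paper (following Archbold--Spielberg) actually uses: instead of cutting by an element of \(A\), work inside a representation. Given \(\pi\colon A\rtimes S\to C\) injective on \(A\) (equivalently, the quotient by your ideal \(J\)), extend \(\sigma=\pi_0\) to a representation \(\nu\) of \(\pi(A\rtimes S)\) on \(H_\nu\supseteq H_\sigma\) and compress by the orthogonal projection \(P_\sigma\) onto \(H_\sigma\). Because \(\nu(\pi(\Hilm_t^\bot))H_\sigma\) carries a representation of \(A\) of class \(\dual{\Hilm_t^\bot}[\sigma]\neq[\sigma]\) (or zero), disjointness of inequivalent irreducibles gives \(P_\sigma\nu(\pi(a_t'))P_\sigma=0\) \emph{exactly}, whence \(\norm{\sigma(EL(a))}=\norm{P_\sigma\nu(\pi(a))P_\sigma}\le\norm{\pi(a)}\) and \(\norm{EL(a)}\le\norm{\pi(a)}\). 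No global control of \(\norm{h\eta_t h}\) is ever needed, because the compression happens in \(\Bound(H_\nu)\) rather than in \(A\rtimes S\). Your surrounding reductions (Proposition~\ref{pro:essential_norm}, the essential ideals \(J_t\), the choice of \([\pi_0]\)) can be kept essentially verbatim once this substitution is made.
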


\begin{proof}
  By Proposition~\ref{pro:abstract_uniqueness} it suffices to show
  that, for any representation \(\pi\colon A\rtimes S\to \Bound(H)\)
  which is injective on~\(A\), there is a map
  \(E_\pi\colon \pi(A\rtimes S)\to \Locmult(A)\) such that
  \(E_\pi\circ \pi=EL\circ \pi\).  This is equivalent to
  \(\norm{EL(a)}\le \norm{\pi(a)}\) for all \(a\in A\rtimes S\).  It
  suffices to check this on the dense \Star{}subalgebra
  \(A\rtimes_\alg S\).  So take \(a \in A\rtimes_\alg S\) and write
  it as \(a=\sum_{t\in F} a_t\delta_t\) for a finite subset
  \(F\subseteq S\) and \(a_t\in \Hilm_t\) for \(t\in F\).  Define
  \(J_t \defeq I_{1,t} \oplus I_{1,t}^\bot\) and
  \(J \defeq \bigcap_{t\in S} J_t\) as in the proof of
  Proposition~\ref{pro:EL_exists}.  These are essential ideals
  in~\(A\), and \(\norm{EL(a)}\) is equal to the supremum of
  \(\norm{EL(a x)}\) for \(x\in J\) with \(\norm{x}\le1\).  So
  \(\norm{EL(a)}\le \norm{\pi(a)}\) follows if
  \(\norm{EL(a x)}\le \norm{\pi(a x)}\) holds for all \(x\in J\).
  So we may assume without loss of generality that
  \(a_t \in \Hilm_t\cdot J\) for all \(t\in F\).

  Then we may further decompose \(a_t = EL(a_t\delta_t) + a_t'\)
  with \(EL(a_t\delta_t) \in I_{1,t}\) and
  \(a_t'\in \Hilm_t^\bot\defeq \Hilm_t\cdot I_{1,t}^\bot\).  Thus
  \(a=EL(a)+\sum_{t\in F} a_t'\delta_t\) with \(EL(a) \in A\).  To
  simplify, we may assume without loss of generality that
  \(\norm{EL(a)}=1\).  Let \(0<\varepsilon < 1\).  The set
  \(U\defeq \setgiven{[\sigma] \in \dual{A}}{\norm{\sigma(EL(a))} >
    1 - \varepsilon}\) is open and non-empty because
  \(\norm{EL(a)}=1\).  For each \(t\in S\),
  Equation~\eqref{eq:Fix_dual_action} and
  Lemma~\ref{lem:groupoid_vs_action}.\ref{en:groupoid_vs_action2}
  show that there is \([\sigma] \in U\) with
  \(\dual{\Hilm_t^\bot} [\sigma] \neq [\sigma]\) for all \(t\in F\).
  There is a representation
  \(\nu\colon \pi(A\rtimes S)\to \Bound(H_\nu)\) that extends the
  irreducible representation~\(\sigma\), that is, there is a closed
  subspace \(H_\sigma \subseteq H_\nu\) on which \(\nu\circ \pi\) is
  unitarily equivalent to~\(\sigma\).  For each \(t\in F\),
  let~\(P_t\) be the orthogonal projection from~\(H_\nu\) onto the
  closed subspace \(\nu(\pi(\Hilm_t^\bot))H_\sigma\).
  Let~\(P_\sigma\) be the orthogonal projection onto~\(H_\sigma\).
  The subspaces~\(P_t H_\nu\) are invariant for \(\nu(\pi(A))\), and
  \(\nu\circ\pi\colon A \to \Bound(P_tH_\nu)\) is either zero or an
  irreducible representation whose equivalence class is
  \(\dual{\Hilm_t^\bot} [\sigma]\) (see
  \cite{Kwasniewski:Topological_freeness}*{Lemma~1.3} or
  \cite{Abadie-Abadie:Ideals}*{Proposition~3.1}).  Therefore,
  \(\dual{\Hilm_t^\bot} [\sigma] \neq [\sigma]\) implies
  \(P_\sigma P_t=0\).  Thus
  \(P_\sigma \nu(\pi(a_t')) P_\sigma= P_\sigma P_t\nu(\pi(a_t'))
  P_\sigma=0\) for all \(t\in F\).  Hence
  \[
  1 - \varepsilon
  < \norm{\sigma(E(a))}
  = \norm{P_\sigma \nu(\pi(E(a)))P_\sigma}
  = \norm{P_\sigma \nu(\pi(a))P_\sigma}
  \le \norm{\pi(a)}.
  \]
  Since \(\varepsilon\in (0,1)\) is arbitrary, this implies
  \(\norm{EL(a)} = 1 \le \norm{\pi(a)}\).  Then~\(EL\) factors
  through~\(\pi\), as desired.
\end{proof}

\begin{corollary}
  \label{cor:simple_minimal}
  Suppose that~\(\Hilm\) is aperiodic or that \(\dual{A}\rtimes S\)
  is AS topologically free.  Then \(A\rtimes_\ess S\) is simple if
  and only if~\(\Hilm\) is minimal, if and only
  if~\(\dual{A}\rtimes S\) is minimal.
\end{corollary}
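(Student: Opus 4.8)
The plan is to assemble the statement from Theorem~\ref{the:simple_crossed_minimal}, the detection-of-ideals results of Section~\ref{sec:properties_aperiodic_isg}, and Lemma~\ref{lem:invariance_vs_duals}; no new idea is needed. First I would settle the equivalence \emph{\(\Hilm\) is minimal \(\iff\) \(\dual{A}\rtimes S\) is minimal}. By Lemma~\ref{lem:invariance_vs_duals}, an ideal in~\(A\) is \(\Hilm\)\nb-invariant if and only if the corresponding open subset of~\(\dual{A}\) is invariant for \(\dual{A}\rtimes S\), equivalently for the dual \(S\)\nb-action (Remark~\ref{rem:invariance_transformation_groupoid}). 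Since the lattice of open subsets of~\(\dual{A}\) is~\(\Ideals(A)\), the condition \(\Ideals^{\Hilm}(A)=\{0,A\}\) says precisely that the only open invariant subsets of the unit space~\(\dual{A}\) are \(\emptyset\) and~\(\dual{A}\), that is, that \(\dual{A}\rtimes S\) is minimal.

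Next I would prove that \(A\rtimes_\ess S\) is simple if and only if~\(\Hilm\) is minimal. The quotient map \(\Lambda\colon A\rtimes S\to A\rtimes_\ess S\) is injective on~\(A\) (as \(\Null_{EL}\cap A=0\)), so by Remark~\ref{rem:representation_vs_grading} the \(\Cst\)\nb-algebra \(A\rtimes_\ess S\) is \(S\)\nb-graded with unit fibre~\(A\) and fibres \(\Hilm_t'\defeq\Lambda(\Hilm_t)\). For \(\xi,\eta\in\Hilm_t\) one has \((\Lambda\xi)^*(\Lambda\eta)=\Lambda(\braket{\xi}{\eta})=\braket{\xi}{\eta}\) and \((\Lambda\xi)(\Lambda\eta)^*=\Lambda(\BRAKET{\xi}{\eta})=\BRAKET{\xi}{\eta}\); hence \(\Lambda|_{\Hilm_t}\) is an isometric Hilbert \(A\)\nb-bimodule isomorphism \(\Hilm_t\congto\Hilm_t'\), and it intertwines the multiplication maps because \(\Lambda\) is multiplicative. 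Thus the \(S\)\nb-action obtained from the grading \((\Hilm_t')_{t\in S}\) is isomorphic to~\(\Hilm\) over the identity on~\(A\), so \((\Hilm_t')_{t\in S}\) is minimal if and only if~\(\Hilm\) is. Now I invoke the hypothesis: if~\(\Hilm\) is aperiodic then \(A\) detects ideals in \(A\rtimes_\ess S\) by Theorem~\ref{the:aperiodic_action_consequences}.\ref{en:aperiodic_uniqueness3}; if \(\dual{A}\rtimes S\) is AS topologically free then \(A\) detects ideals in \(A\rtimes_\ess S\) by Theorem~\ref{the:top_free_uniqueness}. In either case, applying Theorem~\ref{the:simple_crossed_minimal} to the \(S\)\nb-graded algebra \(B\defeq A\rtimes_\ess S\) shows that \(B\) is simple if and only if~\(A\) detects ideals in~\(B\) and the grading action \((\Hilm_t')_{t\in S}\) is minimal. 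Since the first condition always holds under our hypothesis, \(A\rtimes_\ess S\) is simple iff \((\Hilm_t')_{t\in S}\) is minimal iff~\(\Hilm\) is minimal; combined with the previous paragraph this proves the corollary.

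The argument is essentially bookkeeping over earlier results, so there is no serious obstacle. The one point worth a line of checking is the identification of the \(S\)\nb-action induced by the canonical grading on \(A\rtimes_\ess S\) with~\(\Hilm\) itself, which is what makes ``minimality of the grading action'' in Theorem~\ref{the:simple_crossed_minimal} literally the same condition as ``minimality of~\(\Hilm\)''; the inner-product computation above takes care of this. An alternative that avoids identifying the actions is to verify directly that the \(\Hilm\)\nb-invariant ideals of~\(A\) coincide with the ideals of~\(A\) of the form \(J\cap A\) for \(J\in\Ideals(A\rtimes_\ess S)\), using \(\Ideals^{\Hilm}(A)=\setgiven{J\cap A}{J\in\Ideals(A\rtimes S)}\) together with the fact that every ideal of \(A\rtimes_\ess S\) is the image of an ideal of \(A\rtimes S\) containing~\(\Null_{EL}\).
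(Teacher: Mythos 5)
Your proof is correct and follows the same route as the paper: the paper's own proof is the single line ``Theorem~\ref{the:simple_crossed_minimal} applies in both cases,'' and you have simply filled in the implicit steps — detection of ideals in \(A\rtimes_\ess S\) from Theorem~\ref{the:aperiodic_action_consequences} (aperiodic case) or Theorem~\ref{the:top_free_uniqueness} (AS topologically free case), the identification of the grading action on \(A\rtimes_\ess S\) with \(\Hilm\), and the translation between \(\Hilm\)-minimality and minimality of \(\dual{A}\rtimes S\) via Lemma~\ref{lem:invariance_vs_duals}. Nothing to object to.
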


\begin{proof}
  Theorem~\ref{the:simple_crossed_minimal} applies in both cases.
\end{proof}

\begin{remark}
  More recently, it is shown
  in~\cite{Kwasniewski-Meyer:Aperiodicity_pseudo_expectations} that
  topologically free actions are aperiodic.  This significantly
  improves Theorem~\ref{the:top_free_uniqueness} and
  Corollary~\ref{cor:simple_minimal}.  First, aperiodicity is weaker
  than (AS) topological freeness and, secondly, it also gives
  results about pure infiniteness (see
  Corollary~\ref{cor:simple_crossed_pi}).
\end{remark}

The aperiodicity assumption in
Theorem~\ref{the:aperiodic_action_consequences} is not necessary
for~\(A\) to detect ideals in~\(A\rtimes_\ess S\).  An easy
counterexample is an irrational rotation algebra, viewed as the
(essential) crossed product for a twisted action of the
group~\(\Z^2\) on \(A=\C\).  An important case where aperiodicity is
necessary for detection of ideals is when~\(S\) is very special,
namely, \(\Z\) or~\(\Z_n\) with square-free \(n\in\N_{\ge1}\):

\begin{theorem}
  \label{the:Cartan_for_cyclic_groups}
  Let the compact group~\(\Gamma\) be either~\(\T\) or~\(\Z_n\) with
  square-free \(n\in\N_{\ge1}\).  Let~\(B\) be a \(\Cst\)\nb-algebra
  with a continuous action \(\beta\colon \Gamma\to \Aut(B)\) and let
  \(A=B^\beta\) be the fixed point algebra.  Assume that~\(A\)
  contains an essential ideal which is separable, simple, or of
  Type~I.  Then the following are equivalent:
  \begin{enumerate}
  \item \label{enu:Cartans1}%
    \(A\subseteq B\) is aperiodic;
  \item \label{enu:Cartans2}%
    \(A\) detects ideals in~\(B\);
  \item \label{enu:Cartans3}%
    \(A^+\) supports~\(B\).
  \end{enumerate}
\end{theorem}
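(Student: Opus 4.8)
The plan is to realise $A\subseteq B$ as a topologically graded inclusion and feed it into the machinery of Section~\ref{sec:hidden_ideal}, so that the three equivalences reduce to a single implication carrying the hypothesis on~$\Gamma$. Since~$\Gamma$ is compact, integrating~$\beta$ against normalised Haar measure gives a faithful conditional expectation $E\colon B\to A$, and the spectral subspaces $B_\chi\defeq\setgiven{b\in B}{\beta_s(b)=\chi(s)\,b\text{ for all }s\in\Gamma}$, $\chi\in\widehat\Gamma$, grade~$B$ over the discrete group~$\widehat\Gamma$ --- equal to~$\Z$ when $\Gamma=\T$ and to~$\Z_n$ when $\Gamma=\Z_n$ --- with unit fibre~$A$. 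Passing to the Buss--Exel saturation (Remark~\ref{rem:make_saturated}) we may regard this as an $S$\nb-grading with induced action~$\Hilm$ on~$A$ for a suitable inverse semigroup~$S$; since~$E$ is $\Locmult(A)$\nb-valued and intertwines with the canonical expectation $EL$ on $A\rtimes S$, Proposition~\ref{pro:topological_grading_through_expectation} shows the grading is topological, and, as~$E$ is faithful, that $B\cong A\rtimes_\ess S$ with $EL=E$. By Proposition~\ref{pro:aperiodic_isg}, statement~\ref{enu:Cartans1} is then equivalent to aperiodicity of~$\Hilm$, which by Theorem~\ref{the:aperiodic_top_non-trivial} and Theorem~\ref{the:non-trivial_Hilbi} --- using that~$A$ has an essential ideal which is separable, simple, or of Type~I --- is equivalent to a topological-freeness (respectively pure-outerness) condition on~$\Hilm$, equivalently on the dual groupoid $\dual{A}\rtimes S\cong\dual{A}\rtimes\widehat\Gamma$.

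The implications \ref{enu:Cartans1}$\Rightarrow$\ref{enu:Cartans3} and \ref{enu:Cartans1}$\Rightarrow$\ref{enu:Cartans2} are then immediate. A genuine conditional expectation is supportive by Proposition~\ref{pro:local_expectation_supportive}, so Theorem~\ref{the:aperiodic_consequences}, applied to the aperiodic inclusion $A\subseteq B$ with the expectation~$E$ and using $\Null_E=0$ (as~$E$ is faithful), yields at once that $A^+$ supports $B=B/\Null_E$ and that~$A$ detects ideals in~$B$. The implication \ref{enu:Cartans3}$\Rightarrow$\ref{enu:Cartans2} is the last assertion of Lemma~\ref{lem:generalised_support}. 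So everything comes down to \ref{enu:Cartans2}$\Rightarrow$\ref{enu:Cartans1}.

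I would prove \ref{enu:Cartans2}$\Rightarrow$\ref{enu:Cartans1} by contraposition: if $A\subseteq B$ is not aperiodic, I construct a non-zero ideal of~$B$ meeting~$A$ trivially. By the equivalences of the first paragraph, failure of aperiodicity gives $\chi\in\widehat\Gamma\setminus\{1\}$ and a non-zero ideal $J\idealin A$ such that the Rieffel homeomorphism~$\dual{B_\chi}$ of~$\dual{A}$ fixes every point of the non-empty open set~$\dual{J}$ (cf.~\eqref{eq:Fix_dual_action} and Lemma~\ref{lem:groupoid_vs_action}). I would then restrict the Fell bundle of spectral subspaces to the cyclic subgroup $\langle\chi\rangle\subseteq\widehat\Gamma$ and, after shrinking~$J$ to a non-zero $\beta$\nb-invariant ideal and passing to the corresponding subquotient of~$B$, arrange that the restricted bundle becomes Morita equivalent to the trivial $\langle\chi\rangle$\nb-bundle, so that the subquotient is stably isomorphic to $J_0\otimes\Cst(\langle\chi\rangle)$ for some $J_0\neq0$. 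Here the hypothesis on~$\Gamma$ enters: $\langle\chi\rangle$ is infinite cyclic when $\Gamma=\T$ and cyclic of square-free order $d>1$ when $\Gamma=\Z_n$ with~$n$ square-free, so $\Cst(\langle\chi\rangle)$ is $\Cont(\T)$ or $\C^d$ with $d>1$, which in either case has a character whose kernel is a non-zero ideal not containing the unit. Pulling that ideal back through the subquotient map produces a non-zero ideal of~$B$ with trivial intersection with~$A$, so~$A$ does not detect ideals in~$B$.

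The main obstacle is precisely this last construction --- passing from ``$\dual{B_\chi}$ is the identity on an open subset of~$\dual{A}$'' to ``the Fell bundle restricted to~$\langle\chi\rangle$ is trivial, up to Morita equivalence, over a genuine non-zero $\beta$\nb-invariant ideal'' --- which is where one imports the Connes--Arveson spectrum computations of Olesen--Pedersen and Kishimoto, and where the group-theoretic structure of~$\T$ and of~$\Z_n$ for square-free~$n$ (every non-trivial subgroup being cyclic with enough ideals in its group $\Cst$\nb-algebra to separate the unit) enters essentially. Everything else is formal, resting only on Theorem~\ref{the:aperiodic_consequences} and the identification $B\cong A\rtimes_\ess S$.
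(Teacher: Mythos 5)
Your reduction is the same as the paper's: form the Fell bundle \((B_\chi)_{\chi\in\widehat\Gamma}\) of spectral subspaces over the discrete dual group, identify \(B\) with its full (\({}=\)reduced, \({}=\)essential) section \(\Cst\)\nb-algebra using the faithful Haar-average expectation, and translate aperiodicity of \(A\subseteq B\) into aperiodicity of the bundle via Proposition~\ref{pro:aperiodic_isg}.  Your derivation of \ref{enu:Cartans1}\(\Rightarrow\)\ref{enu:Cartans2} and \ref{enu:Cartans1}\(\Rightarrow\)\ref{enu:Cartans3} from Theorem~\ref{the:aperiodic_consequences} (with \(\Null_E=0\)) and of \ref{enu:Cartans3}\(\Rightarrow\)\ref{enu:Cartans2} from Lemma~\ref{lem:generalised_support} is correct, and you rightly observe that the hypothesis on~\(\Gamma\) plays no role there.

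The gap is in \ref{enu:Cartans2}\(\Rightarrow\)\ref{enu:Cartans1}.  The paper does not reprove this implication; it cites Theorem~9.12 of the companion aperiodicity paper, which is precisely the statement that, for Fell bundles over \(\Z\) or \(\Z_n\) with square-free~\(n\) whose unit fibre contains an essential ideal that is separable, simple, or of Type~I, detection of ideals forces aperiodicity.  Your sketch of that implication does not close it.  First, failure of aperiodicity only gives you, via Theorem~\ref{the:non-trivial_Hilbi}, that \(\dual{B_\chi}\) fixes every point of some non-empty open set \(\dual{J}\) (separable case) or a bimodule isomorphism \(B_\chi\cdot J\cong J\) (simple or Type~I case).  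In the separable, non-Type-I case, pointwise fixing of an open set does \emph{not} yield \(B_\chi\cdot J\cong J\): Theorem~\ref{the:non-trivial_Hilbi} only asserts that topological non-triviality implies pure outerness, not the converse, and the paper's own remark on the Kennedy--Schafhauser obstruction records exactly this cohomological discrepancy.  So your ``arrange that the restricted bundle becomes Morita equivalent to the trivial \(\langle\chi\rangle\)\nb-bundle'' has no starting point in that case.  Second, even granting \(B_\chi\cdot J\cong J\), the section algebra of the bundle restricted to the subgroup \(\langle\chi\rangle\) over an invariant ideal is a \emph{subalgebra}, not an ideal or a quotient, of the corresponding ideal of~\(B\); an ideal of \(J_0\otimes\Cst(\langle\chi\rangle)\) therefore does not ``pull back'' to an ideal of~\(B\) meeting~\(A\) trivially --- it must be induced up to the whole group, and it is exactly at this step that square-freeness of~\(n\) is needed.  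These are the points where the Olesen--Pedersen and Kishimoto spectral machinery you invoke actually does its work; your proposal names that machinery but leaves the decisive steps as black boxes, so the hard implication is asserted rather than established.
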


\begin{proof}
  Let \(G=\dual{\Gamma}\) be the dual group, that is, \(G=\Z\) or
  \(G=\Gamma=\Z_n\).  Let \(\B = (B_g)_{g\in G}\) be the
  Fell bundle formed by the spectral subspaces of the
  action~\(\beta\).  Then \(B_0=A^\beta=A\) and
  \(B=\Cst_\red(\B)=\Cst(\B)\) by the Gauge-Equivariant Uniqueness
  Theorem.  By Proposition~\ref{pro:aperiodic_isg}, the inclusion
  \(A\subseteq B\) is aperiodic if and only if the Fell bundle is
  aperiodic.  Then
  \cite{Kwasniewski-Meyer:Aperiodicity}*{Theorem~9.12} implies that
  \ref{enu:Cartans1}--\ref{enu:Cartans3} are equivalent.
\end{proof}

\begin{remark}
  For an action of a discrete group on a separable
  \(\Cst\)\nb-algebra for which \(A\subseteq A\rtimes_\red G\)
  detects ideals, Kennedy and
  Schafhauser~\cite{Kennedy-Schafhauser:noncomm_boundaries}
  introduce a cohomological obstruction whose vanishing implies
  aperiodicity (the condition they call proper outerness is
  aperiodicity).
\end{remark}

\section{Fell bundles over groupoids}
\label{sec:groupoids}

Fell bundles over groups are studied, for instance,
in~\cite{Exel:Partial_dynamical}.  We are going to describe their
analogues over étale groupoids through inverse semigroup actions.
Then we carry over our definitions and results for inverse semigroup
actions to the realm of Fell bundles over étale groupoids.
\emph{Throughout this section, \(X\) is a locally compact Hausdorff
  space, \(H\) is an étale groupoid with unit space~\(X\), and
  \(S\subseteq \Bis(H)\) is a unital, wide inverse subsemigroup of
  bisections of~\(H\).}

\subsection{Groupoid Fell bundles and inverse semigroups actions}
\label{sec:groupoid_Fell}

\begin{definition}%
  [\cite{BussExel:Fell.Bundle.and.Twisted.Groupoids}*{Section~2}]
  \label{def:Fell_bundle_groupoid}
  A \emph{Fell bundle} over~\(H\) is an upper semicontinuous bundle
  \(\A=(A_\gamma)_{\gamma\in H}\) of Banach spaces with a continuous
  involution \(^*\colon \A\to \A\) and a continuous multiplication
  \[
    {\cdot}\colon \setgiven{(a,b)\in \A\times \A} {a\in
      A_{\gamma_1},\ b \in A_{\gamma_2},\ \gamma_1,\gamma_2\in
      H,\ \s(\gamma_1) = \rg(\gamma_2)} \to \A,
  \]
  which is associative whenever defined.  In
  addition, the fibres~\(A_x\) for \(x\in X\) must be
  \(\Cst\)\nb-algebras and the fibres~\(A_\gamma\) for
  \(\gamma\in H\) must be Hilbert
  \(A_{\rg(\gamma)}\)-\(A_{\s(\gamma)}\)-bimodules for the left and
  right inner products \(\BRAKET{x}{y} \defeq x y^*\) and
  \(\braket{x}{y} \defeq x^* y\) for \(x,y\in A_\gamma\).  Then the
  multiplication map yields isometric Hilbert bimodule maps
  \[
    \mu_{\gamma_1,\gamma_2}\colon
    A_{\gamma_1}\otimes_{A_{\s(\gamma_1)}} A_{\gamma_2}\to
    A_{\gamma_1\gamma_2}
  \]
  for all \(\gamma_1,\gamma_2\in H\) with
  \(\s(\gamma_1) = \rg(\gamma_2)\).  If these maps are surjective,
  the Fell bundle~\(\A\) is called \emph{saturated}.  This holds if
  and only if~\(\mu_{\gamma,\gamma^{-1}}\) is surjective for all
  \(\gamma\in H\).
\end{definition}

\begin{example}
  \label{exa:Fell_bundle_from_action}
  An action~\(\alpha\) of~\(H\) on a \(\Cst\)\nb-algebra as in
  \cite{Sieben-Quigg:ActionsOfGroupoidsAndISGs}*{Section~3} yields a
  saturated Fell bundle over~\(H\).  Namely, let
  \(A_\gamma\defeq A_{\rg(\gamma)}\) for \(\gamma\in H\) and define
  the multiplication maps and involutions by
  \(A_\eta\times A_\gamma \to A_{\eta\gamma}\),
  \((a,b) \mapsto a\cdot \alpha_\eta(b)\), and
  \(A_\gamma \to A_{\gamma^{-1}}\),
  \(a\mapsto \alpha_\gamma^{-1}(a^*)\).
\end{example}

We are going to turn a Fell bundle \(\A=(A_\gamma)_{\gamma\in H}\)
over~\(H\) into an inverse semigroup action.  Let~\(A\) be the
\(\Cont_0(X)\)\nb-algebra corresponding to the bundle of
\(\Cst\)\nb-algebras \((A_x)_{x\in X}\).  Let \(S\subseteq \Bis(H)\)
be any unital, wide inverse subsemigroup of bisections of~\(H\) and
let \(U\in S\).  This subset is always Hausdorff and locally compact
because the source and range maps restrict to homeomorphisms
from~\(U\) onto the open subsets \(\s(U)\) and~\(\rg(U)\) in~\(X\).
Let~\(A_U\) be the space of \(\Cont_0\)\nb-sections of the
restriction of~\((A_\gamma)_{\gamma\in H}\) to~\(U\).  The spaces
\(A_{\rg(U)}\) and~\(A_{\s(U)}\) are closed ideals in \(A=A_X\).
And the formulas
\begin{align*}
  (a\cdot \xi \cdot b)(\gamma)
  &\defeq a(\rg(\gamma))\xi(\gamma) b(\s(\gamma)),\\
  \braket{\xi}{\eta}(\s(\gamma))
  &\defeq \xi(\gamma)^*\eta(\gamma),\\
  \BRAKET{\xi}{\eta}(\rg(\gamma))
  &\defeq \xi(\gamma)\eta(\gamma)^*
\end{align*}
for \(a\in A_{\rg(U)}\), \(\xi,\eta\in A_U\), \(b\in A_{\s(U)}\),
and \(\gamma\in U\) define on~\(A_U\) the structure of a Hilbert
\(A_{\rg(U)}\)-\(A_{\s(U)}\)-bimodule.  For \(U, V\in S\), there is
a unique isometric Hilbert bimodule map
\(\mu_{U,V}\colon A_U\otimes_A A_V\to A_{UV}\) with
\[
\mu_{U,V}(\xi\otimes \eta) (\gamma_1\cdot \gamma_2)=
\xi(\gamma_1)
\eta(\gamma_2)
\]
for all \(\gamma_1\in U\), \(\gamma_2\in V\).  The involutions are
the maps \(A_U \to A_U^*\), \(f^*(\gamma) \defeq f(\gamma^{-1})^*\).
This defines a Fell bundle over~\(S\) (see
\cite{BussExel:Fell.Bundle.and.Twisted.Groupoids}*{Example~2.9}).
If~\(\A\) is saturated, then the maps~\(\mu_{U,V}\) are surjective,
that is, the Fell bundle over~\(S\) is saturated.  Then the Fell
bundle above defines an action of~\(S\) on~\(A\) by Hilbert
bimodules.

If~\(\A\) is not saturated, then we have to modify~\(S\) to make the
Fell bundle saturated.  Instead of using the general construction
in~\cite{BussExel:InverseSemigroupExpansions} mentioned in
Remark~\ref{rem:make_saturated}, we prefer a more concrete
construction that depends on the Fell bundle~\(\A\).
Let~\(\tilde{S}_U\) for \(U\in S\) be the set of all Hilbert
subbimodules of~\(A_U\) and let \(\tilde{S}\) be the disjoint union
of the sets~\(\tilde{S}_U\).  By the Rieffel correspondence, any
element of~\(\tilde{S}_U\) is of the form
\(\Hilm[F] = A_U\cdot I = J\cdot A_U\), where \(I\) and~\(J\) are
the source and range ideals of~\(\Hilm[F]\).  The involutions
\(A_U \to A_U^*\) and multiplication maps
\(\mu_{U,V}\colon A_U\otimes_A A_V\to A_{UV}\) map Hilbert
subbimodules again to Hilbert subbimodules.  So they define maps
\(\tilde{S}_U \to \tilde{S}_{U^*}\) and
\(\tilde{S}_U\times \tilde{S}_V \to \tilde{S}_{UV}\).  These define
an involution and a multiplication on~\(\tilde{S}\).

\begin{lemma}
  \label{lem:make_saturated_groupoid_Fell}
  The multiplication above makes~\(\tilde{S}\) a unital inverse
  semigroup.  And \((\Hilm[F])_{U\in S, \Hilm[F]\in\tilde{S}_U}\)
  is a saturated Fell bundle over~\(\tilde{S}\).  There is also a
  canonical inverse semigroup homomorphism \(\tilde{S}\to S\) with
  fibres~\(\tilde{S}_U\).  The Fell bundles over \(S\)
  and~\(\tilde{S}\) defined above have the same algebraic, full and
  reduced section \(\Cst\)\nb-algebras.
\end{lemma}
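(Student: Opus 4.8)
The plan is to handle the four assertions in order.

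\textbf{Step 1: $\tilde S$ is a unital inverse semigroup.} By the Rieffel correspondence each $\Hilm[F]\in\tilde S_U$ has the form $\Hilm[F]=A_U\cdot I=J\cdot A_U$ for its source ideal $I\idealin A_{\s(U)}$ and range ideal $J\idealin A_{\rg(U)}$, and $\Hilm[F]$ is an imprimitivity $J$-$I$-bimodule, so $\Hilm[F]\cdot\Hilm[F]^*=J$, $\Hilm[F]^*\cdot\Hilm[F]=I$ and $J\cdot\Hilm[F]=\Hilm[F]=\Hilm[F]\cdot I$. Hence $\Hilm[F]\cdot\Hilm[F]^*\cdot\Hilm[F]=\Hilm[F]$ and $\Hilm[F]^*\cdot\Hilm[F]\cdot\Hilm[F]^*=\Hilm[F]^*$, so the multiplication on $\tilde S$ (which is associative because the maps $\mu_{U,V}$ are) makes $\tilde S$ a regular semigroup. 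An idempotent of $\tilde S$ lies over an idempotent of $\Bis(H)$, i.e.\ over an open subset $W\subseteq X$, and is then a Hilbert subbimodule of the \(\Cst\)\nb-algebra $A_W$, hence an ideal of $A_W$; the product in $\tilde S$ of two such ideals is their intersection, so the idempotents of $\tilde S$ commute. A regular semigroup with commuting idempotents is inverse, and the unit is $A=A_X\in\tilde S_X$, the identity $A$-bimodule.

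\textbf{Step 2: the saturated Fell bundle over $\tilde S$ and the map $q\colon\tilde S\to S$.} Each $\Hilm[F]\in\tilde S_U$ is a Hilbert $A$-bimodule; the involutions and multiplication maps for $\tilde S$ are restrictions of those for the Fell bundle over $S$, so the associativity and compatibility axioms are inherited. For $\Hilm[F]\le\Hilm[G]$ in $\tilde S$ one has $\Hilm[F]=\Hilm[G]\cdot(\Hilm[F]^*\Hilm[F])\subseteq\Hilm[G]$, and the inclusion map demanded by the partial order is this literal inclusion. Since $\mu_{U,V}\colon A_U\otimes_A A_V\to A_{UV}$ is isometric, its restriction to $\Hilm[F]\otimes_A\Hilm[G]$ is isometric, hence has closed range; that range is precisely the product $\Hilm[F]\cdot\Hilm[G]$ in $\tilde S$, so the Fell bundle over $\tilde S$ is saturated. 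Finally, as $\tilde S$ is the \emph{disjoint} union of the $\tilde S_U$, sending $\Hilm[F]\in\tilde S_U$ to $U$ is a well-defined semigroup homomorphism $q\colon\tilde S\to S$ with fibres $\tilde S_U$; being a homomorphism of inverse semigroups, $q$ is order preserving.

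\textbf{Step 3: the section algebras.} The inclusions $\Hilm[F]\hookrightarrow A_U$, $\Hilm[F]\in\tilde S_U$, form a representation of the Fell bundle over $\tilde S$ in $\Cst_\alg(S,\A)$ and hence in $\Cst(S,\A)$: multiplicativity and the inner-product relations are immediate, and compatibility with the $\tilde S$-inclusion maps holds because for $U\le V$ in $S$ the defining relations of $\Cst_\alg(S,\A)$ identify $A_U$ with its image in $A_V$ under the inclusion map $j_{V,U}$, and $q$ is order preserving. Conversely the copies $A_U\in\tilde S_U$ form a representation of the Fell bundle over $S$ in $\Cst_\alg(\tilde S,\A)$: here multiplicativity uses that $\mu_{U,V}(A_U\otimes_A A_V)$ lies in the subbimodule $A_U\cdot A_V\le A_{UV}$, on which the $\tilde S$-inclusion map is respected --- the only point where non-saturation of $\A$ intervenes. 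These representations induce \Star{}homomorphisms which, checked on the generating Hilbert bimodules, are mutually inverse; this gives $\Cst_\alg(S,\A)\cong\Cst_\alg(\tilde S,\A)$ and, after completion, $\Cst(S,\A)\cong\Cst(\tilde S,\A)$. For the reduced algebras it is enough to show that the resulting isomorphism $\Phi\colon\Cst(S,\A)\to\Cst(\tilde S,\A)$ intertwines the canonical weak conditional expectations $E_S$ and $E_{\tilde S}$ onto $A''$; then $\Phi(\Null_{E_S})=\Null_{E_{\tilde S}}$, whence $\Cst_\red(S,\A)\cong\Cst_\red(\tilde S,\A)$. As both expectations are contractive and $\Cst_\alg$ is dense, it suffices to compare $E_{\tilde S}(\Phi(\xi\delta_U))=E_{\tilde S}(\xi\delta_{A_U})$ with $E_S(\xi\delta_U)$ for $\xi\in A_U$, $U\in S$. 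Using that $S$ is wide, one identifies the ideal $I_{1,U}$ of~\eqref{eq:Itu} for the action of $S$ with the ideal $A_{U\cap X}$ of sections supported on $U\cap X$, and likewise $I_{1,A_U}$ for $\tilde S$ with $A_{U\cap X}$; formula~\eqref{eq:formula-cond.exp} then shows that both $E_S(\xi\delta_U)$ and $E_{\tilde S}(\xi\delta_{A_U})$ equal the restriction of the section $\xi$ to the identity arrows in $U$, extended by zero. Thus $E_{\tilde S}\circ\Phi=E_S$, which finishes the argument.

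\textbf{Expected main obstacle.} Steps 1 and 2 are routine manipulations with the Rieffel correspondence and the Fell bundle axioms, and the algebraic and full cases in Step 3 are a generators-and-relations bookkeeping. The real work is the reduced case: one must pin down the ideals $I_{1,U}$ and $I_{1,A_U}$ precisely enough to recognise the projection onto the unit fibre in~\eqref{eq:formula-cond.exp} as the \emph{intrinsic} operation of restricting a section to the units lying in $U$ --- independent of the grading and of whether $\A$ is saturated --- so that $E_S$ and $E_{\tilde S}$ are genuinely intertwined by $\Phi$.
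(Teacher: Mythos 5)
Your Steps 1 and 2 are correct and follow the paper's own route: the identity \(t t^* t = t\) via the Rieffel correspondence, the observation that idempotents of \(\tilde{S}\) are ideals of \(A\) whose product is intersection, and the fact that the \(\tilde{S}\)\nb-inclusion maps are the literal inclusions \(\Hilm[F]=A_U\cdot \s(\Hilm[F])\subseteq A_U\). The identification of the algebraic and full section algebras is also fine and is essentially the paper's argument (the paper phrases it as ``the relations already identify each \(\xi\in\Hilm[F]\) with the corresponding element of \(A_U\)'' rather than via a pair of mutually inverse representations, but the content is the same).

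The gap is in the reduced case. You intertwine ``the canonical weak conditional expectations \(E_S\) and \(E_{\tilde S}\) onto \(A''\)'' and conclude \(\Phi(\Null_{E_S})=\Null_{E_{\tilde S}}\). But the Buss--Exel--Meyer construction of the weak conditional expectation, and the identification \(A\rtimes_\red S=(A\rtimes S)/\Null_E\) (Proposition~\ref{prop:reduced_through_E}), are only available for \emph{saturated} Fell bundles, i.e.\ for actions by Hilbert bimodules; the whole point of passing to \(\tilde S\) is that the bundle \((A_U)_{U\in S}\) is \emph{not} saturated. For a non-saturated Fell bundle over \(S\) the reduced section \(\Cst\)\nb-algebra is Exel's, defined through representations induced from irreducible representations of~\(A\), and neither the positivity of a formally defined \(E_S\) on the algebraic section algebra nor the equality of Exel's reduced norm with the GNS norm of such an \(E_S\) is established anywhere in your argument. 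So the object your computation manipulates has not been constructed, and the statement you are proving (``same reduced section algebras'') is about a different, already-defined object. The paper avoids this by identifying the full section algebras first and then noting that the two families of induced representations coincide, so Exel's reduced quotients agree; the reconciliation with the expectation picture is invoked only on the \(\tilde S\)\nb-side, where Lemma~\ref{lem:atomic_weak_expectation} applies. Your intrinsic description of the expectation as ``restriction to the units in \(U\)'' via \(I_{1,U}=A_{U\cap X}\) is correct and is a good sanity check, but to make your route rigorous you would have to prove that this restriction map defines a positive map on the non-saturated algebraic section algebra whose largest null ideal is exactly the kernel of Exel's regular representation --- which is precisely the content being skipped.
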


\begin{proof}
  The equation \(t t^* t = t\) holds for all \(t\in \tilde{S}\)
  because
  \(\Hilm[F] \otimes_A \Hilm[F]^* \otimes_A \Hilm[F] \cong
  \Hilm[F]\) holds for all Hilbert \(A\)\nb-bimodules~\(\Hilm[F]\).
  To show that~\(\tilde{S}\) is an inverse semigroup, it suffices to
  prove that the idempotent elements in~\(\tilde{S}\) form a
  commutative subsemigroup.  If \(\Hilm[F]\in\tilde{S}_U\) is
  idempotent, then \(U^2=U\) and so~\(A_U\) is an ideal in~\(A\).
  Then~\(\Hilm[F]\) is an ideal in~\(A\) as well.  The product of
  two ideals is their intersection.  Since this operation on ideals
  is commutative and \(E(S)\) is commutative, the idempotents
  in~\(\tilde{S}\) form a commutative semigroup.  Therefore,
  \(\tilde{S}\) is an inverse semigroup.  The element
  \(A\in \tilde{S}_1\) is an identity element in~\(\tilde{S}\).

  We have defined the multiplication in~\(\tilde{S}\) so that the
  involutions and multiplication maps in our original Fell bundle
  over~\(S\) restricted to the elements of~\(\tilde{S}\) give
  \((\Hilm[F])_{U\in S, \Hilm[F]\in\tilde{S}_U}\) the structure of a
  saturated Fell bundle over~\(\tilde{S}\).  We only discuss the
  inclusion maps (which are redundant in the saturated case by the
  results in~\cite{Buss-Meyer:Actions_groupoids}).  Let
  \(\Hilm[F]_1\subseteq A_{U_1}\) and
  \(\Hilm[F]_2\subseteq A_{U_2}\) be elements of~\(\tilde{S}\).
  Then \((U_1,\Hilm[F]_1) \le (U_2,\Hilm[F]_2)\) holds
  in~\(\tilde{S}\) if and only if \(U_1\le U_2\) in~\(S\) and the
  multiplication map
  \(A_{U_2}\otimes_A A_{\s(U_1)} \hookrightarrow A_{U_1}\) maps
  \(\Hilm[F]_2\otimes_A \s(\Hilm[F]_1)\) onto~\(\Hilm[F]_1\); this
  follows from our description of idempotent elements
  in~\(\tilde{S}\) above.  In this situation, there is a
  canonial inclusion map
  \(\Hilm[F]_1 \cong \Hilm[F]_2\otimes_A \s(\Hilm[F]_1)
  \hookrightarrow \Hilm[F]_2\).  It agrees with the restriction of
  the map \(A_{U_1} \hookrightarrow A_{U_2}\).  And
  \((U_1,\Hilm[F]_1) \le (U_2,\Hilm[F]_2)\) holds if and only if the
  map \(A_{U_1} \hookrightarrow A_{U_2}\) maps~\(\Hilm[F]_1\)
  into~\(\Hilm[F]_2\).

  A section of the Fell bundle over~\(\tilde{S}\) is a finite formal
  linear combination of elements of~\(\Hilm[F]\) for
  \((U,\Hilm[F])\in\tilde{S}\).  The relations are defined
  in~\cite{BussExel:Fell.Bundle.and.Twisted.Groupoids} using only
  the inclusion maps \(\Hilm[F]_1\hookrightarrow \Hilm[F]_2\) for
  \(\Hilm[F]_1\subseteq A_{U_1}\), \(\Hilm[F]_2\subseteq A_{U_2}\)
  with \((U_1,\Hilm[F]_1) \le (U_2,\Hilm[F]_2)\).  More relations
  are used in~\cite{Buss-Exel-Meyer:Reduced}, giving a variant of
  the algebraic section algebra; but this is worked out only in the
  saturated case.  The relations
  in~\cite{BussExel:Fell.Bundle.and.Twisted.Groupoids} already
  suffice to show that any element of~\(\Hilm[F]\) for
  \((U,\Hilm[F])\in \tilde{S}\) is identified in the algebraic
  section \(\Cst\)\nb-algebra with the corresponding element of
  \((U,A_U)\).  And these are subjected to the same relations and
  \Star{}algebra structure as in the section \(\Cst\)\nb-algebra of
  the Fell bundle over~\(S\).  So the section
  \Star{}algebras of the Fell bundles as defined
  in~\cite{BussExel:Fell.Bundle.and.Twisted.Groupoids} are
  canonically isomorphic.  The same would be true for the definition
  in~\cite{Buss-Exel-Meyer:Reduced} if that were carried over to the
  non-saturated case.  The full section \(\Cst\)\nb-algebras are
  defined as the maximal \(\Cst\)\nb-completions of the
  section \Star{}algebras.  Hence these are also canonically
  isomorphic.  The reduced section \(\Cst\)\nb-algebra of a
  non-saturated Fell bundle is defined in~\cite{Exel:noncomm.cartan}
  using all representations of the full section \(\Cst\)\nb-algebra
  that are obtained by inducing irreducible representations of~\(A\)
  in a certain way.  When we identify the full section
  \(\Cst\)\nb-algebras as above, we get the same induced
  representations for both of them.  Hence the reduced section
  \(\Cst\)\nb-algebras are also isomorphic.  For saturated Fell
  bundles, the definition of the reduced section \(\Cst\)\nb-algebra
  through a weak conditional expectation is equivalent to Exel's
  definition by Lemma~\ref{lem:atomic_weak_expectation}.
\end{proof}

Let~\(\A\) be a Fell bundle over~\(H\).  Then~\(H\) acts naturally
on the space~\(\dual{A}\) of all irreducible representations of the
unit fibre \(A=A_X\).  First, any irreducible representation
of~\(A\) factors through the evaluation map \(A \to A_x\) for some
\(x\in X\), and this defines a continuous map
\(\psi\colon \dual{A} \to X\).  Such a map is also equivalent to a
\(\Cont_0(X)\)-\(\Cst\)-algebra structure on~\(A\) by the
Dauns--Hofmann Theorem (see~\cite{Nilsen:Bundles}).  The
map~\(\psi\) is the anchor map of the \(H\)\nb-action
on~\(\dual{A}\).  Secondly, if \(\gamma\in H\), then the Hilbert
\(A_{\rg(\gamma)},A_{\s(\gamma)}\)-bimodule \(A_\gamma\) induces a
partial homeomorphism~\(\dual{A_\gamma}\) from
\(\dual{A_{\s(\gamma)}}\) to \(\dual{A_{\rg(\gamma)}}\).  If the
Fell bundle over~\(H\) is not saturated, then the domain and
codomain~\(\dual{A_\gamma}\) may be smaller than
\(\dual{A_{\s(\gamma)}}\) and \(\dual{A_{\rg(\gamma)}}\),
respectively.  These partial homeomorphisms still form a continuous
``partial'' action of~\(H\) on~\(\dual{A}\), and this is enough to
form a transformation groupoid \(\dual{A}\rtimes H\), just as for a
partial group action on a space.  The proof of
Proposition~\ref{prop:wide_semigroup_isomorphism} extends to this
situation and shows that
\(\dual{A}\rtimes H \cong \dual{A}\rtimes S = \dual{A} \rtimes
\tilde{S}\).

\begin{remark}
  If~\(H\) is Hausdorff, then the unit space of~\(H\) is closed
  in~\(H\).  This implies that the unit space~\(\dual{A}\) is
  closed in~\(\dual{A}\rtimes H\).  That is, all inverse semigroup
  actions associated to actions of~\(H\) are closed (see
  \cite{Buss-Exel-Meyer:Reduced}*{Example~6.7}).
\end{remark}

The following theorem describes when a saturated Fell bundle over
\(\Bis(H)\) comes from a saturated Fell bundle over~\(H\).  The
extra ingredient is the map \(\psi\colon \dual{A} \to X\).

\begin{theorem}[\cite{Buss-Meyer:Actions_groupoids}*{Theorem~6.1}]
  Let~\(H\) be an étale groupoid with locally compact, Hausdorff
  object space~\(X\).  A saturated Fell bundle over~\(\Bis(H)\)
  comes from a saturated Fell bundle over~\(H\) if and only if the
  map \(U\mapsto A_U\) from open subsets in~\(X\) to ideals in~\(A\)
  commutes with suprema, if and only if there is a continuous map
  \(\pi\colon \dual{A} \to X\) such that
  \(\dual{A_U} = \pi^{-1}(U)\) for all open subsets
  \(U\subseteq X\).
\end{theorem}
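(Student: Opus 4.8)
The plan is to establish the implications $(1)\Rightarrow(3)\Rightarrow(2)$ and $(2)\Rightarrow(3)\Rightarrow(1)$, so that all three conditions are equivalent. Throughout write $A\defeq A_X$ for the unit fibre and recall that $\dual{A}\rtimes\Bis(H)\cong\dual{A}\rtimes S$ by Proposition~\ref{prop:wide_semigroup_isomorphism} and that for an idempotent bisection $V\subseteq X$ the fibre $A_V$ is an ideal in~$A$. The implication $(1)\Rightarrow(3)$ is immediate: a saturated Fell bundle $(A_\gamma)_{\gamma\in H}$ over~$H$ exhibits $A$ as the section algebra of the upper semicontinuous bundle of $\Cst$-algebras $(A_x)_{x\in X}$; every irreducible representation of~$A$ factors through an evaluation $A\to A_x$, which yields a continuous map $\pi\colon\dual{A}\to X$, and $\dual{A_U}=\pi^{-1}(U)$ because $A_U$ is the ideal of sections supported on~$U$, that is, $A_U=\Cont_0(U)\cdot A$. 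For $(3)\Rightarrow(2)$: the assignment $I\mapsto\dual{I}$ is a lattice isomorphism of $\Ideals(A)$ onto the open subsets of~$\dual{A}$ which carries suprema of ideals to unions, and $\pi^{-1}$ preserves unions, so $U\mapsto A_U$ preserves suprema.

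For $(2)\Rightarrow(3)$ the crux is that $U\mapsto A_U$ is a frame homomorphism $\mathcal{O}(X)\to\Ideals(A)$. It sends the top element to $A_X=A$; it preserves arbitrary joins by hypothesis $(2)$ — in particular the empty join, so $A_\emptyset=0$; and it preserves finite meets, since for idempotent bisections $UV=U\cap V$ in $\Bis(H)$, so saturation gives $A_{U\cap V}=A_U\cdot A_V$, which equals $A_U\cap A_V$ because the product of two ideals in a $\Cst$-algebra is their intersection. Composing with the frame isomorphism $\Ideals(A)\cong\mathcal{O}(\dual{A})$ produces a frame homomorphism $\mathcal{O}(X)\to\mathcal{O}(\dual{A})$; equivalently (by the Dauns--Hofmann Theorem and standard facts about $\Cont_0(X)$-algebras, using that $X$ is locally compact Hausdorff) this is a $\Cont_0(X)$-algebra structure on~$A$, whose associated continuous map $\pi\colon\dual{A}\to X$ then satisfies $\pi^{-1}(U)=\dual{A_U}$.

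The substantial implication is $(3)\Rightarrow(1)$. Using~$\pi$ and local compactness of~$X$, realise $A$ as the section algebra of an upper semicontinuous bundle $(A_x)_{x\in X}$ of $\Cst$-algebras. Fix a bisection $U\subseteq H$. By saturation $A_U$ is an $A_{\rg(U)}$-$A_{\s(U)}$-imprimitivity bimodule, so it inherits a left $\Cont_0(\rg(U))$- and a right $\Cont_0(\s(U))$-module structure from the $\Cont_0(X)$-algebra structures on the ideals $A_{\rg(U)}$ and $A_{\s(U)}$. I would then prove the key lemma that, transported to $\Cont_0(U)$ along the homeomorphisms $\rg|_U$ and $\s|_U$, these two structures agree, and that this common structure assigns to each open $W\subseteq U$ the submodule $A_W\subseteq A_U$: indeed $\rg(W)\cdot U=W=U\cdot\s(W)$ in $\Bis(H)$, whence $A_{\rg(W)}\cdot A_U=A_W=A_U\cdot A_{\s(W)}$, and a Dauns--Hofmann-type uniqueness argument upgrades this equality of submodules to equality of the module structures. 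Consequently $A_U$ disintegrates into an upper semicontinuous bundle of Hilbert bimodules $A_{U,\gamma}\defeq A_U/A_{U\setminus\{\gamma\}}$ over $\gamma\in U$, each an $A_{\rg(\gamma)}$-$A_{\s(\gamma)}$-imprimitivity bimodule. Since the inclusion maps $A_W\hookrightarrow A_U$ for $W\le U$ identify $A_W$ with the submodule of $A_U$ cut out by~$W$, one obtains $A_{U,\gamma}\cong A_{V,\gamma}$ for $\gamma\in U\cap V$ (pass to any bisection $W$ with $\gamma\in W\subseteq U\cap V$, which exists as $H$ is étale), so $A_\gamma\defeq A_{U,\gamma}$ is well defined. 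The maps $\mu_{U,V}$, the involutions and the inclusion maps of the bundle over $\Bis(H)$ disintegrate fibrewise, compatibly with the $\Cont_0$-structures, making $(A_\gamma)_{\gamma\in H}$ — with $\Cst$-algebra fibres $A_x$ over~$X$, and saturated since the bundle over $\Bis(H)$ is — into a Fell bundle over~$H$ once $\coprod_{\gamma\in H}A_\gamma$ is topologised by gluing the bundle topologies of the $A_U$ over the open cover $\{U:U\subseteq H \text{ a bisection}\}$. Finally one checks that the Fell bundle over $\Bis(H)$ reconstructed from $(A_\gamma)$ via $U\mapsto\Cont_0(U,\A|_U)$ is the original one.

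The main obstacle is precisely this last step: disintegrating every $A_U$ compatibly with all inclusion maps, checking that the fibres glue to an upper semicontinuous Banach bundle over the (generally non-Hausdorff) space~$H$ with \emph{globally} continuous multiplication and involution, and verifying that the construction inverts the passage from Fell bundles over~$H$ to Fell bundles over $\Bis(H)$. The one genuinely new input — and the place where condition $(2)$/$(3)$ is used — is the identity $\rg(W)\cdot U=W=U\cdot\s(W)$ in $\Bis(H)$, which forces the left and right $\Cont_0(X)$-module structures on~$A_U$ to be conjugate by~$\haction_U$; everything else is a routine, if lengthy, application of the theory of $\Cont_0(X)$-algebras and the Rieffel correspondence.
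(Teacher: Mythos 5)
The paper does not actually prove this statement: it is imported verbatim as \cite{Buss-Meyer:Actions_groupoids}*{Theorem~6.1}, so there is no in-paper argument to compare yours against. Judged on its own, your outline is correct and follows what is essentially the standard (and, as far as the cited source goes, the intended) route. The easy implications are handled properly: $(1)\Rightarrow(3)$ is Dauns--Hofmann for the section algebra of $\A|_X$; $(3)\Rightarrow(2)$ is the lattice isomorphism $\Ideals(A)\cong\mathcal O(\dual{A})$; and your $(2)\Rightarrow(3)$ correctly isolates the two facts that make $U\mapsto A_U$ a frame homomorphism, namely $A_X=A$ together with the hypothesis on joins, and $A_{U\cap V}=A_U\cdot A_V=A_U\cap A_V$ from saturation of idempotent fibres. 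The passage from a frame homomorphism $\mathcal O(X)\to\mathcal O(\dual{A})$ to a continuous map $\dual{A}\to X$ only needs $X$ to be sober, which locally compact Hausdorff spaces are, so that step is sound even though $\dual{A}$ itself is badly non-Hausdorff.

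For the substantial direction $(3)\Rightarrow(1)$ you correctly identify the one non-routine input: the identity $\rg(W)\cdot U=W=U\cdot\s(W)$ in $\Bis(H)$, which via saturation forces the left and right $\Cont_0$-structures on $A_U$ to agree after transport along $\rg|_U$ and $\s|_U$. Your ``Dauns--Hofmann-type uniqueness'' step is legitimate but worth one more sentence when written out: the two structures give two nondegenerate central actions of $\Cont_0(U)$ on the imprimitivity bimodule $A_U$, hence two nondegenerate \Star{}homomorphisms $\Cont_0(U)\to Z\Mult(A_{\s(U)})\cong\Contb(\Prim(A_{\s(U)}))$ inducing the same assignment of ideals, i.e.\ the same continuous map $\Prim(A_{\s(U)})\to U$, and such a homomorphism is determined by that map. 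The remaining verifications you defer -- coherence of the fibre identifications $A_{U,\gamma}\cong A_{V,\gamma}$ via the inclusion maps $j_{U,W}$, the gluing of the bundle topologies over the (non-Hausdorff) total space $H$, continuity of the fibrewise multiplication and involution, and the check that the construction inverts $U\mapsto\Cont_0(U,\A|_U)$ -- are indeed routine consequences of the Fell-bundle axioms over $\Bis(H)$ and the $\Cont_0$-compatibility of the $\mu_{U,V}$, so the sketch would survive being fleshed out. I see no gap in the logic, only length to be supplied.
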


A similar result holds for non-saturated Fell bundles over~\(H\) and
\(\Bis(H)\).  And if we replace \(\Bis(H)\) by
\(S\subseteq \Bis(H)\), then a Fell bundle over~\(S\) comes from a
Fell bundle over~\(H\) if and only if there is a continuous,
\(S\)\nb-equivariant map \(\pi\colon \dual{A} \to X\) such that
\(\dual{A_e} = \pi^{-1}(e)\) for all \(e\in E(S)\),
identified with open subsets of~\(X\).

\subsection{The full groupoid crossed product}
\label{sec:full_crossed_groupoid}

If \(U\subseteq H\) is a bisection, then~\(U\) is Hausdorff and
locally compact.  Let \(\Contc(U,\A)\subseteq A_U\) be the space of
continuous sections of~\(\A|_U\) with compact support.
Extend functions in \(\Contc(U,\A)\) by~\(0\) to~\(H\).  These
extensions need not be continuous any more.  Let
\(\mathfrak{S}(H,\A)\) be the linear span of \(\Contc(U,\A)\)
for all bisections \(U\subseteq H\).  We call sections in
\(\mathfrak{S}(H,\A)\) \emph{quasi-continuous}.  The space 
\(\mathfrak{S}(H,\A)\) carries
a convolution product and an involution given by
\[
  (f*g)(\gamma) \defeq \sum_{\rg(\eta) = \rg(\gamma)}
  f(\eta)\cdot g(\eta^{-1}\cdot \gamma),\qquad
  (f^*)(\gamma) \defeq f(\gamma^{-1})^*
\]
for all \(f,g\in \mathfrak{S}(H,\A)\), \(\gamma\in H\).  The
\emph{full section \(\Cst\)\nb-algebra} \(\Cst(H,\A)\) of the Fell
bundle~\(\A\) over~\(H\) is defined as the maximal \(\Cst\)\nb-completion of
the \Star{}algebra \(\mathfrak{S}(H,\A)\).

\begin{proposition}
  \label{pro:groupoid_vs_isg_full}
  Let~\(H\) be an étale groupoid with locally compact and Hausdorff
  unit space~\(X\) and let~\(\A\) be a Fell bundle over~\(H\).
  Turn~\(\A\) into a saturated Fell bundle over an inverse
  semigroup~\(\tilde{S}\) as in
  Lemma~\textup{\ref{lem:make_saturated_groupoid_Fell}} above.  Then
  \(\Cst(H,\A) \cong A\rtimes \tilde{S}\).
\end{proposition}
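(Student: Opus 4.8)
The statement claims a canonical isomorphism $\Cst(H,\A) \cong A\rtimes \tilde{S}$. Both sides are defined as maximal $\Cst$\nb-completions of $*$\nb-algebras: the left-hand side completes $\mathfrak{S}(H,\A)$, the quasi-continuous sections, while the right-hand side completes $A\rtimes_\alg \tilde{S}$, the algebraic crossed product of the saturated Fell bundle over~$\tilde{S}$ built in Lemma~\ref{lem:make_saturated_groupoid_Fell}. The plan is therefore to produce a $*$\nb-algebra isomorphism between these two dense $*$\nb-subalgebras; by the universal property of the maximal $\Cst$\nb-norm, such an isomorphism extends uniquely to an isomorphism of the $\Cst$\nb-completions. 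By Lemma~\ref{lem:make_saturated_groupoid_Fell}, the full section $\Cst$\nb-algebra does not change when we pass from the Fell bundle over~$\tilde{S}$ to the (possibly non-saturated) Fell bundle over~$S$; so it suffices to exhibit an isomorphism $A\rtimes_\alg S \cong \mathfrak{S}(H,\A)$ and, on the level of completions, observe that both give $\Cst(H,\A)$.

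\emph{Key steps.} First I would define the candidate map on generators. For $U\in S$, the fibre $A_U$ of the Fell bundle over~$S$ is the space of $\Cont_0$\nb-sections of $\A|_U$, and $A_U$ embeds into $A\rtimes_\alg S$ via $\xi\mapsto \xi\delta_U$. Extending a section $\xi\in \Contc(U,\A)\subseteq A_U$ by zero to $H$ lands it inside $\mathfrak{S}(H,\A)$; this defines a linear map $\Contc(U,\A)\to \mathfrak{S}(H,\A)$ and hence, summing over $U\in S$, a linear map $\Phi\colon \bigoplus_{U\in S}\Contc(U,\A)\to\mathfrak{S}(H,\A)$. Second, I would check that $\Phi$ descends to $A\rtimes_\alg S$: the relations defining $A\rtimes_\alg S$ identify $\vartheta_{V,U}(\xi)\delta_V$ with $\xi\delta_U$ for $\xi\in A_U\cdot I_{U,V}$, $V\le U$; but in $\mathfrak{S}(H,\A)$ the inclusion maps $A_U\hookrightarrow A_V$ for $U\le V$ in $S$ (i.e.\ $U\subseteq V$ as bisections) correspond precisely to restriction/zero\nb-extension of sections along the inclusion of bisections, so the same section of $\A$ supported on $U$ represents both sides. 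This matches exactly the identification already used in the algebraic section algebra of the Fell bundle over~$\tilde{S}$ discussed in Lemma~\ref{lem:make_saturated_groupoid_Fell}. Third, I would verify that $\Phi$ is a $*$\nb-homomorphism: the convolution $f*g$ on $\mathfrak{S}(H,\A)$ restricted to $\Contc(U,\A)\times\Contc(V,\A)$ reproduces the multiplication map $\mu_{U,V}\colon A_U\otimes_A A_V\to A_{UV}$, because for $\gamma = \gamma_1\gamma_2$ with $\gamma_1\in U$, $\gamma_2\in V$ there is a unique such factorisation (as $U,V$ are bisections) and $(f*g)(\gamma) = f(\gamma_1)g(\gamma_2)$; and the involution $f^*(\gamma)=f(\gamma^{-1})^*$ matches the involution $A_U\to A_U^*$. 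Fourth, surjectivity: every bisection of~$H$ is a union of bisections in~$S$ because $S$ is wide, so $\mathfrak{S}(H,\A)$ is spanned by $\Contc(U,\A)$ for $U\in S$ (using a partition of unity subordinate to a cover of a general bisection's support by members of $S$), hence $\Phi$ is onto. Fifth, injectivity: this is the delicate point — two elements of $\bigoplus_U \Contc(U,\A)$ have the same image under zero\nb-extension precisely when they differ by the inclusion relations, which is exactly the kernel defining $A\rtimes_\alg S$; here I would invoke that the inclusion maps of the Fell bundle over~$\tilde{S}$ (equivalently over $S$) were \emph{designed} in Lemma~\ref{lem:make_saturated_groupoid_Fell} to capture this, together with Remark~\ref{rem:cross_product_graded} that $A\rtimes_\alg S\to A\rtimes_\red S$ is injective so no collapse is lost.

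\emph{Conclusion of the argument.} Having a $*$\nb-algebra isomorphism $A\rtimes_\alg S\xrightarrow{\sim}\mathfrak{S}(H,\A)$, and knowing $A\rtimes_\alg \tilde{S}$ has the same full and reduced completions as the Fell bundle over~$S$ (Lemma~\ref{lem:make_saturated_groupoid_Fell}), the universal property of maximal $\Cst$\nb-norms gives $A\rtimes \tilde{S}\cong \Cst(H,\A)$. One should also remark that this isomorphism is the identity on the common subalgebra $A = A_X = \Cont_0(X,\A)$, so it is genuinely canonical.

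\emph{Main obstacle.} The hard part will be the bookkeeping around injectivity, i.e.\ showing that the zero\nb-extension map does not collapse more than the algebraic crossed product relations do. The subtlety is that in $\mathfrak{S}(H,\A)$ a single element may have several representations as sums of sections supported on overlapping bisections $U_1,\dots,U_n\in S$, and these overlaps $U_i\cap U_j$ are unions (not single members) of bisections in~$S$ by the wideness condition in Definition~\ref{def:wide_isg}. One must argue that any such coincidence is generated by the elementary inclusion relations $V\le U$ — this is precisely the content that makes the Fell bundle over~$\tilde{S}$ (hence $S$) faithfully represented in $A\rtimes_\alg S$, and it should follow by a partition\nb-of\nb-unity / support argument on the étale groupoid, reducing any identity of quasi\nb-continuous sections to pointwise identities in the fibres $A_\gamma$, together with the corresponding statement in the algebraic section algebra established in Lemma~\ref{lem:make_saturated_groupoid_Fell}.
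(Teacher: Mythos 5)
There is a genuine gap, and it sits exactly where the paper itself flags the difficulty: the two dense \Star{}algebras $\mathfrak{S}(H,\A)$ and $A\rtimes_\alg \tilde{S}$ are \emph{not} isomorphic, so your plan of exhibiting a \Star{}isomorphism between them and then invoking the universal property of the maximal $\Cst$\nb-norm cannot be carried out as stated. The fibre $A_U$ of the inverse semigroup Fell bundle is the space of \emph{all} $\Cont_0$\nb-sections of $\A|_U$, whereas $\mathfrak{S}(H,\A)$ is spanned only by the \emph{compactly supported} continuous sections $\Contc(U,\A)$. A $\Cont_0$\nb-section on a non-compact bisection that does not have compact support is not a finite linear combination of compactly supported sections, so its image in $A\rtimes_\alg\tilde{S}$ has no preimage in $\mathfrak{S}(H,\A)$. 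Already for the trivial groupoid $H=X=\R$ with the trivial line bundle one has $\mathfrak{S}(H,\A)=\Contc(\R)\subsetneq \Cont_0(\R)=A\rtimes_\alg\tilde{S}$. So the canonical map one actually gets is an \emph{injective but non-surjective} \Star{}homomorphism $\mathfrak{S}(H,\A)\to A\rtimes_\alg\tilde{S}$ (its well-definedness is the content of your second step, which the paper handles via \cite{Buss-Meyer:Actions_groupoids}*{Proposition~B.2}); your fourth step establishes surjectivity onto $\mathfrak{S}(H,\A)$, not onto $A\rtimes_\alg\tilde{S}$.

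Because of this, the conclusion needs a genuinely different argument, which is the real content of the paper's proof and is absent from your proposal: one must show that the two \Star{}algebras have the same \Star{}\emph{representations}, not that they are isomorphic. A priori, a representation of the smaller algebra $\mathfrak{S}(H,\A)$ might fail to extend to $A\rtimes_\alg\tilde{S}$, in which case the maximal $\Cst$\nb-norm on $\mathfrak{S}(H,\A)$ could be strictly larger than the restriction of the maximal norm of $A\rtimes_\alg\tilde{S}$. The paper closes this by observing that $\Contc(e,\A)$ for $e\in E(S)$ is an increasing union of $\Cst$\nb-subalgebras of $A_e$, so any \Star{}representation of $\mathfrak{S}(H,\A)$ is automatically contractive on it; by the $\Cst$\nb-identity the restriction to each $\Contc(t,\A)$, $t\in S$, is then bounded in the norm of $A_t$ and extends uniquely to $A_t$, and these extensions assemble into a representation of the Fell bundle $(A_t)_{t\in S}$. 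Only then do the two maximal completions coincide. Your remaining steps (the convolution matching $\mu_{U,V}$ on bisections, the partition-of-unity spanning argument, and the use of the inclusion relations to control the kernel) are sound and agree with the paper, but without the automatic-extension argument the proof does not go through.
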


\begin{proof}
  If the Fell bundle~\(\A\) is saturated, then the isomorphism
  \(\Cst(H,\A) \cong A\rtimes S\) is
  \cite{Buss-Meyer:Actions_groupoids}*{Corollary~5.6}.  Under
  separability hypotheses,
  \cite{BussExel:Fell.Bundle.and.Twisted.Groupoids}*{Theorem~2.13}
  says that \(\Cst(H,\A)\) is isomorphic to the full section
  \(\Cst\)\nb-algebra of the Fell bundle~\((A_U)_{U\in S}\).  This
  is isomorphic to \(A\rtimes \tilde{S}\) by
  Lemma~\ref{lem:make_saturated_groupoid_Fell}.  We briefly sketch
  the proof to point out that the extra saturatedness or
  separability assumptions in these proofs are not needed.  What
  makes the proof tricky is that the algebraic \Star{}subalgebras
  used to define \(\Cst(H,\A)\) and \(A\rtimes \tilde{S}\) are \emph{not}
  the same.  The proof shows that they have the same
  \Star{}representations.

  If \(U\in\Bis(H)\), then any compact subset of~\(U\) is covered by
  finitely many bisections in~\(S\).  Using a partition of unity,
  one shows that functions in \(\Contc(t,\A)\) for \(t\in S\)
  already span \(\mathfrak{S}(H,\A)\).  This gives a surjective
  linear map
  \(\bigoplus_{t\in \tilde{S}} \Contc(t,\A) \onto \mathfrak{S}(H,\A)\).  The
  proof of Lemma~\ref{lem:make_saturated_groupoid_Fell} shows that
  \(A\rtimes_\alg \tilde{S}\subseteq A\rtimes \tilde{S}\) is spanned
  by the subspaces~\(A_t\) for \(t\in S\), giving a surjective
  linear map
  \(\bigoplus_{t\in S} A_t \onto A\rtimes_\alg \tilde{S}\).  We
  claim that the map
  \(\bigoplus_{t\in S} \Contc(t,\A) \to \bigoplus_{t\in S} A_t\)
  defined by the inclusion maps \(\Contc(t,\A)\to A_t\) for
  \(t\in S\) descends to a well defined map
  \(\mathfrak{S}(H,\A) \to A\rtimes_\alg \tilde{S}\).  This follows
  from \cite{Buss-Meyer:Actions_groupoids}*{Proposition~B.2}, which
  describes the kernel of the map
  \(\bigoplus_{t\in S} \Contc(t,\A) \to \mathfrak{S}(H,\A)\) in
  terms of the inclusion maps
  \(\Contc(t,\A)\hookrightarrow \Contc(u,\A)\) for \(t\le u\).  The
  resulting map \(\mathfrak{S}(H,\A) \to A\rtimes_\alg \tilde{S}\)
  is an injective \Star{}algebra homomorphism, but not surjective.

  If \(U\subseteq X\), then any \Star{}representation of
  \(\Contc(U,\A)\) is already bounded in the \(\Cst\)\nb-norm
  on~\(A_U\) because \(\Contc(U,\A)\) is a union of
  \(\Cst\)\nb-subalgebras of~\(A_U\).  Hence it extends uniquely to
  a \Star{}representation of~\(A_U\).  Then it follows that the
  restriction of a \Star{}representation of \(\mathfrak{S}(H,\A)\)
  to \(\Contc(t,\A)\) for \(t\in S\) is bounded in the norm
  of~\(A_t\) and hence extends uniquely to a bounded linear map
  on~\(A_t\).  These maps form a representation of the Fell
  bundle~\((A_t)_{t\in S}\) over~\(S\).  Thus \(\mathfrak{S}(H,\A)\)
  and \(A\rtimes_\alg \tilde{S}\) have the same representations and
  hence the same maximal \(\Cst\)\nb-completions.
\end{proof}

\begin{remark}
  \label{rem:Fell_line_bundle}
  A Fell line bundle over~\(H\) is a continuous Fell bundle with
  \(A_\gamma\cong\C\) as a vector space for all \(\gamma\in H\).
  Such Fell bundles correspond to ``twists'' of~\(H\) (the proof of
  \cite{Deaconu-Kumjian-Ramazan:Fell_groupoid_morphism}*{Theorem~5.6}
  still works for non-Hausdorff groupoids).  The corresponding Fell
  bundles over inverse semigroups are studied
  in~\cite{BussExel:Fell.Bundle.and.Twisted.Groupoids}, where they
  are called \emph{semi-Abelian}.  The section \(\Cst\)\nb-algebra
  of a Fell line bundle over~\(H\) is a \emph{twisted} groupoid
  \(\Cst\)\nb-algebra of~\(H\).  The usual groupoid
  \(\Cst\)\nb-algebra \(\Cst(H)\) corresponds to the ``trivial''
  Fell line bundle, where all the multiplication maps are the usual
  multiplication map on~\(\C\).
\end{remark}

\subsection{The reduced section \texorpdfstring{$\Cst$}{C*}-algebra}
\label{sec:reduced_crossed_groupoid}

Next we define the reduced section \(\Cst\)\nb-algebra of the Fell
bundle~\(\A\) over~\(H\).  Let \(x\in X\).  Then
\[
  \Contc(H_x,\A) = \bigoplus_{\s(\gamma)=x} A_\gamma
\]
is a pre-Hilbert \(A_x\)\nb-module for the obvious right
multiplication and the standard inner product
\(\braket{f}{g} \defeq \sum_{\s(\gamma)=x} f(\gamma)^* g(\gamma)\).
Let \(\ell^2(H_x,\A)\) denote its Hilbert \(A_x\)\nb-module
completion.  If \(f\in \mathfrak{S}(H,\A)\),
\(g\in \Contc(H_x,\A)\), then define
\(\lambda_x(f)(g) \in\Contc(H_x,\A)\) by
\[
  \lambda_x(f)(g)(\gamma)\defeq
  \sum_{\rg(\eta) = \rg(\gamma)} f(\eta) g(\eta^{-1}\gamma).
\]
The operator~\(\lambda_x(f)\) extends uniquely to an adjointable
operator on \(\ell^2(H_x,\A)\) with adjoint~\(\lambda_x(f^*)\), and
this defines a non-degenerate \Star{}representation of
\(\mathfrak{S}(H,\A)\) on \(\ell^2(H_x,\A)\).  It extends uniquely
to a non-degenerate \Star{}representation
\[
  \lambda_x\colon \Cst(H,\A) \to \Bound\bigl(\ell^2(H_x,\A)\bigr).
\]

\begin{definition}
  \label{def:reduced_groupoid}
  The \emph{reduced norm} on \(\Cst(H,\A)\) or
  \(\mathfrak{S}(H,\A)\) is defined by
  \[
    \norm{f}_\red \defeq \sup_{x\in X} {}\norm{\lambda_x(f)}.
  \]
  The \emph{reduced section \(\Cst\)\nb-algebra}~\(\Cst_\red(H,\A)\)
  of the Fell bundle~\(\A\) over~\(H\) is defined as the completion
  of \(\mathfrak{S}(H,\A)\) in the reduced norm.  Equivalently, it
  is the quotient of~\(\Cst(H,\A)\) by the ideal
  \(\bigcap_{x\in X} \ker (\lambda_x)\), which is the null space of
  the reduced norm on~\(\Cst(H,\A)\).
\end{definition}

\begin{proposition}
  \label{pro:groupoid_vs_isg_red}
  The isomorphism \(\Cst(H,\A) \cong A\rtimes \tilde{S}\) in
  Proposition~\textup{\ref{pro:groupoid_vs_isg_full}} descends to an
  isomorphism \(\Cst_\red(H,\A) \cong A\rtimes_\red \tilde{S}\).
\end{proposition}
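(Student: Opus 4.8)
The plan is to transfer the isomorphism $\Phi\colon A\rtimes\tilde S\xrightarrow{\ \sim\ }\Cst(H,\A)$ of Proposition~\ref{pro:groupoid_vs_isg_full} to the reduced completions by comparing the two reduced $\Cst$\nobreakdash-norms on the common dense $\Star$subalgebra. By Lemma~\ref{lem:make_saturated_groupoid_Fell}, the Fell bundles over $S$ and over $\tilde S$ have the same reduced section $\Cst$\nobreakdash-algebra; and since the Fell bundle over $\tilde S$ is saturated, its reduced section $\Cst$\nobreakdash-algebra coincides with $A\rtimes_\red\tilde S$ of Definition~\ref{def:reduced_crossed}, because for saturated bundles the weak\nobreakdash-expectation definition agrees with Exel's definition by Lemma~\ref{lem:atomic_weak_expectation} (as noted in the proof of Lemma~\ref{lem:make_saturated_groupoid_Fell}). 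Hence it suffices to identify $\Cst_\red(H,\A)$ with Exel's reduced section $\Cst$\nobreakdash-algebra of the Fell bundle $(A_U)_{U\in S}$ over $S$, compatibly with the identification of dense subalgebras from the proof of Proposition~\ref{pro:groupoid_vs_isg_full}. Concretely, for $f$ in that dense $\Star$algebra I must show
\[
  \norm{f}_\red=\sup_{x\in X}\norm{\lambda_x(f)}=\sup_{\pi\in\dual A}\norm{\operatorname{Ind}_\pi(f)},
\]
where $\operatorname{Ind}_\pi$ is the representation of the full section $\Cst$\nobreakdash-algebra induced from an irreducible representation $\pi$ of $A$.

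First I would analyse the induced representations. Every $\pi\in\dual A$ factors through the evaluation $A\to A_x$ at $x=\psi(\pi)\in X$ described in Section~\ref{sec:groupoid_Fell}, say $\pi=\tilde\pi\circ(\text{evaluation})$ with $\tilde\pi\in\dual{A_x}$. For $U\in S$ the Hilbert bimodule $A_U$ satisfies $A_U\otimes_A\Hils_\pi\neq0$ exactly when $x\in\s(U)$, and then $A_U\otimes_A\Hils_\pi\cong A_\gamma\otimes_{A_x}\Hils_{\tilde\pi}$ for the unique $\gamma\in U$ with $\s(\gamma)=x$. Summing over $S$ modulo the germ relation and using that $S$ is wide (Proposition~\ref{prop:wide_semigroup_isomorphism}) to match germs with arrows $\gamma\in H$ with $\s(\gamma)=x$, the induced module is identified with $\ell^2(H_x,\A)\otimes_{A_x}\Hils_{\tilde\pi}$, and under this identification $\operatorname{Ind}_\pi=\lambda_x\otimes 1_{\Hils_{\tilde\pi}}$; this last point is checked on generators $\xi\in\Contc(U,\A)$, where both sides are given by the convolution formula defining $\lambda_x$. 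Therefore $\norm{\operatorname{Ind}_\pi(f)}=\norm{\lambda_x(f)\otimes 1}$. Since $\bigoplus_{\tilde\pi\in\dual{A_x}}\tilde\pi$ is a faithful representation of $A_x$ and $\ell^2(H_x,\A)$ is a (automatically non\nobreakdash-degenerate) Hilbert $A_x$\nobreakdash-module, taking the supremum over $\tilde\pi\in\dual{A_x}$ recovers $\norm{\lambda_x(f)}$, and taking the supremum over $x\in X$ then gives the displayed chain of equalities. Consequently $\Phi$ is isometric for the reduced norms and descends to an isomorphism $A\rtimes_\red\tilde S\xrightarrow{\ \sim\ }\Cst_\red(H,\A)$.

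The hard part will be the bookkeeping in the identification $\operatorname{Ind}_\pi\cong\lambda_x\otimes 1$: one must track the passage between $\tilde S$, $S$ and $H$, use that the dense $\Star$subalgebra of quasi-continuous sections of $\Cst(H,\A)$ embeds into, but is not all of, $A\rtimes_\alg\tilde S$ while having the same $\Star$representations (established in the proof of Proposition~\ref{pro:groupoid_vs_isg_full}), and check that every $\Star$representation of the quasi-continuous sections is automatically bounded on each $A_t$, so that the two descriptions of $\operatorname{Ind}_\pi$ genuinely agree. An alternative route, which I would mention as a remark, avoids the germ combinatorics: transport the weak conditional expectation, $\mathcal E\defeq E\circ\Phi^{-1}\colon\Cst(H,\A)\to A''$, observe via the strict\nobreakdash-limit formula~\eqref{eq:formula-cond.exp} that $\mathcal E(f)$ is the extension by zero of the restriction $f|_X$, and verify directly that the ideal $\Null_{\mathcal E}$ of Proposition~\ref{prop:reduced_through_E} equals $\bigcap_{x\in X}\ker\lambda_x$; here the inclusion $\Null_{\mathcal E}\subseteq\bigcap_x\ker\lambda_x$ is immediate, and the converse uses that $\ell^2(H_x,\A)$ with inner product $\braket{f}{g}\defeq\mathcal E(f^*g)(x)$ is the localisation at $x$ of the GNS module of $\mathcal E$, together with Lemma~\ref{lem:atomic_weak_expectation} to see that an element of $\mathcal E(\Cst(H,\A))$ vanishes in $A''$ once all its pointwise values over $X$ do.
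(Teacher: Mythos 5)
Your proposal is correct and follows essentially the same route as the paper: identify the representation of \(A\rtimes\tilde S\) induced from \(\pi\in\dual{A_x}\) with \(\lambda_x\otimes 1\) on \(\ell^2(H_x,\A)\otimes_{A_x}\Hils_\pi\), recover \(\norm{\lambda_x(f)}\) as the supremum over \(\pi\in\dual{A_x}\), and conclude via Lemma~\ref{lem:atomic_weak_expectation} that the two reduced norms agree on the dense \Star{}subalgebra. The paper states these steps more tersely; your extra bookkeeping on germs and the alternative argument via \(\Null_{E}\) are consistent elaborations rather than a different method.
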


\begin{proof}
  For saturated Fell bundles over~\(H\), which correspond to
  saturated Fell bundles over~\(S\) and thus actions of~\(S\) by
  Hilbert bimodules, this is contained in
  \cite{Buss-Exel-Meyer:Reduced}*{Theorem~8.11}.  The same idea
  works in the non-saturated case.  Each representation~\(\pi\)
  of~\(A\) may be induced to a representation \(i(\pi)\) of
  \(A\rtimes \tilde{S}\).  The representation
  \(\bigoplus_{\pi\in \dual{A}} i(\pi)\) of \(A\rtimes \tilde{S}\)
  descends to a faithful representation of
  \(A\rtimes_\red \tilde{S}\).  This is how
  Lemma~\ref{lem:atomic_weak_expectation} is proven.  Any
  irreducible representation of~\(A\) factors through one of the
  fibres~\(A_x\) for \(x\in X\).  The representation of
  \(A\rtimes \tilde{S}\) induced by \(\pi\in\dual{A_x}\) corresponds
  to the representation \(\lambda_x\otimes 1\) of \(\Cst(H,\A)\) on
  \(\ell^2(H_x,\A)\otimes_{A_x} \Hils_\pi\).  And
  \(\norm{\lambda_x(f)}\) is the supremum of
  \(\norm{\lambda_x\otimes 1_{\Hils_\pi}(f)}\) over all
  \(\pi\in\dual{A_x}\).  So the reduced norm that defines
  \(\Cst_\red(H,\A)\) corresponds to the supremum of
  \(\norm{i(\pi)(f)}\) over all \(\pi\in \dual{A}\), which gives
  \(A\rtimes_\red \tilde{S}\).
\end{proof}

Let \(\mathfrak{B}(H,\A)\) denote the Banach space of bounded Borel
sections of the Banach space bundle~\(\A\), and similarly for
\(\mathfrak{B}(X,\A)\).  So
\(\mathfrak{S}(H,\A) \subseteq \mathfrak{B}(H,\A) \subseteq
\prod_{\gamma\in H} A_\gamma\) as vector spaces.

\begin{proposition}
  \label{prop:embed_into_sections}
  The embedding \(\mathfrak{S}(H,\A) \to \mathfrak{B}(H,\A)\)
  extends uniquely to an injective and contractive linear map
  \(j\colon \Cst_\red(H,\A) \to \mathfrak{B}(H,\A)\).  The map
  \[
    E_\red\colon \Cst_\red(H,\A) \to \mathfrak{B}(X,\A),
    \qquad
    f\mapsto j(f)|_X,
  \]
  is a faithful generalised expectation.
\end{proposition}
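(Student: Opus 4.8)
The plan is to establish the two assertions in Proposition~\ref{prop:embed_into_sections} by transporting the already-proven facts about inverse semigroup crossed products through the isomorphism $\Cst_\red(H,\A)\cong A\rtimes_\red\tilde S$ of Proposition~\ref{pro:groupoid_vs_isg_red}, and then checking that the abstract maps $\Lambda$, $E$, $E_\red$ on the inverse semigroup side correspond to the concrete ``restrict a section to~$X$'' maps on the groupoid side. First I would construct~$j$. On the dense subspace $\mathfrak S(H,\A)$ the evaluation map $f\mapsto(f(\gamma))_{\gamma\in H}$ is already defined, with values in $\mathfrak B(H,\A)$; I need to see it is bounded for the reduced norm. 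For fixed $x\in X$ and $f\in\mathfrak S(H,\A)$, the operator $\lambda_x(f)$ acts on $\ell^2(H_x,\A)$, and for $\gamma\in H$ with $\s(\gamma)=x$ one has $f(\gamma)=\langle\delta_\gamma\cdot(\text{unit of }A_x^+),\lambda_x(f)(\delta_{\s(\gamma)})\rangle$ up to the usual approximate-unit argument in the fibre $A_x$; more cleanly, writing $\|f(\gamma)\|$ as a matrix coefficient of $\lambda_x(f)$ between two norm-$\le 1$ vectors of $\ell^2(H_x,\A)$, one gets $\|f(\gamma)\|\le\|\lambda_x(f)\|\le\|f\|_\red$. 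Taking the supremum over $\gamma$ yields $\|(f(\gamma))_\gamma\|_\infty\le\|f\|_\red$, so the inclusion extends to a contractive linear map $j\colon\Cst_\red(H,\A)\to\mathfrak B(H,\A)$.

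Next I would identify $E_\red\defeq(\,\cdot\,)|_X\circ j$ with the reduced weak conditional expectation. Under the isomorphism $\Cst(H,\A)\cong A\rtimes\tilde S$ of Proposition~\ref{pro:groupoid_vs_isg_full}, a quasi-continuous section $f$ supported on a bisection $U\in S$ corresponds to the element $\xi_f\delta_U\in A\rtimes_\alg\tilde S$ with $\xi_f\in A_U$ the corresponding Hilbert-bimodule element; and formula~\eqref{eq:formula-cond.exp} for the weak expectation $E\colon A''\rtimes S\to A''$ unwinds, in this picture, exactly to restriction of~$f$ to the unit bisection $U\cap X$, extended by zero to~$X$ — that is, to $f|_X\in A_X=A$ when $U\notin X$ this is zero away from $U\cap X$. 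So on the dense $\Star$-subalgebra $\mathfrak S(H,\A)$ we have $E_\red(f)=E(\xi_f\delta_U)$, and since both $E_\red$ (as $(\cdot)|_X\circ j$, a composite of contractions) and the extension of~$E$ to $A\rtimes_\red S\to A''$ are contractive and agree on a dense subspace, they agree everywhere. Here I should note that the weak expectation a priori lands in $A''$ rather than in $\mathfrak B(X,\A)$; but its values on $\mathfrak S(H,\A)$ lie in $A=A_X\subseteq\mathfrak B(X,\A)$, and $A''$ embeds into $\mathfrak B(X,\A)$-type objects only after care — the cleanest route is simply to define $E_\red$ via $j$ as in the statement, observe it is a contractive $A$-bimodule map restricting to the identity on $A=\Cont_0(X,\A)\subseteq\Cst_\red(H,\A)$ (continuous sections supported on $X$), hence a generalised expectation with values in $\mathfrak B(X,\A)$, and then match it with the abstract $E_\red$ of Theorem~\ref{the:crossed_expectation_faithful} on the level of the inverse semigroup.

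Finally, faithfulness: by Theorem~\ref{the:crossed_expectation_faithful} the weak conditional expectation $E$ on $A\rtimes S$ is symmetric, equivalently $E_\red\colon A\rtimes_\red S\to A''$ is faithful; transporting along $\Cst_\red(H,\A)\cong A\rtimes_\red\tilde S$ and using the identification of $E_\red$ just established, it follows that $E_\red\colon\Cst_\red(H,\A)\to\mathfrak B(X,\A)$ is faithful. Injectivity of~$j$ is then immediate: if $j(f)=0$ then in particular $E_\red(f^*f)=j(f^*f)|_X=0$ (since $f^*f$ has the same support image under $j$ as... — more directly, $j$ is a $\Star$-map on the dense subalgebra so $j(f^*f)=j(f)^*j(f)=0$ forces $E_\red(f^*f)=0$, hence $f=0$ by faithfulness). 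The main obstacle I anticipate is the bookkeeping in the middle step — correctly matching the groupoid-theoretic ``restriction to the unit space'' with the inverse-semigroup expectation~\eqref{eq:formula-cond.exp} through the non-saturated-to-saturated passage of Lemma~\ref{lem:make_saturated_groupoid_Fell}, and being careful that the target of the weak expectation ($A''$) versus $\mathfrak B(X,\A)$ is handled by only comparing the two maps on elements where they both visibly land in~$A$, then extending by density. Everything else is a routine estimate or a direct transport of an already-proven theorem.
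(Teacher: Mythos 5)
Your overall architecture matches the paper's: define \(j\) by compressing \(\lambda_x(f)\) to the summand \(A_\gamma\subseteq\ell^2(H_{\s(\gamma)},\A)\), get contractivity from \(\norm{\lambda_x(f)}\le\norm{f}_\red\), and derive faithfulness of \(E_\red\) from Theorem~\ref{the:crossed_expectation_faithful} through the isomorphism \(\Cst_\red(H,\A)\cong A\rtimes_\red\tilde S\). But the step you yourself flag as the main obstacle is where the proposal actually breaks. You claim that on the dense subalgebra \(\mathfrak S(H,\A)\) the two expectations ``both visibly land in \(A\)'', so that they can be matched there and then extended by density. This is false: for \(f\in\Contc(U,\A)\) with \(U\not\subseteq X\), the restriction \(f|_{U\cap X}\) extended by zero is in general a \emph{discontinuous} Borel section (it need not vanish at the boundary of \(U\cap X\) in \(X\)), and correspondingly the strict limit in~\eqref{eq:formula-cond.exp} lies in \(\Mult(I_{1,t})\subseteq A''\) but not in \(A\) unless the action is closed. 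So on the dense subalgebra you are comparing an element of \(A''\) with an element of \(\mathfrak B(X,\A)\), two codomains with no given common embedding, and ``agree on \(A\), extend by density'' has nothing to grab onto. (Indeed, if the dense-subalgebra values really did land in \(A\), then by closedness of \(A\) in both codomains the expectation would be \(A\)-valued on all of \(\Cst_\red(H,\A)\), which is exactly what fails for non-Hausdorff \(H\).) The paper resolves this by composing \emph{both} expectations into the common algebra \(\prod_{\pi\in\dual{A}}\Bound(\Hils_\pi)\) — via the atomic representation \(\varrho\) on the \(A''\) side and via the pointwise faithful representations \(A_x\to\prod_{\pi\in\dual{A_x}}\Bound(\Hils_\pi)\) on the \(\mathfrak B(X,\A)\) side — checking equality of the composites on each \(\Contc(t,\A)\), and then invoking Lemma~\ref{lem:atomic_weak_expectation} to know that \(\varrho\) does not destroy faithfulness. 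That lemma is the missing ingredient in your plan; without it, knowing \(E_\red\) is faithful on \(A\rtimes_\red\tilde S\) does not transfer.

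Two smaller gaps. First, ``contractive \(A\)-bimodule map restricting to the identity on \(A\), hence a generalised expectation'' is not a valid inference: the definition requires complete positivity, and Tomiyama's theorem applies to projections onto a subalgebra, not to maps into a larger algebra \(\mathfrak B(X,\A)\not\subseteq\Cst_\red(H,\A)\). (This is easily repaired, since \(f\mapsto T_x^*\lambda_x(f)T_x\) is a compression of a \Star{}representation and hence manifestly completely positive — but you must say so.) Second, your injectivity argument uses \(j(f^**f)=j(f)^**j(f)\) for arbitrary \(f\in\Cst_\red(H,\A)\); for such \(f\) the convolution on the right is an infinite sum \(\sum_{\s(\gamma)=x}j(f)(\gamma)^*j(f)(\gamma)\) whose convergence, and agreement with \(E_\red(f^**f)(x)\), must be established by first proving the identity on \(\mathfrak S(H,\A)\) together with the bound \(\norm{j(f)|_{H_x}}_{\ell^2(H_x,\A)}\le\norm{f}_\red\) and then passing to the limit — which is exactly equation~\eqref{eq:j_vs_E} in the paper. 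Your parenthetical hedging at that point indicates you sensed the issue but did not close it.
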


\begin{proof} 
  Let \(\gamma\in H\).  Then
  \(A_\gamma \subseteq \ell^2(H_{\s(\gamma)},\A)\) is a direct summand.
  Let \(T_\gamma\colon A_\gamma \to \ell^2(H_{\s(\gamma)},\A)\) be
  the isometric inclusion.  Then~\(T_\gamma^*\) is the orthogonal
  projection onto~\(A_\gamma\).  If \(f\in \Cst_\red(H,\A)\), then
  define
  \[
    j(f)(\gamma) \defeq T_\gamma^* \lambda_x(f) T_{\s(\gamma)}\colon
    A_{\s(\gamma)} \to A_\gamma.
  \]
  If \(f\in \Contc(U,\A)\) for some bisection \(U\subseteq H\), then
  \(j(f)(\gamma)(a) = f(\gamma)\cdot a\) for all
  \(a\in A_{\s(\gamma)}\) and \(\gamma \in H\).  Thus
  \(j(f)(\gamma)\) is the compact operator corresponding under the
  isomorphism \(\Comp(A_{s(\gamma)}, A_\gamma)\cong A_\gamma\) to
  the element \(f(\gamma)\in A_\gamma\).  We simply write
  \(j(f)(\gamma) = f(\gamma)\) for all \(f\in\mathfrak{S}(H,\A)\).
  Since \(\norm{T_\gamma} = 1\), we may estimate
  \(\norm{j(f)(\gamma)} \le \norm{\lambda_x(f)} \le \norm{f}_\red\)
  for all \(f\in\Cst_\red(H,\A)\).  The section
  \(j(f) \in \prod_{x\in X} A_x\) is Borel for all
  \(f\in\Contc(U,\A)\) and hence for \(f\in\mathfrak{S}(H,\A)\).
  Since~\(j\) is bounded, \(\mathfrak{S}(H,\A)\) is dense in
  \(\Cst_\red(H,\A)\), and uniform limits of Borel functions are
  Borel, it follows that~\(j\) is the unique contractive linear map
  \(\Cst_\red(H,\A)\to \mathfrak{B}(H,\A)\) extending~\(j\) on
  \(\mathfrak{S}(H,\A)\).

  If \(x\in X\), then \(f\mapsto j(f)(x) = T_x^* \lambda_x(f) T_x\)
  is a completely positive, contractive linear map
  \(E_x\colon \Cst_\red(H,\A) \to A_x\).  Hence
  \(E_\red(f) \defeq j(f)|_X\) is completely positive and
  contractive as a map to \(\prod_{x\in X} A_x\).  Then it is a
  completely positive contraction
  \(E_\red\colon \Cst_\red(H,\A)\to \mathfrak{B}(X,\A)\) as well.
  Being the identity map on~\(A\), it is a generalised conditional
  expectation.  We may further compose~\(E_x\) with the faithul
  representation
  \(A_x \to \prod_{\pi\in\dual{A_x}} \Bound(\Hils_\pi)\).  Since
  \(\dual{A} = \bigsqcup_{x\in X} \dual{A_x}\), this gives a
  generalised expectation
  \[
    \tilde{E}_\red\colon \Cst_\red(H,\A)
    \to \prod_{\pi\in\dual{A}} \Bound(\Hils_\pi),
    \qquad
    \tilde{E}_\red(f)(\pi) \defeq \pi(j(f)|_X).
  \]
  Let \(E_\red\colon A\rtimes_\red \tilde{S} \to A''\) be the
  canonical weak conditional expectation (it should not be confused
  with \(E_\red\colon \Cst_\red(H,\A)\to \mathfrak{B}(X,\A)\) as the
  domain and codomain are different).  The generalised expectation
  \[
    \varrho\circ E_\red \colon A\rtimes_\red \tilde{S} \to
    \prod_{\pi\in\dual{A}} \Bound(\Hils_\pi),
  \]
  is faithful by Lemma~\ref{lem:atomic_weak_expectation} and
  Theorem~\ref{the:crossed_expectation_faithful}.  The isomorphism
  \(\Cst_\red(H,\A) \cong A\rtimes_\red \tilde{S}\) in
  Proposition~\ref{pro:groupoid_vs_isg_red} intertwines the
  generalised expectations \(\tilde{E}_\red\) and
  \(\varrho\circ E_\red\) because it does so on functions in
  \(f\in \Contc(t,\A)\) for \(t\in S\).  Hence the generalised
  expectation~\(\tilde{E}_\red\) is faithful.  Then so is
  \(E_\red\colon \Cst_\red(H,\A)\to \mathfrak{B}(X,\A)\).

  If \(f\in \mathfrak{S}(H,\A)\), then we compute
  \[
    E_\red(f^* * f)(x)
    = j(f^* * f)(x)
    = \sum_{\s(\gamma)=x} f^*(\gamma^{-1}) f(\gamma)
    = \sum_{\s(\gamma)=x} j(f)(\gamma)^* j(f)(\gamma).
  \]
  The norm of the left hand side is bounded
  by~\(\norm{f}^2_{\Cst_\red(H,\A)}\).  The norm of the right hand
  side is the square of the norm of~\(j(f)\) in \(\ell^2(H_x,\A)\).
  Hence \(j(f)|_{H_x} \in \ell^2(H_x,\A)\) and
  \(\norm{j(f)|_{H_x}}_{\ell^2(H_x,\A)} \le
  \norm{f}_{\Cst_\red(H,\A)}\) for all \(f\in\Cst_\red(H,\A)\).  By
  continuity, we get
  \begin{equation}
    \label{eq:j_vs_E}
    E_\red(f^* * f)(x)
    = \sum_{\s(\gamma)=x} j(f)(\gamma)^* j(f)(\gamma)
  \end{equation}
  for all \(f\in \Cst_\red(H,\A)\).  So \(E_\red(f^* * f) = 0\) is
  equivalent to \(j(f) = 0\).  Since~\(E_\red\) is faithful, this is
  equivalent to \(f=0\).  Hence~\(j\) is injective.
\end{proof}

\begin{remark}
  \label{rem:E-tilde_scalar-valued}
  If~\(\A\) is a Fell line bundle, then its restriction to the unit
  space is the trivial bundle \(X\times\C\).  Hence identifying
  sections~\(j(f)|_X\) with scalar-valued functions on~\(X\),
  we may view~\( E_\red\) as a generalised expectation into the
  \(\Cst\)\nb-algebra~\(\mathfrak{B}(X)\) of Borel functions
  on~\(X\).  This expectation has been used already by Khoskham and
  Skandalis~\cite{Khoshkam-Skandalis:Regular}.
\end{remark}

\subsection{The essential groupoid crossed product}
\label{sec:essential_groupoid}

\begin{definition}
  Let \(\Cst_\ess(H,\A)\) be the quotient of~\(\Cst(H,\A)\) that
  corresponds to the quotient \(A\rtimes_\ess \tilde{S}\) of
  \(A\rtimes \tilde{S}\) under the isomorphism in
  Proposition~\ref{pro:groupoid_vs_isg_full}.
\end{definition}

So \(\Cst_\ess(H,\A)\) is the quotient of~\(\Cst(H,\A)\) by the
ideal~\(\Null_{EL}\) for the canonical \(\Locmult\)-expectation
\[
  EL\colon \Cst(H,\A) \congto A\rtimes \tilde{S} \to \Locmult(A).
\]
Since~\(EL\) is symmetric by
Theorem~\ref{the:crossed_expectation_faithful}, \(b\in \Cst(H,\A)\)
belongs to~\(\Null_{EL}\) if and only if \(EL(b^* * b)=0\).  Since
\(A\rtimes_\ess \tilde{S}\) is a quotient of
\(A\rtimes_\red \tilde{S}\),
Proposition~\ref{pro:groupoid_vs_isg_red} allows to identify
\(\Cst_\ess(H,\A)\) with the quotient of \(\Cst_\red(H,\A)\) by the
image of~\(\Null_{EL}\) in \(\Cst_\red(H,\A)\).  We denote this
image by~\(J_\sing\) as in Section~\ref{sec:compare} and call its
elements \emph{singular}.  We are going to describe~\(J_\sing\) in
terms of the groupoid Fell bundle.

By Proposition~\ref{pro:Locmult_as_operator_families}, the essential
multiplier algebra \(\Locmult(A)\) is embeds into the quotient of
\(\prod_{\pi\in\dual{A}} \Bound(\Hils_\pi)\) by the null space of
the essential supremum norm~\(\norm{\cdot}_\ess\), which takes the
minimum of the supremum norms over comeagre subsets of~\(\dual{A}\).
In the proof of Proposition~\ref{prop:embed_into_sections}, we have
noticed that the canonical generalised expectation
\(E_\red\colon \Cst_\red(H,\A) \to \prod_{x\in X} A_x\) -- composed
with the standard faithful representations -- gives a faithful
generalised expectation
\[
  \tilde{E}_\red\colon \Cst_\red(H,\A)
  \to \prod_{x\in X} A_x \to \prod_{\pi\in\dual{A}} \Bound(\Hils_\pi),
\]
which corresponds to
\(\varrho\circ E_\red \colon A\rtimes_\red \tilde{S} \to
\prod_{\pi\in\dual{A}} \Bound(\Hils_\pi)\).  So by
Proposition~\ref{pro:Locmult_as_operator_families}, the norm of the
image of \(b\in \Cst_\red(H,\A)\) in \(\Cst_\ess(H,\A)\) is the
\emph{essential} supremum of \(\norm*{\tilde{E}_\red(b)(\pi)}\).
Therefore, \(b\in J_\sing\) if and only if the set of
\(\pi\in\dual{A}\) with \(\tilde{E}_\red(b^* * b)(\pi)\neq 0\) is
meagre.  In general, this is the best we can say.  Under extra
assumptions, we are going to rewrite this in terms of the set of
\(x\in X\) with \(E_\red(b^* * b)(x)\neq 0\), or the set of
\(\gamma\in H\) with \(j(b)(\gamma)\neq 0\).  The starting point is
the following analogue of Proposition~\ref{pro:essential_norm}:

\begin{lemma}
  \label{lem:groupoid_Cstar_element_a-e-continuous}
  Let \(f\in \Cst_\red(H,\A)\).  There is a comeagre subset
  \(C\subseteq H\) such that the section~\(j(f)\) of~\(\A\) is
  continuous in all points of~\(C\); that is, if \(\gamma\in C\),
  then there is an open neighbourhood~\(U\) of~\(\gamma\) and a
  continuous section~\(h\) of~\(\A|_U\) with
  \(\lim_{\eta\to\gamma} {}\norm{f(\eta) - h(\eta)} = 0\).  Thus the
  section~\(E_\red(f)\) of~\(\A|_X\) is continuous in \(C\cap X\),
  which is comeagre in~\(X\).
\end{lemma}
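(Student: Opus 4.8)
The plan is to mirror the proof of Proposition~\ref{pro:essential_norm}: first establish the statement for quasi\nb-continuous sections by a direct argument about boundaries of bisections, and then pass to a general element of \(\Cst_\red(H,\A)\) by a uniform\nb-limit argument, using that \(j\) is contractive for the supremum norm on \(\mathfrak{B}(H,\A)\) (Proposition~\ref{prop:embed_into_sections}).

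First I would treat \(f\in\mathfrak{S}(H,\A)\), for which \(j(f)=f\). Writing \(f=\sum_{i=1}^n f_i\) with \(f_i\in\Contc(U_i,\A)\) for bisections \(U_i\subseteq H\), each \(f_i\), extended by zero, is continuous on \(U_i\) (where it equals the given continuous section) and on the complement of the \(H\)\nb-closure of \(\supp f_i\) (where it vanishes). Since \(\supp f_i\) is compact, hence closed, in the Hausdorff space \(U_i\), its \(H\)\nb-closure meets \(U_i\) exactly in \(\supp f_i\), so the \(H\)\nb-closure minus \(U_i\) lies in \(\partial U_i\defeq\overline{U_i}\setminus U_i\); and \(\partial U_i\) is closed with empty interior because \(U_i\) is open. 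Hence \(j(f)=f\) is continuous on the dense open set \(V\defeq H\setminus\bigcup_{i=1}^n\partial U_i\), and near each point of~\(V\) it agrees with a finite sum of continuous sections over bisections, hence with a continuous section of~\(\A\).

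Next, for arbitrary \(f\in\Cst_\red(H,\A)\) I would pick \(f_n\in\mathfrak{S}(H,\A)\) with \(f_n\to f\); since \(\norm{j(g)(\gamma)}\le\norm{g}_{\Cst_\red(H,\A)}\) for all \(\gamma\in H\), the sections \(j(f_n)\) converge to \(j(f)\) uniformly on~\(H\). By the previous step there is for each \(n\) a dense open \(V_n\subseteq H\) on which \(j(f_n)\) is continuous, so \(C\defeq\bigcap_{n\in\N}V_n\) is comeagre in~\(H\). For \(\gamma\in C\), every \(j(f_n)\) coincides near~\(\gamma\) with a continuous section of~\(\A\), and \(j(f_n)\to j(f)\) uniformly; an \(\varepsilon/3\) estimate then shows that \(j(f)\) is continuous at~\(\gamma\) in the sense of the statement. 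Finally, since \(H\) is étale its unit space \(X\) is open in~\(H\), so each \(V_n\cap X\) is dense and open in~\(X\) and \(C\cap X=\bigcap_n(V_n\cap X)\) is comeagre in~\(X\); as \(E_\red(f)=j(f)|_X\), it is continuous at every point of \(C\cap X\).

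The step I expect to require the most care is the last inference in the passage to the limit: producing a \emph{single} local continuous section~\(h\) with \(\norm{j(f)(\eta)-h(\eta)}\to 0\) as \(\eta\to\gamma\), rather than merely the approximate statement ``for every \(\varepsilon>0\) there are a neighbourhood~\(W\) and a continuous section~\(t\) with \(\sup_W\norm{j(f)-t}<\varepsilon\)''. This is where one genuinely uses that \(\A\) has enough continuous local sections (which holds since bisections are locally compact Hausdorff) together with upper semicontinuity of the bundle to pass from uniformly Cauchy approximants on shrinking neighbourhoods to one approximant on a fixed neighbourhood; everything else is bookkeeping about boundaries of open sets and finite unions of nowhere dense sets.
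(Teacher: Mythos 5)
Your proposal follows essentially the same route as the paper: reduce to quasi\nb-continuous sections by observing that \(j(f)\) for \(f\in\Contc(U,\A)\) is continuous off \(\partial U\), then pass to a uniform limit over a countable intersection of dense open sets. The only step you defer --- upgrading ``for every \(\varepsilon>0\) there is a continuous \(\varepsilon\)\nb-approximant on some neighbourhood of \(\gamma\)'' to a single continuous section \(h\) with \(\norm{f(\eta)-h(\eta)}\to0\) --- is exactly the one the paper makes explicit: after arranging \(\norm{f_n-f_{n-1}}_\infty<2^{-n}\) and fixing a bisection \(U\ni\gamma\), it chooses scalar cutoffs \(w_n\in\Contc(U\cap Y_n\cap Y_{n-1})\) with \(w_n(\gamma)=1\) and \(\norm{w_n}_\infty\le1\), and sets \(h\defeq\sum_{n}(f_n-f_{n-1})\cdot w_n\); the series converges uniformly to a continuous section of \(\A|_U\), and since each \(w_n\) is continuous with \(w_n(\gamma)=1\), the tail estimate \(\norm{f(\eta)-h(\eta)}\le\sum_n 2^{-n}\,\abs{1-w_n(\eta)}\) tends to \(0\) as \(\eta\to\gamma\). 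This is precisely the gluing of uniformly Cauchy approximants you anticipate; note that it uses only Urysohn functions on the locally compact Hausdorff bisection \(U\), not upper semicontinuity of the bundle, so your plan closes up into the paper's proof once this construction is written out.
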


\begin{proof}
  First let \(f\in \Contc(U,A)\) for some bisection~\(U\).
  Then~\(j(f)\) is a continuous section on~\(U\) and vanishes on the
  interior of \(H\setminus U\).  Thus it is continuous in all points
  of the dense open subset \(H\setminus \partial U\).  If
  \(f\in \mathfrak{S}(H,\A)\), then~\(f\) is a finite linear
  combination of functions as above.  Hence~\(j(f)\) is continuous
  in all points of a finite intersection of dense open subsets,
  which is again dense open.  Finally, if \(f\in\Cst_\red(H,\A)\),
  then there is a sequence~\((f_n)_{n\in\N}\) in
  \(\mathfrak{S}(H,\A)\) with \(\lim {}\norm{f_n-f} = 0\).  Hence
  \(j(f_n)\) converges uniformly towards~\(j(f)\).  For each
  \(n\in\N\), there is a dense open subset \(Y_n\subseteq H\)
  where~\(f_n\) is continuous.  Let
  \(C\defeq \bigcap_{n\in\N} Y_n\).  The subset~\(C\) is comeagre as
  a countable intersection of dense open subsets.  Its intersection
  with the closed subspace \(X\subseteq H\) is comeagre in~\(X\).

  We claim that~\(j(f)\) is a continuous section in all
  \(\gamma\in C\).  Thus \(E_\red(f) = j(f)|_X\) is continuous in
  \(C\cap X\).  A continuous section~\(h\) with
  \(\lim_{\eta\to\gamma} {}\norm{f(\eta) - h(\eta)} = 0\) is built
  as follows.  Let \(f_n\) and~\(Y_n\) for \(n\in\N\) be as above.
  Define \(f_{-1}\defeq0\).  We may arrange that
  \(\norm{f_n - f_{n-1}}_\infty < 2^{-n}\) for all \(n\in\N\).
  Let~\(U\) be a bisection containing~\(\gamma\).  Since~\(Y_n\) is
  open, there is a function
  \(w_n\in \Contc(U\cap Y_n \cap Y_{n-1})\) with \(w_n(\gamma)=1\)
  and \(\norm{w_n}_\infty \le1\).  Then \((f_n-f_{n-1})\cdot w_n\) is
  a continuous section of~\(\A|_U\) with
  \(\norm{(f_n-f_{n-1})\cdot w_n}\le 2^{-n}\).  Hence
  \(h\defeq \sum_{n=0}^\infty (f_n-f_{n-1})\cdot w_n\) is a well
  defined continuous section of~\(\A|_U\).  The continuity of the
  functions~\(w_n\) implies that
  \(\lim_{\eta\to\gamma} {}\norm{f(\eta) - h(\eta)} = 0\).
\end{proof}

In order to compare our description of the essential crossed product
to the one in~\cite{Exel-Pitts:Weak_Cartan}, we need more
information about the comeagre subset~\(C\) in
Lemma~\ref{lem:groupoid_Cstar_element_a-e-continuous}.

\begin{definition}
  \label{def:dangerous}
  Call \(x\in X\) \emph{dangerous} if there is a net~\((\gamma_n)\)
  in~\(H\) that converges towards two different points
  \(\gamma\neq\gamma'\in H\) with \(\s(\gamma) = \s(\gamma') = x\).
\end{definition}

\begin{lemma}
  \label{lem:dangerous}
  Let \(f\in \Cst_\red(H,\A)\) and \(\gamma\in H\).  If
  \(\s(\gamma)\) is not dangerous, then~\(j(f)\) is continuous
  at~\(\gamma\).  If \(x\in X\) is dangerous, then the isotropy
  group~\(H(x)\) at~\(x\) is non-trivial.  If~\(H\) is covered by
  countably many bisections, then the subset~\(D\) of dangerous
  points is meagre, and so is \(\s^{-1}(D)\subseteq H\).
\end{lemma}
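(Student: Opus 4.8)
The plan is to prove the three assertions in order, using Lemma~\ref{lem:groupoid_Cstar_element_a-e-continuous} and a Baire-category argument tailored to the bisection cover. First I would prove that if $\s(\gamma)=x$ is not dangerous, then $j(f)$ is continuous at~$\gamma$. Recall from the proof of Lemma~\ref{lem:groupoid_Cstar_element_a-e-continuous} that $j(f)$ is a uniform limit of sections $j(f_n)$ with $f_n\in\mathfrak{S}(H,\A)$, and each $j(f_n)$ is a finite linear combination of sections of the form $\Contc(U,\A)$ extended by zero, so $j(f_n)$ is continuous off the topological boundary~$\partial U$ of finitely many bisections. The key point is that if $x=\s(\gamma)$ is not dangerous, then $\gamma$ cannot lie on~$\partial U$ for any bisection~$U$: if $\gamma\in\partial U$, then there is a net $(\gamma_k)$ in~$U$ with $\gamma_k\to\gamma$; picking one such net and passing to a subnet, we may assume $\s(\gamma_k)\to\s(\gamma)=x$, and since $H$ is étale we can lift the $\s(\gamma_k)$ back along a bisection around~$\gamma$ itself to get a net converging to~$\gamma$ and (via continuity of the source map restricted to the original bisection, extended to its closure suitably) produce a second limit point distinct from~$\gamma$ with the same source — contradicting non-dangerousness. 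Hence $\gamma\notin\partial U$ for every bisection~$U$, so every $j(f_n)$ is continuous at~$\gamma$, and since $j(f_n)\to j(f)$ uniformly, $j(f)$ is continuous at~$\gamma$.

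Second, I would show that a dangerous point~$x$ has nontrivial isotropy. If $x$ is dangerous, take a net $(\gamma_k)$ in~$H$ converging to $\gamma\neq\gamma'$ with $\s(\gamma)=\s(\gamma')=x$. Since source and range are local homeomorphisms and $\gamma\mapsto\gamma^{-1}$ is a homeomorphism, we may compose: the net $\gamma'\gamma_k^{-1}$ (defined eventually, once $\rg(\gamma_k)$ is close enough to $\rg(\gamma')$, using that $\s(\gamma')=\s(\gamma)=x=\lim\s(\gamma_k)$ and $\rg(\gamma_k)\to\rg(\gamma)$; more carefully one multiplies $\gamma_k$ on the left by elements of a bisection through $\gamma'\gamma^{-1}$) converges simultaneously to $\gamma'\gamma^{-1}$ and to the unit $\rg(\gamma)$ — or, more cleanly, the net $\gamma_k^{-1}\gamma_k' $ built from two nets with the same source converges both to a unit and to $\gamma^{-1}\gamma'$, and $\gamma^{-1}\gamma'$ is a nontrivial element (since $\gamma\neq\gamma'$ with the same source) of $H(x)$ because $\s(\gamma^{-1}\gamma')=\rg(\gamma^{-1}\gamma')=x$. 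Thus $H(x)\setminus X\neq\emptyset$.

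Third, for the meagreness of $D$ when $H$ has a countable bisection cover $(U_i)_{i\in\N}$: I claim $D\subseteq\bigcup_i \s(\partial U_i\cap\s^{-1}(\cdot))$, more precisely $D\subseteq\bigcup_{i}\s(\overline{U_i}\setminus U_i')$ for suitable sets, the point being that a dangerous $x$ forces $\gamma$ or $\gamma'$ to be a boundary point of some $U_i$ with source~$x$. Concretely, if $x$ is dangerous witnessed by $(\gamma_k)\to\gamma\neq\gamma'$, then choosing $U_i\ni\gamma$ one has $\gamma_k\in U_i$ eventually is impossible (else $\s(\gamma_k)\to\s(\gamma)$ via the homeomorphism $\s|_{U_i}$ would pin down a single limit), so infinitely many $\gamma_k$ lie outside~$U_i$, hence $\gamma\in\overline{H\setminus U_i}\setminus(\text{interior})$, i.e.\ $\gamma\in\partial U_i$; then $x=\s(\gamma)\in\s(\partial U_i)$. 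Now $\partial U_i=\overline{U_i}\setminus U_i$ is closed with empty interior in~$H$ (as $U_i$ is open), and $\s$ restricted to a bisection containing a neighbourhood of any point of $\overline{U_i}$ need not be injective, but one can cover $\partial U_i$ by countably many bisections on each of which $\s$ is a homeomorphism onto an open subset of~$X$; the image of the closed-nowhere-dense set $\partial U_i$ under such a homeomorphism is nowhere dense, so $\s(\partial U_i)$ is a countable union of nowhere dense sets, i.e.\ meagre. Taking the union over~$i$, $D$ is meagre in~$X$. Finally $\s^{-1}(D)=\bigcup_i \s|_{U_i}^{-1}(D\cap\s(U_i))$ is a countable union of homeomorphic copies of meagre subsets of open sets, hence meagre in~$H$.

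\textbf{Main obstacle.} The delicate part is the careful bookkeeping in the first and third steps: precisely relating ``dangerous'' to boundary points of bisections, and in particular showing that non-dangerousness of $\s(\gamma)$ rules out $\gamma\in\partial U$ for \emph{every} bisection~$U$, not just those in a fixed cover. One must use étaleness (local homeomorphism property of $\s$) to transport convergence of the $\s(\gamma_k)$ back up to a second net converging to~$\gamma$ inside any bisection through~$\gamma$, and then argue that the original net, living near but not in that bisection, supplies a genuinely different limit point with the same source. Handling the case distinctions (subnets, whether $\rg(\gamma_k)$ converges, multiplying nets to land in the isotropy group) cleanly is the technical heart; once that is set up, the Baire-category conclusion that images of boundaries under local homeomorphisms are nowhere dense is routine.
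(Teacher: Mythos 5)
There are two genuine gaps, both at the places you yourself flag as the ``technical heart''.

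First, your key claim in Step 1 --- that if \(\s(\gamma)\) is not dangerous then \(\gamma\) cannot lie on \(\partial U\) for \emph{any} bisection \(U\) --- is false. Take the unit groupoid \(H=X=\R\) and \(U=(0,1)\): the point \(\gamma=0\) lies on \(\partial U\), yet no point of this Hausdorff groupoid is dangerous. Lying on the boundary of a bisection does not by itself produce a net with two distinct limits; your sketch of ``lifting \(\s(\gamma_k)\) back along a bisection around \(\gamma\)'' only reproduces a net converging to \(\gamma\) again, not a second limit point. What actually makes the argument work is the \emph{compactness of the support}: for \(f\in\Contc(U,\A)\), a failure of continuity of \(j(f)\) at \(\gamma\) forces \(\gamma\notin U\) and yields a net \((\gamma_k)\) in the compact set \(\supp f\subseteq U\) converging to \(\gamma\) with \(f(\gamma_k)\not\to0\); passing to a subnet inside \(\supp f\) gives a second limit \(\gamma'\in U\), necessarily distinct from \(\gamma\notin U\), and \(\s(\gamma)=\s(\gamma')\) because \(X\) is Hausdorff. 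That is the witness of danger, and it is what the paper's proof does. Your Step 2 (non-trivial isotropy via \(\gamma^{-1}\gamma'\), using \(\rg(\gamma)=\rg(\gamma')\) from Hausdorffness of \(X\)) is essentially correct.

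Second, Step 3 breaks at the assertion that ``\(\gamma_k\in U_i\) eventually is impossible''. Since \(U_i\) is an \emph{open} neighbourhood of the limit \(\gamma\), the net is automatically eventually in \(U_i\); being eventually in \(U_i\) pins down the limit only \emph{within} \(U_i\), and does nothing to exclude a second limit \(\gamma'\notin U_i\) --- that is precisely the non-Hausdorff phenomenon. Consequently \(\gamma\in U_i\), not \(\gamma\in\partial U_i\), and your localisation of \(D\) inside \(\bigcup_i\s(\partial U_i)\) fails. The correct bookkeeping uses \emph{pairs} of bisections: pick \(U\ni\gamma\) and \(V\ni\gamma'\) from the countable cover; the net is eventually in \(U\cap V\), while neither \(\gamma\) nor \(\gamma'\) can lie in \(U\cap V\) (uniqueness of limits inside a single bisection). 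Hence \(x=\s(\gamma)\in\overline{\s(U\cap V)}\setminus\s(U\cap V)=\partial(\s(U\cap V))\), a closed nowhere dense subset of \(X\), and \(D\) is contained in the countable union of these over all pairs \((U,V)\). Your final remark that preimages under the étale source map of meagre sets are meagre is fine, but the set you feed into it has to be produced by the pairwise argument above.
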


\begin{proof}
  First let \(f\in \Contc(U,\A)\) for some \(U\in \Bis(H)\) and assume
  that \(j(f)\) is discontinuous at \(\gamma\in H\).  Then
  \(\gamma\in H\setminus U\) and there is a net~\((\gamma_n)\)
  converging to~\(\gamma\) with \(f(\gamma_n) \not\to 0\).
  So~\(\gamma_n\) must lie in the support of~\(f\), which is a
  compact subset of~\(U\).  Passing to a subnet, we may arrange
  that~\(\gamma_n\) converges to some \(\gamma'\in U\).  This must
  be different from~\(\gamma\).  Since~\(X\) is Hausdorff,
  \(\s(\gamma) = \lim \s(\gamma_n) = \s(\gamma')\).
  So~\(\s(\gamma)\) is dangerous.  In other words, \(j(f)\) for
  \(f\in \Contc(U,\A)\) is continuous at all \(\gamma\in H\) with
  \(\s(\gamma)\notin D\).  This remains so for uniform limits of
  finite linear combinations of such \(j(f)\), giving the first
  claim for all \(f\in \Cst_\red(H,\A)\).  The same argument shows
  that \(\rg(\gamma) = \rg(\gamma')\).  So \(\gamma^{-1} \gamma'\)
  is a non-trivial element in the isotropy group of~\(x\) and
  \(H(x)\) is non-trivial if~\(x\) is dangerous.

  Now assume that~\(H\) is covered by countably many bisections
  \(S\subseteq \Bis(H)\).  The arrows \(\gamma,\gamma'\) witnessing that some
  \(x\in X\) is dangerous must belong to some bisections
  \(U,V\in S\).  Since a countable union of meagre subsets is
  meagre, it suffices to fix \(U,V\in S\) and prove the meagreness
  of the set of all \(x\in X\) for which there are \(\gamma\in U\),
  \(\gamma'\in V\) with \(\s(\gamma) = \s(\gamma') = x\) and a net
  \((\gamma_n)\) in~\(H\) converging both to \(\gamma\)
  and~\(\gamma'\).  Since \(U\) and~\(V\) are open, we may restrict
  our net \((\gamma_n)\) to a subnet that belongs to \(U\cap V\).
  But \(\gamma,\gamma' \notin U\cap V\).  Since~\(\s\) restricts to
  homeomorphisms \(U\cong \s(U)\) and \(V\cong \s(V)\), it follows
  that \(x\in \partial(\s(U\cap V))\).  This subset is closed and
  nowhere dense, hence meagre.  The subset
  \(\s^{-1}(\partial(\s(U\cap V)))\) is also closed and nowhere
  dense because~\(\s\) is continuous and open.  Hence
  \(\s^{-1}(D)\subseteq H\) is meagre as well.
\end{proof}

\begin{example}
  If~\(H\) is not covered by countably many bisections, then all
  \(x\in X\) may be dangerous.  To produce such an example, let
  \(X=[0,1]\) and let the free group~\(F\) on the set~\([0,1]\) act
  identically on~\(X\).  For each \(t\in [0,1]\) we identify the
  arrow \(s\to s\) given by the generator \(t\in F\) with the
  identity arrow on~\(s\) if \(s\in [0,t)\).  This extends to a
  congruence relation~\(\sim\) on the transformation groupoid
  \([0,1]\rtimes F\).  By construction, each \(t\in [0,1]\) is
  dangerous in \([0,1]\rtimes F/ {\sim}\).
\end{example}

So far, we have assumed~\(\A\) to be an upper semicontinuous field.
Then Lemma~\ref{lem:groupoid_Cstar_element_a-e-continuous} implies
that, for every \(f\in \Cst_\red(H,\A)\), the function
\[
  \nu_f\colon H\to[0,\infty),\qquad
  \gamma\mapsto \norm*{j(f)(\gamma)}_{A_\gamma},
\]
is upper semicontinuous in a comeagre subset of~\(X\).  This is
useless, however.  The applications of
Proposition~\ref{pro:essential_norm} in Section~\ref{sec:compare}
need \emph{lower} semicontinuity instead.  Therefore, we
assume~\(\A\) to be a continuous field of Banach spaces from now on.
For brevity, we call~\(\A\) a \emph{continuous Fell bundle}.

\begin{remark}
  \label{rem:continuous_field}
  A Fell bundle~\(\A\) over~\(H\) is continuous if and only if its
  restriction~\(\A|_X\) is a continuous field of
  \(\Cst\)\nb-algebras over~\(X\).  One implication is trivial, and
  the continuity of~\(\A|_X\) implies continuity of~\(\A\) because
  \(\norm{a(\gamma)}_{A_\gamma}^2 = \norm{a^* a}_{A_{\s(\gamma)}}\)
  for all \(a\in A_\gamma\).  Recall also that a
  \(\Cont_0(X)\)-\(\Cst\)-algebra structure on the
  \(\Cst\)\nb-algebra~\(A_X\) is equivalent to a continuous map
  \(\psi\colon \dual{A_X} \to X\).  The field~\(\A|_X\) is a
  continuous field of \(\Cst\)\nb-algebras if and only if~\(\psi\)
  is open (see \cite{Nilsen:Bundles}*{Theorem~3.3}). A Fell  bundle~\(\A\) is a
  line bundle if and only if~\(\A|_X\) is the trivial
  \(\Cst\)\nb-algebra bundle over~\(X\) with fibre~\(\C\).  This
  implies that~\(\A\) is continuous.  In fact, Fell line bundles are
  locally trivial.
\end{remark}

We can now describe the ideal
\(J_\sing\defeq \ker \left(\Cst_\red(H,\A) \to \Cst_\ess(H,\A)\right)\):

\begin{proposition}
  \label{pro:upper_semicontinuous_dichotomy}
  Let~\(H\) be an étale groupoid with locally compact and Hausdorff
  unit space~\(X\).  Let~\(\A\) be a continuous Fell bundle
  over~\(H\); so the map \(\psi\colon \dual{A}\to X\) with
  \(\psi^{-1}(x) = \dual{A_x} \subseteq \dual{A}\) for \(x\in X\)
  is open.  Assume~\(H\) to be covered by countably many bisections.
  Let \(f\in\Cst_\red(H,\A)\) and \(\varepsilon\ge0\).  Define
  \begin{align*}
    s_{\dual{A}}^{\varepsilon}(f)
    &\defeq \setgiven*{\pi\in\dual{A}}{\norm{\pi(\tilde{E}_\red(f^* *f))}> \varepsilon},\\
    s_X^\varepsilon(f)
    &\defeq \setgiven*{x\in X}{\norm{E_\red(f^* *f)(x)}>\varepsilon},\\
    s_H^\varepsilon(f)
    &\defeq \setgiven*{\gamma\in H}{\norm{j(f)(\gamma)}>\varepsilon}.
  \end{align*}
  Let \(D\subseteq X\) be the set of dangerous points.  The
  following are equivalent:
  \begin{enumerate}
  \item \label{en:upper_semicontinuous_dichotomy_1}%
    \(f\in J_\sing\);
  \item \label{en:upper_semicontinuous_dichotomy_2a}%
    \(s_{\dual{A}}^0(f)\subseteq \psi^{-1}(D)\);
  \item \label{en:upper_semicontinuous_dichotomy_2}%
    \(s_{\dual{A}}^0(f)\subseteq \dual{A}\) is meagre;
  \item \label{en:upper_semicontinuous_dichotomy_3}%
    \(s_{\dual{A}}^0(f)\subseteq \dual{A}\) has empty interior;
   \item \label{en:upper_semicontinuous_dichotomy_3a}%
     \(s_{\dual{A}}^\varepsilon(f)\subseteq \dual{A}\) has empty interior for all
     \(\varepsilon>0\);
   \item \label{en:upper_semicontinuous_dichotomy_4a}%
     \(s_X^0(f)\subseteq D\);
   \item \label{en:upper_semicontinuous_dichotomy_4}%
     \(s_X^0(f)\subseteq X\) is meagre;
   \item \label{en:upper_semicontinuous_dichotomy_6}%
     \(s_X^0(f)\subseteq X\) has empty interior;
   \item \label{en:upper_semicontinuous_dichotomy_7}%
     \(s_X^\varepsilon(f)\subseteq X\) has empty interior for all
     \(\varepsilon>0\);
   \item \label{en:upper_semicontinuous_dichotomy_8a}%
     \(s_H^0(f)\subseteq \s^{-1}(D)\);
   \item \label{en:upper_semicontinuous_dichotomy_8}%
     \(s_H^0(f)\subseteq H\) is meagre;
   \item \label{en:upper_semicontinuous_dichotomy_10}%
     \(s_H^0(f)\subseteq H\) has empty interior;
   \item \label{en:upper_semicontinuous_dichotomy_11}%
     \(s_H^\varepsilon(f)\subseteq H\) has empty interior for all
     \(\varepsilon>0\).
   \end{enumerate}	
   In general, without any restriction on~\(\A\) and~\(H\),
   \ref{en:upper_semicontinuous_dichotomy_1} is equivalent to each
   of the conditions
   \ref{en:upper_semicontinuous_dichotomy_2}--\ref{en:upper_semicontinuous_dichotomy_3a}.
   And if~\(\A\) is a Fell line bundle, then they are further
   equivalent to
   \ref{en:upper_semicontinuous_dichotomy_4}--\ref{en:upper_semicontinuous_dichotomy_7}.
\end{proposition}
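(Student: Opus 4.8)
The plan is to organize the equivalences into three layers and prove them by passing freely through the isometric correspondences already established. The first layer is the list \ref{en:upper_semicontinuous_dichotomy_2}--\ref{en:upper_semicontinuous_dichotomy_3a}, which concerns the set $S_{\dual{A}}^{\varepsilon}$ of irreducible representations where $\norm{\pi(\tilde{E}_\red(f^**f))}>\varepsilon$. These are exactly the conditions of Proposition~\ref{pro:essential_norm} applied to the element $f^**f$ of $\Cst_\red(H,\A)\cong A\rtimes_\red\tilde{S}$, under the identification $\nu_{E(f^**f)}(\pi) = \norm{\pi(\tilde{E}_\red(f^**f))}$ coming from the proof of Proposition~\ref{prop:embed_into_sections}. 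So the equivalence of \ref{en:upper_semicontinuous_dichotomy_2}--\ref{en:upper_semicontinuous_dichotomy_3a}, as well as their equivalence to \ref{en:upper_semicontinuous_dichotomy_1} (that is, $f\in J_\sing$), is immediate from Corollary~\ref{cor:kernel_essential_expectation} together with Proposition~\ref{pro:essential_norm} and the isomorphism $\Cst_\ess(H,\A)\cong A\rtimes_\ess\tilde{S}$. This handles the ``in general'' clause of the proposition at once, with no hypotheses on $\A$ or $H$.

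The second layer transports these conditions from $\dual{A}$ down to $X$ and up to $H$. The key tool is the formula $\norm{E_\red(f^**f)(x)} = \sup_{\pi\in\dual{A_x}}\norm{\pi(j(f)|_X)}$, which says that the function $\psi$-fibrewise supremum of $\nu_{E(f^**f)}$ over $\dual{A}$ equals $x\mapsto\norm{E_\red(f^**f)(x)}$ on $X$; since $\psi\colon\dual{A}\to X$ is continuous, open (by the continuity of $\A|_X$ and Remark~\ref{rem:continuous_field}) and surjective onto the relevant open set, preimages and images of open sets correspond, so $S_{\dual{A}}^{0} = \psi^{-1}(S_X^{0})$ and $S_{\dual{A}}^{\varepsilon}\subseteq\psi^{-1}(S_X^{\varepsilon})\subseteq\overline{S_{\dual{A}}^{\varepsilon}}$-type estimates let one translate ``meagre'', ``empty interior'', and the $\varepsilon$-versions between the two spaces. (Open surjective continuous maps between Baire spaces preserve and reflect meagreness and emptiness of interior for the relevant sets.) Likewise, from \eqref{eq:j_vs_E} one gets $\norm{E_\red(f^**f)(x)} = \sum_{\s(\gamma)=x}\norm{j(f)(\gamma)}^2$, which shows $S_X^{0} = \s(S_H^{0})$ and relates the $\varepsilon$-sublevel sets; since $\s$ is a local homeomorphism and $H$ is covered by countably many bisections, $\s$ restricted to each bisection is an open embedding, so once more meagreness and emptiness of interior pass back and forth, giving the equivalences among \ref{en:upper_semicontinuous_dichotomy_4}--\ref{en:upper_semicontinuous_dichotomy_7}, among \ref{en:upper_semicontinuous_dichotomy_8}--\ref{en:upper_semicontinuous_dichotomy_11}, and their links to the first layer. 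For the Fell line bundle case one uses Remark~\ref{rem:continuous_field} (line bundles are automatically continuous and locally trivial), so the hypothesis that $\A$ be continuous is free, and only the countable-cover hypothesis on $H$ is needed for \ref{en:upper_semicontinuous_dichotomy_4}--\ref{en:upper_semicontinuous_dichotomy_7}; this is the ``if $\A$ is a Fell line bundle'' clause.

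The third and most delicate layer is the trio of ``dangerous point'' characterisations \ref{en:upper_semicontinuous_dichotomy_2a}, \ref{en:upper_semicontinuous_dichotomy_4a}, \ref{en:upper_semicontinuous_dichotomy_8a}. Here I would argue as follows. If $f\in J_\sing$ then $S_H^{0}$ is meagre, so it cannot contain a nonempty open set; but by Lemma~\ref{lem:dangerous}, $j(f)$ is continuous at every $\gamma$ with $\s(\gamma)\notin D$, so if $\gamma\in S_H^{0}$ with $\s(\gamma)\notin D$ then $\norm{j(f)(\cdot)}$ is bounded below on a neighbourhood of $\gamma$ inside $S_H^{0}$, a contradiction --- hence $S_H^{0}\subseteq\s^{-1}(D)$, which is \ref{en:upper_semicontinuous_dichotomy_8a}, and applying $\s$ gives \ref{en:upper_semicontinuous_dichotomy_4a}, and fibring back gives \ref{en:upper_semicontinuous_dichotomy_2a}. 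Conversely, if $S_H^{0}\subseteq\s^{-1}(D)$, then since $H$ is covered by countably many bisections, Lemma~\ref{lem:dangerous} says $\s^{-1}(D)$ is meagre, so $S_H^{0}$ is meagre, which is \ref{en:upper_semicontinuous_dichotomy_8} and hence, by the second layer, $f\in J_\sing$; similarly $S_X^{0}\subseteq D$ implies $S_X^{0}$ meagre and $S_{\dual{A}}^{0}\subseteq\psi^{-1}(D)$ implies $S_{\dual{A}}^{0}$ meagre by the same lemma (its last sentence), closing the circle. The main obstacle I anticipate is bookkeeping the continuity-point argument carefully: one must ensure that the comeagre set of continuity points of $j(f)$ from Lemma~\ref{lem:groupoid_Cstar_element_a-e-continuous} is genuinely open-neighbourhood continuity (it is, by the explicit construction there), and one must be precise that ``$\norm{j(f)(\cdot)}>\varepsilon$ is a neighbourhood condition at a continuity point'' uses only lower semicontinuity of the norm, which is why the switch from upper to lower semicontinuous fields (i.e., assuming $\A$ continuous) was made right before the statement. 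Everything else is routine transport along the open continuous surjections $\psi$ and $\s$.
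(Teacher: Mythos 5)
Your first layer is fine and matches the paper: the general equivalence of \(f\in J_\sing\) with the three conditions on \(S_{\dual{A}}^\varepsilon\) is exactly Corollary~\ref{cor:singular_ideal_description}, read through the identification of \(\tilde{E}_\red\) with \(\varrho\circ E_\red\) from Proposition~\ref{prop:embed_into_sections}. Your third layer also assembles the right ingredients: lower semicontinuity of the norm functions at points over \(X\setminus D\), and meagreness of \(D\) and \(\s^{-1}(D)\) from Lemma~\ref{lem:dangerous}. The problem is the second layer, on which the starting point of your third layer depends.

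The gap is this: you assert that the open continuous surjections \(\psi\colon\dual{A}\to X\) and \(\s\colon H\to X\) ``preserve and reflect meagreness and emptiness of interior'', and in particular that \(S_{\dual{A}}^0=\psi^{-1}(S_X^0)\). Neither holds. Since \(\norm{E_\red(f^* * f)(x)}=\sup_{\pi\in\dual{A_x}}\norm{\pi(\tilde{E}_\red(f^* * f))}\), a point \(x\) lies in \(S_X^0\) as soon as \emph{one} representation over \(x\) lies in \(S_{\dual{A}}^0\); so only \(\psi(S_{\dual{A}}^0)=S_X^0\) is true, and \(\psi^{-1}(S_X^0)\) is in general strictly larger than \(S_{\dual{A}}^0\). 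Moreover, open continuous maps do not push meagreness or empty interior forward: the graph of a continuous function \(\R\to\R\) is closed and nowhere dense in \(\R^2\), yet the open coordinate projection maps it onto all of \(\R\). Hence from ``\(S_{\dual{A}}^0\) is meagre'' -- which is what \(f\in J_\sing\) gives you -- you cannot conclude ``\(S_X^0\) is meagre'' by transport along \(\psi\), and so you cannot legitimately begin your third-layer argument with ``\(S_H^0\) is meagre''. (Preimages behave well under open continuous maps; images do not. The direction \(\s\colon H\to X\) can be salvaged using the countable cover by bisections, but \(\psi\) cannot, since its fibres \(\dual{A_x}\) are large.) This is precisely why the paper does \emph{not} link the three families by transporting meagreness: it closes the cycle \emph{inside} each family (containment in the dangerous set \(\Rightarrow\) meagre \(\Rightarrow\) empty interior \(\Rightarrow\) all \(\varepsilon\)-level sets have empty interior \(\Rightarrow\) containment in the dangerous set, the last step by lower semicontinuity off \(D\)), and links the families only at the containments \(S_{\dual{A}}^0\subseteq\psi^{-1}(D)\), \(S_X^0\subseteq D\), \(S_H^0\subseteq\s^{-1}(D)\), where the sets correspond exactly because \(E_\red(f^* * f)(x)=0\) if and only if \(\pi(\tilde{E}_\red(f^* * f))=0\) for all \(\pi\in\dual{A_x}\), if and only if \(j(f)|_{H_x}=0\) by~\eqref{eq:j_vs_E}. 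Your claimed estimate \(\psi^{-1}(S_X^\varepsilon)\subseteq\overline{S_{\dual{A}}^\varepsilon}\) has no justification and should be dropped. A minor further point: for the Fell line bundle clause the paper needs no countable cover of \(H\), because \(\psi\) is then a homeomorphism and \(S_{\dual{A}}^\varepsilon\) literally equals \(S_X^\varepsilon\); your version proves a weaker statement there.
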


\begin{proof}
  In general, without any restriction on \(\A\) and~\(H\),
  \ref{en:upper_semicontinuous_dichotomy_1} is equivalent to
  \ref{en:upper_semicontinuous_dichotomy_2}--\ref{en:upper_semicontinuous_dichotomy_3a}
  by Corollary \ref{cor:singular_ideal_description}.  If~\(\A\) is a
  line bundle, then \(\psi\colon \dual{A}\to X\) is a homeomorphism
  and
  \(\psi(s_{\dual{A}}^\varepsilon(f))=s_{X}^\varepsilon(f)\)
  for all \(\varepsilon \ge 0\).  Thus
  \ref{en:upper_semicontinuous_dichotomy_2}--\ref{en:upper_semicontinuous_dichotomy_3a}
  are equivalent to
  \ref{en:upper_semicontinuous_dichotomy_4}--\ref{en:upper_semicontinuous_dichotomy_7}
  in this case.

  From now on, we assume~\(\A\) to be a continuous Fell bundle
  and~\(H\) to be covered by countably many bisections.  We first
  show that \ref{en:upper_semicontinuous_dichotomy_8a}%
  --\ref{en:upper_semicontinuous_dichotomy_11} are equivalent.  The
  subset \(\s^{-1}(D)\subseteq H\) is meagre by
  Lemma~\ref{lem:dangerous}.
  So~\ref{en:upper_semicontinuous_dichotomy_8a}
  implies~\ref{en:upper_semicontinuous_dichotomy_8}.  Since~\(H\) is
  a union of open sets that are Baire, \(H\) is a Baire space. Hence
  a meagre subset of \(H\) must have empty interior.
  So~\ref{en:upper_semicontinuous_dichotomy_8}
  implies~\ref{en:upper_semicontinuous_dichotomy_10}.  And
  \ref{en:upper_semicontinuous_dichotomy_10} implies
  \ref{en:upper_semicontinuous_dichotomy_11} because
  \(s_H^0(f)\supseteq s_H^\varepsilon(f)\) for all
  \(\varepsilon>0\).  If \(s_H^0(f)\) is not contained in
  \(\s^{-1}(D)\), then there is \(\gamma\in H\) with
  \(\s(\gamma)\notin D\) and \(j(f)(\gamma)\neq0\).  Let
  \(\varepsilon = \norm{j(f)(\gamma)}/2\).  Since
  \(\s(\gamma)\notin D\) and the bundle~\(\A\) is lower
  semicontinuous, \(\norm{j(f)}\) is lower semicontinuous
  in~\(\gamma\) by
  Lemma~\ref{lem:groupoid_Cstar_element_a-e-continuous}.  This gives
  an open neighbourhood~\(U\) of~\(\gamma\) with
  \(\norm{j(f)(\eta)}>\varepsilon\) for all \(\eta\in U\).
  So~\(s_H^\varepsilon(f)\) has non-empty interior.  This shows
  that~\ref{en:upper_semicontinuous_dichotomy_11}
  implies~\ref{en:upper_semicontinuous_dichotomy_8a} and finishes
  the proof that \ref{en:upper_semicontinuous_dichotomy_8a}%
  --\ref{en:upper_semicontinuous_dichotomy_11} are equivalent.

  The spaces \(X\) and~\(\dual{A}\) are Baire spaces as well (see
  \cite{Dixmier:Cstar-algebras}*{Proposition~3.4.13}).  The subset
  \(D\subseteq X\) is meagre.  Since~\(\psi\) is open and
  continuous, this implies that \(\psi^{-1}(D) \subseteq \dual{A}\)
  is also meagre.  By Lemma~\ref{lem:dangerous}, the function
  \(\norm{E_\red(f^* * f)(x)}\) is continuous in \(X\setminus D\).
  We claim that the function \(\norm{\tilde{E}_\red(f^* *f)(\pi)}\)
  is lower semicontinuous in \(\psi^{-1}(X\setminus D)\). Indeed, if
  \(x\in X\setminus D\), then there is a continuous section~\(h\)
  of~\(\A|_X\) with
  \(\lim_{y\to x} {}\norm{E_\red(f^* *f)(y) - h(y)} = 0\).  Since
  \(h\in A\), the function
  \(\norm{\tilde{E}_\red(h)(\pi)} = \norm{\pi(h)}\) is lower
  semicontinuous on~\(\dual{A}\) (see
  \cite{Dixmier:Cstar-algebras}*{Proposition~3.3.2}).  Since
  \[
    \abs*{\norm{\tilde{E}_\red(h)(\pi)} - \norm{\tilde{E}_\red(f^* *f)(\pi)}}
    \le \norm{E_\red(f^* *f)(y) - h(y)}
  \]
  for \(\pi \in \dual{A_y}\subseteq \dual{A}\), it follows that
  \(\norm{\tilde{E}_\red(f^* *f)(\pi)}\) is lower semicontinuous in
  \(\psi^{-1}(x)\).

  Using the facts gathered in the previous paragraph, we may carry
  over the proof of the equivalence
  \ref{en:upper_semicontinuous_dichotomy_8a}%
  --\ref{en:upper_semicontinuous_dichotomy_11} to prove that
  \ref{en:upper_semicontinuous_dichotomy_4a}%
  --\ref{en:upper_semicontinuous_dichotomy_7} are equivalent and
  that \ref{en:upper_semicontinuous_dichotomy_2a}%
  --\ref{en:upper_semicontinuous_dichotomy_3a} are equivalent.
  Finally, \(E_\red(f^* * f)(x)=0\) is equivalent to
  \(\norm{\pi(\tilde{E}_\red(f^* *f))}=0\) for all
  \(\pi\in \dual{A_x}\), and to \(j(f)|_{H_x}=0\)
  by~\eqref{eq:j_vs_E}.  Hence
  \ref{en:upper_semicontinuous_dichotomy_4a} is equivalent to
  \ref{en:upper_semicontinuous_dichotomy_2a} and
  \ref{en:upper_semicontinuous_dichotomy_8a}.
\end{proof}

\begin{remark}
  By definition, \(\Cst_\red(H,\A) = \Cst_\ess(H,\A)\) if and only
  if the only element \(f\in \Cst_\red(H,\A)\) that belongs
  to~\(J_\sing\) is the zero element.  Thus
  Proposition~\ref{pro:upper_semicontinuous_dichotomy} implies many
  equivalent characterisations for
  \(\Cst_\red(H,\A) = \Cst_\ess(H,\A)\).
\end{remark}

\begin{remark}
  An element~\(f\) of \(\Cst_\red(H)\) is called \emph{singular} in
  \cite{Clark-Exel-Pardo-Sims-Starling:Simplicity_non-Hausdorff} if
  \(s_H^0(f)\defeq\setgiven{\gamma\in H}{j(f)(\gamma)\neq 0}\) has
  empty interior.
  Proposition~\ref{pro:upper_semicontinuous_dichotomy} shows that
  \(f\in \Cst_\red(H)\) is singular in the notation
  of~\cite{Clark-Exel-Pardo-Sims-Starling:Simplicity_non-Hausdorff}
  if and only if it belongs to~\(J_\sing\), provided~\(H\) is
  covered by countably many bisections.  This follows if~\(H\) is
  countable at infinity.
\end{remark}

Let~\(\A\) be a Fell line bundle.  Remark~\ref{rem:continuous_field}
shows that~\(\A\) is a continuous field of Banach spaces over~\(H\).
Exel and Pitts show that
\[
  \Gamma\defeq \setgiven{f \in \Cst_\red(H,\A)}
  {E_\red(f^*f)(x)=0 \text{ for all }x\in X \text{ with }H(x)=\{x\}}
\]
is an ideal in \(\Cst_\red(H,\A)\), and they define the essential
groupoid \(\Cst\)\nb-algebra as the quotient
\(\Cst_\red(H,\A)/\Gamma\).  It is clear that
\(\Gamma \cap \Cont_0(X)=0\) if and only if the set of \(x\in H\)
with \(H(x)=\{x\}\) has empty interior, that is, \(H\) is
topologically principal.  Therefore, the map from \(\Cont_0(X)\) to
\(\Cst_\red(H,\A)/\Gamma\) is injective if and only if~\(H\) is
topologically principal.  In contrast, we have defined
\(\Cst_\ess(H,\A)\) so that the map
\(\Cont_0(X) \to \Cst_\ess(H,\A)\) is always injective.  Hence the
essential twisted groupoid \(\Cst\)\nb-algebras defined here and
in~\cite{Exel-Pitts:Weak_Cartan} differ when~\(H\) is not
topologically principal.  If, however, \(H\) is topologically
principal, then the two definitions are equivalent:

\begin{proposition}
  \label{prop:singular_ideal_topological_principal}
  Let~\(H\) be topologically principal and let~\(\A\) be a Fell
  bundle over~\(H\).  Assume either that~\(\A\) is a Fell line
  bundle or that~\(\A\) is continuous and~\(H\) is covered by
  countably many bisections.  Then
  \[
    J_\sing = \setgiven{f\in \Cst_\red(H,\A)}{
      E_\red(f^* *f)(x)=0\text{ for all } x \in X\text{ with }H(x)=\{x\}}.
  \]
\end{proposition}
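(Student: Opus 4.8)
The plan is to prove the claimed equality of ideals by establishing two inclusions, using the characterisations of $J_\sing$ assembled in Proposition~\ref{pro:upper_semicontinuous_dichotomy}. Denote the right-hand side by
\[
  \Gamma_0 \defeq \setgiven{f\in \Cst_\red(H,\A)}{
    E_\red(f^* *f)(x)=0\text{ for all } x \in X\text{ with }H(x)=\{x\}}.
\]
Under either hypothesis on $\A$ and $H$, Proposition~\ref{pro:upper_semicontinuous_dichotomy} tells us that for $f\in\Cst_\red(H,\A)$ we have $f\in J_\sing$ if and only if the set $S_X^0 = \setgiven{x\in X}{E_\red(f^* *f)(x)\neq 0}$ has empty interior in~$X$. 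So it suffices to show: $S_X^0$ has empty interior if and only if $E_\red(f^* *f)(x)=0$ for every $x\in X$ with trivial isotropy.

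First I would do the easy direction: if $E_\red(f^* *f)(x)=0$ for all $x$ with $H(x)=\{x\}$, then $S_X^0$ is contained in the set $X_{\mathrm{iso}} \defeq \setgiven{x\in X}{H(x)\setminus X\neq\emptyset}$ of points with non-trivial isotropy. Since $H$ is topologically principal, $X_{\mathrm{iso}}$ has empty interior by Definition~\ref{def:non-triviality_groupoids}, and hence so does its subset $S_X^0$; thus $f\in J_\sing$. For the converse, suppose $S_X^0$ has empty interior but, for contradiction, that there is $x_0\in X$ with $H(x_0)=\{x_0\}$ and $E_\red(f^* *f)(x_0)\neq 0$. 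The key is that at a point with trivial isotropy the function $x\mapsto \|E_\red(f^* *f)(x)\|$ is \emph{continuous} at $x_0$: a point with trivial isotropy is never dangerous (the witnesses $\gamma\neq\gamma'$ in Definition~\ref{def:dangerous} would give a non-trivial isotropy element at $x=\s(\gamma)=\rg(\gamma)$ as in the proof of Lemma~\ref{lem:dangerous}), so by Lemma~\ref{lem:groupoid_Cstar_element_a-e-continuous} and the continuity of~$\A|_X$ (automatic for a Fell line bundle by Remark~\ref{rem:continuous_field}, assumed in the other case) the section $E_\red(f^* *f)$ is continuous at~$x_0$. Continuity at $x_0$ plus $E_\red(f^* *f)(x_0)\neq 0$ forces $\|E_\red(f^* *f)(\cdot)\|$ to be bounded below by $\tfrac12\|E_\red(f^* *f)(x_0)\|$ on a neighbourhood of~$x_0$, so $S_X^0$ contains a non-empty open set — contradicting that $f\in J_\sing$. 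Hence $E_\red(f^* *f)(x)=0$ for all $x$ with trivial isotropy, i.e. $f\in\Gamma_0$.

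I expect the main obstacle to be making the continuity-at-$x_0$ step fully rigorous in the non-line-bundle case: one must go from ``$x_0$ not dangerous'' to genuine norm-continuity of $E_\red(f^* *f)$ at~$x_0$. The cleanest route is to invoke Lemma~\ref{lem:groupoid_Cstar_element_a-e-continuous}, whose proof actually shows that $j(f)$ — and hence its restriction $E_\red(f)=j(f)|_X$ — is continuous at every $\gamma$ with $\s(\gamma)$ not dangerous (this is the content of Lemma~\ref{lem:dangerous}, which does not need the countable-bisection hypothesis for this pointwise statement); applied to $f^* *f\in\Cst_\red(H,\A)$ it gives continuity of $E_\red(f^* *f)$ at~$x_0$. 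One should also note that the equivalence in Proposition~\ref{pro:upper_semicontinuous_dichotomy} linking $f\in J_\sing$ to ``$S_X^0$ has empty interior'' is exactly what permits us to work with $S_X^0$ rather than with the a~priori more natural (but harder to control) subset $S_{\dual{A}}^0$ of~$\dual{A}$; in the Fell-line-bundle case this is automatic since $\psi$ is a homeomorphism, and in the other case it is part of the proposition's assertion. Putting the two inclusions together yields $J_\sing=\Gamma_0$, which is the claim.
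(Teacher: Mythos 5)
Your proposal is correct and follows essentially the same route as the paper's proof: both directions reduce, via Proposition~\ref{pro:upper_semicontinuous_dichotomy}, to the statement that $S_X^0$ has empty interior, with topological principality giving one inclusion and the ``trivial isotropy $\Rightarrow$ not dangerous $\Rightarrow$ continuity of $E_\red(f^**f)$ at $x_0$'' chain from Lemma~\ref{lem:dangerous} giving the other. Your extra remarks on where lower semicontinuity of the norm is needed and on which parts of Proposition~\ref{pro:upper_semicontinuous_dichotomy} apply in each case are accurate refinements of what the paper leaves implicit.
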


\begin{proof}
  Assume \(E_\red(f^*f)(x)\neq 0\) for some \(x\in X\) with
  \(H(x)=\{x\}\).  Then \(E_\red(f^*f)\) is continuous at this
  point by Lemma~\ref{lem:dangerous}.  Hence there is an open
  neighbourhood of~\(x\) on which \(E_\red(f^*f)(y)\neq 0\).  Thus
  \(f\not\in J_\sing\) by
  Proposition~\ref{pro:upper_semicontinuous_dichotomy}.  Conversely,
  assume \(E_\red(f^*f)(x)=0\) for all \(x\in X\) with
  \(H(x)=\{x\}\).  Since~\(H\) is topologically principal, the set
  of \(x\in X\) with \(H(x)\neq\{x\}\) has empty interior.  Hence
  the set of \(x\in X\) with \(E_\red(f^*f)(x)\neq0\) has empty
  interior.  Then \(f\in J_\sing\) by
  Proposition~\ref{pro:upper_semicontinuous_dichotomy}.
\end{proof}

If~\(\A\) is a Fell line bundle, then the conditions above may also
be related to supportive conditional expectations and
the criterion in Lemma~\ref{lem:supportive_condition}:

\begin{theorem}
  \label{thm:supportive_groupoid_characterisation}
  Let~\(H\) be a topologically free étale groupoid with locally
  compact Hausdorff object space~\(X\).  Let~\(\L\) be a Fell line
  bundle over~\(H\).
  The following are equivalent:
  \begin{enumerate}
  \item \label{enu:supportive_groupoid_characterisation1}%
    \(\Cst_\red(H,\L)=\Cst_\ess(H,\L)\);
  \item \label{enu:supportive_groupoid_characterisation1.25}%
    \(\Cont_0(X)\) supports \(\Cst_\red(H,\L)\);
  \item \label{enu:supportive_groupoid_characterisation4}%
    \(\Cont_0(X)\) detects ideals in \(\Cst_\red(H,\L)\);
  \item \label{enu:supportive_groupoid_characterisation6}%
    for any \(f\in \Cst_\red(H,\L)^+\setminus \{0\}\), there is
    \(a\in \Cont_0(X)^+\setminus\{0\}\) with \(a \le j(f)|_X\);
  \item \label{enu:supportive_groupoid_characterisation7}%
    the canonical weak expectation
    \(\Cst_\red(H,\L) \to \Cont_0(X)''\) is supportive.
  \end{enumerate}
\end{theorem}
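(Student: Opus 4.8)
The plan is to prove the cycle of implications
$\ref{enu:supportive_groupoid_characterisation1}\Rightarrow\ref{enu:supportive_groupoid_characterisation1.25}\Rightarrow\ref{enu:supportive_groupoid_characterisation4}\Rightarrow\ref{enu:supportive_groupoid_characterisation1}$, then obtain the equivalence of \ref{enu:supportive_groupoid_characterisation6} by feeding it into the general machinery of Section~\ref{sec:hidden_ideal}, and finally peel off \ref{enu:supportive_groupoid_characterisation7} as a reformulation. Throughout we use that $H$ is topologically free and that, for a Fell line bundle, $\A|_X$ is the trivial bundle over~$X$, so $E_\red$ may be viewed as a generalised expectation $\Cst_\red(H,\L)\to\mathfrak{B}(X)$ with values really in $\Cont_0(X)''$ after composing with the embedding $\Cont_0(X)''\hookrightarrow\prod_{\pi}\Bound(\Hils_\pi)$; moreover $\dual{A}\cong X$ in the line-bundle case by Remark~\ref{rem:continuous_field}, so the subset descriptions of Proposition~\ref{pro:upper_semicontinuous_dichotomy} apply directly in terms of $X$.

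\emph{First step: \ref{enu:supportive_groupoid_characterisation1}$\Rightarrow$\ref{enu:supportive_groupoid_characterisation1.25}.} Assume $\Cst_\red(H,\L)=\Cst_\ess(H,\L)$. Since $H$ is topologically free, the dual groupoid $\dual{A}\rtimes S\cong H$ is topologically free, hence by Proposition~\ref{prop:groupoid_non-triviality_conditions} and Corollary~\ref{cor:non-triviality_conditions_equiv} it is AS topologically free, so the action is AS topologically free and Theorem~\ref{the:top_free_uniqueness} gives that $\Cont_0(X)$ detects ideals in $\Cst_\ess(H,\L)=\Cst_\red(H,\L)$. But we want the stronger \emph{support} property. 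For this I would instead argue that topological freeness of $H$ forces the inclusion $\Cont_0(X)\subseteq\Cst_\red(H,\L)$ to be aperiodic: by Proposition~\ref{pro:aperiodic_isg} and Theorem~\ref{the:aperiodic_top_non-trivial} (applied to $A=\Cont_0(X)$, which is of Type~I), topological freeness of the action is equivalent to aperiodicity of the action, hence the inclusion into the essential crossed product is aperiodic. When $\Cst_\red=\Cst_\ess$, the inclusion $\Cont_0(X)\subseteq\Cst_\red(H,\L)$ is aperiodic, and $E_\red$ is then an essentially defined conditional expectation (values in $\Locmult(\Cont_0(X))\cong\mathfrak{B}(X)/\mathfrak{M}(X)$) which is faithful by Proposition~\ref{prop:embed_into_sections} combined with the identification in Proposition~\ref{pro:groupoid_vs_isg_red}. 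Now Theorem~\ref{the:aperiodic_consequences}.\ref{enu:aperiodic_consequences2} with $\Null_E=0$ yields that $\Cont_0(X)^+$ supports $\Cst_\red(H,\L)$.

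\emph{Remaining steps.} The implication \ref{enu:supportive_groupoid_characterisation1.25}$\Rightarrow$\ref{enu:supportive_groupoid_characterisation4} is immediate from Lemma~\ref{lem:generalised_support}. For \ref{enu:supportive_groupoid_characterisation4}$\Rightarrow$\ref{enu:supportive_groupoid_characterisation1}: by Corollary~\ref{cor:detects_implies_almost_faithfulness}, if $\Cont_0(X)$ detects ideals in $\Cst_\red(H,\L)$, then the expectation $EL_{\red}$ (the image of $EL$ on $\Cst_\red$) is almost faithful, i.e. $\Null_{EL}$ has trivial image in $\Cst_\red(H,\L)$, which is exactly $J_\sing=0$, i.e. $\Cst_\red(H,\L)=\Cst_\ess(H,\L)$. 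For \ref{enu:supportive_groupoid_characterisation6}: it implies \ref{enu:supportive_groupoid_characterisation1.25} via Lemma~\ref{lem:supportive_condition} together with Theorem~\ref{the:aperiodic_consequences}.\ref{enu:aperiodic_consequences1} (once aperiodicity and $\Null_{EL}=0$ are in force, $E(b)\neq0$ forces the existence of $a\in\Cont_0(X)^+\setminus\{0\}$ with $a\precsim b$; but using Proposition~\ref{pro:upper_semicontinuous_dichotomy} in the topologically free case one upgrades $\precsim$ to the literal inequality $a\le j(f)|_X$ by exploiting lower semicontinuity of $x\mapsto\|E_\red(f)(x)\|$ on a comeagre set — one picks a small open box where this norm stays $>\delta$ and a bump function supported there below $\delta\cdot\mathbf 1$); conversely, under \ref{enu:supportive_groupoid_characterisation1} the argument of the First step combined with Theorem~\ref{the:aperiodic_consequences}.\ref{enu:aperiodic_consequences1} applied to $b=f$ and then refined via Proposition~\ref{pro:upper_semicontinuous_dichotomy} gives \ref{enu:supportive_groupoid_characterisation6}. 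Finally \ref{enu:supportive_groupoid_characterisation7} is equivalent to \ref{enu:supportive_groupoid_characterisation1.25}: by Proposition~\ref{pro:local_expectation_supportive} essentially defined conditional expectations are supportive, and since for commutative $A$ the local multiplier algebra sits inside $A''$ with $\Null$ unchanged, supportiveness of the weak expectation $\Cst_\red(H,\L)\to\Cont_0(X)''$ is the same condition as supportiveness of $E_\red$ viewed into $\Locmult(\Cont_0(X))$; combined with aperiodicity this is equivalent to $\Cont_0(X)^+$ supporting $\Cst_\red(H,\L)$ by the Remark following Definition~\ref{def:supportive_weak_conditional_expectation} and Theorem~\ref{the:aperiodic_consequences}.

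\emph{Main obstacle.} The delicate point is upgrading the Cuntz-subequivalence $a\precsim j(f)|_X$ produced by the abstract theorem to the \emph{literal} domination $a\le j(f)|_X$ in \ref{enu:supportive_groupoid_characterisation6}; this is precisely where one must invoke the continuity-on-a-comeagre-set statement of Lemma~\ref{lem:groupoid_Cstar_element_a-e-continuous} and the Baire property of~$X$, and where topological freeness is genuinely used to ensure $E_\red(f^* * f)$ does not vanish on a dense-interior set when $f\notin J_\sing$. Handling the non-line-bundle subtleties is avoided here because we are in the Fell-line-bundle case, where $\dual{A}\cong X$ and $\A|_X$ is trivial, so Proposition~\ref{pro:upper_semicontinuous_dichotomy} gives all the equivalent $X$-level descriptions of $J_\sing$ directly.
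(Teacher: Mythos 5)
Your proposal is correct and follows essentially the same route as the paper: aperiodicity of the inclusion from topological freeness via Theorem~\ref{the:aperiodic_top_non-trivial} (using that \(\Cont_0(X)\) is of Type~I and the dual groupoid is~\(H\)), the general consequences of Theorem~\ref{the:aperiodic_consequences} for the equivalence of \ref{enu:supportive_groupoid_characterisation1}, \ref{enu:supportive_groupoid_characterisation1.25} and~\ref{enu:supportive_groupoid_characterisation4}, the bump-function argument from Proposition~\ref{pro:upper_semicontinuous_dichotomy} for \ref{enu:supportive_groupoid_characterisation1}\(\Rightarrow\)\ref{enu:supportive_groupoid_characterisation6} (which, as you eventually note, gives the literal inequality directly, so no upgrade from \(\precsim\) to \(\le\) is ever needed), and Lemma~\ref{lem:supportive_condition} for \ref{enu:supportive_groupoid_characterisation6}\(\Rightarrow\)\ref{enu:supportive_groupoid_characterisation7}. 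The one inessential wrinkle is your standalone claim that \ref{enu:supportive_groupoid_characterisation7}\(\Leftrightarrow\)\ref{enu:supportive_groupoid_characterisation1.25} by identifying supportiveness of the \(A''\)-valued expectation with supportiveness of the \(\Locmult(A)\)-valued one; that identification is not cleanly justified, but it is also not needed, since \ref{enu:supportive_groupoid_characterisation7}\(\Rightarrow\)\ref{enu:supportive_groupoid_characterisation1.25} already follows from Theorem~\ref{the:aperiodic_consequences} and the implication cycle then closes.
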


\begin{proof}
  The dual groupoid of our action is~\(H\), and \(\Cont_0(X)\) is of
  Type~I.  So Theorem~\ref{the:aperiodic_top_non-trivial} shows that
  the inclusion
  \(\Cont_0(X) \hookrightarrow \Cst(H,\L) \cong \Cont_0(X)\rtimes
  S\) is aperiodic.  Thus
  Theorem~\ref{the:aperiodic_action_consequences} shows that the
  essential crossed product \(\Cst_\ess(H,\L)\) is the unique
  quotient of \(\Cst(H,\L)\) in which \(\Cont_0(X)\) embeds and
  detects ideals, and also that \(\Cont_0(X)\) supports
  \(\Cst_\ess(H,\L)\).  So
  \ref{enu:supportive_groupoid_characterisation1}
  and~\ref{enu:supportive_groupoid_characterisation4} are equivalent
  and they imply~\ref{enu:supportive_groupoid_characterisation1.25}.
  And~\ref{enu:supportive_groupoid_characterisation1.25}
  implies~\ref{enu:supportive_groupoid_characterisation4} by
  Lemma~\ref{lem:generalised_support}.  Hence the conditions
  \ref{enu:supportive_groupoid_characterisation1}--%
  \ref{enu:supportive_groupoid_characterisation4} are equivalent.
  Assume~\ref{enu:supportive_groupoid_characterisation1} and let
  \(f\in \Cst_\red(H,\L)^+ \setminus\{0\}\).  Since \(J_\sing=0\),
  Proposition~\ref{pro:upper_semicontinuous_dichotomy} implies that
  there are \(\varepsilon>0\) and an open subset \(U\subseteq X\)
  with \(j(f)|_X(x) = E_\red(f)(x) > \varepsilon\) for all
  \(x\in U\).  There is \(a\in \Cont_0(U)\) with
  \(0 \le a \le \varepsilon\).  It witnesses that~\(f\) satisfies
  the condition in~\ref{enu:supportive_groupoid_characterisation6}.
  So~\ref{enu:supportive_groupoid_characterisation1}
  implies~\ref{enu:supportive_groupoid_characterisation6}.
  And~\ref{enu:supportive_groupoid_characterisation6}
  implies~\ref{enu:supportive_groupoid_characterisation7} by
  Lemma~\ref{lem:supportive_condition}.
  Condition~\ref{enu:supportive_groupoid_characterisation7}
  implies~\ref{enu:supportive_groupoid_characterisation1.25}
  and~\ref{enu:supportive_groupoid_characterisation4} by
  Theorem~\ref{the:aperiodic_consequences}.  This shows that all
  conditions are equivalent.
\end{proof}

\begin{remark}
  The existence of non-trivial singular elements in
  \(\Cst_\red(H,\L)\) for a Fell line bundle~\(\L\) depends on the
  line bundle~\(\L\).  Exel constructed in
  \cite{Exel:Non-Hausdorff}*{Section~2} a non-Hausdorff,
  topologically principal, \'etale groupoid~\(H\) such that
  \(\Cont_0(X)\) does not detect ideals in~\(\Cst_\red(H)\).  A line
  bundle~\(\L\) over the same groupoid such that \(\Cont_0(X)\) does
  detect ideals in~\(\Cst_\red(H,\L)\) is built
  in~\cite{Exel-Pitts:Weak_Cartan}*{Section~23}.
\end{remark}

\subsection{Ideal structure and pure infiniteness for groupoid Fell
  bundles}
\label{sec:ideals_groupoid}

The following theorem carries our results for inverse semigroup
crossed products over to the groupoid case:

\begin{theorem}
  \label{the:aperiodic_vs_top_free_groupoid}
  Let~\(\A\) be a Fell bundle over an étale groupoid~\(H\) with
  locally compact Hausdorff object space~\(X\).  Assume that
  \(A\defeq \Cont_0(\A|_X)\) contains an essential ideal that is
  separable or of Type~I.  The inclusion \(A \subseteq \Cst(H,\A)\)
  is aperiodic if and only if the dual groupoid
  \(\dual{A}\rtimes H\) is topologically free.  This follows if
  \(\Prim(A)\rtimes H\) is topologically free.  If~\(\A\) is a
  continuous Fell bundle, then it follows also if~\(H\) is
  topologically free.
\end{theorem}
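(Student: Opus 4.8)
The plan is to deduce Theorem~\ref{the:aperiodic_vs_top_free_groupoid} from the inverse semigroup machinery already assembled, using the wide inverse subsemigroup $S\subseteq\Bis(H)$ fixed at the start of Section~\ref{sec:groupoids} and the saturated expansion $\tilde S$ of Lemma~\ref{lem:make_saturated_groupoid_Fell}. First I would record that, by Proposition~\ref{pro:groupoid_vs_isg_full}, $\Cst(H,\A)\cong A\rtimes\tilde S$ as a $\Cst$\nb-inclusion of $A=C_0(\A|_X)$; hence the aperiodicity of $A\subseteq\Cst(H,\A)$ is, by definition, exactly the aperiodicity of the Banach $A$\nb-bimodule $(A\rtimes\tilde S)/A$. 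Next I would invoke Proposition~\ref{pro:aperiodic_isg} applied to the $\tilde S$\nb-grading on $\Cst(H,\A)$: this grading is saturated and, being the grading coming from the universal property of the full crossed product, it is automatically topological in the sense of Definition~\ref{def:topological_grading} (the canonical map $A\rtimes\tilde S\to\Cst(H,\A)$ is an isomorphism, so its kernel is contained in $\Null_{EL}$). Therefore $A\subseteq\Cst(H,\A)$ is aperiodic if and only if the $\tilde S$\nb-action on $A$ is aperiodic in the sense of Definition~\ref{def:aperiodic_action}.

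With that reduction in hand, the heart of the argument is Theorem~\ref{the:aperiodic_top_non-trivial}: since $A$ contains an essential ideal that is separable or of Type~I, conditions \ref{en:aperiodic_top_non-trivial1} (aperiodicity of the action), \ref{en:aperiodic_top_non-trivial3b} (topological freeness of $\dual A\rtimes\tilde S$) and \ref{en:aperiodic_top_non-trivial3c} (AS topological freeness) are all equivalent, and each implies pure outerness \ref{en:aperiodic_top_non-trivial5}. So it remains to identify $\dual A\rtimes\tilde S$ with $\dual A\rtimes H$. This is supplied by the discussion at the end of Section~\ref{sec:groupoid_Fell}: the Fell bundle $\A$ over $H$ induces an action of $H$ on $\dual A$ with anchor map $\psi\colon\dual A\to X$, whose transformation groupoid $\dual A\rtimes H$ is naturally isomorphic to $\dual A\rtimes\Bis(H)$, and this is isomorphic to $\dual A\rtimes S$ by Proposition~\ref{prop:wide_semigroup_isomorphism} since $S$ is wide; finally $\dual A\rtimes S\cong\dual A\rtimes\tilde S$ because passing to the saturated expansion does not change the dual groupoid (the fibres $\tilde S_U$ of $\tilde S\to S$ consist of sub-bimodules, which induce restrictions of the same partial homeomorphisms). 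Chaining these isomorphisms gives ``$A\subseteq\Cst(H,\A)$ aperiodic $\iff$ $\dual A\rtimes H$ topologically free''.

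The implication ``$\Prim(A)\rtimes H$ topologically free $\Rightarrow$ $\dual A\rtimes H$ topologically free'' I would obtain from the continuous, open, closed groupoid homomorphism $\kappa\colon\dual A\rtimes S\to\widecheck A\rtimes S$ of Proposition~\ref{prop:conditional_expectation}, adapted to the $H$\nb-picture: $\kappa^{-1}(\widecheck A)=\dual A$, and a bisection $U\subseteq\dual A\rtimes H$ disjoint from the units maps to a set whose closure's interior controls that of $\setgiven{x}{H(x)\cap U\ne\emptyset}$; since $\kappa$ is open and surjective with $\kappa^{-1}$ of the unit space equal to the unit space, nowhere-density of fixed-point sets downstairs pulls back. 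Concretely: topological freeness is inherited along surjective open groupoid homomorphisms that are bijective on objects and restrict $\kappa^{-1}(\text{units})=\text{units}$, because preimages of nowhere dense sets under open continuous maps are nowhere dense. For the last clause, when $\A$ is a continuous Fell bundle, Remark~\ref{rem:continuous_field} shows $\psi\colon\dual A\to X$ is open; then $\psi$ is a surjective, open, continuous map intertwining the $H$\nb-actions on $\dual A$ and on $X$ and satisfying $\psi^{-1}(X)=\dual A$ at the level of unit spaces of the transformation groupoids. So the same inheritance principle, applied this time to the groupoid homomorphism $\dual A\rtimes H\to X\rtimes H=H$ induced by $\psi$, shows that topological freeness of $H$ (which is $X\rtimes H$) implies topological freeness of $\dual A\rtimes H$, whence aperiodicity of the inclusion by the equivalence already proved.

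The main obstacle I anticipate is the bookkeeping in the last two paragraphs: verifying cleanly that topological freeness is preserved under the relevant groupoid homomorphisms. The subtlety is that topological freeness is a statement about \emph{all} bisections disjoint from the units, and one must check that every such bisection of $\dual A\rtimes H$ arises (up to shrinking to an open piece) as a preimage of a bisection of the target, or at least that its image ``feels'' the fixed-point structure of the target. The cleanest route is probably to argue via Lemma~\ref{lem:groupoid_vs_action}: express both groupoids as transformation groupoids of the wide inverse semigroup $S$ acting on $\dual A$, on $\widecheck A$, and on $X$ respectively, translate topological freeness into ``$\underline{\Fix}(\haction_t)$ has empty interior for all $t\in S$'' in each case, and then observe that the surjective open equivariant maps $\dual A\to\widecheck A$ and $\dual A\to X$ pull back dense open complements of fixed-point sets to dense open sets — this avoids the bisection gymnastics entirely and reduces everything to the single fact that preimages of meagre (indeed nowhere dense) sets under continuous open surjections between Baire spaces are meagre.
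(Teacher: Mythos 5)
Your proposal is correct and follows essentially the same route as the paper: reduce to the $\tilde S$\nb-action via Proposition~\ref{pro:aperiodic_isg} (the full crossed product grading being topological), apply Theorem~\ref{the:aperiodic_top_non-trivial} after identifying $\dual A\rtimes\tilde S$ with $\dual A\rtimes H$, and deduce the remaining implications from the fact that topological freeness is inherited along $H$\nb-equivariant, continuous, open surjections of $H$\nb-spaces, applied to $\dual A\to\Prim(A)$ and (when $\A$ is continuous, so that the anchor map is open) to $\dual A\to X$. Your final paragraph's formulation via Lemma~\ref{lem:groupoid_vs_action} and preimages of dense complements of fixed-point sets is the right way to justify that inheritance (the earlier phrase ``bijective on objects'' is inaccurate for $\kappa$, but your corrected argument does not need it).
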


\begin{proof}
  The first statement follows from
  Theorem~\ref{the:aperiodic_top_non-trivial} because
  \(\dual{A}\rtimes H\) is naturally isomorphic to the dual groupoid
  \(\dual{A}\rtimes \tilde{S}\) for the inverse semigroup action
  defined in Lemma~\ref{lem:make_saturated_groupoid_Fell}.  If
  \(X\onto Y\) is an \(H\)\nb-equivariant, continuous and open map
  between two \(H\)\nb-spaces, then \(X\rtimes H\) inherits
  topological freeness from \(Y\rtimes H\).  Hence
  \(\dual{A}\rtimes H\) is topologically free if
  \(\Prim(A)\rtimes H\) is topologically free.  And
  \(\Prim(A)\rtimes H\) is topologically free if \(H = X\rtimes H\)
  is topologically free and the base map
  \(\psi\colon \Prim(A)\to X\) is open.  The base map is open if and
  only if~\(A\) is a continuous \(\Cont_0(X)\)-algebra if and only
  if~\(\A\) is a continuous Fell bundle (see
  Remark~\ref{rem:continuous_field}).
\end{proof}

\begin{theorem}
  \label{the:groupoid_crossed_properties}
  Let~\(\A\) be a Fell bundle over an étale groupoid~\(H\) with
  locally compact Hausdorff object space~\(X\).  Assume that the
  inclusion \(A\defeq \Cont_0(\A|_X) \subseteq \Cst(H,\A)\) is
  aperiodic.  Then
  \begin{enumerate}
  \item \label{the:groupoid_crossed_properties_1}%
    \(A\) supports \(\Cst_\ess(H,\A)\);
  \item \label{the:groupoid_crossed_properties_2}%
    \(A\) detects ideals in~\(\Cst_\ess(H,\A)\),
    and~\(\Cst_\ess(H,\A)\) is the only quotient of \(\Cst(H,\A)\)
    with this property;
  \item \label{the:groupoid_crossed_properties_3}%
    if \(J\in \Ideals(\Cst(H,\A))\) and \(J\cap A=0\), then
    \(J\subseteq \ker(\Cst(H,\A) \to \Cst_\ess(H,\A))\);
  \item \label{the:groupoid_crossed_properties_4}%
    \(\Cst_\ess(H,\A)\) is simple if and only if the dual groupoid
    \(\dual{A}\rtimes H\) is minimal;
  \item \label{the:groupoid_crossed_properties_5}%
    if \(\dual{A}\rtimes H\) is minimal, then \(\Cst_\ess(H,\A)\)
    is simple and purely infinite if and only if every element of
    \(A^+\setminus\{0\}\) is infinite in \(\Cst_\ess(H,\A)\).
  \end{enumerate}
\end{theorem}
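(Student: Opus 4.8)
The plan is to transport the statement into the inverse semigroup setting using the dictionary of Section~\ref{sec:groupoid_Fell}. First I would realise~\(\A\) as a saturated Fell bundle over an inverse semigroup~\(\tilde S\) as in Lemma~\ref{lem:make_saturated_groupoid_Fell}, and then use the three identifications \(\Cst(H,\A)\cong A\rtimes\tilde S\) (Proposition~\ref{pro:groupoid_vs_isg_full}), \(\Cst_\ess(H,\A)\cong A\rtimes_\ess\tilde S\) (the definition of the essential groupoid crossed product), and \(\dual{A}\rtimes H\cong\dual{A}\rtimes\tilde S\) (as used already in the proof of Theorem~\ref{the:aperiodic_vs_top_free_groupoid}). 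Under the first identification the hypothesis says exactly that the \(\Cst\)-inclusion \(A\subseteq A\rtimes\tilde S\) is aperiodic. The tautological \(\tilde S\)-grading on the \emph{full} crossed product is topological in the sense of Definition~\ref{def:topological_grading}, since the canonical epimorphism \(A\rtimes\tilde S\to A\rtimes\tilde S\) is the identity and so its kernel~\(0\) lies inside~\(\Null_{EL}\); hence the second part of Proposition~\ref{pro:aperiodic_isg} turns aperiodicity of the inclusion into aperiodicity of the \(\tilde S\)-action~\(\Hilm\). I would record this equivalence once, so that Theorems~\ref{the:aperiodic_action_consequences} and~\ref{the:simple_crossed_minimal} and Corollaries~\ref{cor:simple_minimal} and~\ref{cor:simple_crossed_pi} all become directly applicable, with~\(S\) replaced by~\(\tilde S\).

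Items~\ref{the:groupoid_crossed_properties_1}--\ref{the:groupoid_crossed_properties_3} are then read off Theorem~\ref{the:aperiodic_action_consequences} applied to the aperiodic action~\(\Hilm\): part~\ref{en:aperiodic_uniqueness2} gives that \(A^+\) supports \(A\rtimes_\ess\tilde S\), which is~\ref{the:groupoid_crossed_properties_1}; part~\ref{en:aperiodic_uniqueness3} gives that \(A\) detects ideals in \(A\rtimes_\ess\tilde S\) and that this is the only quotient of \(A\rtimes\tilde S\) with that property, which is~\ref{the:groupoid_crossed_properties_2}; and part~\ref{en:aperiodic_uniqueness1} identifies~\(\Null_{EL}\) as the hidden ideal, whence Proposition~\ref{pro:abstract_uniqueness} yields \(J\subseteq\Null_{EL}=\ker\bigl(\Cst(H,\A)\to\Cst_\ess(H,\A)\bigr)\) for every \(J\in\Ideals(\Cst(H,\A))\) with \(J\cap A=0\), which is~\ref{the:groupoid_crossed_properties_3}. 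I emphasise that this uses no continuity of~\(\A\) and no countable cover of~\(H\), in line with the hypotheses of the theorem.

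For~\ref{the:groupoid_crossed_properties_4}, Corollary~\ref{cor:simple_minimal} --- which already incorporates Lemma~\ref{lem:invariance_vs_duals}, the equivalence between minimality of~\(\Hilm\) and minimality of the dual groupoid --- says, for the aperiodic action~\(\Hilm\), that \(A\rtimes_\ess\tilde S\) is simple if and only if \(\dual{A}\rtimes\tilde S\) is minimal; since \(\dual{A}\rtimes\tilde S\cong\dual{A}\rtimes H\) and \(A\rtimes_\ess\tilde S\cong\Cst_\ess(H,\A)\), this is~\ref{the:groupoid_crossed_properties_4}. For~\ref{the:groupoid_crossed_properties_5}, assume \(\dual{A}\rtimes H\) is minimal; then~\(\Hilm\) is aperiodic and minimal, so Corollary~\ref{cor:simple_crossed_pi} gives that \(A\rtimes_\ess\tilde S\) is simple and purely infinite if and only if every element of \(A^+\setminus\{0\}\) is infinite in it --- equivalently, once~\ref{the:groupoid_crossed_properties_4} makes \(\Cst_\ess(H,\A)\) simple, this is Theorem~\ref{the:aperiodic_consequences}.\ref{enu:aperiodic_consequences6} for the aperiodic inclusion \(A\subseteq\Cst_\ess(H,\A)\) with its faithful essentially defined conditional expectation~\(EL_{\ess}\). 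This is~\ref{the:groupoid_crossed_properties_5}.

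There is no new analytic content: the proof is entirely bookkeeping over the earlier, harder results (Theorems~\ref{the:aperiodic_consequences}, \ref{the:aperiodic_action_consequences} and~\ref{the:simple_crossed_minimal} and their corollaries, all of which are stated without separability or continuity assumptions). The only place where one must be a little careful --- and which I regard as the main point --- is the first paragraph: one has to notice that the full crossed product carries a \emph{topological} grading so that Proposition~\ref{pro:aperiodic_isg} can convert ``the inclusion \(A\subseteq\Cst(H,\A)\) is aperiodic'' into ``the inverse semigroup action~\(\Hilm\) is aperiodic'', and one must be sure that ``minimal'', ``\(B\)-minimal'' and ``hidden ideal'' are preserved under the three identifications, which they are because those identifications are isomorphisms of (graded) \(\Cst\)-algebras and of groupoids.
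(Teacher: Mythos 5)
Your proposal is correct and follows essentially the same route as the paper, which simply applies Theorems~\ref{the:aperiodic_action_consequences} and~\ref{the:simple_crossed_minimal} and Corollary~\ref{cor:simple_crossed_pi} after transporting everything through the identifications \(\Cst(H,\A)\cong A\rtimes\tilde S\), \(\Cst_\ess(H,\A)\cong A\rtimes_\ess\tilde S\) and \(\dual{A}\rtimes H\cong\dual{A}\rtimes\tilde S\). Your explicit remark that the tautological grading on the full crossed product is topological, so that Proposition~\ref{pro:aperiodic_isg} converts aperiodicity of the inclusion into aperiodicity of the action, is a detail the paper leaves implicit but is exactly the intended justification.
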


\begin{proof}
  We may apply Theorems \ref{the:aperiodic_action_consequences}
  and~\ref{the:simple_crossed_minimal} and
  Corollary~\ref{cor:simple_crossed_pi}, replacing \(A\rtimes S\)
  and \(A\rtimes_\ess S\) by \(\Cst(H,\A)\) and \(\Cst_\ess(H,\A)\).
  This implies all the statements.
\end{proof}

The simplicity criterion above strengthens the criterion of
Renault~\cite{Renault:Ideal_structure} by removing the Hausdorffness
assumption, replacing \(\Prim(A)\rtimes H\) by \(\dual{A}\rtimes H\)
in the topological freeness assumption, and weakening the
separability assumptions.  In the group case,
Theorem~\ref{the:groupoid_crossed_properties} implies Kishimoto's
theorem that purely outer group actions on simple
\(\Cst\)\nb-algebras are simple, whereas Renault's criterion says
nothing about this situation.  Theorems
\ref{the:groupoid_crossed_properties}
and~\ref{the:aperiodic_vs_top_free_groupoid} combined with the
criteria for \(\Cst_\red(H,\A) = \Cst_\ess(H,\A)\) also imply the
results
in~\cite{Clark-Exel-Pardo-Sims-Starling:Simplicity_non-Hausdorff}
about the simplicity of~\(\Cst_\red(H)\).

\begin{theorem}
  \label{theorem:pure_infiniteness_groupoid}
  Let~\(H\) be a minimal, topologically free, étale groupoid with
  locally compact Hausdorff object space~\(X\).  Let~\(\L\) be a
  Fell line bundle over~\(H\).  Then \(\Cst_\ess(H,\L)\) is simple.
  And \(\Cst_\ess(H,\L)\) is purely infinite if and only if every
  element of \(\Cont_0(X)\) is infinite in \(\Cst_\ess(H,\L)\).

  In particular, \(\Cst_\ess(H,\L)\) is purely infinite if, for
  every non-empty open subset \(U\subseteq X\) there are \(n\in\N\), a
  non-empty open subset \(V\subseteq U\), and bisections
  \(t_1,\dotsc,t_n\in \Bis(H)\) on which~\(\L\) is trivial such that
  \[
    \rg(t_i)\cap \rg(t_j)=\emptyset\quad\text{for } 1\le i<j\le n,
    \qquad
    V=\bigcup_{i=1}^n \s(t_i),\qquad
    \overline{\bigcup_{i=1}^n \rg(t_i)}\subsetneq V.
  \]
\end{theorem}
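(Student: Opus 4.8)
The first two assertions are immediate from the general machinery. Since $\L$ is a Fell line bundle, it is a continuous Fell bundle (Remark~\ref{rem:continuous_field}) and its unit fibre is $A=\Cont_0(X)$, which is of Type~I; the associated dual groupoid $\dual{A}\rtimes H$ is just $X\rtimes H\cong H$. So Theorem~\ref{the:aperiodic_vs_top_free_groupoid} shows that the inclusion $\Cont_0(X)\subseteq\Cst(H,\L)$ is aperiodic because $H$ is topologically free, and then Theorem~\ref{the:groupoid_crossed_properties}.\ref{the:groupoid_crossed_properties_4} shows that $\Cst_\ess(H,\L)$ is simple because $H$ is minimal, while Theorem~\ref{the:groupoid_crossed_properties}.\ref{the:groupoid_crossed_properties_5} shows that $\Cst_\ess(H,\L)$ is purely infinite if and only if every element of $\Cont_0(X)^+\setminus\{0\}$ is infinite in $B\defeq\Cst_\ess(H,\L)$. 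Thus only the final ``in particular'' remains: one must deduce from the geometric condition that every $a\in\Cont_0(X)^+\setminus\{0\}$ is infinite in $B$.

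Given such an $a$, the plan is to produce a non-zero $g\in\Cont_0(X)^+$ and a $k\in\N$ with $a\precsim g^{\oplus k}$ and $g^{\oplus(k+1)}\precsim a$ in matrix amplifications of $B$; then $a\oplus g\precsim g^{\oplus(k+1)}\precsim a$, and unwinding this Cuntz subequivalence (taking $x=a^{1/2}Z_{11}^*$, $y=a^{1/2}Z_{21}^*$ for an approximating $Z\in\Mat_2(B)$) yields exactly the elements required by Definition~\ref{def:infinite} with witness $b=g$, so $a$ is infinite in $B$. To get this, put $U\defeq\setgiven{x\in X}{a(x)>\norm{a}/2}$, a non-empty open set on which $a$ is bounded below by $\norm{a}/2$, and apply the hypothesis to $U$: this yields $n\in\N$, a non-empty open $V\subseteq U$ and bisections $t_1,\dots,t_n$ on which $\L$ is trivial with $\rg(t_i)$ pairwise disjoint, $V=\bigcup_i\s(t_i)$ and $\overline R\subsetneq V$, where $R\defeq\bigcup_i\rg(t_i)$. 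Choose a non-empty open $W$ with compact closure $\overline W\subseteq V\setminus\overline R$ and any non-zero $g\in\Contc(W)^+$; since $B$ is simple, $a\in\overline{BgB}=B$, so there is $k\in\N$ with $a\precsim g^{\oplus k}$. It remains to fit $k+1$ pairwise orthogonal copies of $g$ into $a$. The identity $V=\bigcup_i\s(t_i)$ together with the disjointness of the $\rg(t_i)$ lets the partial homeomorphisms $\rg\circ(\s|_{t_i})^{-1}$ be glued, after disjointifying the open cover $\{\s(t_i)\}$ of $V$, into a single homeomorphism $\phi$ from a dense open $V'\subseteq V$ onto an open subset of $R$, realised by a bisection on which $\L$ is trivial. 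Because $\phi(V')\subseteq R$ and $W\cap\overline R=\emptyset$, the iterates $W,\phi(W),\phi^2(W),\dots$ (shrunk as needed so that each stays inside the domain of $\phi$) are pairwise disjoint non-empty open subsets of $V$, each carried onto the next by an $\L$-trivial bisection. Transporting $g$ along the associated trivialising sections yields $g_0,\dots,g_k\in\Contc(V)^+$ with pairwise disjoint supports, each Cuntz equivalent to $g$ in $B$; then $\widehat g\defeq\sum_{j=0}^k g_j\in\Contc(V)^+$ has compact support inside $U$, whence $\widehat g\precsim a$, while $\widehat g\sim g^{\oplus(k+1)}$ since the $g_j$ are orthogonal and mutually equivalent. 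This gives $g^{\oplus(k+1)}\precsim a$, completing the argument.

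The main obstacle is this last construction: producing $k+1$ genuinely orthogonal, mutually equivalent copies of $g$ inside $a$. Orthogonality of elements of $B$ forces the supporting sets in $X$ to be disjoint --- one cannot simply use $\sum_i u_{t_i}$ of trivialising sections, since its cross terms are supported on non-trivial bisections and do not vanish even in the essential crossed product --- so one is obliged to work with the disjointified cover, whose union is only dense open in $V$, and the ``contraction'' $\phi$ need not map its domain into itself, which is what necessitates the careful shrinking in the iteration. This is the familiar ``paradoxicality implies pure infiniteness'' bookkeeping; it is carried out for group actions in \cite{Rordam-Sierakowski:Purely_infinite} and for Fell bundles in \cite{Kwasniewski-Szymanski:Pure_infinite}. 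The non-Hausdorffness of $H$ causes no extra difficulty here, because the whole construction takes place over the open subset $V$ of the Hausdorff space $X$ and uses only that $\Cont_0(X)$ sits isometrically in $B$.
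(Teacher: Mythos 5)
Your treatment of the first two assertions is exactly the paper's: aperiodicity of \(\Cont_0(X)\subseteq \Cst(H,\L)\) via Theorem~\ref{the:aperiodic_vs_top_free_groupoid}, then Theorem~\ref{the:groupoid_crossed_properties}.  For the ``in particular'' you take a genuinely different and much longer route, and the reason you give for doing so is mistaken.  The paper's proof is a one-step construction of the witnesses in Definition~\ref{def:infinite}: pick \(a\in\Contc\bigl(V\setminus\overline{\bigcup_i\rg(t_i)}\bigr)^+\setminus\{0\}\) (so \(a\precsim f\), and by simplicity it suffices that \(a\) be infinite), take a partition of unity \(w_1,\dotsc,w_n\) for the cover \(V=\bigcup_i\s(t_i)\), and set \(x\defeq\sum_i a_i\delta_{t_i}\) with \(a_i\) supported in \(\s(t_i)\) and \(y\defeq\sqrt a\).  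Your objection that ``the cross terms do not vanish even in the essential crossed product'' is not correct for the products that actually occur: \((a_i\delta_{t_i})^*(a_j\delta_{t_j})\) lives over \(t_i^{-1}t_j\), and a point of \(t_i^{-1}t_j\) requires arrows \(\eta_i\in t_i\), \(\eta_j\in t_j\) with \(\rg(\eta_i)=\rg(\eta_j)\); since \(\rg(t_i)\cap\rg(t_j)=\emptyset\) for \(i\neq j\), these cross terms vanish identically, and likewise \((a_i\delta_{t_i})^*a=0\) because \(\supp a\) avoids \(\overline{\bigcup_i\rg(t_i)}\).  It is the \emph{ranges}, not the sources, that enter \(x^*x\) and \(x^*y\), so no disjointification of the cover \(\{\s(t_i)\}\) is needed: one gets \(x^*x=a=y^*y\) and \(x^*y=0\) on the nose, whence \(a\) is (even properly) infinite.

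Your paradoxicality route — glue the \(\rg\circ(\s|_{t_i})^{-1}\) over a disjointified, dense open subcover into a ``contraction'' \(\phi\), iterate to fit \(k+1\) orthogonal copies of \(g\) under \(a\), and conclude \(a\oplus g\precsim a\) — can be pushed through, and it is the standard argument of \cite{Rordam-Sierakowski:Purely_infinite} and \cite{Kwasniewski-Szymanski:Pure_infinite}.  But as written it has two soft spots beyond the bookkeeping you acknowledge.  First, simplicity only gives \((a-\varepsilon)_+\precsim g^{\oplus k(\varepsilon)}\), not \(a\precsim g^{\oplus k}\) for a single \(k\) (think of a rank-one projection versus an infinite-rank positive compact); the argument survives by running it for each \(\varepsilon\) and using that \(c\precsim d\) iff \((c-\varepsilon)_+\precsim d\) for all \(\varepsilon>0\), but the quantifiers must be arranged in that order.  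Second, the iterates \(\phi^j(W)\) only make sense after repeatedly intersecting with the (dense open) domain of \(\phi\) and its preimages, so the ``shrinking'' is not optional and should be set up before the transport of \(g\).  The net effect is that you trade a two-line exact computation for Cuntz-comparison bookkeeping in matrix amplifications; nothing is gained in generality, since both arguments use the hypothesis in the same way (the disjointness of the \(\rg(t_i)\) and the gap \(V\setminus\overline{\bigcup_i\rg(t_i)}\)).
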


\begin{proof}
  Theorem~\ref{the:aperiodic_vs_top_free_groupoid} implies that the
  inclusion \(\Cont_0(X) \subseteq \Cst(H,\L)\) is aperiodic because
  \(\dual{A} = X\).  Then the inclusion
  \(\Cont_0(X) \subseteq \Cst_\ess(H,\L)\) is aperiodic as well.
  Now Theorem~\ref{the:groupoid_crossed_properties} implies the
  first part of the statement.  It remains to check that the
  criterion in the second paragraph implies that any
  \(f\in \Cont_0(X)^+\setminus\{0\}\) is infinite in
  \(B\defeq \Cst_\ess(H,\L)\) (see Definition~\ref{def:infinite}).

  Let~\(S\) be the set of bisections in \(\Bis(H)\) that
  trivialise~\(\L\).  This is a unital, wide inverse subsemigroup,
  and we may identify \(\Cst_\ess(H,\L)\) with the essential crossed
  product \(\Cont_0(X)\rtimes_\ess S\) by the inverse semigroup
  action \(\Hilm=(\Hilm_t)_{t\in S}\) associated to the Fell line
  bundle~\(\L\).  Let \(U\defeq\setgiven{x\in X}{f(x)\neq0}\).
  Choose \(\emptyset\neq V\subseteq U\) and \(t_1,\dotsc,t_n\in S\)
  as in the statement of the theorem.  Let
  \(a=b\in \Cont_0(X)^+\setminus\{0\}\) be any non-zero positive
  function that is supported in the open subset
  \(V\setminus \overline{\bigcup_{i=1}^n \rg(t_i)}\).  Then
  \(a\precsim f\).  Since \(\Cst_{\ess}(H,\L)\) is simple, \(f\) is
  infinite if~\(a\) is infinite (see the proof of
  Proposition~\ref{prop:support_vs_pure_infinite}).  Let
  \(w_1,\dotsc, w_n\in \Cont_0(X)\) be a partition of unity
  subordinate to the open covering \(V= \bigcup_{i=1}^n \s(t_i)\).
  Let \(a_i \defeq a \cdot w_i^{1/2}\) for \(i=1,\dotsc,n\).  These
  functions vanish outside~\(V\), and~\(a_i\) is supported
  in~\(\s(t_i)\).  Since~\(\L|_{t_i}\) is a trivial line bundle,
  \(a_i\) gives an element of~\(\Hilm_{t_i}\), which we denote by
  \(a_i \delta_{t_i}\).  It belongs to \(a\cdot \Hilm_{t_i}\) by
  construction.  The product
  \((a_i \delta_{t_i})^* a_j \delta_{t_j}\) is defined using the
  Fell bundle structure.  It vanishes for \(i\neq j\) because
  \(\rg(t_i)\cap \rg(t_j)=\emptyset\).  Similarly,
  \((a_i \delta_{t_i})^* a = 0\).  And
  \(\sum_{i=1}^n (a_i \delta_{t_i})^* a_i \delta_{t_i} =
  \sum_{i=1}^n a \cdot w_i = a\).  Hence
  \(x\defeq \sum_{i=1}^n a_i\delta_{t_i}\) and \(y\defeq \sqrt{a}\)
  are elements of \(a \cdot \Cst_\ess(H,\L)\) such that
  \(x^* x =a\), \(y^*y=a\neq0\) and \(x^*y=0\).  Thus~\(a\) is
  infinite in \(\Cst_\ess(H,\L)\).  In fact, the proof shows
  that~\(a\) is properly infinite.
\end{proof}

\begin{remark}
  The condition in the second paragraph of
  Theorem~\ref{theorem:pure_infiniteness_groupoid} is satisfied, for
  instance, for the transformation groupoids of strongly boundary
  group actions (see~\cite{Laca-Spielberg:Purely_infinite}) and,
  more generally, for filling actions
  (see~\cite{Jolissaint-Robertson:Simple_purely_infinite}).  Hence
  the pure infiniteness results in
  \cites{Laca-Spielberg:Purely_infinite,
    Jolissaint-Robertson:Simple_purely_infinite} are covered by
  Theorem~\ref{theorem:pure_infiniteness_groupoid}.

  For an \'etale, locally compact groupoid~\(H\), the condition of
  being \emph{locally contracting}
  in~\cite{Anantharaman-Delaroche:Purely_infinite} is the same as
  the condition in Theorem~\ref{theorem:pure_infiniteness_groupoid}
  with \(n=1\), without the trivialisation of the Fell line bundle.
  Thus \(\Cst_\ess(H)\) is purely infinite and simple if~\(H\) is a
  minimal, topologically free, étale, locally contracting groupoid
  with locally compact Hausdorff object space.
\end{remark}

Now we specialise further to the trivial Fell bundle, which gives
the groupoid \(\Cst\)\nb-algebra \(\Cst(H)\) without any twist.
Then \(\Cst(H) \cong \Cont_0(X) \rtimes S\) for any unital, wide
inverse subsemigroup \(S\subseteq \Bis(H)\), acting in the canonical
way on \(\Cont_0(X)\).  We are going to show that
\(\Cont_0(X) \subseteq \Cst(H)\) has the generalised intersection
property with essential quotient~\(\Cst_\ess(H)\) if \emph{and only
  if}~\(H\) is topologically free.  One direction already follows
from our general results.  For the other direction, we construct the
orbit representations of~\(\Cst(H)\).  We construct them as
covariant representations of the \(S\)\nb-action on~\(\Cont_0(X)\).
Let \(x\in X\) and let \([x]\defeq \rg(\s^{-1}(x))\subseteq X\) be the
orbit of~\(x\).  The \emph{orbit representation of~\([x]\)} takes
place on the Hilbert space~\(\ell^2 ([x])\).  Here \(\Cont_0(X)\)
acts by pointwise multiplication.  If \(U\in S\) is a bisection
of~\(H\), then \(f\in\Cont_0(U)\) acts on \(\ell^2([x])\) by
\(\pi_U(f) \xi(\rg(\gamma)) \defeq f(\gamma)\cdot \xi(\s(\gamma))\)
for all \(\gamma\in U\) with \(\s(\gamma)\in[x]\), and
\(\pi_U(f) \xi(y)=0\) for \(y\in [x]\setminus \rg(U)\).  Simple
computations show that \(\pi_U(f)^*=\pi_{U^*}(f^*)\),
\(\pi_U(f) \pi_V(g) = \pi_{UV}(f*g)\) for \(U,V\in S\),
\(f\in\Cont_0(U)\), \(g\in\Cont_0(V)\), and \(\pi_U(f) = \pi_V(f)\)
for \(U\subseteq V\) and \(f\in\Cont_0(U) \subseteq \Cont_0(V)\).
Hence the maps~\(\pi_U\) for \(U\in S\) form a representation of the
action of~\(S\) on~\(\Cont_0(X)\).  Then they induce a non-degenerate
representation~\(\pi_{[x]}\) of \(\Cont_0(X)\rtimes S\) on
\(\Bound(\ell^2[x])\).  Let
\(\pi \defeq \bigoplus_{x\in X} \pi_{[x]}\) be the direct sum of all
these representations.

\begin{lemma}
  \label{lem:orbit_representation}
  The representation~\(\pi\) is faithful on \(\Cont_0(X)\).  If it
  factors through \(\Cst_\ess(H,\A)\), then~\(H\) is topologically
  free.
\end{lemma}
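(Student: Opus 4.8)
The first assertion is routine: given \(0\neq f\in\Cont_0(X)\), I choose \(x\in X\) with \(f(x)\neq0\); since \(\Cont_0(X)\) acts on \(\ell^2([x])\) by pointwise multiplication and \(x\in[x]\), the operator \(\pi_{[x]}(f)\) is multiplication by a function not vanishing at~\(x\), hence nonzero, and so \(\pi(f)\neq0\).

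For the second assertion the plan is to argue by contraposition: assuming \(H\) is \emph{not} topologically free, I would produce \(b\in\Cst(H)\) with \(\pi(b)=0\) but \(b\notin\Null_{EL}\); this shows \(\ker\pi\neq\Null_{EL}\), hence that~\(\pi\) does not descend to a faithful representation of \(\Cst_\ess(H)=\Cst(H)/\Null_{EL}\), i.e.\ does not factor through it in the sense of the lemma. Using the canonical action~\(\haction\) of~\(S\) on~\(X\) with \(X\rtimes S\cong H\), non-topological-freeness together with Lemma~\ref{lem:groupoid_vs_action}.\ref{en:groupoid_vs_action1.5} provides \(t\in S\) for which \(\underline{\Fix}(\haction_t)=\setgiven{x\in\D_t\setminus\overline{\D_{1,t}}}{\haction_t(x)=x}\) has nonempty interior~\(V\). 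I would fix \(g\in\Contc(V)\) with \(g\ge0\) and \(g\neq0\); inside \(\Cst(H)\cong\Cont_0(X)\rtimes S\) I let \(g\delta_1\in\Hilm_1=\Cont_0(X)\) denote~\(g\) and \(g\delta_t\in\Hilm_t\cong\Cont_0(\D_t)\) denote~\(g\) viewed as a section of the trivial bundle over the bisection \(\setgiven{[t,x]}{x\in V}\subseteq H\setminus X\), and put \(b\defeq g\delta_t-g\delta_1\).

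I would then verify two things. First, \(\pi(b)=0\): since every point of~\(V\) is a fixed point of~\(\haction_t\), the arrow \([t,z]\) attached to \(z\in V\) is an isotropy loop, and unwinding the definition of \(\pi_{[x]}\) shows that for every \(x\in X\) the operator \(\pi_{[x]}(g\delta_t)\) acts on \(\ell^2([x])\) as multiplication by \(g|_{[x]}\)---exactly as \(\pi_{[x]}(g\delta_1)\) does---so that \(\pi_{[x]}(b)=0\) and thus \(b\in\ker\pi\). Second, \(b\notin\Null_{EL}\): because \(g\) is supported in \(\D_t\setminus\overline{\D_{1,t}}\) it lies in \(I_{1,t}^\bot\), so \(g\delta_t\in\Hilm_t\cdot I_{1,t}^\bot\), on which the essentially defined expectation~\(EL\) vanishes; since \(EL|_{\Cont_0(X)}=\Id_{\Cont_0(X)}\), this gives \(EL(b)=-g\neq0\), whence \(b\notin\ker EL\supseteq\Null_{EL}\). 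Therefore \(b\in\ker\pi\setminus\Null_{EL}\), and contraposition gives the claim.

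I expect the only delicate steps to be (i) invoking Lemma~\ref{lem:groupoid_vs_action} correctly so as to turn ``not topologically free'' into the existence of the isotropy-loop bisection \(\setgiven{[t,x]}{x\in V}\) lying over a nonempty open set of fixed points of~\(\haction_t\), and (ii) checking the dichotomy satisfied by \(g\delta_t\): it escapes \(\ker EL\), and hence \(\Null_{EL}\), yet every orbit representation identifies it with \(g\in\Cont_0(X)\). This last point is the conceptual heart of the argument: the orbit representations collapse the isotropy and therefore cannot see the difference between \(g\delta_t\) and \(g\) that \(\Cst_\ess(H)\) retains.
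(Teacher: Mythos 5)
Your proof is correct and follows essentially the same route as the paper: the paper likewise picks a non-empty bisection $U$ of isotropy arrows lying outside $\overline{X}$ (your $\setgiven{[t,x]}{x\in V}$ with $V\subseteq\underline{\Fix}(\haction_t)$ open), takes $f\in\Contc(U)$ and the corresponding $f_0\in\Cont_0(X)$ (your $g\delta_t$ and $g\delta_1$), and observes that $f-f_0\in\ker\pi$ while $EL(f-f_0)=-f_0\neq0$, so $\ker\pi\not\subseteq\Null_{EL}$. The only cosmetic difference is that you phrase the construction in the inverse-semigroup picture via $I_{1,t}^\bot$ rather than directly with a bisection disjoint from $\overline{X}$.
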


\begin{proof}
  The algebra \(\Cont_0(X)\) acts on \(\ell^2([x])\) by pointwise
  multiplication.  These representations are faithful when we sum
  over all \(x\in X\).  Assume that~\(H\) is not topologically free.
  That is, there is a non-empty bisection
  \(U\subseteq H\setminus \overline{X}\) with \(\rg|_U = \s|_U\).
  Since \(U\cap \overline{X}=0\), it follows that \(EL(f)=0\) for
  all \(f\in \Cont_0(U)\).  Choose any non-zero \(f\in \Contc(U)\).
  Define \(f_0\in \Contc(\s(U)) \subseteq \Cont_0(X)\) by
  \(f_0(\s(\gamma))\defeq f(\gamma)\) for all \(\gamma\in U\).
  Since \(\rg|_U = \s|_U\), both \(f\in \Cont_0(U)\) and
  \(f_0\in\Cont_0(X)\) act
  on \(\ell^2([x])\) by pointwise multiplication with the same
  function~\(f_0\), for all \(x\in X\).  Hence
  \(f-f_0\in \ker \pi\).  But \(EL(f-f_0)= -f_0 \neq0\).  Hence
  \(f-f_0\notin \Null_{EL}\).
\end{proof}

\begin{theorem}
  \label{thm:detection_in_etale_groupoids}
  Let~\(H\) be an \'etale groupoid with locally compact, Hausdorff
  unit space~\(X\).  The inclusion
  \(\Cont_0(X) \subseteq \Cont_0(X)\rtimes S = \Cst(H)\) has the
  generalised intersection property with hidden ideal~\(\Null_{EL}\)
  if and only if~\(H\) is topologically free.
\end{theorem}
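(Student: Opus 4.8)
The plan is to prove the two implications separately, as most of the work is already in place. For the ``only if'' direction, suppose $H$ is not topologically free. Then Lemma~\ref{lem:orbit_representation} gives a faithful-on-$\Cont_0(X)$ representation $\pi$ of $\Cont_0(X)\rtimes S$ that does \emph{not} factor through $\Cst_\ess(H)$; concretely, $\ker\pi$ is not contained in $\Null_{EL}$. This means the inclusion $\Cont_0(X)\subseteq \Cst(H)$ fails condition~\ref{enu:abstract_uniqueness2} of Proposition~\ref{pro:abstract_uniqueness} for the expectation $EL$. But if the inclusion had the generalised intersection property with hidden ideal $\Null_{EL}$, then by Proposition~\ref{pro:abstract_uniqueness}\ref{enu:abstract_uniqueness0} all the equivalent conditions there would hold, a contradiction. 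Hence $H$ must be topologically free.

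For the ``if'' direction, assume $H$ is topologically free. Since $X$ is Hausdorff and locally compact, $\Cont_0(X)$ is of Type~I and is an essential ideal in itself, so Theorem~\ref{the:aperiodic_vs_top_free_groupoid} (with the trivial Fell bundle $\A$, for which $\dual{A}=X$ and $\dual{A}\rtimes H \cong X\rtimes S = H$) shows that the inclusion $\Cont_0(X)\subseteq \Cst(H)$ is aperiodic. Then Theorem~\ref{the:aperiodic_action_consequences}\ref{en:aperiodic_uniqueness1}, applied to the $S$\nobreakdash-action on $A=\Cont_0(X)$, says precisely that $\Null_{EL}$ is the hidden ideal of $\Cont_0(X)\subseteq \Cont_0(X)\rtimes S = \Cst(H)$. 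That is exactly the generalised intersection property with hidden ideal $\Null_{EL}$, as desired.

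The main (and essentially only) obstacle is making sure the bookkeeping of identifications is correct: that $\Cst(H)$ really is $\Cont_0(X)\rtimes S$ for the canonical action (this is the saturated case of Proposition~\ref{pro:groupoid_vs_isg_full} together with the discussion at the end of Section~\ref{sec:groupoid_Fell}, since the trivial line bundle gives a saturated Fell bundle, so no passage to $\tilde S$ is needed), that the dual groupoid of this action is $H$ itself, and that the ideal $\Null_{EL}$ appearing in Theorem~\ref{the:aperiodic_action_consequences} matches the kernel of $\Cst(H)\to\Cst_\ess(H)$ --- which holds by the definition of $\Cst_\ess(H)$ in Section~\ref{sec:essential_groupoid}. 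Once these identifications are in place, both directions are immediate citations of results already proved, with Lemma~\ref{lem:orbit_representation} supplying the one genuinely new ingredient for the converse.

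\begin{proof}
  If $H$ is topologically free, then $\Cont_0(X)$ is an essential
  ideal in itself of Type~I, and the dual groupoid of the canonical
  $S$\nobreakdash-action on $\Cont_0(X)$ is $X\rtimes S \cong H$, which
  is topologically free.  Hence
  Theorem~\ref{the:aperiodic_vs_top_free_groupoid}, applied to the
  trivial Fell bundle, shows that the inclusion
  $\Cont_0(X)\subseteq \Cst(H)$ is aperiodic.  Since
  $\Cst(H) \cong \Cont_0(X)\rtimes S$, part
  \ref{en:aperiodic_uniqueness1} of
  Theorem~\ref{the:aperiodic_action_consequences} shows that
  $\Null_{EL}$ is the hidden ideal of this inclusion.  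So
  $\Cont_0(X)\subseteq \Cst(H)$ has the generalised intersection
  property with hidden ideal $\Null_{EL}$.

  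Conversely, suppose $\Cont_0(X)\subseteq \Cst(H)$ has the generalised
  intersection property with hidden ideal $\Null_{EL}$.  Then by
  Proposition~\ref{pro:abstract_uniqueness}, applied to the
  essentially defined conditional expectation $EL$, condition
  \ref{enu:abstract_uniqueness2} holds: for every \Star{}homomorphism
  $\pi$ of $\Cst(H)$ that is injective on $\Cont_0(X)$, we have
  $\ker\pi \subseteq \Null_{EL}$.  Apply this to the orbit
  representation $\pi$ of Lemma~\ref{lem:orbit_representation}.  It is
  faithful on $\Cont_0(X)$, so $\ker\pi\subseteq \Null_{EL}$, that is,
  $\pi$ factors through $\Cst_\ess(H)$.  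By
  Lemma~\ref{lem:orbit_representation}, this implies that $H$ is
  topologically free.
\end{proof}
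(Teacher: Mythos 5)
Your proof is correct and follows essentially the same route as the paper's: topological freeness gives aperiodicity of the action via Theorem~\ref{the:aperiodic_vs_top_free_groupoid} (equivalently Theorem~\ref{the:aperiodic_top_non-trivial}), which yields the generalised intersection property with hidden ideal \(\Null_{EL}\) by Theorem~\ref{the:aperiodic_action_consequences}, while the converse rests on the orbit representation of Lemma~\ref{lem:orbit_representation}. The only cosmetic difference is that you route the converse through Proposition~\ref{pro:abstract_uniqueness}, whereas the paper observes directly that \(\ker\pi\) is an ideal meeting \(\Cont_0(X)\) trivially yet not contained in \(\Null_{EL}\); the two formulations are equivalent.
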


\begin{proof}
  The dual groupoid for the \(S\)\nb-action on~\(\Cont_0(X)\) is
  simply~\(H\), and \(\Cont_0(X)\) is of Type~I.  So
  Theorem~\ref{the:aperiodic_top_non-trivial} (or
  Theorem~\ref{the:aperiodic_vs_top_free_groupoid}) shows that the
  action of~\(S\) on \(\Cont_0(X)\) is aperiodic if and only
  if~\(H\) is topologically free.  Aperiodicity implies that
  \(\Cont_0(X) \subseteq \Cont_0(X)\rtimes S = \Cst(H)\) has the
  generalised intersection property with hidden
  ideal~\(\Null_{EL}\).  Conversely, if~\(H\) is not topologically
  free, then Lemma~\ref{lem:orbit_representation} exhibits an ideal
  \(\ker \pi\) with \(\ker \pi \cap A = 0\), but
  \(\ker \pi \not\subseteq \Null_{EL}\).
\end{proof}

The special case of Theorem~\ref{thm:detection_in_etale_groupoids}
for transformation groups goes back to Kawamura--Tomiyama and
Archbold--Spielberg (see
\cite{Kawamura-Tomiyama:Properties_dynamical}*{Theorem~4.1} and
\cite{Archbold-Spielberg:Topologically_free}*{Theorem~2}).  The
special case of Hausdorff groupoids with \(\Cst_\red(H)\) instead of
\(\Cst_\ess(H)\) is similar to
\cite{Brown-Clark-Farthing-Sims:Simplicity}*{Proposition~5.5};
replacing ``topologically principal'' by ``topologically free''
allows us to remove the second countability assumption, and
replacing the reduced by the essential crossed product allows to
remove the Hausdorffness assumption.

\begin{bibdiv}
  \begin{biblist}
   \bib{Abadie-Abadie:Ideals}{article}{
  author={Abadie, Beatriz},
  author={Abadie, Fernando},
  title={Ideals in cross sectional \(\textup C^*\)\nobreakdash -algebras of Fell bundles},
  journal={Rocky Mountain J. Math.},
  volume={47},
  date={2017},
  number={2},
  pages={351--381},
  issn={0035-7596},
  doi={10.1216/RMJ-2017-47-2-351},
}

\bib{Anantharaman-Delaroche:Purely_infinite}{article}{
  author={Anantharaman-Delaroche, Claire},
  title={Purely infinite $C^*$\nobreakdash -algebras arising from dynamical systems},
  journal={Bull. Soc. Math. France},
  volume={125},
  date={1997},
  number={2},
  pages={199--225},
  issn={0037-9484},
  eprint={http://www.numdam.org/item?id=BSMF_1997__125_2_199_0},
}

\bib{Ara-Mathieu:Local_multipliers}{book}{
  author={Ara, Pere},
  author={Mathieu, Martin},
  title={Local multipliers of $C^*$-algebras},
  series={Springer Monographs in Mathematics},
  publisher={Springer-Verlag London, Ltd., London},
  date={2003},
  pages={xii+319},
  isbn={1-85233-237-9},
  doi={10.1007/978-1-4471-0045-4},
}

\bib{Archbold-Spielberg:Topologically_free}{article}{
  author={Archbold, Robert J.},
  author={Spielberg, John S.},
  title={Topologically free actions and ideals in discrete $C^*$\nobreakdash -dynamical systems},
  journal={Proc. Edinburgh Math. Soc. (2)},
  volume={37},
  date={1994},
  number={1},
  pages={119--124},
  issn={0013-0915},
  doi={10.1017/S0013091500018733},
}

\bib{Brown-Clark-Farthing-Sims:Simplicity}{article}{
  author={Brown, Jonathan Henry},
  author={Clark, Lisa Orloff},
  author={Farthing, Cynthia},
  author={Sims, Aidan},
  title={Simplicity of algebras associated to \'etale groupoids},
  journal={Semigroup Forum},
  volume={88},
  date={2014},
  number={2},
  pages={433--452},
  issn={0037-1912},
  doi={10.1007/s00233-013-9546-z},
}

\bib{Brown-Ozawa:Approximations}{book}{
  author={Brown, Nathanial P.},
  author={Ozawa, Narutaka},
  title={$C^*$\nobreakdash -algebras and finite-dimensional approximations},
  series={Graduate Studies in Mathematics},
  volume={88},
  publisher={Amer. Math. Soc.},
  place={Providence, RI},
  date={2008},
  pages={xvi+509},
  isbn={978-0-8218-4381-9},
  isbn={0-8218-4381-8},
}

\bib{Brownlowe-Raeburn-Vittadello:Exel}{article}{
  author={Brownlowe, Nathan},
  author={Raeburn, Iain},
  author={Vittadello, Sean T.},
  title={Exel's crossed product for non-unital $C^*$\nobreakdash -algebras},
  journal={Math. Proc. Cambridge Philos. Soc.},
  volume={149},
  date={2010},
  number={3},
  pages={423--444},
  issn={0305-0041},
  doi={10.1017/S030500411000037X},
}

\bib{BussExel:Fell.Bundle.and.Twisted.Groupoids}{article}{
  author={Buss, Alcides},
  author={Exel, Ruy},
  title={Fell bundles over inverse semigroups and twisted \'etale groupoids},
  journal={J. Operator Theory},
  volume={67},
  date={2012},
  number={1},
  pages={153--205},
  issn={0379-4024},
}

\bib{BussExel:InverseSemigroupExpansions}{article}{
  author={Buss, Alcides},
  author={Exel, Ruy},
  title={Inverse semigroup expansions and their actions on \(C^*\)\nobreakdash -algebras},
  journal={Illinois J. Math.},
  volume={56},
  date={2012},
  number={4},
  pages={1185--1212},
  issn={0019-2082},
}

\bib{Buss-Exel-Meyer:Reduced}{article}{
  author={Buss, Alcides},
  author={Exel, Ruy},
  author={Meyer, Ralf},
  title={Reduced \(C^*\)\nobreakdash -algebras of Fell bundles over inverse semigroups},
  journal={Israel J. Math.},
  date={2017},
  volume={220},
  number={1},
  pages={225--274},
  issn={0021-2172},
  doi={10.1007/s11856-017-1516-9},
}

\bib{Buss-Meyer:Actions_groupoids}{article}{
  author={Buss, Alcides},
  author={Meyer, Ralf},
  title={Inverse semigroup actions on groupoids},
  journal={Rocky Mountain J. Math.},
  issn={0035-7596},
  date={2017},
  volume={47},
  number={1},
  pages={53--159},
  doi={10.1216/RMJ-2017-47-1-53},
}

\bib{Choi:Schwarz}{article}{
  author={Choi, Man Duen},
  title={A Schwarz inequality for positive linear maps on $C^*$-algebras},
  journal={Illinois J. Math.},
  volume={18},
  date={1974},
  pages={565--574},
  issn={0019-2082},
  doi={10.1215/ijm/1256051007},
}

\bib{Clark-Exel-Pardo-Sims-Starling:Simplicity_non-Hausdorff}{article}{
  author={Orloff Clark, Lisa},
  author={Exel, Ruy},
  author={Pardo, Enrique},
  author={Sims, Aidan},
  author={Starling, Charles},
  title={Simplicity of algebras associated to non-Hausdorff groupoids},
  journal={Trans. Amer. Math. Soc.},
  volume={372},
  date={2019},
  number={5},
  pages={3669--3712},
  issn={0002-9947},
  doi={10.1090/tran/7840},
}

\bib{Cuntz:Dimension_functions}{article}{
  author={Cuntz, Joachim},
  title={Dimension functions on simple $C^*$\nobreakdash -algebras},
  journal={Math. Ann.},
  volume={233},
  date={1978},
  number={2},
  pages={145--153},
  issn={0025-5831},
  doi={10.1007/BF01421922},
}

\bib{Deaconu-Kumjian-Ramazan:Fell_groupoid_morphism}{article}{
  author={Deaconu, Valentin},
  author={Kumjian, Alex},
  author={Ramazan, Birant},
  title={Fell bundles associated to groupoid morphisms},
  journal={Math. Scand.},
  volume={102},
  date={2008},
  number={2},
  pages={305--319},
  issn={0025-5521},
  doi={10.7146/math.scand.a-15064},
}

\bib{Dixmier:Sur_espaces_Stone}{article}{
  author={Dixmier, Jacques},
  title={Sur certains espaces consideres par M. H. Stone},
  language={French},
  journal={Summa Bras. Math.},
  volume={2},
  date={1951},
  pages={151--181},
  issn={0037-9484},
  doi={10.24033/bsmf.1545},
}

\bib{Dixmier:Cstar-algebras}{book}{
  author={Dixmier, Jacques},
  title={\(C^*\)\nobreakdash -Algebras},
  note={Translated from the French by Francis Jellett; North-Holland Mathematical Library, Vol. 15},
  publisher={North-Holland Publishing Co.},
  place={Amsterdam},
  date={1977},
  pages={xiii+492},
  isbn={0-7204-0762-1},
}

\bib{Exel:Amenability}{article}{
  author={Exel, Ruy},
  title={Amenability for Fell bundles},
  journal={J. Reine Angew. Math.},
  volume={492},
  date={1997},
  pages={41--73},
  issn={0075-4102},
  doi={10.1515/crll.1997.492.41},
}

\bib{Exel:Inverse_combinatorial}{article}{
  author={Exel, Ruy},
  title={Inverse semigroups and combinatorial $C^*$\nobreakdash -algebras},
  journal={Bull. Braz. Math. Soc. (N.S.)},
  volume={39},
  date={2008},
  number={2},
  pages={191--313},
  issn={1678-7544},
  doi={10.1007/s00574-008-0080-7},
}

\bib{Exel:noncomm.cartan}{article}{
  author={Exel, Ruy},
  title={Noncommutative Cartan subalgebras of $C^*$\nobreakdash -algebras},
  journal={New York J. Math.},
  issn={1076-9803},
  volume={17},
  date={2011},
  pages={331--382},
}

\bib{Exel:Non-Hausdorff}{article}{
  author={Exel, Ruy},
  title={Non-Hausdorff \'etale groupoids},
  journal={Proc. Amer. Math. Soc.},
  volume={139},
  date={2011},
  number={3},
  pages={897--907},
  issn={0002-9939},
  doi={10.1090/S0002-9939-2010-10477-X},
}

\bib{Exel:Partial_dynamical}{book}{
  author={Exel, Ruy},
  title={Partial dynamical systems, Fell bundles and applications},
  series={Mathematical Surveys and Monographs},
  volume={224},
  date={2017},
  pages={321},
  isbn={978-1-4704-3785-5},
  isbn={978-1-4704-4236-1},
  publisher={Amer. Math. Soc.},
  place={Providence, RI},
}

\bib{Exel-Pardo:Tight_groupoid}{article}{
  author={Exel, Ruy},
  author={Pardo, Enrique},
  title={The tight groupoid of an inverse semigroup},
  journal={Semigroup Forum},
  volume={92},
  date={2016},
  number={1},
  pages={274--303},
  issn={0037-1912},
  doi={10.1007/s00233-015-9758-5},
}

\bib{Exel-Pardo:Self-similar}{article}{
  author={Exel, Ruy},
  author={Pardo, Enrique},
  title={Self-similar graphs, a unified treatment of Katsura and Nekrashevych \(\textup {C}^*\)\nobreakdash -algebras},
  journal={Adv. Math.},
  volume={306},
  date={2017},
  pages={1046--1129},
  issn={0001-8708},
  doi={10.1016/j.aim.2016.10.030},
}

\bib{Exel-Pitts:Weak_Cartan}{article}{
  author={Exel, Ruy},
  author={Pitts, David R.},
  status={eprint},
  note={\arxiv {1901.09683}},
  title={Characterizing groupoid \(\textup {C}^*\)\nobreakdash -algebras of non-Hausdorff \'etale groupoids},
  date={2019},
}

\bib{Frank:Injective_local_multiplier}{article}{
  author={Frank, Michael},
  title={Injective envelopes and local multiplier algebras of $C^*$-algebras},
  journal={Int. Math. J.},
  volume={1},
  date={2002},
  number={6},
  pages={611--620},
  issn={1311-6797},
  note={\arxiv {math/9910109v2}},
}

\bib{Giordano-Sierakowski:Purely_infinite}{article}{
  author={Giordano, Thierry},
  author={Sierakowski, Adam},
  title={Purely infinite partial crossed products},
  journal={J. Funct. Anal.},
  volume={266},
  date={2014},
  number={9},
  pages={5733--5764},
  issn={0022-1236},
  doi={10.1016/j.jfa.2014.02.025},
}

\bib{Gonshor:Injective_hulls_II}{article}{
  author={Gonshor, Harry},
  title={Injective hulls of $C^*$ algebras. II},
  journal={Proc. Amer. Math. Soc.},
  volume={24},
  date={1970},
  pages={486--491},
  issn={0002-9939},
  doi={10.2307/2037393},
}

\bib{Hamana:Injective-Envelope-Cstar}{article}{
  author={Hamana, Masamichi},
  title={Injective envelopes of $C^*$-algebras},
  journal={J. Math. Soc. Japan},
  volume={31},
  date={1979},
  number={1},
  pages={181--197},
  issn={0025-5645},
  doi={10.2969/jmsj/03110181},
}

\bib{Jolissaint-Robertson:Simple_purely_infinite}{article}{
  author={Jolissaint, Paul},
  author={Robertson, Guyan},
  title={Simple purely infinite $C^*$\nobreakdash -algebras and $n$\nobreakdash -filling actions},
  journal={J. Funct. Anal.},
  volume={175},
  date={2000},
  number={1},
  pages={197--213},
  issn={0022-1236},
  doi={10.1006/jfan.2000.3608},
}

\bib{Kawamura-Tomiyama:Properties_dynamical}{article}{
  author={Kawamura, Shinz\=o},
  author={Tomiyama, Jun},
  title={Properties of topological dynamical systems and corresponding $C^*$\nobreakdash -algebras},
  journal={Tokyo J. Math.},
  volume={13},
  date={1990},
  number={2},
  pages={251--257},
  issn={0387-3870},
  doi={10.3836/tjm/1270132260},
}

\bib{Kennedy-Schafhauser:noncomm_boundaries}{article}{
  author={Kennedy, Matthew},
  author={Schafhauser, Christopher},
  title={Noncommutative boundaries and the ideal structure of reduced crossed products},
  journal={Duke Math. J.},
  volume={168},
  date={2019},
  number={17},
  pages={3215--3260},
  issn={0012-7094},
  doi={10.1215/00127094-2019-0032},
}

\bib{Khoshkam-Skandalis:Regular}{article}{
  author={Khoshkam, Mahmood},
  author={Skandalis, Georges},
  title={Regular representation of groupoid $C^*$\nobreakdash -algebras and applications to inverse semigroups},
  journal={J. Reine Angew. Math.},
  volume={546},
  date={2002},
  pages={47--72},
  issn={0075-4102},
  doi={10.1515/crll.2002.045},
}

\bib{Khoshkam-Skandalis:Crossed_inverse_semigroup}{article}{
  author={Khoshkam, Mahmood},
  author={Skandalis, Georges},
  title={Crossed products of $C^*$\nobreakdash -algebras by groupoids and inverse semigroups},
  journal={J. Operator Theory},
  volume={51},
  date={2004},
  number={2},
  pages={255--279},
  issn={0379-4024},
}

\bib{Kirchberg-Rordam:Non-simple_pi}{article}{
  author={Kirchberg, Eberhard},
  author={R\o rdam, Mikael},
  title={Non-simple purely infinite $C^*$\nobreakdash -algebras},
  journal={Amer. J. Math.},
  volume={122},
  date={2000},
  number={3},
  pages={637--666},
  issn={0002-9327},
  doi={10.1353/ajm.2000.0021},
}

\bib{Kirchberg-Rordam:Infinite_absorbing}{article}{
  author={Kirchberg, Eberhard},
  author={R\o rdam, Mikael},
  title={Infinite non-simple $C^*$\nobreakdash -algebras: absorbing the Cuntz algebra~$\mathcal O_\infty $},
  journal={Adv. Math.},
  volume={167},
  date={2002},
  number={2},
  pages={195--264},
  issn={0001-8708},
  doi={10.1006/aima.2001.2041},
}

\bib{Kirchberg-Sierakowski:Strong_pure}{article}{
  author={Kirchberg, Eberhard},
  author={Sierakowski, Adam},
  title={Strong pure infiniteness of crossed products},
  journal={Ergodic Theory Dynam. Systems},
  volume={38},
  date={2018},
  number={1},
  pages={220--243},
  issn={0143-3857},
  doi={10.1017/etds.2016.25},
}

\bib{Kishimoto:Outer_crossed}{article}{
  author={Kishimoto, Akitaka},
  title={Outer automorphisms and reduced crossed products of simple $C^*$\nobreakdash -algebras},
  journal={Comm. Math. Phys.},
  volume={81},
  date={1981},
  number={3},
  pages={429--435},
  issn={0010-3616},
}

\bib{Kumjian:Diagonals}{article}{
  author={Kumjian, Alexander},
  title={On $C^*$\nobreakdash -diagonals},
  journal={Canad. J. Math.},
  volume={38},
  date={1986},
  number={4},
  pages={969--1008},
  issn={0008-414X},
  doi={10.4153/CJM-1986-048-0},
}

\bib{Kwasniewski:Topological_freeness}{article}{
  author={Kwa\'sniewski, Bartosz Kosma},
  title={Topological freeness for Hilbert bimodules},
  journal={Israel J. Math.},
  volume={199},
  date={2014},
  number={2},
  pages={641--650},
  issn={0021-2172},
  doi={10.1007/s11856-013-0057-0},
}

\bib{Kwasniewski:Crossed_products}{article}{
  author={Kwa\'sniewski, Bartosz Kosma},
  title={Crossed products by endomorphisms of $C_0(X)$-algebras},
  journal={J. Funct. Anal.},
  volume={270},
  date={2016},
  number={6},
  pages={2268--2335},
  issn={0022-1236},
  doi={10.1016/j.jfa.2016.01.015},
}

\bib{Kwasniewski:Exel_crossed}{article}{
  author={Kwa\'sniewski, Bartosz Kosma},
  title={Exel's crossed product and crossed products by completely positive maps},
  journal={Houston J. Math.},
  volume={43},
  date={2017},
  number={2},
  pages={509--567},
  issn={0362-1588},
}

\bib{Kwasniewski-Meyer:Aperiodicity}{article}{
  author={Kwa\'sniewski, Bartosz Kosma},
  author={Meyer, Ralf},
  title={Aperiodicity, topological freeness and pure outerness: from group actions to Fell bundles},
  journal={Studia Math.},
  issn={0039-3223},
  volume={241},
  number={3},
  date={2018},
  pages={257--303},
  doi={10.4064/sm8762-5-2017},
}

\bib{Kwasniewski-Meyer:Stone_duality}{article}{
  author={Kwa\'sniewski, Bartosz Kosma},
  author={Meyer, Ralf},
  title={Stone duality and quasi-orbit spaces for generalised \(\textup {C}^*\)\nobreakdash -inclusions},
  journal={Proc. Lond. Math. Soc. (3)},
  volume={121},
  date={2020},
  number={4},
  pages={788--827},
  issn={0024-6115},
  doi={10.1112/plms.12332},
}

\bib{Kwasniewski-Meyer:Aperiodicity_pseudo_expectations}{article}{
  author={Kwa\'sniewski, Bartosz Kosma},
  author={Meyer, Ralf},
  title={Aperiodicity: the almost extension property and uniqueness of pseudo-expectations},
  journal={IMRN},
  note={\arxiv {2007.05409}},
  status={published online},
	  date={2021},
	 doi={10.1093/imrn/rnab098}
}

\bib{Kwasniewski-Szymanski:Pure_infinite}{article}{
  author={Kwa\'sniewski, Bartosz Kosma},
  author={Szyma\'nski, Wojciech},
  title={Pure infiniteness and ideal structure of \(\textup {C}^*\)\nobreakdash -algebras associated to Fell bundles},
  journal={J. Math. Anal. Appl.},
  volume={445},
  date={2017},
  number={1},
  pages={898--943},
  issn={0022-247X},
  doi={10.1016/j.jmaa.2013.10.078},
}

\bib{Laca-Spielberg:Purely_infinite}{article}{
  author={Laca, Marcelo},
  author={Spielberg, Jack},
  title={Purely infinite $C^*$\nobreakdash -algebras from boundary actions of discrete groups},
  journal={J. Reine Angew. Math.},
  volume={480},
  date={1996},
  pages={125--139},
  issn={0075-4102},
  doi={10.1515/crll.1996.480.125},
}

\bib{Nilsen:Bundles}{article}{
  author={Nilsen, May},
  title={\(C^*\)\nobreakdash -bundles and \(C_0(X)\)-algebras},
  journal={Indiana Univ. Math. J.},
  volume={45},
  date={1996},
  number={2},
  pages={463--477},
  issn={0022-2518},
  doi={10.1512/iumj.1996.45.1086},
}

\bib{Olesen-Pedersen:Applications_Connes_2}{article}{
  author={Olesen, Dorte},
  author={Pedersen, Gert K.},
  title={Applications of the Connes spectrum to $C^*$\nobreakdash -dynamical systems, II},
  journal={J. Funct. Anal.},
  volume={36},
  date={1980},
  number={1},
  pages={18--32},
  issn={0022-1236},
  doi={10.1016/0022-1236(80)90104-4},
}

\bib{Pasnicu-Phillips:Spectrally_free}{article}{
  author={Pasnicu, Cornel},
  author={Phillips, N. Christopher},
  title={Crossed products by spectrally free actions},
  journal={J. Funct. Anal.},
  volume={269},
  date={2015},
  number={4},
  pages={915--967},
  issn={0022-1236},
  doi={10.1016/j.jfa.2015.04.020},
}

\bib{Paterson:Groupoids}{book}{
  author={Paterson, Alan L. T.},
  title={Groupoids, inverse semigroups, and their operator algebras},
  series={Progress in Mathematics},
  volume={170},
  publisher={Birkh\"auser Boston Inc.},
  place={Boston, MA},
  date={1999},
  pages={xvi+274},
  isbn={0-8176-4051-7},
  doi={10.1007/978-1-4612-1774-9},
}

\bib{Pitts:Regular_I}{article}{
  author={Pitts, David R.},
  title={Structure for regular inclusions. I},
  journal={J. Operator Theory},
  volume={78},
  date={2017},
  number={2},
  pages={357--416},
  issn={0379-4024},
  doi={10.7900/jot.2016sep15.2128},
}

\bib{Pitts-Zarikian:Unique_pseudoexpectation}{article}{
  author={Pitts, David R.},
  author={Zarikian, Vrej},
  title={Unique pseudo-expectations for $C^*$-inclusions},
  journal={Illinois J. Math.},
  volume={59},
  date={2015},
  number={2},
  pages={449--483},
  issn={0019-2082},
}

\bib{Sieben-Quigg:ActionsOfGroupoidsAndISGs}{article}{
  author={Quigg, John},
  author={Sieben, N\'andor},
  title={$C^*$\nobreakdash -actions of $r$\nobreakdash -discrete groupoids and inverse semigroups},
  journal={J. Austral. Math. Soc. Ser. A},
  volume={66},
  date={1999},
  number={2},
  pages={143--167},
  issn={0263-6115},
  doi={10.1017/S1446788700039288},
}

\bib{Renault:Ideal_structure}{article}{
  author={Renault, Jean},
  title={The ideal structure of groupoid crossed product $C^*$\nobreakdash -algebras},
  note={With an appendix by Georges Skandalis},
  journal={J. Operator Theory},
  volume={25},
  date={1991},
  number={1},
  pages={3--36},
  issn={0379-4024},
}

\bib{Renault:Cartan.Subalgebras}{article}{
  author={Renault, Jean},
  title={Cartan subalgebras in $C^*$\nobreakdash -algebras},
  journal={Irish Math. Soc. Bull.},
  number={61},
  date={2008},
  pages={29--63},
  issn={0791-5578},
}

\bib{Rordam-Sierakowski:Purely_infinite}{article}{
  author={R\o rdam, Mikael},
  author={Sierakowski, Adam},
  title={Purely infinite $C^*$\nobreakdash -algebras arising from crossed products},
  journal={Ergodic Theory Dynam. Systems},
  volume={32},
  date={2012},
  number={1},
  pages={273--293},
  issn={0143-3857},
  doi={10.1017/S0143385710000829},
}

\bib{Sierakowski:IdealStructureCrossedProducts}{article}{
  author={Sierakowski, Adam},
  title={The ideal structure of reduced crossed products},
  journal={M\"unster J. Math.},
  volume={3},
  date={2010},
  pages={237--261},
  issn={1867-5778},
}

\bib{Steinberg-Szakacs:Simplicity}{article}{
  title={Simplicity of inverse semigroup and \'etale groupoid algebras},
  author={Steinberg, Benjamin},
  author={Szak\'acs, N\'ora},
			journal = {Adv. Math.},
volume = {380},
pages = {107611},
year = {2021},
issn = {0001-8708},
doi = {10.1016/j.aim.2021.107611},
}

\bib{Tomiyama:Projection_norm_one}{article}{
  author={Tomiyama, Jun},
  title={On the projection of norm one in $W^*$\nobreakdash -algebras},
  journal={Proc. Japan Acad.},
  volume={33},
  date={1957},
  pages={608--612},
  issn={0021-4280},
  doi={10.3792/pja/1195524885},
}

\bib{Zarikian:Unique_expectations}{article}{
  author={Zarikian, Vrej},
  title={Unique expectations for discrete crossed products},
  journal={Ann. Funct. Anal.},
  volume={10},
  date={2019},
  number={1},
  pages={60--71},
  issn={2008-8752},
  doi={10.1215/20088752-2018-0008},
}

  \end{biblist}
\end{bibdiv}

\end{document}